\theoremstyle{plain}
\newtheorem*{theorem*}{{\bf Theorem}}
\newtheorem*{corollary*}{{\bf Corollary}}
\newtheorem{proposition}[subsection]{{\bf Proposition}}
\theoremstyle{definition}
\theoremstyle{remark}
\numberwithin{equation}{section}
\DeclareMathOperator{\B}{B}
\DeclareMathOperator{\M}{M}
\DeclareMathOperator{\K}{K}
\DeclareMathOperator{\Hom}{Hom}
\DeclareMathOperator{\cwedge}{\curlywedge}
\begin{document}
\baselineskip=14pt
\title{Bogomolov multipliers of all groups of order $128$}
\author[U. Jezernik]{Urban Jezernik}
\address[Urban Jezernik]{
Institute of Mathematics, Physics, and Mechanics \\
Jadranska 19 \\
1000 Ljubljana \\
Slovenia}
\thanks{}
\email{urban.jezernik@imfm.si}
\author[P. Moravec]{Primo\v{z} Moravec}
\address[Primo\v{z} Moravec]{
Department of Mathematics \\
University of Ljubljana \\
Jadranska 21 \\
1000 Ljubljana \\
Slovenia}
\thanks{}
\email{primoz.moravec@fmf.uni-lj.si}
\subjclass[2010]{13A50, 14E08, 20D15}
\keywords{Bogomolov multiplier, groups of order $128$}
\thanks{}
\date{\today}
\begin{abstract}
\noindent This note is an implementation of the algorithm for computing Bogomolov multipliers as given in \cite{Mor11} in combination with \cite{Eic08} to effectively determine the multipliers of groups of order $128$.
The two serving purposes are a continuation of the results \cite{Chu08,Chu09}, and an application \cite{Jez13}.
\end{abstract}
\maketitle
\section{Introduction}
\label{sec:introduction}

\noindent 
Let $G$ be a group and let $G\cwedge G$ be the group generated by the symbols $x\cwedge y$ for all pairs $x,y\in G$, subject to the following
relations:
\[	
\begin{aligned}
\label{eq:ext}
xy\cwedge z = (x^y\cwedge z^y)(y\cwedge z), \quad 
x\cwedge yz = (x\cwedge z)(x^z\cwedge y^z), \quad
a\cwedge b = 1,
\end{aligned}
\]
for all $x,y,z\in G$ and all $a,b\in G$ with $[a,b]=1$. The group $G\cwedge G$ was first studied in \cite{Mor11} and is called the {\it curly exterior square} of $G$. There is a canonical epimorphism $G\cwedge G\to [G,G]$ whose kernel is denoted by $\B_0(G)$. Its significance was pointed
out in \cite{Mor11} where it was shown that $\Hom(\B_0(G),\mathbb{Q}/\mathbb{Z})$ is naturally isomorphic to the unramifed Brauer group of a field extension $\mathbb{C}(V)^G/\mathbb{C}$ over $\mathbb{Q}/\mathbb{Z}$. The unramified Brauer group is a well known obstruction to Noether's problem \cite{Noe16} asking whether or not $\mathbb{C}(V)^G$ is purely transcendental over $\mathbb{C}$. Following Kunyavski\u\i \ \cite{Kun08}, we say that $\B_0(G)$ is the {\it Bogomolov multiplier} of $G$. Bogomolov multipliers can also be interpreted as measures of how the commutator relations in groups fail to follow from the so-called universal ones, see \cite{Jez13} for further details.

Based on the above description of Bogomolov multipliers, an algorithm for computing $\B_0(G)$ and $G\cwedge G$ when $G$ is a polycyclic group was developed in \cite{Mor11}. The purpose of this paper is to describe a new algorithm for computing Bogomolov multipliers and curly exterior squares of polycyclic groups. It is based on an algorithm for computing Schur multipliers that was developed by Eick and Nickel \cite{Eic08}, and a Hopf-type formula for $\B_0(G)$ that was found in \cite{Mor11}. An advantage of the new algorithm is that it enables a systematic trace of which elements of $\B_0(G)$ are in fact non-trivial, thus providing an efficient tool of double-checking non-triviality of Bogomolov multipliers by hand. The algorithm has been implemented in {\sf GAP} \cite{GAP}.

Hand calculations of Bogomolov multipliers were done for groups of order 32 by 
Chu, Hu, Kang, and Prokhorov \cite{Chu08}, and groups of order 64 by
Chu, Hu, Kang, and Kunyavski\u\i \ \cite{Chu09}. In a similar way, Bogomolov multipliers of groups of order $p^5$ were determined in \cite{Hos11,Hos12}, and for groups of order $p^6$ this was done recently by Chen and Ma \cite{Che13}.
We apply the above mentioned algorithm to determine Bogomolov multipliers of all groups of order 128. Our contribution is an explicit description of generators of Bogomolov multipliers of these groups. There are 2328 groups of order 128, and they were classified by James, Newman, and O'Brien \cite{Jam90}. Instead of considering all of them, we use the fact \cite{Jam90} that these groups belong to 115 isoclinism families according to Hall \cite{Hal40}, together with the fact that isoclinic groups have isomorphic Bogomolov multipliers \cite{Mor14}.
It turns out that there are precisely eleven isoclinism families whose Bogomolov multipliers are non-trivial. For each of these families we explicitly determine $\B_0(G)$ for a chosen representative $G$.
We mention that the results of this paper form a basis for proving the main result of \cite{Jez13}.

The outline of the paper is as follows. In Section \ref{sec:algorithm} we describe the new algorithm for computing Bogomolov multipliers and curly exterior squares of polycyclic groups.
We then proceed to determine the multipliers of groups of order $128$.
A short summary of the results is provided in Section \ref{sec:summary},
and the full details of all the calculations are given in Section \ref{sec:calculations}.

This manuscript is an extended version of \cite{arx} where only the groups of order 128 that yield non-trivial Bogomolov multipliers are considered.

\section{The algorithm}
\label{sec:algorithm}

\noindent 
Let $G$ be a finite polycyclic group, presented by a power-commutator presentation with a polycyclic generating sequence $g_i$ with $1 \leq i \leq n$ for some $n$ subject to the relations
\[
	\begin{aligned}
	g_i^{e_i} & = \textstyle \prod_{k=i+1}^n g_k^{x_{i,k}} & & \text{ for } 1 \leq i \leq n, \\
	[g_i, g_j] & = \textstyle \prod_{k=i+1}^n g_k^{y_{i,j,k}} & & \text{ for } 1 \leq j < i \leq n.
	\end{aligned}
\]
Note that when printing such a presentation, we hold to standard practice and omit the trivial commutator relations.
For every relation except the trivial commutator relations (the reason being these get factored out in the next step), introduce a new abstract generator, a so-called {\em tail}, append the tail to the relation, and make it central.
In this way, we obtain a group generated by $g_i$ with $1 \leq i \leq n$ and $t_{\ell}$ with $1 \leq \ell \leq m$ for some $m$, subject to the relations
\[
	\begin{aligned}
	g_i^{e_i} & = \textstyle \prod_{k=i+1}^n g_k^{x_{i,k}} \cdot t_{\ell(i)} & & \text{ for } 1 \leq i \leq n, \\
	[g_i, g_j] & = \textstyle \prod_{k=i+1}^n g_k^{y_{i,j,k}} \cdot t_{\ell(i,j)} & & \text{ for } 1 \leq j < i \leq n,
	\end{aligned}
\]
with the tails $t_{\ell}$ being central.
This presentation gives a central extension $G_{\emptyset}^{*}$ of $\langle t_{\ell} \mid 1 \leq \ell \leq m \rangle$ by $G$, but the given relations may not determine a consistent power-commutator presentation.
Evaluating the consistency relations 
\[
	\begin{aligned}
	g_k (g_j g_i) &= (g_k g_j) g_i & & \text{ for } k > j > i, \\
	(g_j^{e_j}) g_i &= g_j^{e_j - 1}(g_i g_i) & & \text{ for } j > i, \\
	g_j (g_i^{e_i}) &= (g_jg_i) g_i^{e_i - 1} & & \text{ for } j > i, \\
	(g_i^{e_i}) g_i &= g_i (g_i^{e_i}) & & \text{ for all } i
	\end{aligned}
\]
in the extension gives a system of relations between the tails.
Having these in mind, the above presentation of $G_{\emptyset}^{*}$ amounts to a pc-presented quotient of the universal central extension $G^{*}$ of the quotient system, backed by the theory of the {\em tails routine} and {\em consistency checks}, see \cite{Nic93,Sim94,Eic08}.
Beside the consistency enforced relations, we evaluate the commutators $[g,h]$ in the extension with the elements $g,h$ commuting in $G$, which potentially impose some new tail relations.
In the language of exterior squares, this step amounts to determining the subgroup $\M_0(G)$ of the Schur multiplier, see \cite{Mor11}.
The procedure may be simplified by noticing that the conjugacy class of a single commutator induces the same relation throughout.
Let $G_0^*$ be the group obtained by factoring $G_{\emptyset}^*$ by these additional relations.
Computationally, we do this by applying Gaussian elimination over the integers to produce a generating set for all of the relations between the tails at once, and collect them in a matrix $T$.
Applying a transition matrix $Q^{-1}$ to obtain the Smith normal form of $T = PSQ$ gives a new basis for the tails, say $t_{\ell}^*$.
The abelian invariants of the group generated by the tails are recognised as the elementary divisors of $T$.
Finally, the Bogomolov multiplier of $G$ is identified as the torsion subgroup of $\langle t_{\ell}^* \mid 1 \leq \ell \leq m \rangle$ inside $G_0^{*}$, the theoretical background of this being the following proposition.

\begin{proposition}
\label{p:torsion}
Let $G$ be a finite group, presented by $G = F/R$ with $F$ free of rank $n$.
Denote by $\K(F)$ the set of commutators in $F$.
Then $\B_0(G)$ is isomorphic to the torsion subgroup of $R/\langle \K(F) \cap R \rangle$, and the torsion-free factor $R/([F,F]\cap R)$ is free abelian of rank $n$.
Moreover, every complement $C$ to $\B_0(G)$ in $R/\langle \K(F) \cap R \rangle$ yields a commutativity preserving central extension of $\B_0(G)$ by $G$.
\end{proposition}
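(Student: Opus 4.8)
The plan is to derive the proposition from the Hopf-type formula of \cite{Mor11}, which identifies $\B_0(G)$ with $\M(G)/\M_0(G)$, where $\M_0(G)$ is the image in the Schur multiplier $\M(G)=([F,F]\cap R)/[F,R]$ of the subgroup $\langle\K(F)\cap R\rangle$; since $[F,R]\subseteq\langle\K(F)\cap R\rangle$ this reads
\[
  \B_0(G)\;\cong\;\frac{[F,F]\cap R}{\langle\K(F)\cap R\rangle}.
\]
Write $N=\langle\K(F)\cap R\rangle$. The first step is to pin down the elementary properties of $N$. As $R$ is normal in $F$, conjugation sends a commutator $[a,b]\in R$ to the commutator $[a^f,b^f]\in R$, so $N$ is normal in $F$, and clearly $N\subseteq[F,F]\cap R$. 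Each generator $[f,r]$ of $[F,R]$ is a commutator lying in $R$ (again because $R$ is normal), so $[F,R]\subseteq N$; hence $R/N$ is central — in particular abelian — in $F/N$, and $([F,F]\cap R)/N$ is a quotient of the finite group $\M(G)$, hence finite.

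The second step identifies the torsion-free part. By the second isomorphism theorem $R/([F,F]\cap R)\cong R[F,F]/[F,F]$, a subgroup of $F/[F,F]\cong\mathbb Z^{n}$ of index $|G/[G,G]|$; since $G$ is finite this index is finite, so the subgroup is free abelian of rank $n$. Therefore the exact sequence of abelian groups
\[
  0\longrightarrow\frac{[F,F]\cap R}{N}\longrightarrow\frac{R}{N}\longrightarrow\frac{R}{[F,F]\cap R}\longrightarrow 0
\]
splits, and $R/N\cong\bigl(([F,F]\cap R)/N\bigr)\oplus\mathbb Z^{n}$. The first summand is finite and the second torsion-free, so the torsion subgroup of $R/N$ is exactly $([F,F]\cap R)/N\cong\B_0(G)$, while $R/([F,F]\cap R)\cong\mathbb Z^{n}$ is a torsion-free complement; this proves the first two assertions.

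The third step concerns complements. Let $C$ be any complement to $\B_0(G)=([F,F]\cap R)/N$ in $R/N$. Because $R/N$ lies in the centre of $F/N$, the subgroup $C$ is automatically normal in $F/N$, so we may form $E=(F/N)/C$. The image of $R/N$ in $E$ is $(R/N)/C\cong\B_0(G)$, it is central in $E$, and the quotient is $F/R=G$; thus $1\to\B_0(G)\to E\to G\to1$ is a central extension. It is commutativity preserving: if $g,h\in G$ satisfy $[g,h]=1$, lift them to $\tilde g,\tilde h\in F$; then $[\tilde g,\tilde h]$ is a commutator lying in $R$, hence in $\K(F)\cap R\subseteq N$, so the images of $\tilde g,\tilde h$ in $E$ — which are lifts of $g,h$ — commute.

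The only ingredient beyond manipulation of finitely generated abelian groups is the Hopf-type formula imported from \cite{Mor11}; re-establishing it from scratch (by presenting $G\cwedge G$ and identifying $\ker(G\cwedge G\to[G,G])$ with $([F,F]\cap R)/N$) would be the genuine work. Among the formal steps, the point worth emphasising is that a complement $C$ is normal in $F/N$ for free: it is this centrality observation that lets the construction of $E$ succeed for \emph{every} complement, which is precisely the generality in which the proposition will be used.
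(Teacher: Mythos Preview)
Your proof is correct and follows the same route as the paper: both invoke the Hopf-type formula $\B_0(G)\cong([F,F]\cap R)/\langle\K(F)\cap R\rangle$ from \cite{Mor11} and then appeal to the standard splitting argument for $R/\langle\K(F)\cap R\rangle$ via the free abelian quotient $R/([F,F]\cap R)$. The paper outsources these details to \cite[Corollary~2.4.7]{Kar87} (the analogous statement for the ordinary Schur multiplier), whereas you have written them out explicitly; you also spell out the verification that $E=(F/N)/C$ is commutativity preserving, which the paper leaves implicit.
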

\begin{proof}
Using the Hopf-type formula for the Bogomolov multiplier $\B_0(G)\cong ([F,F]\cap R)/\langle \K(F)\cap R\rangle$ from \cite{Mor11}, the proposition follows from the  arguments given in \cite[Corollary 2.4.7]{Kar87}.
By construction and \cite{Eic08}, we have $G_0^{*} \cong F/\langle \K(F) \cap R \rangle$,
and the complement $C$ gives the extension $G_0^{*}/C$.
\end{proof}

Taking the derived subgroup of the extension $G_0^{*}$ and factoring it by a complement of the torsion part of the subgroup generated by the tails thus gives a consistent power-commutator presentation of the curly exterior square $G \curlywedge G$, see \cite{Eic08,Mor11}.
With each of the groups below, we also output the presentation of $G_0^{*}$ factored by a complement of $\B_0(G)$ and expressed in the new tail basis $t_i^*$ as to explicitly point to the nonuniversal commutator relations with respect to the commutator presentation of the original group.
\section{A summary of results}
\label{sec:summary}

\noindent There are precisely $11$ isoclinism families of groups of order $128$ whose Bogomolov multipliers are nontrivial, see Table \ref{tab:report}.
These are the families $\Phi_i$ with $i \in \{ 16, 30, 31, 37, 39, 43, 58, 60, 80, 106, 114 \}$ of \cite{Jam90}.
Their multipliers are all isomorphic to $C_2$, except those of the family $\Phi_{30}$ with which we get $C_2 \times C_2$.
The exceptional groups belonging to the latter family have been, together with their odd prime counterpart, further investigated in \cite{Jez13}.
For each of the families with nontrivial multipliers, we also give the identification number as implemented in {\sf GAP} \cite{GAP} of a selected representative that was used for determining the family's multiplier.

\begin{table}[htb]
\caption{Isoclinism families of groups of order $128$ with nontrivial Bogomolov multipliers.}
\label{tab:report}
\begin{longtable}{ccc}
\toprule
Family & {\sf GAP} ID & $\B_0$ \\
\midrule \endhead 
$\ref{number:16}$ & $227$ & $C_{2}$  \\
$\ref{number:30}$ & $1544$ & $C_{2} \times C_{2}$  \\
$\ref{number:31}$ & $1345$ & $C_{2}$  \\
$\ref{number:37}$ & $242$ & $C_{2}$  \\
$\ref{number:39}$ & $36$ & $C_{2}$  \\
$\ref{number:43}$ & $1924$ & $C_{2}$  \\
$\ref{number:58}$ & $417$ & $C_{2}$  \\
$\ref{number:60}$ & $446$ & $C_{2}$  \\
$\ref{number:80}$ & $950$ & $C_{2}$  \\
$\ref{number:106}$ & $144$ & $C_{2}$  \\
$\ref{number:114}$ & $138$ & $C_{2}$  \\
\bottomrule
\end{longtable}
\end{table}

All-in-all, there are $230$ groups of order $128$ with nontrivial Bogomolov multipliers out of a total of $2328$ groups of this order.
For all these groups, Noether's rationality problem \cite{Noe16} therefore has a negative solution.
\section{The calculations}
\label{sec:calculations}

\begin{enumerate}[leftmargin=0cm]


\item \label{number:1} 
Let the group $G$ be the representative of this family given by the presentation
\[
\begin{aligned}
\langle g_{1}, \,g_{2}, \,g_{3}, \,g_{4}, \,g_{5}, \,g_{6}, \,g_{7} & \mid & g_{1}^{2} &= g_{2}, \\ 
 & & g_{2}^{2} &= g_{3}, \\ 
 & & g_{3}^{2} &= g_{4}, \\ 
 & & g_{4}^{2} &= g_{5}, \\ 
 & & g_{5}^{2} &= g_{6}, \\ 
 & & g_{6}^{2} &= g_{7}, \\ 
 & & g_{7}^{2} &= 1\rangle. \\ 
\end{aligned}
\]
We add 7 tails to the presentation as to form a quotient of the universal central extension of the system: 
$g_{1}^{2} = g_{2} t_{1}$,
$g_{2}^{2} = g_{3} t_{2}$,
$g_{3}^{2} = g_{4} t_{3}$,
$g_{4}^{2} = g_{5} t_{4}$,
$g_{5}^{2} = g_{6} t_{5}$,
$g_{6}^{2} = g_{7} t_{6}$,
$g_{7}^{2} =  t_{7}$.
Consistency checks enforce no relations between the tails.
Scanning through the conjugacy class representatives of $G$ and the generators of their centralizers, we see that no new relations are imposed.
We thus have that the torison subgroup of the group generated by the tails is trivial, implying that the Bogomolov multiplier is also trivial.


\item \label{number:2} 
Let the group $G$ be the representative of this family given by the presentation
\[
\begin{aligned}
\langle g_{1}, \,g_{2}, \,g_{3}, \,g_{4}, \,g_{5}, \,g_{6}, \,g_{7} & \mid & g_{1}^{2} &= g_{4}, \\ 
 & & g_{2}^{2} &= g_{5}, & [g_{2}, g_{1}]  &= g_{3}, \\ 
 & & g_{3}^{2} &= 1, \\ 
 & & g_{4}^{2} &= g_{6}, \\ 
 & & g_{5}^{2} &= g_{7}, \\ 
 & & g_{6}^{2} &= 1, \\ 
 & & g_{7}^{2} &= 1\rangle. \\ 
\end{aligned}
\]
We add 8 tails to the presentation as to form a quotient of the universal central extension of the system: 
$g_{1}^{2} = g_{4} t_{1}$,
$g_{2}^{2} = g_{5} t_{2}$,
$[g_{2}, g_{1}] = g_{3} t_{3}$,
$g_{3}^{2} =  t_{4}$,
$g_{4}^{2} = g_{6} t_{5}$,
$g_{5}^{2} = g_{7} t_{6}$,
$g_{6}^{2} =  t_{7}$,
$g_{7}^{2} =  t_{8}$.
Carrying out consistency checks gives the following relations between the tails:
\[
\begin{aligned}
g_{2}^2 g_{1} & = g_{2} (g_{2} g_{1})& \Longrightarrow & & t_{3}^{2}t_{4} & = 1 \\
\end{aligned}
\]
Scanning through the conjugacy class representatives of $G$ and the generators of their centralizers, we see that no new relations are imposed.
Collecting the coefficients of these relations into a matrix yields
\[
T = \bordermatrix{
{} & t_{1} & t_{2} & t_{3} & t_{4} & t_{5} & t_{6} & t_{7} & t_{8} \cr
{} &  &  & 2 & 1 &  &  &  &  \cr
}.
\]
It follows readily that the nontrivial elementary divisor of the Smith normal form of $T$ is equal to $1$. The torsion subgroup of the group generated by the tails is thus trivial, thereby showing $\B_0(G) = 1$.


\item \label{number:3} 
Let the group $G$ be the representative of this family given by the presentation
\[
\begin{aligned}
\langle g_{1}, \,g_{2}, \,g_{3}, \,g_{4}, \,g_{5}, \,g_{6}, \,g_{7} & \mid & g_{1}^{2} &= g_{4}, \\ 
 & & g_{2}^{2} &= g_{5}, & [g_{2}, g_{1}]  &= g_{3}, \\ 
 & & g_{3}^{2} &= g_{6}, & [g_{3}, g_{1}]  &= g_{6}, \\ 
 & & g_{4}^{2} &= g_{7}, \\ 
 & & g_{5}^{2} &= 1, & [g_{5}, g_{1}]  &= g_{6}, \\ 
 & & g_{6}^{2} &= 1, \\ 
 & & g_{7}^{2} &= 1\rangle. \\ 
\end{aligned}
\]
We add 10 tails to the presentation as to form a quotient of the universal central extension of the system: 
$g_{1}^{2} = g_{4} t_{1}$,
$g_{2}^{2} = g_{5} t_{2}$,
$[g_{2}, g_{1}] = g_{3} t_{3}$,
$g_{3}^{2} = g_{6} t_{4}$,
$[g_{3}, g_{1}] = g_{6} t_{5}$,
$g_{4}^{2} = g_{7} t_{6}$,
$g_{5}^{2} =  t_{7}$,
$[g_{5}, g_{1}] = g_{6} t_{8}$,
$g_{6}^{2} =  t_{9}$,
$g_{7}^{2} =  t_{10}$.
Carrying out consistency checks gives the following relations between the tails:
\[
\begin{aligned}
g_{5}^2 g_{1} & = g_{5} (g_{5} g_{1})& \Longrightarrow & & t_{8}^{2}t_{9} & = 1 \\
g_{3}^2 g_{1} & = g_{3} (g_{3} g_{1})& \Longrightarrow & & t_{5}^{2}t_{9} & = 1 \\
g_{2}^2 g_{1} & = g_{2} (g_{2} g_{1})& \Longrightarrow & & t_{3}^{2}t_{4}t_{8}^{-1} & = 1 \\
g_{2} g_{1}^{2} & = (g_{2} g_{1}) g_{1}& \Longrightarrow & & t_{3}^{2}t_{4}t_{5}t_{9} & = 1 \\
\end{aligned}
\]
Scanning through the conjugacy class representatives of $G$ and the generators of their centralizers, we see that no new relations are imposed.
Collecting the coefficients of these relations into a matrix yields
\[
T = \bordermatrix{
{} & t_{1} & t_{2} & t_{3} & t_{4} & t_{5} & t_{6} & t_{7} & t_{8} & t_{9} & t_{10} \cr
{} &  &  & 2 & 1 &  &  &  & 1 & 1 &  \cr
{} &  &  &  &  & 1 &  &  & 1 & 1 &  \cr
{} &  &  &  &  &  &  &  & 2 & 1 &  \cr
}.
\]
It follows readily that the nontrivial elementary divisors of the Smith normal form of $T$ are all equal to $1$. The torsion subgroup of the group generated by the tails is thus trivial, thereby showing $\B_0(G) = 1$.


\item \label{number:4} 
Let the group $G$ be the representative of this family given by the presentation
\[
\begin{aligned}
\langle g_{1}, \,g_{2}, \,g_{3}, \,g_{4}, \,g_{5}, \,g_{6}, \,g_{7} & \mid & g_{1}^{2} &= g_{6}, \\ 
 & & g_{2}^{2} &= g_{7}, & [g_{2}, g_{1}]  &= g_{4}, \\ 
 & & g_{3}^{2} &= 1, & [g_{3}, g_{1}]  &= g_{5}, \\ 
 & & g_{4}^{2} &= 1, \\ 
 & & g_{5}^{2} &= 1, \\ 
 & & g_{6}^{2} &= 1, \\ 
 & & g_{7}^{2} &= 1\rangle. \\ 
\end{aligned}
\]
We add 9 tails to the presentation as to form a quotient of the universal central extension of the system: 
$g_{1}^{2} = g_{6} t_{1}$,
$g_{2}^{2} = g_{7} t_{2}$,
$[g_{2}, g_{1}] = g_{4} t_{3}$,
$g_{3}^{2} =  t_{4}$,
$[g_{3}, g_{1}] = g_{5} t_{5}$,
$g_{4}^{2} =  t_{6}$,
$g_{5}^{2} =  t_{7}$,
$g_{6}^{2} =  t_{8}$,
$g_{7}^{2} =  t_{9}$.
Carrying out consistency checks gives the following relations between the tails:
\[
\begin{aligned}
g_{3}^2 g_{1} & = g_{3} (g_{3} g_{1})& \Longrightarrow & & t_{5}^{2}t_{7} & = 1 \\
g_{2}^2 g_{1} & = g_{2} (g_{2} g_{1})& \Longrightarrow & & t_{3}^{2}t_{6} & = 1 \\
\end{aligned}
\]
Scanning through the conjugacy class representatives of $G$ and the generators of their centralizers, we see that no new relations are imposed.
Collecting the coefficients of these relations into a matrix yields
\[
T = \bordermatrix{
{} & t_{1} & t_{2} & t_{3} & t_{4} & t_{5} & t_{6} & t_{7} & t_{8} & t_{9} \cr
{} &  &  & 2 &  &  & 1 &  &  &  \cr
{} &  &  &  &  & 2 &  & 1 &  &  \cr
}.
\]
It follows readily that the nontrivial elementary divisors of the Smith normal form of $T$ are all equal to $1$. The torsion subgroup of the group generated by the tails is thus trivial, thereby showing $\B_0(G) = 1$.


\item \label{number:5} 
Let the group $G$ be the representative of this family given by the presentation
\[
\begin{aligned}
\langle g_{1}, \,g_{2}, \,g_{3}, \,g_{4}, \,g_{5}, \,g_{6}, \,g_{7} & \mid & g_{1}^{2} &= g_{6}, \\ 
 & & g_{2}^{2} &= g_{7}, & [g_{2}, g_{1}]  &= g_{5}, \\ 
 & & g_{3}^{2} &= 1, & [g_{3}, g_{2}]  &= g_{5}, \\ 
 & & g_{4}^{2} &= 1, & [g_{4}, g_{1}]  &= g_{5}, \\ 
 & & g_{5}^{2} &= 1, \\ 
 & & g_{6}^{2} &= 1, \\ 
 & & g_{7}^{2} &= 1\rangle. \\ 
\end{aligned}
\]
We add 10 tails to the presentation as to form a quotient of the universal central extension of the system: 
$g_{1}^{2} = g_{6} t_{1}$,
$g_{2}^{2} = g_{7} t_{2}$,
$[g_{2}, g_{1}] = g_{5} t_{3}$,
$g_{3}^{2} =  t_{4}$,
$[g_{3}, g_{2}] = g_{5} t_{5}$,
$g_{4}^{2} =  t_{6}$,
$[g_{4}, g_{1}] = g_{5} t_{7}$,
$g_{5}^{2} =  t_{8}$,
$g_{6}^{2} =  t_{9}$,
$g_{7}^{2} =  t_{10}$.
Carrying out consistency checks gives the following relations between the tails:
\[
\begin{aligned}
g_{4}^2 g_{1} & = g_{4} (g_{4} g_{1})& \Longrightarrow & & t_{7}^{2}t_{8} & = 1 \\
g_{3}^2 g_{2} & = g_{3} (g_{3} g_{2})& \Longrightarrow & & t_{5}^{2}t_{8} & = 1 \\
g_{2}^2 g_{1} & = g_{2} (g_{2} g_{1})& \Longrightarrow & & t_{3}^{2}t_{8} & = 1 \\
\end{aligned}
\]
Scanning through the conjugacy class representatives of $G$ and the generators of their centralizers, we obtain the following relations induced on the tails:
\[
\begin{aligned}
{[g_{3} g_{4} g_{5} , \, g_{1} g_{2} g_{4} ]}_G & = 1 & \Longrightarrow & & t_{5}t_{7}t_{8} & = 1 \\
{[g_{2} g_{5} , \, g_{1} g_{3} g_{4} ]}_G & = 1 & \Longrightarrow & & t_{3}t_{5}^{-1} & = 1 \\
\end{aligned}
\]
Collecting the coefficients of these relations into a matrix yields
\[
T = \bordermatrix{
{} & t_{1} & t_{2} & t_{3} & t_{4} & t_{5} & t_{6} & t_{7} & t_{8} & t_{9} & t_{10} \cr
{} &  &  & 1 &  &  &  & 1 & 1 &  &  \cr
{} &  &  &  &  & 1 &  & 1 & 1 &  &  \cr
{} &  &  &  &  &  &  & 2 & 1 &  &  \cr
}.
\]
It follows readily that the nontrivial elementary divisors of the Smith normal form of $T$ are all equal to $1$. The torsion subgroup of the group generated by the tails is thus trivial, thereby showing $\B_0(G) = 1$.


\item \label{number:6} 
Let the group $G$ be the representative of this family given by the presentation
\[
\begin{aligned}
\langle g_{1}, \,g_{2}, \,g_{3}, \,g_{4}, \,g_{5}, \,g_{6}, \,g_{7} & \mid & g_{1}^{2} &= g_{5}, \\ 
 & & g_{2}^{2} &= 1, & [g_{2}, g_{1}]  &= g_{4}, \\ 
 & & g_{3}^{2} &= 1, & [g_{3}, g_{2}]  &= g_{6}, \\ 
 & & g_{4}^{2} &= g_{6}, & [g_{4}, g_{1}]  &= g_{6}, & [g_{4}, g_{2}]  &= g_{6}, \\ 
 & & g_{5}^{2} &= g_{7}, \\ 
 & & g_{6}^{2} &= 1, \\ 
 & & g_{7}^{2} &= 1\rangle. \\ 
\end{aligned}
\]
We add 11 tails to the presentation as to form a quotient of the universal central extension of the system: 
$g_{1}^{2} = g_{5} t_{1}$,
$g_{2}^{2} =  t_{2}$,
$[g_{2}, g_{1}] = g_{4} t_{3}$,
$g_{3}^{2} =  t_{4}$,
$[g_{3}, g_{2}] = g_{6} t_{5}$,
$g_{4}^{2} = g_{6} t_{6}$,
$[g_{4}, g_{1}] = g_{6} t_{7}$,
$[g_{4}, g_{2}] = g_{6} t_{8}$,
$g_{5}^{2} = g_{7} t_{9}$,
$g_{6}^{2} =  t_{10}$,
$g_{7}^{2} =  t_{11}$.
Carrying out consistency checks gives the following relations between the tails:
\[
\begin{aligned}
g_{4}^2 g_{2} & = g_{4} (g_{4} g_{2})& \Longrightarrow & & t_{8}^{2}t_{10} & = 1 \\
g_{4}^2 g_{1} & = g_{4} (g_{4} g_{1})& \Longrightarrow & & t_{7}^{2}t_{10} & = 1 \\
g_{3}^2 g_{2} & = g_{3} (g_{3} g_{2})& \Longrightarrow & & t_{5}^{2}t_{10} & = 1 \\
g_{2}^2 g_{1} & = g_{2} (g_{2} g_{1})& \Longrightarrow & & t_{3}^{2}t_{6}t_{8}t_{10} & = 1 \\
g_{2} g_{1}^{2} & = (g_{2} g_{1}) g_{1}& \Longrightarrow & & t_{3}^{2}t_{6}t_{7}t_{10} & = 1 \\
\end{aligned}
\]
Scanning through the conjugacy class representatives of $G$ and the generators of their centralizers, we obtain the following relations induced on the tails:
\[
\begin{aligned}
{[g_{3} g_{4} g_{6} , \, g_{2} g_{4} g_{6} ]}_G & = 1 & \Longrightarrow & & t_{5}t_{8}t_{10} & = 1 \\
\end{aligned}
\]
Collecting the coefficients of these relations into a matrix yields
\[
T = \bordermatrix{
{} & t_{1} & t_{2} & t_{3} & t_{4} & t_{5} & t_{6} & t_{7} & t_{8} & t_{9} & t_{10} & t_{11} \cr
{} &  &  & 2 &  &  & 1 &  & 1 &  & 1 &  \cr
{} &  &  &  &  & 1 &  &  & 1 &  & 1 &  \cr
{} &  &  &  &  &  &  & 1 & 1 &  & 1 &  \cr
{} &  &  &  &  &  &  &  & 2 &  & 1 &  \cr
}.
\]
It follows readily that the nontrivial elementary divisors of the Smith normal form of $T$ are all equal to $1$. The torsion subgroup of the group generated by the tails is thus trivial, thereby showing $\B_0(G) = 1$.


\item \label{number:7} 
Let the group $G$ be the representative of this family given by the presentation
\[
\begin{aligned}
\langle g_{1}, \,g_{2}, \,g_{3}, \,g_{4}, \,g_{5}, \,g_{6}, \,g_{7} & \mid & g_{1}^{2} &= g_{4}, \\ 
 & & g_{2}^{2} &= g_{5}, & [g_{2}, g_{1}]  &= g_{3}, \\ 
 & & g_{3}^{2} &= 1, & [g_{3}, g_{1}]  &= g_{6}, \\ 
 & & g_{4}^{2} &= g_{7}, & [g_{4}, g_{2}]  &= g_{6}, \\ 
 & & g_{5}^{2} &= 1, \\ 
 & & g_{6}^{2} &= 1, \\ 
 & & g_{7}^{2} &= 1\rangle. \\ 
\end{aligned}
\]
We add 10 tails to the presentation as to form a quotient of the universal central extension of the system: 
$g_{1}^{2} = g_{4} t_{1}$,
$g_{2}^{2} = g_{5} t_{2}$,
$[g_{2}, g_{1}] = g_{3} t_{3}$,
$g_{3}^{2} =  t_{4}$,
$[g_{3}, g_{1}] = g_{6} t_{5}$,
$g_{4}^{2} = g_{7} t_{6}$,
$[g_{4}, g_{2}] = g_{6} t_{7}$,
$g_{5}^{2} =  t_{8}$,
$g_{6}^{2} =  t_{9}$,
$g_{7}^{2} =  t_{10}$.
Carrying out consistency checks gives the following relations between the tails:
\[
\begin{aligned}
g_{4}^2 g_{2} & = g_{4} (g_{4} g_{2})& \Longrightarrow & & t_{7}^{2}t_{9} & = 1 \\
g_{3}^2 g_{1} & = g_{3} (g_{3} g_{1})& \Longrightarrow & & t_{5}^{2}t_{9} & = 1 \\
g_{2}^2 g_{1} & = g_{2} (g_{2} g_{1})& \Longrightarrow & & t_{3}^{2}t_{4} & = 1 \\
g_{2} g_{1}^{2} & = (g_{2} g_{1}) g_{1}& \Longrightarrow & & t_{3}^{2}t_{4}t_{5}t_{7}t_{9} & = 1 \\
\end{aligned}
\]
Scanning through the conjugacy class representatives of $G$ and the generators of their centralizers, we see that no new relations are imposed.
Collecting the coefficients of these relations into a matrix yields
\[
T = \bordermatrix{
{} & t_{1} & t_{2} & t_{3} & t_{4} & t_{5} & t_{6} & t_{7} & t_{8} & t_{9} & t_{10} \cr
{} &  &  & 2 & 1 &  &  &  &  &  &  \cr
{} &  &  &  &  & 1 &  & 1 &  & 1 &  \cr
{} &  &  &  &  &  &  & 2 &  & 1 &  \cr
}.
\]
It follows readily that the nontrivial elementary divisors of the Smith normal form of $T$ are all equal to $1$. The torsion subgroup of the group generated by the tails is thus trivial, thereby showing $\B_0(G) = 1$.


\item \label{number:8} 
Let the group $G$ be the representative of this family given by the presentation
\[
\begin{aligned}
\langle g_{1}, \,g_{2}, \,g_{3}, \,g_{4}, \,g_{5}, \,g_{6}, \,g_{7} & \mid & g_{1}^{2} &= g_{4}, \\ 
 & & g_{2}^{2} &= 1, & [g_{2}, g_{1}]  &= g_{3}, \\ 
 & & g_{3}^{2} &= g_{5}g_{7}, & [g_{3}, g_{1}]  &= g_{5}, & [g_{3}, g_{2}]  &= g_{5}, \\ 
 & & g_{4}^{2} &= g_{6}, \\ 
 & & g_{5}^{2} &= g_{7}, & [g_{5}, g_{1}]  &= g_{7}, & [g_{5}, g_{2}]  &= g_{7}, \\ 
 & & g_{6}^{2} &= 1, \\ 
 & & g_{7}^{2} &= 1\rangle. \\ 
\end{aligned}
\]
We add 12 tails to the presentation as to form a quotient of the universal central extension of the system: 
$g_{1}^{2} = g_{4} t_{1}$,
$g_{2}^{2} =  t_{2}$,
$[g_{2}, g_{1}] = g_{3} t_{3}$,
$g_{3}^{2} = g_{5}g_{7} t_{4}$,
$[g_{3}, g_{1}] = g_{5} t_{5}$,
$[g_{3}, g_{2}] = g_{5} t_{6}$,
$g_{4}^{2} = g_{6} t_{7}$,
$g_{5}^{2} = g_{7} t_{8}$,
$[g_{5}, g_{1}] = g_{7} t_{9}$,
$[g_{5}, g_{2}] = g_{7} t_{10}$,
$g_{6}^{2} =  t_{11}$,
$g_{7}^{2} =  t_{12}$.
Carrying out consistency checks gives the following relations between the tails:
\[
\begin{aligned}
g_{3}(g_{2} g_{1}) & = (g_{3} g_{2}) g_{1}  & \Longrightarrow & & t_{9}t_{10}^{-1} & = 1 \\
g_{5}^2 g_{2} & = g_{5} (g_{5} g_{2})& \Longrightarrow & & t_{10}^{2}t_{12} & = 1 \\
g_{3}^2 g_{2} & = g_{3} (g_{3} g_{2})& \Longrightarrow & & t_{6}^{2}t_{8}t_{10}^{-1} & = 1 \\
g_{3}^2 g_{1} & = g_{3} (g_{3} g_{1})& \Longrightarrow & & t_{5}^{2}t_{8}t_{9}^{-1} & = 1 \\
g_{2}^2 g_{1} & = g_{2} (g_{2} g_{1})& \Longrightarrow & & t_{3}^{2}t_{4}t_{6}t_{8}t_{12} & = 1 \\
g_{2} g_{1}^{2} & = (g_{2} g_{1}) g_{1}& \Longrightarrow & & t_{3}^{2}t_{4}t_{5}t_{8}t_{12} & = 1 \\
\end{aligned}
\]
Scanning through the conjugacy class representatives of $G$ and the generators of their centralizers, we see that no new relations are imposed.
Collecting the coefficients of these relations into a matrix yields
\[
T = \bordermatrix{
{} & t_{1} & t_{2} & t_{3} & t_{4} & t_{5} & t_{6} & t_{7} & t_{8} & t_{9} & t_{10} & t_{11} & t_{12} \cr
{} &  &  & 2 & 1 &  & 1 &  & 1 &  &  &  & 1 \cr
{} &  &  &  &  & 1 & 1 &  & 1 &  & 1 &  & 1 \cr
{} &  &  &  &  &  & 2 &  & 1 &  & 1 &  & 1 \cr
{} &  &  &  &  &  &  &  &  & 1 & 1 &  & 1 \cr
{} &  &  &  &  &  &  &  &  &  & 2 &  & 1 \cr
}.
\]
It follows readily that the nontrivial elementary divisors of the Smith normal form of $T$ are all equal to $1$. The torsion subgroup of the group generated by the tails is thus trivial, thereby showing $\B_0(G) = 1$.


\item \label{number:9} 
Let the group $G$ be the representative of this family given by the presentation
\[
\begin{aligned}
\langle g_{1}, \,g_{2}, \,g_{3}, \,g_{4}, \,g_{5}, \,g_{6}, \,g_{7} & \mid & g_{1}^{2} &= g_{7}, \\ 
 & & g_{2}^{2} &= 1, & [g_{2}, g_{1}]  &= g_{4}, \\ 
 & & g_{3}^{2} &= 1, & [g_{3}, g_{1}]  &= g_{5}, & [g_{3}, g_{2}]  &= g_{6}, \\ 
 & & g_{4}^{2} &= 1, \\ 
 & & g_{5}^{2} &= 1, \\ 
 & & g_{6}^{2} &= 1, \\ 
 & & g_{7}^{2} &= 1\rangle. \\ 
\end{aligned}
\]
We add 10 tails to the presentation as to form a quotient of the universal central extension of the system: 
$g_{1}^{2} = g_{7} t_{1}$,
$g_{2}^{2} =  t_{2}$,
$[g_{2}, g_{1}] = g_{4} t_{3}$,
$g_{3}^{2} =  t_{4}$,
$[g_{3}, g_{1}] = g_{5} t_{5}$,
$[g_{3}, g_{2}] = g_{6} t_{6}$,
$g_{4}^{2} =  t_{7}$,
$g_{5}^{2} =  t_{8}$,
$g_{6}^{2} =  t_{9}$,
$g_{7}^{2} =  t_{10}$.
Carrying out consistency checks gives the following relations between the tails:
\[
\begin{aligned}
g_{3}^2 g_{2} & = g_{3} (g_{3} g_{2})& \Longrightarrow & & t_{6}^{2}t_{9} & = 1 \\
g_{3}^2 g_{1} & = g_{3} (g_{3} g_{1})& \Longrightarrow & & t_{5}^{2}t_{8} & = 1 \\
g_{2}^2 g_{1} & = g_{2} (g_{2} g_{1})& \Longrightarrow & & t_{3}^{2}t_{7} & = 1 \\
\end{aligned}
\]
Scanning through the conjugacy class representatives of $G$ and the generators of their centralizers, we see that no new relations are imposed.
Collecting the coefficients of these relations into a matrix yields
\[
T = \bordermatrix{
{} & t_{1} & t_{2} & t_{3} & t_{4} & t_{5} & t_{6} & t_{7} & t_{8} & t_{9} & t_{10} \cr
{} &  &  & 2 &  &  &  & 1 &  &  &  \cr
{} &  &  &  &  & 2 &  &  & 1 &  &  \cr
{} &  &  &  &  &  & 2 &  &  & 1 &  \cr
}.
\]
It follows readily that the nontrivial elementary divisors of the Smith normal form of $T$ are all equal to $1$. The torsion subgroup of the group generated by the tails is thus trivial, thereby showing $\B_0(G) = 1$.


\item \label{number:10} 
Let the group $G$ be the representative of this family given by the presentation
\[
\begin{aligned}
\langle g_{1}, \,g_{2}, \,g_{3}, \,g_{4}, \,g_{5}, \,g_{6}, \,g_{7} & \mid & g_{1}^{2} &= g_{7}, \\ 
 & & g_{2}^{2} &= 1, & [g_{2}, g_{1}]  &= g_{5}, \\ 
 & & g_{3}^{2} &= 1, & [g_{3}, g_{1}]  &= g_{6}, \\ 
 & & g_{4}^{2} &= 1, & [g_{4}, g_{2}]  &= g_{5}, \\ 
 & & g_{5}^{2} &= 1, \\ 
 & & g_{6}^{2} &= 1, \\ 
 & & g_{7}^{2} &= 1\rangle. \\ 
\end{aligned}
\]
We add 10 tails to the presentation as to form a quotient of the universal central extension of the system: 
$g_{1}^{2} = g_{7} t_{1}$,
$g_{2}^{2} =  t_{2}$,
$[g_{2}, g_{1}] = g_{5} t_{3}$,
$g_{3}^{2} =  t_{4}$,
$[g_{3}, g_{1}] = g_{6} t_{5}$,
$g_{4}^{2} =  t_{6}$,
$[g_{4}, g_{2}] = g_{5} t_{7}$,
$g_{5}^{2} =  t_{8}$,
$g_{6}^{2} =  t_{9}$,
$g_{7}^{2} =  t_{10}$.
Carrying out consistency checks gives the following relations between the tails:
\[
\begin{aligned}
g_{4}^2 g_{2} & = g_{4} (g_{4} g_{2})& \Longrightarrow & & t_{7}^{2}t_{8} & = 1 \\
g_{3}^2 g_{1} & = g_{3} (g_{3} g_{1})& \Longrightarrow & & t_{5}^{2}t_{9} & = 1 \\
g_{2}^2 g_{1} & = g_{2} (g_{2} g_{1})& \Longrightarrow & & t_{3}^{2}t_{8} & = 1 \\
\end{aligned}
\]
Scanning through the conjugacy class representatives of $G$ and the generators of their centralizers, we obtain the following relations induced on the tails:
\[
\begin{aligned}
{[g_{2} g_{5} , \, g_{1} g_{3} g_{4} ]}_G & = 1 & \Longrightarrow & & t_{3}t_{7}^{-1} & = 1 \\
\end{aligned}
\]
Collecting the coefficients of these relations into a matrix yields
\[
T = \bordermatrix{
{} & t_{1} & t_{2} & t_{3} & t_{4} & t_{5} & t_{6} & t_{7} & t_{8} & t_{9} & t_{10} \cr
{} &  &  & 1 &  &  &  & 1 & 1 &  &  \cr
{} &  &  &  &  & 2 &  &  &  & 1 &  \cr
{} &  &  &  &  &  &  & 2 & 1 &  &  \cr
}.
\]
It follows readily that the nontrivial elementary divisors of the Smith normal form of $T$ are all equal to $1$. The torsion subgroup of the group generated by the tails is thus trivial, thereby showing $\B_0(G) = 1$.


\item \label{number:11} 
Let the group $G$ be the representative of this family given by the presentation
\[
\begin{aligned}
\langle g_{1}, \,g_{2}, \,g_{3}, \,g_{4}, \,g_{5}, \,g_{6}, \,g_{7} & \mid & g_{1}^{2} &= g_{7}, \\ 
 & & g_{2}^{2} &= 1, & [g_{2}, g_{1}]  &= g_{5}, \\ 
 & & g_{3}^{2} &= 1, & [g_{3}, g_{1}]  &= g_{6}, & [g_{3}, g_{2}]  &= g_{5}, \\ 
 & & g_{4}^{2} &= 1, & [g_{4}, g_{1}]  &= g_{5}, \\ 
 & & g_{5}^{2} &= 1, \\ 
 & & g_{6}^{2} &= 1, \\ 
 & & g_{7}^{2} &= 1\rangle. \\ 
\end{aligned}
\]
We add 11 tails to the presentation as to form a quotient of the universal central extension of the system: 
$g_{1}^{2} = g_{7} t_{1}$,
$g_{2}^{2} =  t_{2}$,
$[g_{2}, g_{1}] = g_{5} t_{3}$,
$g_{3}^{2} =  t_{4}$,
$[g_{3}, g_{1}] = g_{6} t_{5}$,
$[g_{3}, g_{2}] = g_{5} t_{6}$,
$g_{4}^{2} =  t_{7}$,
$[g_{4}, g_{1}] = g_{5} t_{8}$,
$g_{5}^{2} =  t_{9}$,
$g_{6}^{2} =  t_{10}$,
$g_{7}^{2} =  t_{11}$.
Carrying out consistency checks gives the following relations between the tails:
\[
\begin{aligned}
g_{4}^2 g_{1} & = g_{4} (g_{4} g_{1})& \Longrightarrow & & t_{8}^{2}t_{9} & = 1 \\
g_{3}^2 g_{2} & = g_{3} (g_{3} g_{2})& \Longrightarrow & & t_{6}^{2}t_{9} & = 1 \\
g_{3}^2 g_{1} & = g_{3} (g_{3} g_{1})& \Longrightarrow & & t_{5}^{2}t_{10} & = 1 \\
g_{2}^2 g_{1} & = g_{2} (g_{2} g_{1})& \Longrightarrow & & t_{3}^{2}t_{9} & = 1 \\
\end{aligned}
\]
Scanning through the conjugacy class representatives of $G$ and the generators of their centralizers, we obtain the following relations induced on the tails:
\[
\begin{aligned}
{[g_{2} g_{5} , \, g_{1} g_{3} g_{4} ]}_G & = 1 & \Longrightarrow & & t_{3}t_{6}^{-1} & = 1 \\
{[g_{2} g_{4} g_{5} , \, g_{1} g_{4} ]}_G & = 1 & \Longrightarrow & & t_{3}t_{8}t_{9} & = 1 \\
\end{aligned}
\]
Collecting the coefficients of these relations into a matrix yields
\[
T = \bordermatrix{
{} & t_{1} & t_{2} & t_{3} & t_{4} & t_{5} & t_{6} & t_{7} & t_{8} & t_{9} & t_{10} & t_{11} \cr
{} &  &  & 1 &  &  &  &  & 1 & 1 &  &  \cr
{} &  &  &  &  & 2 &  &  &  &  & 1 &  \cr
{} &  &  &  &  &  & 1 &  & 1 & 1 &  &  \cr
{} &  &  &  &  &  &  &  & 2 & 1 &  &  \cr
}.
\]
It follows readily that the nontrivial elementary divisors of the Smith normal form of $T$ are all equal to $1$. The torsion subgroup of the group generated by the tails is thus trivial, thereby showing $\B_0(G) = 1$.


\item \label{number:12} 
Let the group $G$ be the representative of this family given by the presentation
\[
\begin{aligned}
\langle g_{1}, \,g_{2}, \,g_{3}, \,g_{4}, \,g_{5}, \,g_{6}, \,g_{7} & \mid & g_{1}^{2} &= g_{4}, \\ 
 & & g_{2}^{2} &= g_{5}, & [g_{2}, g_{1}]  &= g_{3}, \\ 
 & & g_{3}^{2} &= g_{6}, \\ 
 & & g_{4}^{2} &= g_{7}, & [g_{4}, g_{2}]  &= g_{6}, \\ 
 & & g_{5}^{2} &= 1, & [g_{5}, g_{1}]  &= g_{6}, \\ 
 & & g_{6}^{2} &= 1, \\ 
 & & g_{7}^{2} &= 1\rangle. \\ 
\end{aligned}
\]
We add 10 tails to the presentation as to form a quotient of the universal central extension of the system: 
$g_{1}^{2} = g_{4} t_{1}$,
$g_{2}^{2} = g_{5} t_{2}$,
$[g_{2}, g_{1}] = g_{3} t_{3}$,
$g_{3}^{2} = g_{6} t_{4}$,
$g_{4}^{2} = g_{7} t_{5}$,
$[g_{4}, g_{2}] = g_{6} t_{6}$,
$g_{5}^{2} =  t_{7}$,
$[g_{5}, g_{1}] = g_{6} t_{8}$,
$g_{6}^{2} =  t_{9}$,
$g_{7}^{2} =  t_{10}$.
Carrying out consistency checks gives the following relations between the tails:
\[
\begin{aligned}
g_{5}^2 g_{1} & = g_{5} (g_{5} g_{1})& \Longrightarrow & & t_{8}^{2}t_{9} & = 1 \\
g_{4}^2 g_{2} & = g_{4} (g_{4} g_{2})& \Longrightarrow & & t_{6}^{2}t_{9} & = 1 \\
g_{2}^2 g_{1} & = g_{2} (g_{2} g_{1})& \Longrightarrow & & t_{3}^{2}t_{4}t_{8}^{-1} & = 1 \\
g_{2} g_{1}^{2} & = (g_{2} g_{1}) g_{1}& \Longrightarrow & & t_{3}^{2}t_{4}t_{6}t_{9} & = 1 \\
\end{aligned}
\]
Scanning through the conjugacy class representatives of $G$ and the generators of their centralizers, we see that no new relations are imposed.
Collecting the coefficients of these relations into a matrix yields
\[
T = \bordermatrix{
{} & t_{1} & t_{2} & t_{3} & t_{4} & t_{5} & t_{6} & t_{7} & t_{8} & t_{9} & t_{10} \cr
{} &  &  & 2 & 1 &  &  &  & 1 & 1 &  \cr
{} &  &  &  &  &  & 1 &  & 1 & 1 &  \cr
{} &  &  &  &  &  &  &  & 2 & 1 &  \cr
}.
\]
It follows readily that the nontrivial elementary divisors of the Smith normal form of $T$ are all equal to $1$. The torsion subgroup of the group generated by the tails is thus trivial, thereby showing $\B_0(G) = 1$.


\item \label{number:13} 
Let the group $G$ be the representative of this family given by the presentation
\[
\begin{aligned}
\langle g_{1}, \,g_{2}, \,g_{3}, \,g_{4}, \,g_{5}, \,g_{6}, \,g_{7} & \mid & g_{1}^{2} &= g_{7}, \\ 
 & & g_{2}^{2} &= 1, & [g_{2}, g_{1}]  &= g_{5}, \\ 
 & & g_{3}^{2} &= 1, & [g_{3}, g_{1}]  &= g_{6}, & [g_{3}, g_{2}]  &= g_{5}g_{6}, \\ 
 & & g_{4}^{2} &= 1, & [g_{4}, g_{1}]  &= g_{5}g_{6}, & [g_{4}, g_{2}]  &= g_{5}, \\ 
 & & g_{5}^{2} &= 1, \\ 
 & & g_{6}^{2} &= 1, \\ 
 & & g_{7}^{2} &= 1\rangle. \\ 
\end{aligned}
\]
We add 12 tails to the presentation as to form a quotient of the universal central extension of the system: 
$g_{1}^{2} = g_{7} t_{1}$,
$g_{2}^{2} =  t_{2}$,
$[g_{2}, g_{1}] = g_{5} t_{3}$,
$g_{3}^{2} =  t_{4}$,
$[g_{3}, g_{1}] = g_{6} t_{5}$,
$[g_{3}, g_{2}] = g_{5}g_{6} t_{6}$,
$g_{4}^{2} =  t_{7}$,
$[g_{4}, g_{1}] = g_{5}g_{6} t_{8}$,
$[g_{4}, g_{2}] = g_{5} t_{9}$,
$g_{5}^{2} =  t_{10}$,
$g_{6}^{2} =  t_{11}$,
$g_{7}^{2} =  t_{12}$.
Carrying out consistency checks gives the following relations between the tails:
\[
\begin{aligned}
g_{4}^2 g_{2} & = g_{4} (g_{4} g_{2})& \Longrightarrow & & t_{9}^{2}t_{10} & = 1 \\
g_{4}^2 g_{1} & = g_{4} (g_{4} g_{1})& \Longrightarrow & & t_{8}^{2}t_{10}t_{11} & = 1 \\
g_{3}^2 g_{2} & = g_{3} (g_{3} g_{2})& \Longrightarrow & & t_{6}^{2}t_{10}t_{11} & = 1 \\
g_{3}^2 g_{1} & = g_{3} (g_{3} g_{1})& \Longrightarrow & & t_{5}^{2}t_{11} & = 1 \\
g_{2}^2 g_{1} & = g_{2} (g_{2} g_{1})& \Longrightarrow & & t_{3}^{2}t_{10} & = 1 \\
\end{aligned}
\]
Scanning through the conjugacy class representatives of $G$ and the generators of their centralizers, we obtain the following relations induced on the tails:
\[
\begin{aligned}
{[g_{2} g_{5} g_{6} , \, g_{1} g_{4} g_{5} ]}_G & = 1 & \Longrightarrow & & t_{3}t_{9}^{-1} & = 1 \\
{[g_{2} g_{4} g_{5} g_{6} , \, g_{1} g_{3} g_{4} g_{5} ]}_G & = 1 & \Longrightarrow & & t_{3}t_{6}^{-1}t_{8}t_{9}^{-1} & = 1 \\
{[g_{2} g_{3} g_{5} g_{6} , \, g_{1} g_{3} g_{5} ]}_G & = 1 & \Longrightarrow & & t_{3}t_{5}t_{6}^{-1} & = 1 \\
\end{aligned}
\]
Collecting the coefficients of these relations into a matrix yields
\[
T = \bordermatrix{
{} & t_{1} & t_{2} & t_{3} & t_{4} & t_{5} & t_{6} & t_{7} & t_{8} & t_{9} & t_{10} & t_{11} & t_{12} \cr
{} &  &  & 1 &  &  &  &  &  & 1 & 1 &  &  \cr
{} &  &  &  &  & 1 &  &  & 1 & 1 & 1 & 1 &  \cr
{} &  &  &  &  &  & 1 &  & 1 &  & 1 & 1 &  \cr
{} &  &  &  &  &  &  &  & 2 &  & 1 & 1 &  \cr
{} &  &  &  &  &  &  &  &  & 2 & 1 &  &  \cr
}.
\]
It follows readily that the nontrivial elementary divisors of the Smith normal form of $T$ are all equal to $1$. The torsion subgroup of the group generated by the tails is thus trivial, thereby showing $\B_0(G) = 1$.


\item \label{number:14} 
Let the group $G$ be the representative of this family given by the presentation
\[
\begin{aligned}
\langle g_{1}, \,g_{2}, \,g_{3}, \,g_{4}, \,g_{5}, \,g_{6}, \,g_{7} & \mid & g_{1}^{2} &= g_{5}, \\ 
 & & g_{2}^{2} &= 1, & [g_{2}, g_{1}]  &= g_{4}, \\ 
 & & g_{3}^{2} &= 1, & [g_{3}, g_{1}]  &= g_{7}, & [g_{3}, g_{2}]  &= g_{7}, \\ 
 & & g_{4}^{2} &= g_{6}, & [g_{4}, g_{1}]  &= g_{6}, & [g_{4}, g_{2}]  &= g_{6}, \\ 
 & & g_{5}^{2} &= g_{7}, \\ 
 & & g_{6}^{2} &= 1, \\ 
 & & g_{7}^{2} &= 1\rangle. \\ 
\end{aligned}
\]
We add 12 tails to the presentation as to form a quotient of the universal central extension of the system: 
$g_{1}^{2} = g_{5} t_{1}$,
$g_{2}^{2} =  t_{2}$,
$[g_{2}, g_{1}] = g_{4} t_{3}$,
$g_{3}^{2} =  t_{4}$,
$[g_{3}, g_{1}] = g_{7} t_{5}$,
$[g_{3}, g_{2}] = g_{7} t_{6}$,
$g_{4}^{2} = g_{6} t_{7}$,
$[g_{4}, g_{1}] = g_{6} t_{8}$,
$[g_{4}, g_{2}] = g_{6} t_{9}$,
$g_{5}^{2} = g_{7} t_{10}$,
$g_{6}^{2} =  t_{11}$,
$g_{7}^{2} =  t_{12}$.
Carrying out consistency checks gives the following relations between the tails:
\[
\begin{aligned}
g_{4}^2 g_{2} & = g_{4} (g_{4} g_{2})& \Longrightarrow & & t_{9}^{2}t_{11} & = 1 \\
g_{4}^2 g_{1} & = g_{4} (g_{4} g_{1})& \Longrightarrow & & t_{8}^{2}t_{11} & = 1 \\
g_{3}^2 g_{2} & = g_{3} (g_{3} g_{2})& \Longrightarrow & & t_{6}^{2}t_{12} & = 1 \\
g_{3}^2 g_{1} & = g_{3} (g_{3} g_{1})& \Longrightarrow & & t_{5}^{2}t_{12} & = 1 \\
g_{2}^2 g_{1} & = g_{2} (g_{2} g_{1})& \Longrightarrow & & t_{3}^{2}t_{7}t_{9}t_{11} & = 1 \\
g_{2} g_{1}^{2} & = (g_{2} g_{1}) g_{1}& \Longrightarrow & & t_{3}^{2}t_{7}t_{8}t_{11} & = 1 \\
\end{aligned}
\]
Scanning through the conjugacy class representatives of $G$ and the generators of their centralizers, we obtain the following relations induced on the tails:
\[
\begin{aligned}
{[g_{3} g_{7} , \, g_{1} g_{2} g_{3} ]}_G & = 1 & \Longrightarrow & & t_{5}t_{6}t_{12} & = 1 \\
\end{aligned}
\]
Collecting the coefficients of these relations into a matrix yields
\[
T = \bordermatrix{
{} & t_{1} & t_{2} & t_{3} & t_{4} & t_{5} & t_{6} & t_{7} & t_{8} & t_{9} & t_{10} & t_{11} & t_{12} \cr
{} &  &  & 2 &  &  &  & 1 &  & 1 &  & 1 &  \cr
{} &  &  &  &  & 1 & 1 &  &  &  &  &  & 1 \cr
{} &  &  &  &  &  & 2 &  &  &  &  &  & 1 \cr
{} &  &  &  &  &  &  &  & 1 & 1 &  & 1 &  \cr
{} &  &  &  &  &  &  &  &  & 2 &  & 1 &  \cr
}.
\]
It follows readily that the nontrivial elementary divisors of the Smith normal form of $T$ are all equal to $1$. The torsion subgroup of the group generated by the tails is thus trivial, thereby showing $\B_0(G) = 1$.


\item \label{number:15} 
Let the group $G$ be the representative of this family given by the presentation
\[
\begin{aligned}
\langle g_{1}, \,g_{2}, \,g_{3}, \,g_{4}, \,g_{5}, \,g_{6}, \,g_{7} & \mid & g_{1}^{2} &= g_{5}, \\ 
 & & g_{2}^{2} &= 1, & [g_{2}, g_{1}]  &= g_{4}, \\ 
 & & g_{3}^{2} &= 1, & [g_{3}, g_{2}]  &= g_{7}, \\ 
 & & g_{4}^{2} &= g_{6}, & [g_{4}, g_{1}]  &= g_{6}, & [g_{4}, g_{2}]  &= g_{6}, \\ 
 & & g_{5}^{2} &= g_{7}, \\ 
 & & g_{6}^{2} &= 1, \\ 
 & & g_{7}^{2} &= 1\rangle. \\ 
\end{aligned}
\]
We add 11 tails to the presentation as to form a quotient of the universal central extension of the system: 
$g_{1}^{2} = g_{5} t_{1}$,
$g_{2}^{2} =  t_{2}$,
$[g_{2}, g_{1}] = g_{4} t_{3}$,
$g_{3}^{2} =  t_{4}$,
$[g_{3}, g_{2}] = g_{7} t_{5}$,
$g_{4}^{2} = g_{6} t_{6}$,
$[g_{4}, g_{1}] = g_{6} t_{7}$,
$[g_{4}, g_{2}] = g_{6} t_{8}$,
$g_{5}^{2} = g_{7} t_{9}$,
$g_{6}^{2} =  t_{10}$,
$g_{7}^{2} =  t_{11}$.
Carrying out consistency checks gives the following relations between the tails:
\[
\begin{aligned}
g_{4}^2 g_{2} & = g_{4} (g_{4} g_{2})& \Longrightarrow & & t_{8}^{2}t_{10} & = 1 \\
g_{4}^2 g_{1} & = g_{4} (g_{4} g_{1})& \Longrightarrow & & t_{7}^{2}t_{10} & = 1 \\
g_{3}^2 g_{2} & = g_{3} (g_{3} g_{2})& \Longrightarrow & & t_{5}^{2}t_{11} & = 1 \\
g_{2}^2 g_{1} & = g_{2} (g_{2} g_{1})& \Longrightarrow & & t_{3}^{2}t_{6}t_{8}t_{10} & = 1 \\
g_{2} g_{1}^{2} & = (g_{2} g_{1}) g_{1}& \Longrightarrow & & t_{3}^{2}t_{6}t_{7}t_{10} & = 1 \\
\end{aligned}
\]
Scanning through the conjugacy class representatives of $G$ and the generators of their centralizers, we see that no new relations are imposed.
Collecting the coefficients of these relations into a matrix yields
\[
T = \bordermatrix{
{} & t_{1} & t_{2} & t_{3} & t_{4} & t_{5} & t_{6} & t_{7} & t_{8} & t_{9} & t_{10} & t_{11} \cr
{} &  &  & 2 &  &  & 1 &  & 1 &  & 1 &  \cr
{} &  &  &  &  & 2 &  &  &  &  &  & 1 \cr
{} &  &  &  &  &  &  & 1 & 1 &  & 1 &  \cr
{} &  &  &  &  &  &  &  & 2 &  & 1 &  \cr
}.
\]
It follows readily that the nontrivial elementary divisors of the Smith normal form of $T$ are all equal to $1$. The torsion subgroup of the group generated by the tails is thus trivial, thereby showing $\B_0(G) = 1$.


\item \label{number:16} 
Let the group $G$ be the representative of this family given by the presentation
\[
\begin{aligned}
\langle g_{1}, \,g_{2}, \,g_{3}, \,g_{4}, \,g_{5}, \,g_{6}, \,g_{7} & \mid & g_{1}^{2} &= g_{5}, \\ 
 & & g_{2}^{2} &= 1, & [g_{2}, g_{1}]  &= g_{4}, \\ 
 & & g_{3}^{2} &= 1, & [g_{3}, g_{1}]  &= g_{7}, & [g_{3}, g_{2}]  &= g_{6}g_{7}, \\ 
 & & g_{4}^{2} &= g_{6}, & [g_{4}, g_{1}]  &= g_{6}, & [g_{4}, g_{2}]  &= g_{6}, \\ 
 & & g_{5}^{2} &= g_{7}, \\ 
 & & g_{6}^{2} &= 1, \\ 
 & & g_{7}^{2} &= 1\rangle. \\ 
\end{aligned}
\]
We add 12 tails to the presentation as to form a quotient of the universal central extension of the system: 
$g_{1}^{2} = g_{5} t_{1}$,
$g_{2}^{2} =  t_{2}$,
$[g_{2}, g_{1}] = g_{4} t_{3}$,
$g_{3}^{2} =  t_{4}$,
$[g_{3}, g_{1}] = g_{7} t_{5}$,
$[g_{3}, g_{2}] = g_{6}g_{7} t_{6}$,
$g_{4}^{2} = g_{6} t_{7}$,
$[g_{4}, g_{1}] = g_{6} t_{8}$,
$[g_{4}, g_{2}] = g_{6} t_{9}$,
$g_{5}^{2} = g_{7} t_{10}$,
$g_{6}^{2} =  t_{11}$,
$g_{7}^{2} =  t_{12}$.
Carrying out consistency checks gives the following relations between the tails:
\[
\begin{aligned}
g_{4}^2 g_{2} & = g_{4} (g_{4} g_{2})& \Longrightarrow & & t_{9}^{2}t_{11} & = 1 \\
g_{4}^2 g_{1} & = g_{4} (g_{4} g_{1})& \Longrightarrow & & t_{8}^{2}t_{11} & = 1 \\
g_{3}^2 g_{2} & = g_{3} (g_{3} g_{2})& \Longrightarrow & & t_{6}^{2}t_{11}t_{12} & = 1 \\
g_{3}^2 g_{1} & = g_{3} (g_{3} g_{1})& \Longrightarrow & & t_{5}^{2}t_{12} & = 1 \\
g_{2}^2 g_{1} & = g_{2} (g_{2} g_{1})& \Longrightarrow & & t_{3}^{2}t_{7}t_{9}t_{11} & = 1 \\
g_{2} g_{1}^{2} & = (g_{2} g_{1}) g_{1}& \Longrightarrow & & t_{3}^{2}t_{7}t_{8}t_{11} & = 1 \\
\end{aligned}
\]
Scanning through the conjugacy class representatives of $G$ and the generators of their centralizers, we see that no new relations are imposed.
Collecting the coefficients of these relations into a matrix yields
\[
T = \bordermatrix{
{} & t_{1} & t_{2} & t_{3} & t_{4} & t_{5} & t_{6} & t_{7} & t_{8} & t_{9} & t_{10} & t_{11} & t_{12} \cr
{} &  &  & 2 &  &  &  & 1 &  & 1 &  & 1 &  \cr
{} &  &  &  &  & 2 &  &  &  &  &  &  & 1 \cr
{} &  &  &  &  &  & 2 &  &  &  &  & 1 & 1 \cr
{} &  &  &  &  &  &  &  & 1 & 1 &  & 1 &  \cr
{} &  &  &  &  &  &  &  &  & 2 &  & 1 &  \cr
}.
\]
A change of basis according to the transition matrix (specifying expansions of $t_i^{*}$ by $t_j$)
\[
\bordermatrix{
{} & t_{1}^{*} & t_{2}^{*} & t_{3}^{*} & t_{4}^{*} & t_{5}^{*} & t_{6}^{*} & t_{7}^{*} & t_{8}^{*} & t_{9}^{*} & t_{10}^{*} & t_{11}^{*} & t_{12}^{*} \cr
t_{1} &  &  &  &  &  & -1 & -1 & 1 &  &  & 1 &  \cr
t_{2} &  &  &  &  &  & -1 &  & -1 &  &  & -1 & -1 \cr
t_{3} & -2 &  &  &  & -2 &  &  & -1 &  &  & -1 &  \cr
t_{4} &  &  &  &  &  & 4 &  & -1 &  &  &  &  \cr
t_{5} & 4 &  &  &  & 3 & 1 &  &  &  &  &  &  \cr
t_{6} & -16 & 2 & -2 &  & -13 & -4 &  &  &  &  &  &  \cr
t_{7} & -1 &  &  &  & -1 &  & 1 &  &  &  &  &  \cr
t_{8} & 16 & -2 & 2 & 1 & 13 &  &  & 1 &  &  &  &  \cr
t_{9} & -27 & 4 & -2 & -3 & -21 &  &  &  & 1 &  &  &  \cr
t_{10} &  &  &  &  &  &  &  &  &  & 1 &  &  \cr
t_{11} & -14 & 2 & -1 & -1 & -11 &  &  &  &  &  & 1 &  \cr
t_{12} & -6 & 1 & -1 &  & -5 &  &  &  &  &  &  & 1 \cr
}
\]
shows that the nontrivial elementary divisors of the Smith normal form of $T$ are $1$, $1$, $1$, $1$, $2$.  The element corresponding to the divisor that is greater than $1$ is $t_{5}^{*}$. This already gives
\[
\B_0(G) \cong \langle t_{5}^{*}  \mid {t_{5}^{*}}^{2} \rangle.
\]

We now deal with explicitly identifying the nonuniversal commutator relation generating $\B_0(G)$.
First, factor out by the tails $t_{i}^{*}$ whose corresponding elementary divisors are either trivial or $1$. Transforming the situation back to the original tails $t_i$, this amounts to the nontrivial expansion 
$t_{6} = t_{5}^{*}$ and all the other tails $t_i$ are trivial. We thus obtain a commutativity preserving central extension of the group $G$, given by the presentation
\[
\begin{aligned}
\langle g_{1}, \,g_{2}, \,g_{3}, \,g_{4}, \,g_{5}, \,g_{6}, \,g_{7}, \,t_{5}^{*} & \mid & g_{1}^{2} &= g_{5}, \\ 
 & & g_{2}^{2} &= 1, & [g_{2}, g_{1}]  &= g_{4}, \\ 
 & & g_{3}^{2} &= 1, & [g_{3}, g_{1}]  &= g_{7}, & [g_{3}, g_{2}]  &= g_{6}g_{7}t_{5}^{*} , \\ 
 & & g_{4}^{2} &= g_{6}, & [g_{4}, g_{1}]  &= g_{6}, & [g_{4}, g_{2}]  &= g_{6}, \\ 
 & & g_{5}^{2} &= g_{7}, \\ 
 & & g_{6}^{2} &= 1, \\ 
 & & g_{7}^{2} &= 1, \\ 
 & & {t_{5}^{*}}^{2} &= 1  \rangle,
\end{aligned}
\]
whence the nonuniversal commutator relation is identified as
\[
t_{5}^{*}  = [g_{3}, g_{1}] [g_{3}, g_{2}]^{-1}[g_{4}, g_{2}] .  \quad 
\]


\item \label{number:17} 
Let the group $G$ be the representative of this family given by the presentation
\[
\begin{aligned}
\langle g_{1}, \,g_{2}, \,g_{3}, \,g_{4}, \,g_{5}, \,g_{6}, \,g_{7} & \mid & g_{1}^{2} &= g_{4}, \\ 
 & & g_{2}^{2} &= 1, & [g_{2}, g_{1}]  &= g_{3}, \\ 
 & & g_{3}^{2} &= g_{6}, & [g_{3}, g_{1}]  &= g_{5}, & [g_{3}, g_{2}]  &= g_{6}, \\ 
 & & g_{4}^{2} &= g_{7}, & [g_{4}, g_{2}]  &= g_{5}g_{6}, \\ 
 & & g_{5}^{2} &= 1, \\ 
 & & g_{6}^{2} &= 1, \\ 
 & & g_{7}^{2} &= 1\rangle. \\ 
\end{aligned}
\]
We add 11 tails to the presentation as to form a quotient of the universal central extension of the system: 
$g_{1}^{2} = g_{4} t_{1}$,
$g_{2}^{2} =  t_{2}$,
$[g_{2}, g_{1}] = g_{3} t_{3}$,
$g_{3}^{2} = g_{6} t_{4}$,
$[g_{3}, g_{1}] = g_{5} t_{5}$,
$[g_{3}, g_{2}] = g_{6} t_{6}$,
$g_{4}^{2} = g_{7} t_{7}$,
$[g_{4}, g_{2}] = g_{5}g_{6} t_{8}$,
$g_{5}^{2} =  t_{9}$,
$g_{6}^{2} =  t_{10}$,
$g_{7}^{2} =  t_{11}$.
Carrying out consistency checks gives the following relations between the tails:
\[
\begin{aligned}
g_{4}^2 g_{2} & = g_{4} (g_{4} g_{2})& \Longrightarrow & & t_{8}^{2}t_{9}t_{10} & = 1 \\
g_{3}^2 g_{2} & = g_{3} (g_{3} g_{2})& \Longrightarrow & & t_{6}^{2}t_{10} & = 1 \\
g_{3}^2 g_{1} & = g_{3} (g_{3} g_{1})& \Longrightarrow & & t_{5}^{2}t_{9} & = 1 \\
g_{2}^2 g_{1} & = g_{2} (g_{2} g_{1})& \Longrightarrow & & t_{3}^{2}t_{4}t_{6}t_{10} & = 1 \\
g_{2} g_{1}^{2} & = (g_{2} g_{1}) g_{1}& \Longrightarrow & & t_{3}^{2}t_{4}t_{5}t_{8}t_{9}t_{10} & = 1 \\
\end{aligned}
\]
Scanning through the conjugacy class representatives of $G$ and the generators of their centralizers, we see that no new relations are imposed.
Collecting the coefficients of these relations into a matrix yields
\[
T = \bordermatrix{
{} & t_{1} & t_{2} & t_{3} & t_{4} & t_{5} & t_{6} & t_{7} & t_{8} & t_{9} & t_{10} & t_{11} \cr
{} &  &  & 2 & 1 &  & 1 &  &  &  & 1 &  \cr
{} &  &  &  &  & 1 & 1 &  & 1 & 1 & 1 &  \cr
{} &  &  &  &  &  & 2 &  &  &  & 1 &  \cr
{} &  &  &  &  &  &  &  & 2 & 1 & 1 &  \cr
}.
\]
It follows readily that the nontrivial elementary divisors of the Smith normal form of $T$ are all equal to $1$. The torsion subgroup of the group generated by the tails is thus trivial, thereby showing $\B_0(G) = 1$.


\item \label{number:18} 
Let the group $G$ be the representative of this family given by the presentation
\[
\begin{aligned}
\langle g_{1}, \,g_{2}, \,g_{3}, \,g_{4}, \,g_{5}, \,g_{6}, \,g_{7} & \mid & g_{1}^{2} &= g_{6}, \\ 
 & & g_{2}^{2} &= 1, & [g_{2}, g_{1}]  &= g_{5}, \\ 
 & & g_{3}^{2} &= 1, \\ 
 & & g_{4}^{2} &= 1, & [g_{4}, g_{3}]  &= g_{7}, \\ 
 & & g_{5}^{2} &= g_{7}, & [g_{5}, g_{1}]  &= g_{7}, & [g_{5}, g_{2}]  &= g_{7}, \\ 
 & & g_{6}^{2} &= 1, \\ 
 & & g_{7}^{2} &= 1\rangle. \\ 
\end{aligned}
\]
We add 11 tails to the presentation as to form a quotient of the universal central extension of the system: 
$g_{1}^{2} = g_{6} t_{1}$,
$g_{2}^{2} =  t_{2}$,
$[g_{2}, g_{1}] = g_{5} t_{3}$,
$g_{3}^{2} =  t_{4}$,
$g_{4}^{2} =  t_{5}$,
$[g_{4}, g_{3}] = g_{7} t_{6}$,
$g_{5}^{2} = g_{7} t_{7}$,
$[g_{5}, g_{1}] = g_{7} t_{8}$,
$[g_{5}, g_{2}] = g_{7} t_{9}$,
$g_{6}^{2} =  t_{10}$,
$g_{7}^{2} =  t_{11}$.
Carrying out consistency checks gives the following relations between the tails:
\[
\begin{aligned}
g_{5}^2 g_{2} & = g_{5} (g_{5} g_{2})& \Longrightarrow & & t_{9}^{2}t_{11} & = 1 \\
g_{5}^2 g_{1} & = g_{5} (g_{5} g_{1})& \Longrightarrow & & t_{8}^{2}t_{11} & = 1 \\
g_{4}^2 g_{3} & = g_{4} (g_{4} g_{3})& \Longrightarrow & & t_{6}^{2}t_{11} & = 1 \\
g_{2}^2 g_{1} & = g_{2} (g_{2} g_{1})& \Longrightarrow & & t_{3}^{2}t_{7}t_{9}t_{11} & = 1 \\
g_{2} g_{1}^{2} & = (g_{2} g_{1}) g_{1}& \Longrightarrow & & t_{3}^{2}t_{7}t_{8}t_{11} & = 1 \\
\end{aligned}
\]
Scanning through the conjugacy class representatives of $G$ and the generators of their centralizers, we obtain the following relations induced on the tails:
\[
\begin{aligned}
{[g_{4} g_{5} g_{7} , \, g_{1} g_{3} ]}_G & = 1 & \Longrightarrow & & t_{6}t_{8}t_{11} & = 1 \\
\end{aligned}
\]
Collecting the coefficients of these relations into a matrix yields
\[
T = \bordermatrix{
{} & t_{1} & t_{2} & t_{3} & t_{4} & t_{5} & t_{6} & t_{7} & t_{8} & t_{9} & t_{10} & t_{11} \cr
{} &  &  & 2 &  &  &  & 1 &  & 1 &  & 1 \cr
{} &  &  &  &  &  & 1 &  &  & 1 &  & 1 \cr
{} &  &  &  &  &  &  &  & 1 & 1 &  & 1 \cr
{} &  &  &  &  &  &  &  &  & 2 &  & 1 \cr
}.
\]
It follows readily that the nontrivial elementary divisors of the Smith normal form of $T$ are all equal to $1$. The torsion subgroup of the group generated by the tails is thus trivial, thereby showing $\B_0(G) = 1$.


\item \label{number:19} 
Let the group $G$ be the representative of this family given by the presentation
\[
\begin{aligned}
\langle g_{1}, \,g_{2}, \,g_{3}, \,g_{4}, \,g_{5}, \,g_{6}, \,g_{7} & \mid & g_{1}^{2} &= g_{5}, \\ 
 & & g_{2}^{2} &= 1, & [g_{2}, g_{1}]  &= g_{4}, \\ 
 & & g_{3}^{2} &= 1, & [g_{3}, g_{2}]  &= g_{7}, \\ 
 & & g_{4}^{2} &= g_{6}g_{7}, & [g_{4}, g_{1}]  &= g_{6}, & [g_{4}, g_{2}]  &= g_{6}, \\ 
 & & g_{5}^{2} &= 1, \\ 
 & & g_{6}^{2} &= g_{7}, & [g_{6}, g_{1}]  &= g_{7}, & [g_{6}, g_{2}]  &= g_{7}, \\ 
 & & g_{7}^{2} &= 1\rangle. \\ 
\end{aligned}
\]
We add 13 tails to the presentation as to form a quotient of the universal central extension of the system: 
$g_{1}^{2} = g_{5} t_{1}$,
$g_{2}^{2} =  t_{2}$,
$[g_{2}, g_{1}] = g_{4} t_{3}$,
$g_{3}^{2} =  t_{4}$,
$[g_{3}, g_{2}] = g_{7} t_{5}$,
$g_{4}^{2} = g_{6}g_{7} t_{6}$,
$[g_{4}, g_{1}] = g_{6} t_{7}$,
$[g_{4}, g_{2}] = g_{6} t_{8}$,
$g_{5}^{2} =  t_{9}$,
$g_{6}^{2} = g_{7} t_{10}$,
$[g_{6}, g_{1}] = g_{7} t_{11}$,
$[g_{6}, g_{2}] = g_{7} t_{12}$,
$g_{7}^{2} =  t_{13}$.
Carrying out consistency checks gives the following relations between the tails:
\[
\begin{aligned}
g_{4}(g_{2} g_{1}) & = (g_{4} g_{2}) g_{1}  & \Longrightarrow & & t_{11}t_{12}^{-1} & = 1 \\
g_{6}^2 g_{2} & = g_{6} (g_{6} g_{2})& \Longrightarrow & & t_{12}^{2}t_{13} & = 1 \\
g_{4}^2 g_{2} & = g_{4} (g_{4} g_{2})& \Longrightarrow & & t_{8}^{2}t_{10}t_{12}^{-1} & = 1 \\
g_{4}^2 g_{1} & = g_{4} (g_{4} g_{1})& \Longrightarrow & & t_{7}^{2}t_{10}t_{11}^{-1} & = 1 \\
g_{3}^2 g_{2} & = g_{3} (g_{3} g_{2})& \Longrightarrow & & t_{5}^{2}t_{13} & = 1 \\
g_{2}^2 g_{1} & = g_{2} (g_{2} g_{1})& \Longrightarrow & & t_{3}^{2}t_{6}t_{8}t_{10}t_{13} & = 1 \\
g_{2} g_{1}^{2} & = (g_{2} g_{1}) g_{1}& \Longrightarrow & & t_{3}^{2}t_{6}t_{7}t_{10}t_{13} & = 1 \\
\end{aligned}
\]
Scanning through the conjugacy class representatives of $G$ and the generators of their centralizers, we obtain the following relations induced on the tails:
\[
\begin{aligned}
{[g_{3} g_{6} g_{7} , \, g_{2} g_{4} g_{6} ]}_G & = 1 & \Longrightarrow & & t_{5}t_{12}t_{13} & = 1 \\
\end{aligned}
\]
Collecting the coefficients of these relations into a matrix yields
\[
T = \bordermatrix{
{} & t_{1} & t_{2} & t_{3} & t_{4} & t_{5} & t_{6} & t_{7} & t_{8} & t_{9} & t_{10} & t_{11} & t_{12} & t_{13} \cr
{} &  &  & 2 &  &  & 1 &  & 1 &  & 1 &  &  & 1 \cr
{} &  &  &  &  & 1 &  &  &  &  &  &  & 1 & 1 \cr
{} &  &  &  &  &  &  & 1 & 1 &  & 1 &  & 1 & 1 \cr
{} &  &  &  &  &  &  &  & 2 &  & 1 &  & 1 & 1 \cr
{} &  &  &  &  &  &  &  &  &  &  & 1 & 1 & 1 \cr
{} &  &  &  &  &  &  &  &  &  &  &  & 2 & 1 \cr
}.
\]
It follows readily that the nontrivial elementary divisors of the Smith normal form of $T$ are all equal to $1$. The torsion subgroup of the group generated by the tails is thus trivial, thereby showing $\B_0(G) = 1$.


\item \label{number:20} 
Let the group $G$ be the representative of this family given by the presentation
\[
\begin{aligned}
\langle g_{1}, \,g_{2}, \,g_{3}, \,g_{4}, \,g_{5}, \,g_{6}, \,g_{7} & \mid & g_{1}^{2} &= g_{4}, \\ 
 & & g_{2}^{2} &= g_{3}, & [g_{2}, g_{1}]  &= g_{3}, \\ 
 & & g_{3}^{2} &= g_{5}, & [g_{3}, g_{1}]  &= g_{5}, \\ 
 & & g_{4}^{2} &= g_{6}, & [g_{4}, g_{2}]  &= g_{7}, \\ 
 & & g_{5}^{2} &= g_{7}, & [g_{5}, g_{1}]  &= g_{7}, \\ 
 & & g_{6}^{2} &= 1, \\ 
 & & g_{7}^{2} &= 1\rangle. \\ 
\end{aligned}
\]
We add 11 tails to the presentation as to form a quotient of the universal central extension of the system: 
$g_{1}^{2} = g_{4} t_{1}$,
$g_{2}^{2} = g_{3} t_{2}$,
$[g_{2}, g_{1}] = g_{3} t_{3}$,
$g_{3}^{2} = g_{5} t_{4}$,
$[g_{3}, g_{1}] = g_{5} t_{5}$,
$g_{4}^{2} = g_{6} t_{6}$,
$[g_{4}, g_{2}] = g_{7} t_{7}$,
$g_{5}^{2} = g_{7} t_{8}$,
$[g_{5}, g_{1}] = g_{7} t_{9}$,
$g_{6}^{2} =  t_{10}$,
$g_{7}^{2} =  t_{11}$.
Carrying out consistency checks gives the following relations between the tails:
\[
\begin{aligned}
g_{5}^2 g_{1} & = g_{5} (g_{5} g_{1})& \Longrightarrow & & t_{9}^{2}t_{11} & = 1 \\
g_{4}^2 g_{2} & = g_{4} (g_{4} g_{2})& \Longrightarrow & & t_{7}^{2}t_{11} & = 1 \\
g_{3}^2 g_{1} & = g_{3} (g_{3} g_{1})& \Longrightarrow & & t_{5}^{2}t_{8}t_{9}^{-1} & = 1 \\
g_{2}^2 g_{1} & = g_{2} (g_{2} g_{1})& \Longrightarrow & & t_{3}^{2}t_{4}t_{5}^{-1} & = 1 \\
g_{2} g_{1}^{2} & = (g_{2} g_{1}) g_{1}& \Longrightarrow & & t_{3}^{2}t_{4}t_{5}t_{7}t_{8}t_{11} & = 1 \\
\end{aligned}
\]
Scanning through the conjugacy class representatives of $G$ and the generators of their centralizers, we see that no new relations are imposed.
Collecting the coefficients of these relations into a matrix yields
\[
T = \bordermatrix{
{} & t_{1} & t_{2} & t_{3} & t_{4} & t_{5} & t_{6} & t_{7} & t_{8} & t_{9} & t_{10} & t_{11} \cr
{} &  &  & 2 & 1 & 1 &  &  & 1 & 1 &  & 1 \cr
{} &  &  &  &  & 2 &  &  & 1 & 1 &  & 1 \cr
{} &  &  &  &  &  &  & 1 &  & 1 &  & 1 \cr
{} &  &  &  &  &  &  &  &  & 2 &  & 1 \cr
}.
\]
It follows readily that the nontrivial elementary divisors of the Smith normal form of $T$ are all equal to $1$. The torsion subgroup of the group generated by the tails is thus trivial, thereby showing $\B_0(G) = 1$.


\item \label{number:21} 
Let the group $G$ be the representative of this family given by the presentation
\[
\begin{aligned}
\langle g_{1}, \,g_{2}, \,g_{3}, \,g_{4}, \,g_{5}, \,g_{6}, \,g_{7} & \mid & g_{1}^{2} &= g_{4}, \\ 
 & & g_{2}^{2} &= 1, & [g_{2}, g_{1}]  &= g_{3}, \\ 
 & & g_{3}^{2} &= g_{5}, & [g_{3}, g_{1}]  &= g_{5}, & [g_{3}, g_{2}]  &= g_{5}g_{7}, \\ 
 & & g_{4}^{2} &= g_{6}, & [g_{4}, g_{2}]  &= g_{7}, \\ 
 & & g_{5}^{2} &= g_{7}, & [g_{5}, g_{1}]  &= g_{7}, & [g_{5}, g_{2}]  &= g_{7}, \\ 
 & & g_{6}^{2} &= 1, \\ 
 & & g_{7}^{2} &= 1\rangle. \\ 
\end{aligned}
\]
We add 13 tails to the presentation as to form a quotient of the universal central extension of the system: 
$g_{1}^{2} = g_{4} t_{1}$,
$g_{2}^{2} =  t_{2}$,
$[g_{2}, g_{1}] = g_{3} t_{3}$,
$g_{3}^{2} = g_{5} t_{4}$,
$[g_{3}, g_{1}] = g_{5} t_{5}$,
$[g_{3}, g_{2}] = g_{5}g_{7} t_{6}$,
$g_{4}^{2} = g_{6} t_{7}$,
$[g_{4}, g_{2}] = g_{7} t_{8}$,
$g_{5}^{2} = g_{7} t_{9}$,
$[g_{5}, g_{1}] = g_{7} t_{10}$,
$[g_{5}, g_{2}] = g_{7} t_{11}$,
$g_{6}^{2} =  t_{12}$,
$g_{7}^{2} =  t_{13}$.
Carrying out consistency checks gives the following relations between the tails:
\[
\begin{aligned}
g_{3}(g_{2} g_{1}) & = (g_{3} g_{2}) g_{1}  & \Longrightarrow & & t_{10}t_{11}^{-1} & = 1 \\
g_{5}^2 g_{2} & = g_{5} (g_{5} g_{2})& \Longrightarrow & & t_{11}^{2}t_{13} & = 1 \\
g_{4}^2 g_{2} & = g_{4} (g_{4} g_{2})& \Longrightarrow & & t_{8}^{2}t_{13} & = 1 \\
g_{3}^2 g_{2} & = g_{3} (g_{3} g_{2})& \Longrightarrow & & t_{6}^{2}t_{9}t_{11}^{-1}t_{13} & = 1 \\
g_{3}^2 g_{1} & = g_{3} (g_{3} g_{1})& \Longrightarrow & & t_{5}^{2}t_{9}t_{10}^{-1} & = 1 \\
g_{2}^2 g_{1} & = g_{2} (g_{2} g_{1})& \Longrightarrow & & t_{3}^{2}t_{4}t_{6}t_{9}t_{13} & = 1 \\
g_{2} g_{1}^{2} & = (g_{2} g_{1}) g_{1}& \Longrightarrow & & t_{3}^{2}t_{4}t_{5}t_{8}t_{9}t_{13} & = 1 \\
\end{aligned}
\]
Scanning through the conjugacy class representatives of $G$ and the generators of their centralizers, we obtain the following relations induced on the tails:
\[
\begin{aligned}
{[g_{4} g_{5} g_{7} , \, g_{2} g_{4} g_{6} ]}_G & = 1 & \Longrightarrow & & t_{8}t_{11}t_{13} & = 1 \\
\end{aligned}
\]
Collecting the coefficients of these relations into a matrix yields
\[
T = \bordermatrix{
{} & t_{1} & t_{2} & t_{3} & t_{4} & t_{5} & t_{6} & t_{7} & t_{8} & t_{9} & t_{10} & t_{11} & t_{12} & t_{13} \cr
{} &  &  & 2 & 1 &  & 1 &  &  & 1 &  &  &  & 1 \cr
{} &  &  &  &  & 1 & 1 &  &  & 1 &  &  &  & 1 \cr
{} &  &  &  &  &  & 2 &  &  & 1 &  & 1 &  & 2 \cr
{} &  &  &  &  &  &  &  & 1 &  &  & 1 &  & 1 \cr
{} &  &  &  &  &  &  &  &  &  & 1 & 1 &  & 1 \cr
{} &  &  &  &  &  &  &  &  &  &  & 2 &  & 1 \cr
}.
\]
It follows readily that the nontrivial elementary divisors of the Smith normal form of $T$ are all equal to $1$. The torsion subgroup of the group generated by the tails is thus trivial, thereby showing $\B_0(G) = 1$.


\item \label{number:22} 
Let the group $G$ be the representative of this family given by the presentation
\[
\begin{aligned}
\langle g_{1}, \,g_{2}, \,g_{3}, \,g_{4}, \,g_{5}, \,g_{6}, \,g_{7} & \mid & g_{1}^{2} &= g_{4}, \\ 
 & & g_{2}^{2} &= 1, & [g_{2}, g_{1}]  &= g_{3}, \\ 
 & & g_{3}^{2} &= 1, & [g_{3}, g_{1}]  &= g_{5}, \\ 
 & & g_{4}^{2} &= g_{6}, & [g_{4}, g_{2}]  &= g_{5}, & [g_{4}, g_{3}]  &= g_{7}, \\ 
 & & g_{5}^{2} &= 1, & [g_{5}, g_{1}]  &= g_{7}, \\ 
 & & g_{6}^{2} &= 1, \\ 
 & & g_{7}^{2} &= 1\rangle. \\ 
\end{aligned}
\]
We add 12 tails to the presentation as to form a quotient of the universal central extension of the system: 
$g_{1}^{2} = g_{4} t_{1}$,
$g_{2}^{2} =  t_{2}$,
$[g_{2}, g_{1}] = g_{3} t_{3}$,
$g_{3}^{2} =  t_{4}$,
$[g_{3}, g_{1}] = g_{5} t_{5}$,
$g_{4}^{2} = g_{6} t_{6}$,
$[g_{4}, g_{2}] = g_{5} t_{7}$,
$[g_{4}, g_{3}] = g_{7} t_{8}$,
$g_{5}^{2} =  t_{9}$,
$[g_{5}, g_{1}] = g_{7} t_{10}$,
$g_{6}^{2} =  t_{11}$,
$g_{7}^{2} =  t_{12}$.
Carrying out consistency checks gives the following relations between the tails:
\[
\begin{aligned}
g_{4}(g_{2} g_{1}) & = (g_{4} g_{2}) g_{1}  & \Longrightarrow & & t_{8}^{-1}t_{10} & = 1 \\
g_{5}^2 g_{1} & = g_{5} (g_{5} g_{1})& \Longrightarrow & & t_{10}^{2}t_{12} & = 1 \\
g_{4}^2 g_{2} & = g_{4} (g_{4} g_{2})& \Longrightarrow & & t_{7}^{2}t_{9} & = 1 \\
g_{3}^2 g_{1} & = g_{3} (g_{3} g_{1})& \Longrightarrow & & t_{5}^{2}t_{9} & = 1 \\
g_{2}^2 g_{1} & = g_{2} (g_{2} g_{1})& \Longrightarrow & & t_{3}^{2}t_{4} & = 1 \\
g_{2} g_{1}^{2} & = (g_{2} g_{1}) g_{1}& \Longrightarrow & & t_{3}^{2}t_{4}t_{5}t_{7}t_{8}^{2}t_{9}t_{12} & = 1 \\
\end{aligned}
\]
Scanning through the conjugacy class representatives of $G$ and the generators of their centralizers, we see that no new relations are imposed.
Collecting the coefficients of these relations into a matrix yields
\[
T = \bordermatrix{
{} & t_{1} & t_{2} & t_{3} & t_{4} & t_{5} & t_{6} & t_{7} & t_{8} & t_{9} & t_{10} & t_{11} & t_{12} \cr
{} &  &  & 2 & 1 &  &  &  &  &  &  &  &  \cr
{} &  &  &  &  & 1 &  & 1 &  & 1 &  &  &  \cr
{} &  &  &  &  &  &  & 2 &  & 1 &  &  &  \cr
{} &  &  &  &  &  &  &  & 1 &  & 1 &  & 1 \cr
{} &  &  &  &  &  &  &  &  &  & 2 &  & 1 \cr
}.
\]
It follows readily that the nontrivial elementary divisors of the Smith normal form of $T$ are all equal to $1$. The torsion subgroup of the group generated by the tails is thus trivial, thereby showing $\B_0(G) = 1$.


\item \label{number:23} 
Let the group $G$ be the representative of this family given by the presentation
\[
\begin{aligned}
\langle g_{1}, \,g_{2}, \,g_{3}, \,g_{4}, \,g_{5}, \,g_{6}, \,g_{7} & \mid & g_{1}^{2} &= g_{4}, \\ 
 & & g_{2}^{2} &= 1, & [g_{2}, g_{1}]  &= g_{3}, \\ 
 & & g_{3}^{2} &= g_{7}, & [g_{3}, g_{1}]  &= g_{5}, & [g_{3}, g_{2}]  &= g_{7}, \\ 
 & & g_{4}^{2} &= g_{6}, & [g_{4}, g_{2}]  &= g_{5}g_{7}, & [g_{4}, g_{3}]  &= g_{7}, \\ 
 & & g_{5}^{2} &= 1, & [g_{5}, g_{1}]  &= g_{7}, \\ 
 & & g_{6}^{2} &= 1, \\ 
 & & g_{7}^{2} &= 1\rangle. \\ 
\end{aligned}
\]
We add 13 tails to the presentation as to form a quotient of the universal central extension of the system: 
$g_{1}^{2} = g_{4} t_{1}$,
$g_{2}^{2} =  t_{2}$,
$[g_{2}, g_{1}] = g_{3} t_{3}$,
$g_{3}^{2} = g_{7} t_{4}$,
$[g_{3}, g_{1}] = g_{5} t_{5}$,
$[g_{3}, g_{2}] = g_{7} t_{6}$,
$g_{4}^{2} = g_{6} t_{7}$,
$[g_{4}, g_{2}] = g_{5}g_{7} t_{8}$,
$[g_{4}, g_{3}] = g_{7} t_{9}$,
$g_{5}^{2} =  t_{10}$,
$[g_{5}, g_{1}] = g_{7} t_{11}$,
$g_{6}^{2} =  t_{12}$,
$g_{7}^{2} =  t_{13}$.
Carrying out consistency checks gives the following relations between the tails:
\[
\begin{aligned}
g_{4}(g_{2} g_{1}) & = (g_{4} g_{2}) g_{1}  & \Longrightarrow & & t_{9}^{-1}t_{11} & = 1 \\
g_{5}^2 g_{1} & = g_{5} (g_{5} g_{1})& \Longrightarrow & & t_{11}^{2}t_{13} & = 1 \\
g_{4}^2 g_{2} & = g_{4} (g_{4} g_{2})& \Longrightarrow & & t_{8}^{2}t_{10}t_{13} & = 1 \\
g_{3}^2 g_{2} & = g_{3} (g_{3} g_{2})& \Longrightarrow & & t_{6}^{2}t_{13} & = 1 \\
g_{3}^2 g_{1} & = g_{3} (g_{3} g_{1})& \Longrightarrow & & t_{5}^{2}t_{10} & = 1 \\
g_{2}^2 g_{1} & = g_{2} (g_{2} g_{1})& \Longrightarrow & & t_{3}^{2}t_{4}t_{6}t_{13} & = 1 \\
g_{2} g_{1}^{2} & = (g_{2} g_{1}) g_{1}& \Longrightarrow & & t_{3}^{2}t_{4}t_{5}t_{8}t_{9}^{2}t_{10}t_{13}^{2} & = 1 \\
\end{aligned}
\]
Scanning through the conjugacy class representatives of $G$ and the generators of their centralizers, we obtain the following relations induced on the tails:
\[
\begin{aligned}
{[g_{3} g_{5} g_{7} , \, g_{2} g_{4} g_{6} ]}_G & = 1 & \Longrightarrow & & t_{6}t_{9}^{-1} & = 1 \\
\end{aligned}
\]
Collecting the coefficients of these relations into a matrix yields
\[
T = \bordermatrix{
{} & t_{1} & t_{2} & t_{3} & t_{4} & t_{5} & t_{6} & t_{7} & t_{8} & t_{9} & t_{10} & t_{11} & t_{12} & t_{13} \cr
{} &  &  & 2 & 1 &  &  &  &  &  &  & 1 &  & 1 \cr
{} &  &  &  &  & 1 &  &  & 1 &  & 1 & 1 &  & 1 \cr
{} &  &  &  &  &  & 1 &  &  &  &  & 1 &  & 1 \cr
{} &  &  &  &  &  &  &  & 2 &  & 1 &  &  & 1 \cr
{} &  &  &  &  &  &  &  &  & 1 &  & 1 &  & 1 \cr
{} &  &  &  &  &  &  &  &  &  &  & 2 &  & 1 \cr
}.
\]
It follows readily that the nontrivial elementary divisors of the Smith normal form of $T$ are all equal to $1$. The torsion subgroup of the group generated by the tails is thus trivial, thereby showing $\B_0(G) = 1$.


\item \label{number:24} 
Let the group $G$ be the representative of this family given by the presentation
\[
\begin{aligned}
\langle g_{1}, \,g_{2}, \,g_{3}, \,g_{4}, \,g_{5}, \,g_{6}, \,g_{7} & \mid & g_{1}^{2} &= g_{6}, \\ 
 & & g_{2}^{2} &= g_{4}, & [g_{2}, g_{1}]  &= g_{4}, \\ 
 & & g_{3}^{2} &= 1, & [g_{3}, g_{1}]  &= g_{5}, \\ 
 & & g_{4}^{2} &= g_{7}, & [g_{4}, g_{1}]  &= g_{7}, \\ 
 & & g_{5}^{2} &= g_{7}, & [g_{5}, g_{3}]  &= g_{7}, \\ 
 & & g_{6}^{2} &= 1, & [g_{6}, g_{3}]  &= g_{7}, \\ 
 & & g_{7}^{2} &= 1\rangle. \\ 
\end{aligned}
\]
We add 12 tails to the presentation as to form a quotient of the universal central extension of the system: 
$g_{1}^{2} = g_{6} t_{1}$,
$g_{2}^{2} = g_{4} t_{2}$,
$[g_{2}, g_{1}] = g_{4} t_{3}$,
$g_{3}^{2} =  t_{4}$,
$[g_{3}, g_{1}] = g_{5} t_{5}$,
$g_{4}^{2} = g_{7} t_{6}$,
$[g_{4}, g_{1}] = g_{7} t_{7}$,
$g_{5}^{2} = g_{7} t_{8}$,
$[g_{5}, g_{3}] = g_{7} t_{9}$,
$g_{6}^{2} =  t_{10}$,
$[g_{6}, g_{3}] = g_{7} t_{11}$,
$g_{7}^{2} =  t_{12}$.
Carrying out consistency checks gives the following relations between the tails:
\[
\begin{aligned}
g_{6}^2 g_{3} & = g_{6} (g_{6} g_{3})& \Longrightarrow & & t_{11}^{2}t_{12} & = 1 \\
g_{5}^2 g_{3} & = g_{5} (g_{5} g_{3})& \Longrightarrow & & t_{9}^{2}t_{12} & = 1 \\
g_{4}^2 g_{1} & = g_{4} (g_{4} g_{1})& \Longrightarrow & & t_{7}^{2}t_{12} & = 1 \\
g_{3}^2 g_{1} & = g_{3} (g_{3} g_{1})& \Longrightarrow & & t_{5}^{2}t_{8}t_{9}t_{12} & = 1 \\
g_{2}^2 g_{1} & = g_{2} (g_{2} g_{1})& \Longrightarrow & & t_{3}^{2}t_{6}t_{7}^{-1} & = 1 \\
g_{3} g_{1}^{2} & = (g_{3} g_{1}) g_{1}& \Longrightarrow & & t_{5}^{2}t_{8}t_{11}t_{12} & = 1 \\
\end{aligned}
\]
Scanning through the conjugacy class representatives of $G$ and the generators of their centralizers, we obtain the following relations induced on the tails:
\[
\begin{aligned}
{[g_{4} g_{6} g_{7} , \, g_{1} g_{3} ]}_G & = 1 & \Longrightarrow & & t_{7}t_{11}t_{12} & = 1 \\
\end{aligned}
\]
Collecting the coefficients of these relations into a matrix yields
\[
T = \bordermatrix{
{} & t_{1} & t_{2} & t_{3} & t_{4} & t_{5} & t_{6} & t_{7} & t_{8} & t_{9} & t_{10} & t_{11} & t_{12} \cr
{} &  &  & 2 &  &  & 1 &  &  &  &  & 1 & 1 \cr
{} &  &  &  &  & 2 &  &  & 1 &  &  & 1 & 1 \cr
{} &  &  &  &  &  &  & 1 &  &  &  & 1 & 1 \cr
{} &  &  &  &  &  &  &  &  & 1 &  & 1 & 1 \cr
{} &  &  &  &  &  &  &  &  &  &  & 2 & 1 \cr
}.
\]
It follows readily that the nontrivial elementary divisors of the Smith normal form of $T$ are all equal to $1$. The torsion subgroup of the group generated by the tails is thus trivial, thereby showing $\B_0(G) = 1$.


\item \label{number:25} 
Let the group $G$ be the representative of this family given by the presentation
\[
\begin{aligned}
\langle g_{1}, \,g_{2}, \,g_{3}, \,g_{4}, \,g_{5}, \,g_{6}, \,g_{7} & \mid & g_{1}^{2} &= g_{6}, \\ 
 & & g_{2}^{2} &= 1, & [g_{2}, g_{1}]  &= g_{4}, \\ 
 & & g_{3}^{2} &= 1, & [g_{3}, g_{1}]  &= g_{5}, \\ 
 & & g_{4}^{2} &= 1, & [g_{4}, g_{1}]  &= g_{7}, & [g_{4}, g_{3}]  &= g_{7}, \\ 
 & & g_{5}^{2} &= 1, & [g_{5}, g_{2}]  &= g_{7}, \\ 
 & & g_{6}^{2} &= 1, & [g_{6}, g_{2}]  &= g_{7}, \\ 
 & & g_{7}^{2} &= 1\rangle. \\ 
\end{aligned}
\]
We add 13 tails to the presentation as to form a quotient of the universal central extension of the system: 
$g_{1}^{2} = g_{6} t_{1}$,
$g_{2}^{2} =  t_{2}$,
$[g_{2}, g_{1}] = g_{4} t_{3}$,
$g_{3}^{2} =  t_{4}$,
$[g_{3}, g_{1}] = g_{5} t_{5}$,
$g_{4}^{2} =  t_{6}$,
$[g_{4}, g_{1}] = g_{7} t_{7}$,
$[g_{4}, g_{3}] = g_{7} t_{8}$,
$g_{5}^{2} =  t_{9}$,
$[g_{5}, g_{2}] = g_{7} t_{10}$,
$g_{6}^{2} =  t_{11}$,
$[g_{6}, g_{2}] = g_{7} t_{12}$,
$g_{7}^{2} =  t_{13}$.
Carrying out consistency checks gives the following relations between the tails:
\[
\begin{aligned}
g_{3}(g_{2} g_{1}) & = (g_{3} g_{2}) g_{1}  & \Longrightarrow & & t_{8}t_{10}^{-1} & = 1 \\
g_{6}^2 g_{2} & = g_{6} (g_{6} g_{2})& \Longrightarrow & & t_{12}^{2}t_{13} & = 1 \\
g_{5}^2 g_{2} & = g_{5} (g_{5} g_{2})& \Longrightarrow & & t_{10}^{2}t_{13} & = 1 \\
g_{4}^2 g_{1} & = g_{4} (g_{4} g_{1})& \Longrightarrow & & t_{7}^{2}t_{13} & = 1 \\
g_{3}^2 g_{1} & = g_{3} (g_{3} g_{1})& \Longrightarrow & & t_{5}^{2}t_{9} & = 1 \\
g_{2}^2 g_{1} & = g_{2} (g_{2} g_{1})& \Longrightarrow & & t_{3}^{2}t_{6} & = 1 \\
g_{2} g_{1}^{2} & = (g_{2} g_{1}) g_{1}& \Longrightarrow & & t_{3}^{2}t_{6}t_{7}t_{12}t_{13} & = 1 \\
\end{aligned}
\]
Scanning through the conjugacy class representatives of $G$ and the generators of their centralizers, we obtain the following relations induced on the tails:
\[
\begin{aligned}
{[g_{5} g_{6} , \, g_{2} g_{4} ]}_G & = 1 & \Longrightarrow & & t_{10}t_{12}t_{13} & = 1 \\
\end{aligned}
\]
Collecting the coefficients of these relations into a matrix yields
\[
T = \bordermatrix{
{} & t_{1} & t_{2} & t_{3} & t_{4} & t_{5} & t_{6} & t_{7} & t_{8} & t_{9} & t_{10} & t_{11} & t_{12} & t_{13} \cr
{} &  &  & 2 &  &  & 1 &  &  &  &  &  &  &  \cr
{} &  &  &  &  & 2 &  &  &  & 1 &  &  &  &  \cr
{} &  &  &  &  &  &  & 1 &  &  &  &  & 1 & 1 \cr
{} &  &  &  &  &  &  &  & 1 &  &  &  & 1 & 1 \cr
{} &  &  &  &  &  &  &  &  &  & 1 &  & 1 & 1 \cr
{} &  &  &  &  &  &  &  &  &  &  &  & 2 & 1 \cr
}.
\]
It follows readily that the nontrivial elementary divisors of the Smith normal form of $T$ are all equal to $1$. The torsion subgroup of the group generated by the tails is thus trivial, thereby showing $\B_0(G) = 1$.


\item \label{number:26} 
Let the group $G$ be the representative of this family given by the presentation
\[
\begin{aligned}
\langle g_{1}, \,g_{2}, \,g_{3}, \,g_{4}, \,g_{5}, \,g_{6}, \,g_{7} & \mid & g_{1}^{2} &= g_{6}, \\ 
 & & g_{2}^{2} &= 1, & [g_{2}, g_{1}]  &= g_{4}, \\ 
 & & g_{3}^{2} &= 1, & [g_{3}, g_{1}]  &= g_{5}, \\ 
 & & g_{4}^{2} &= g_{7}, & [g_{4}, g_{1}]  &= g_{7}, & [g_{4}, g_{2}]  &= g_{7}, & [g_{4}, g_{3}]  &= g_{7}, \\ 
 & & g_{5}^{2} &= 1, & [g_{5}, g_{2}]  &= g_{7}, \\ 
 & & g_{6}^{2} &= 1, \\ 
 & & g_{7}^{2} &= 1\rangle. \\ 
\end{aligned}
\]
We add 13 tails to the presentation as to form a quotient of the universal central extension of the system: 
$g_{1}^{2} = g_{6} t_{1}$,
$g_{2}^{2} =  t_{2}$,
$[g_{2}, g_{1}] = g_{4} t_{3}$,
$g_{3}^{2} =  t_{4}$,
$[g_{3}, g_{1}] = g_{5} t_{5}$,
$g_{4}^{2} = g_{7} t_{6}$,
$[g_{4}, g_{1}] = g_{7} t_{7}$,
$[g_{4}, g_{2}] = g_{7} t_{8}$,
$[g_{4}, g_{3}] = g_{7} t_{9}$,
$g_{5}^{2} =  t_{10}$,
$[g_{5}, g_{2}] = g_{7} t_{11}$,
$g_{6}^{2} =  t_{12}$,
$g_{7}^{2} =  t_{13}$.
Carrying out consistency checks gives the following relations between the tails:
\[
\begin{aligned}
g_{3}(g_{2} g_{1}) & = (g_{3} g_{2}) g_{1}  & \Longrightarrow & & t_{9}t_{11}^{-1} & = 1 \\
g_{5}^2 g_{2} & = g_{5} (g_{5} g_{2})& \Longrightarrow & & t_{11}^{2}t_{13} & = 1 \\
g_{4}^2 g_{2} & = g_{4} (g_{4} g_{2})& \Longrightarrow & & t_{8}^{2}t_{13} & = 1 \\
g_{4}^2 g_{1} & = g_{4} (g_{4} g_{1})& \Longrightarrow & & t_{7}^{2}t_{13} & = 1 \\
g_{3}^2 g_{1} & = g_{3} (g_{3} g_{1})& \Longrightarrow & & t_{5}^{2}t_{10} & = 1 \\
g_{2}^2 g_{1} & = g_{2} (g_{2} g_{1})& \Longrightarrow & & t_{3}^{2}t_{6}t_{8}t_{13} & = 1 \\
g_{2} g_{1}^{2} & = (g_{2} g_{1}) g_{1}& \Longrightarrow & & t_{3}^{2}t_{6}t_{7}t_{13} & = 1 \\
\end{aligned}
\]
Scanning through the conjugacy class representatives of $G$ and the generators of their centralizers, we obtain the following relations induced on the tails:
\[
\begin{aligned}
{[g_{4} g_{7} , \, g_{1} g_{3} ]}_G & = 1 & \Longrightarrow & & t_{7}t_{9}t_{13} & = 1 \\
\end{aligned}
\]
Collecting the coefficients of these relations into a matrix yields
\[
T = \bordermatrix{
{} & t_{1} & t_{2} & t_{3} & t_{4} & t_{5} & t_{6} & t_{7} & t_{8} & t_{9} & t_{10} & t_{11} & t_{12} & t_{13} \cr
{} &  &  & 2 &  &  & 1 &  &  &  &  & 1 &  & 1 \cr
{} &  &  &  &  & 2 &  &  &  &  & 1 &  &  &  \cr
{} &  &  &  &  &  &  & 1 &  &  &  & 1 &  & 1 \cr
{} &  &  &  &  &  &  &  & 1 &  &  & 1 &  & 1 \cr
{} &  &  &  &  &  &  &  &  & 1 &  & 1 &  & 1 \cr
{} &  &  &  &  &  &  &  &  &  &  & 2 &  & 1 \cr
}.
\]
It follows readily that the nontrivial elementary divisors of the Smith normal form of $T$ are all equal to $1$. The torsion subgroup of the group generated by the tails is thus trivial, thereby showing $\B_0(G) = 1$.


\item \label{number:27} 
Let the group $G$ be the representative of this family given by the presentation
\[
\begin{aligned}
\langle g_{1}, \,g_{2}, \,g_{3}, \,g_{4}, \,g_{5}, \,g_{6}, \,g_{7} & \mid & g_{1}^{2} &= g_{4}, \\ 
 & & g_{2}^{2} &= 1, & [g_{2}, g_{1}]  &= g_{3}, \\ 
 & & g_{3}^{2} &= g_{5}g_{6}, & [g_{3}, g_{1}]  &= g_{5}, & [g_{3}, g_{2}]  &= g_{5}, \\ 
 & & g_{4}^{2} &= 1, \\ 
 & & g_{5}^{2} &= g_{6}g_{7}, & [g_{5}, g_{1}]  &= g_{6}, & [g_{5}, g_{2}]  &= g_{6}, \\ 
 & & g_{6}^{2} &= g_{7}, & [g_{6}, g_{1}]  &= g_{7}, & [g_{6}, g_{2}]  &= g_{7}, \\ 
 & & g_{7}^{2} &= 1\rangle. \\ 
\end{aligned}
\]
We add 14 tails to the presentation as to form a quotient of the universal central extension of the system: 
$g_{1}^{2} = g_{4} t_{1}$,
$g_{2}^{2} =  t_{2}$,
$[g_{2}, g_{1}] = g_{3} t_{3}$,
$g_{3}^{2} = g_{5}g_{6} t_{4}$,
$[g_{3}, g_{1}] = g_{5} t_{5}$,
$[g_{3}, g_{2}] = g_{5} t_{6}$,
$g_{4}^{2} =  t_{7}$,
$g_{5}^{2} = g_{6}g_{7} t_{8}$,
$[g_{5}, g_{1}] = g_{6} t_{9}$,
$[g_{5}, g_{2}] = g_{6} t_{10}$,
$g_{6}^{2} = g_{7} t_{11}$,
$[g_{6}, g_{1}] = g_{7} t_{12}$,
$[g_{6}, g_{2}] = g_{7} t_{13}$,
$g_{7}^{2} =  t_{14}$.
Carrying out consistency checks gives the following relations between the tails:
\[
\begin{aligned}
g_{5}(g_{2} g_{1}) & = (g_{5} g_{2}) g_{1}  & \Longrightarrow & & t_{12}t_{13}^{-1} & = 1 \\
g_{3}(g_{2} g_{1}) & = (g_{3} g_{2}) g_{1}  & \Longrightarrow & & t_{9}t_{10}^{-1} & = 1 \\
g_{6}^2 g_{2} & = g_{6} (g_{6} g_{2})& \Longrightarrow & & t_{13}^{2}t_{14} & = 1 \\
g_{5}^2 g_{2} & = g_{5} (g_{5} g_{2})& \Longrightarrow & & t_{10}^{2}t_{11}t_{13}^{-1} & = 1 \\
g_{3}^2 g_{2} & = g_{3} (g_{3} g_{2})& \Longrightarrow & & t_{6}^{2}t_{8}t_{10}^{-1}t_{13}^{-1} & = 1 \\
g_{3}^2 g_{1} & = g_{3} (g_{3} g_{1})& \Longrightarrow & & t_{5}^{2}t_{8}t_{9}^{-1}t_{12}^{-1} & = 1 \\
g_{2}^2 g_{1} & = g_{2} (g_{2} g_{1})& \Longrightarrow & & t_{3}^{2}t_{4}t_{6}t_{8}t_{11}t_{14} & = 1 \\
g_{2} g_{1}^{2} & = (g_{2} g_{1}) g_{1}& \Longrightarrow & & t_{3}^{2}t_{4}t_{5}t_{8}t_{11}t_{14} & = 1 \\
\end{aligned}
\]
Scanning through the conjugacy class representatives of $G$ and the generators of their centralizers, we see that no new relations are imposed.
Collecting the coefficients of these relations into a matrix yields
\[
T = \bordermatrix{
{} & t_{1} & t_{2} & t_{3} & t_{4} & t_{5} & t_{6} & t_{7} & t_{8} & t_{9} & t_{10} & t_{11} & t_{12} & t_{13} & t_{14} \cr
{} &  &  & 2 & 1 &  & 1 &  & 1 &  &  & 1 &  &  & 1 \cr
{} &  &  &  &  & 1 & 1 &  & 1 &  & 1 & 1 &  &  & 1 \cr
{} &  &  &  &  &  & 2 &  & 1 &  & 1 & 1 &  &  & 1 \cr
{} &  &  &  &  &  &  &  &  & 1 & 1 & 1 &  & 1 & 1 \cr
{} &  &  &  &  &  &  &  &  &  & 2 & 1 &  & 1 & 1 \cr
{} &  &  &  &  &  &  &  &  &  &  &  & 1 & 1 & 1 \cr
{} &  &  &  &  &  &  &  &  &  &  &  &  & 2 & 1 \cr
}.
\]
It follows readily that the nontrivial elementary divisors of the Smith normal form of $T$ are all equal to $1$. The torsion subgroup of the group generated by the tails is thus trivial, thereby showing $\B_0(G) = 1$.


\item \label{number:28} 
Let the group $G$ be the representative of this family given by the presentation
\[
\begin{aligned}
\langle g_{1}, \,g_{2}, \,g_{3}, \,g_{4}, \,g_{5}, \,g_{6}, \,g_{7} & \mid & g_{1}^{2} &= 1, \\ 
 & & g_{2}^{2} &= 1, & [g_{2}, g_{1}]  &= g_{5}, \\ 
 & & g_{3}^{2} &= 1, & [g_{3}, g_{1}]  &= g_{6}, & [g_{3}, g_{2}]  &= g_{7}, \\ 
 & & g_{4}^{2} &= 1, & [g_{4}, g_{1}]  &= g_{5}g_{6}, & [g_{4}, g_{2}]  &= g_{5}, \\ 
 & & g_{5}^{2} &= 1, \\ 
 & & g_{6}^{2} &= 1, \\ 
 & & g_{7}^{2} &= 1\rangle. \\ 
\end{aligned}
\]
We add 12 tails to the presentation as to form a quotient of the universal central extension of the system: 
$g_{1}^{2} =  t_{1}$,
$g_{2}^{2} =  t_{2}$,
$[g_{2}, g_{1}] = g_{5} t_{3}$,
$g_{3}^{2} =  t_{4}$,
$[g_{3}, g_{1}] = g_{6} t_{5}$,
$[g_{3}, g_{2}] = g_{7} t_{6}$,
$g_{4}^{2} =  t_{7}$,
$[g_{4}, g_{1}] = g_{5}g_{6} t_{8}$,
$[g_{4}, g_{2}] = g_{5} t_{9}$,
$g_{5}^{2} =  t_{10}$,
$g_{6}^{2} =  t_{11}$,
$g_{7}^{2} =  t_{12}$.
Carrying out consistency checks gives the following relations between the tails:
\[
\begin{aligned}
g_{4}^2 g_{2} & = g_{4} (g_{4} g_{2})& \Longrightarrow & & t_{9}^{2}t_{10} & = 1 \\
g_{4}^2 g_{1} & = g_{4} (g_{4} g_{1})& \Longrightarrow & & t_{8}^{2}t_{10}t_{11} & = 1 \\
g_{3}^2 g_{2} & = g_{3} (g_{3} g_{2})& \Longrightarrow & & t_{6}^{2}t_{12} & = 1 \\
g_{3}^2 g_{1} & = g_{3} (g_{3} g_{1})& \Longrightarrow & & t_{5}^{2}t_{11} & = 1 \\
g_{2}^2 g_{1} & = g_{2} (g_{2} g_{1})& \Longrightarrow & & t_{3}^{2}t_{10} & = 1 \\
\end{aligned}
\]
Scanning through the conjugacy class representatives of $G$ and the generators of their centralizers, we obtain the following relations induced on the tails:
\[
\begin{aligned}
{[g_{2} g_{5} g_{7} , \, g_{1} g_{4} ]}_G & = 1 & \Longrightarrow & & t_{3}t_{9}^{-1} & = 1 \\
{[g_{2} g_{3} g_{4} g_{5} g_{7} , \, g_{1} ]}_G & = 1 & \Longrightarrow & & t_{3}t_{5}t_{8}t_{10}t_{11} & = 1 \\
\end{aligned}
\]
Collecting the coefficients of these relations into a matrix yields
\[
T = \bordermatrix{
{} & t_{1} & t_{2} & t_{3} & t_{4} & t_{5} & t_{6} & t_{7} & t_{8} & t_{9} & t_{10} & t_{11} & t_{12} \cr
{} &  &  & 1 &  &  &  &  &  & 1 & 1 &  &  \cr
{} &  &  &  &  & 1 &  &  & 1 & 1 & 1 & 1 &  \cr
{} &  &  &  &  &  & 2 &  &  &  &  &  & 1 \cr
{} &  &  &  &  &  &  &  & 2 &  & 1 & 1 &  \cr
{} &  &  &  &  &  &  &  &  & 2 & 1 &  &  \cr
}.
\]
It follows readily that the nontrivial elementary divisors of the Smith normal form of $T$ are all equal to $1$. The torsion subgroup of the group generated by the tails is thus trivial, thereby showing $\B_0(G) = 1$.


\item \label{number:29} 
Let the group $G$ be the representative of this family given by the presentation
\[
\begin{aligned}
\langle g_{1}, \,g_{2}, \,g_{3}, \,g_{4}, \,g_{5}, \,g_{6}, \,g_{7} & \mid & g_{1}^{2} &= 1, \\ 
 & & g_{2}^{2} &= 1, & [g_{2}, g_{1}]  &= g_{5}, \\ 
 & & g_{3}^{2} &= 1, & [g_{3}, g_{1}]  &= g_{6}, & [g_{3}, g_{2}]  &= g_{7}, \\ 
 & & g_{4}^{2} &= 1, & [g_{4}, g_{1}]  &= g_{5}, \\ 
 & & g_{5}^{2} &= 1, \\ 
 & & g_{6}^{2} &= 1, \\ 
 & & g_{7}^{2} &= 1\rangle. \\ 
\end{aligned}
\]
We add 11 tails to the presentation as to form a quotient of the universal central extension of the system: 
$g_{1}^{2} =  t_{1}$,
$g_{2}^{2} =  t_{2}$,
$[g_{2}, g_{1}] = g_{5} t_{3}$,
$g_{3}^{2} =  t_{4}$,
$[g_{3}, g_{1}] = g_{6} t_{5}$,
$[g_{3}, g_{2}] = g_{7} t_{6}$,
$g_{4}^{2} =  t_{7}$,
$[g_{4}, g_{1}] = g_{5} t_{8}$,
$g_{5}^{2} =  t_{9}$,
$g_{6}^{2} =  t_{10}$,
$g_{7}^{2} =  t_{11}$.
Carrying out consistency checks gives the following relations between the tails:
\[
\begin{aligned}
g_{4}^2 g_{1} & = g_{4} (g_{4} g_{1})& \Longrightarrow & & t_{8}^{2}t_{9} & = 1 \\
g_{3}^2 g_{2} & = g_{3} (g_{3} g_{2})& \Longrightarrow & & t_{6}^{2}t_{11} & = 1 \\
g_{3}^2 g_{1} & = g_{3} (g_{3} g_{1})& \Longrightarrow & & t_{5}^{2}t_{10} & = 1 \\
g_{2}^2 g_{1} & = g_{2} (g_{2} g_{1})& \Longrightarrow & & t_{3}^{2}t_{9} & = 1 \\
\end{aligned}
\]
Scanning through the conjugacy class representatives of $G$ and the generators of their centralizers, we obtain the following relations induced on the tails:
\[
\begin{aligned}
{[g_{2} g_{4} g_{7} , \, g_{1} ]}_G & = 1 & \Longrightarrow & & t_{3}t_{8}t_{9} & = 1 \\
\end{aligned}
\]
Collecting the coefficients of these relations into a matrix yields
\[
T = \bordermatrix{
{} & t_{1} & t_{2} & t_{3} & t_{4} & t_{5} & t_{6} & t_{7} & t_{8} & t_{9} & t_{10} & t_{11} \cr
{} &  &  & 1 &  &  &  &  & 1 & 1 &  &  \cr
{} &  &  &  &  & 2 &  &  &  &  & 1 &  \cr
{} &  &  &  &  &  & 2 &  &  &  &  & 1 \cr
{} &  &  &  &  &  &  &  & 2 & 1 &  &  \cr
}.
\]
It follows readily that the nontrivial elementary divisors of the Smith normal form of $T$ are all equal to $1$. The torsion subgroup of the group generated by the tails is thus trivial, thereby showing $\B_0(G) = 1$.


\item \label{number:30} 
Let the group $G$ be the representative of this family given by the presentation
\[
\begin{aligned}
\langle g_{1}, \,g_{2}, \,g_{3}, \,g_{4}, \,g_{5}, \,g_{6}, \,g_{7} & \mid & g_{1}^{2} &= 1, \\ 
 & & g_{2}^{2} &= 1, & [g_{2}, g_{1}]  &= g_{5}, \\ 
 & & g_{3}^{2} &= 1, & [g_{3}, g_{1}]  &= g_{6}, & [g_{3}, g_{2}]  &= g_{7}, \\ 
 & & g_{4}^{2} &= 1, & [g_{4}, g_{2}]  &= g_{5}g_{6}, & [g_{4}, g_{3}]  &= g_{5}, \\ 
 & & g_{5}^{2} &= 1, \\ 
 & & g_{6}^{2} &= 1, \\ 
 & & g_{7}^{2} &= 1\rangle. \\ 
\end{aligned}
\]
We add 12 tails to the presentation as to form a quotient of the universal central extension of the system: 
$g_{1}^{2} =  t_{1}$,
$g_{2}^{2} =  t_{2}$,
$[g_{2}, g_{1}] = g_{5} t_{3}$,
$g_{3}^{2} =  t_{4}$,
$[g_{3}, g_{1}] = g_{6} t_{5}$,
$[g_{3}, g_{2}] = g_{7} t_{6}$,
$g_{4}^{2} =  t_{7}$,
$[g_{4}, g_{2}] = g_{5}g_{6} t_{8}$,
$[g_{4}, g_{3}] = g_{5} t_{9}$,
$g_{5}^{2} =  t_{10}$,
$g_{6}^{2} =  t_{11}$,
$g_{7}^{2} =  t_{12}$.
Carrying out consistency checks gives the following relations between the tails:
\[
\begin{aligned}
g_{4}^2 g_{3} & = g_{4} (g_{4} g_{3})& \Longrightarrow & & t_{9}^{2}t_{10} & = 1 \\
g_{4}^2 g_{2} & = g_{4} (g_{4} g_{2})& \Longrightarrow & & t_{8}^{2}t_{10}t_{11} & = 1 \\
g_{3}^2 g_{2} & = g_{3} (g_{3} g_{2})& \Longrightarrow & & t_{6}^{2}t_{12} & = 1 \\
g_{3}^2 g_{1} & = g_{3} (g_{3} g_{1})& \Longrightarrow & & t_{5}^{2}t_{11} & = 1 \\
g_{2}^2 g_{1} & = g_{2} (g_{2} g_{1})& \Longrightarrow & & t_{3}^{2}t_{10} & = 1 \\
\end{aligned}
\]
Scanning through the conjugacy class representatives of $G$ and the generators of their centralizers, we see that no new relations are imposed.
Collecting the coefficients of these relations into a matrix yields
\[
T = \bordermatrix{
{} & t_{1} & t_{2} & t_{3} & t_{4} & t_{5} & t_{6} & t_{7} & t_{8} & t_{9} & t_{10} & t_{11} & t_{12} \cr
{} &  &  & 2 &  &  &  &  &  &  & 1 &  &  \cr
{} &  &  &  &  & 2 &  &  &  &  &  & 1 &  \cr
{} &  &  &  &  &  & 2 &  &  &  &  &  & 1 \cr
{} &  &  &  &  &  &  &  & 2 &  & 1 & 1 &  \cr
{} &  &  &  &  &  &  &  &  & 2 & 1 &  &  \cr
}.
\]
A change of basis according to the transition matrix (specifying expansions of $t_i^{*}$ by $t_j$)
\[
\bordermatrix{
{} & t_{1}^{*} & t_{2}^{*} & t_{3}^{*} & t_{4}^{*} & t_{5}^{*} & t_{6}^{*} & t_{7}^{*} & t_{8}^{*} & t_{9}^{*} & t_{10}^{*} & t_{11}^{*} & t_{12}^{*} \cr
t_{1} &  &  &  &  &  &  &  &  &  & -1 & 1 &  \cr
t_{2} &  &  &  &  &  & -1 &  &  & -1 &  & -1 & -1 \cr
t_{3} &  &  & 2 & 1 & 1 &  &  &  &  &  & -1 &  \cr
t_{4} &  &  &  &  &  & -1 &  &  & -1 &  &  &  \cr
t_{5} &  &  &  & 1 &  & 1 &  &  & 1 &  &  &  \cr
t_{6} &  & 2 & 2 &  & 2 &  &  &  & -1 &  &  &  \cr
t_{7} &  &  &  &  &  &  & 1 &  &  &  &  &  \cr
t_{8} &  &  & -2 & -3 & -2 &  &  & 1 &  &  &  &  \cr
t_{9} & 2 &  & -4 &  & -1 &  &  &  & 1 &  &  &  \cr
t_{10} & 1 &  & -2 & -1 & -1 &  &  &  &  & 1 &  &  \cr
t_{11} &  &  & -1 & -1 & -1 &  &  &  &  &  & 1 &  \cr
t_{12} &  & 1 & 1 &  & 1 &  &  &  &  &  &  & 1 \cr
}
\]
shows that the nontrivial elementary divisors of the Smith normal form of $T$ are $1$, $1$, $1$, $2$, $2$.  The elements corresponding to the divisors that are greater than $1$ are $t_{4}^{*}$, $t_{5}^{*}$. This already gives
\[
\B_0(G) \cong \langle t_{4}^{*} , \,  t_{5}^{*}  \mid {t_{4}^{*}}^{2} , \, {t_{5}^{*}}^{2} \rangle.
\]

We now deal with explicitly identifying the nonuniversal commutator relations generating $\B_0(G)$.
First, factor out by the tails $t_{i}^{*}$ whose corresponding elementary divisors are either trivial or $1$. Transforming the situation back to the original tails $t_i$, this amounts to the nontrivial expansions given by
\[
\bordermatrix{
{} & t_{4} & t_{5} & t_{6} & t_{9} \cr
t_{4}^{*} & 1 & 1 & 0 & 0 \cr
t_{5}^{*} & 0 & 0 & 1 & 1 \cr
}
\]
and all the other tails $t_i$ are trivial. We thus obtain a commutativity preserving central extension of the group $G$, given by the presentation
\[
\begin{aligned}
\langle g_{1}, \,g_{2}, \,g_{3}, \,g_{4}, \,g_{5}, \,g_{6}, \,g_{7}, \,t_{4}^{*}, \,t_{5}^{*} & \mid & g_{1}^{2} &= 1, \\ 
 & & g_{2}^{2} &= 1, & [g_{2}, g_{1}]  &= g_{5}, \\ 
 & & g_{3}^{2} &= t_{4}^{*} , & [g_{3}, g_{1}]  &= g_{6}t_{4}^{*} , & [g_{3}, g_{2}]  &= g_{7}t_{5}^{*} , \\ 
 & & g_{4}^{2} &= 1, & [g_{4}, g_{2}]  &= g_{5}g_{6}, & [g_{4}, g_{3}]  &= g_{5}t_{5}^{*} , \\ 
 & & g_{5}^{2} &= 1, \\ 
 & & g_{6}^{2} &= 1, \\ 
 & & g_{7}^{2} &= 1, \\ 
 & & {t_{4}^{*}}^{2} &= 1 , \\
 & & {t_{5}^{*}}^{2} &= 1  \rangle,
\end{aligned}
\]
whence the nonuniversal commutator relations are identified as
\[
t_{4}^{*}  = [g_{2}, g_{1}] [g_{3}, g_{1}] [g_{4}, g_{2}]^{-1},  \quad 
t_{5}^{*}  = [g_{2}, g_{1}] [g_{4}, g_{3}]^{-1}.  \quad 
\]


\item \label{number:31} 
Let the group $G$ be the representative of this family given by the presentation
\[
\begin{aligned}
\langle g_{1}, \,g_{2}, \,g_{3}, \,g_{4}, \,g_{5}, \,g_{6}, \,g_{7} & \mid & g_{1}^{2} &= 1, \\ 
 & & g_{2}^{2} &= 1, & [g_{2}, g_{1}]  &= g_{5}, \\ 
 & & g_{3}^{2} &= 1, & [g_{3}, g_{1}]  &= g_{6}, & [g_{3}, g_{2}]  &= g_{7}, \\ 
 & & g_{4}^{2} &= 1, & [g_{4}, g_{3}]  &= g_{5}, \\ 
 & & g_{5}^{2} &= 1, \\ 
 & & g_{6}^{2} &= 1, \\ 
 & & g_{7}^{2} &= 1\rangle. \\ 
\end{aligned}
\]
We add 11 tails to the presentation as to form a quotient of the universal central extension of the system: 
$g_{1}^{2} =  t_{1}$,
$g_{2}^{2} =  t_{2}$,
$[g_{2}, g_{1}] = g_{5} t_{3}$,
$g_{3}^{2} =  t_{4}$,
$[g_{3}, g_{1}] = g_{6} t_{5}$,
$[g_{3}, g_{2}] = g_{7} t_{6}$,
$g_{4}^{2} =  t_{7}$,
$[g_{4}, g_{3}] = g_{5} t_{8}$,
$g_{5}^{2} =  t_{9}$,
$g_{6}^{2} =  t_{10}$,
$g_{7}^{2} =  t_{11}$.
Carrying out consistency checks gives the following relations between the tails:
\[
\begin{aligned}
g_{4}^2 g_{3} & = g_{4} (g_{4} g_{3})& \Longrightarrow & & t_{8}^{2}t_{9} & = 1 \\
g_{3}^2 g_{2} & = g_{3} (g_{3} g_{2})& \Longrightarrow & & t_{6}^{2}t_{11} & = 1 \\
g_{3}^2 g_{1} & = g_{3} (g_{3} g_{1})& \Longrightarrow & & t_{5}^{2}t_{10} & = 1 \\
g_{2}^2 g_{1} & = g_{2} (g_{2} g_{1})& \Longrightarrow & & t_{3}^{2}t_{9} & = 1 \\
\end{aligned}
\]
Scanning through the conjugacy class representatives of $G$ and the generators of their centralizers, we see that no new relations are imposed.
Collecting the coefficients of these relations into a matrix yields
\[
T = \bordermatrix{
{} & t_{1} & t_{2} & t_{3} & t_{4} & t_{5} & t_{6} & t_{7} & t_{8} & t_{9} & t_{10} & t_{11} \cr
{} &  &  & 2 &  &  &  &  &  & 1 &  &  \cr
{} &  &  &  &  & 2 &  &  &  &  & 1 &  \cr
{} &  &  &  &  &  & 2 &  &  &  &  & 1 \cr
{} &  &  &  &  &  &  &  & 2 & 1 &  &  \cr
}.
\]
A change of basis according to the transition matrix (specifying expansions of $t_i^{*}$ by $t_j$)
\[
\bordermatrix{
{} & t_{1}^{*} & t_{2}^{*} & t_{3}^{*} & t_{4}^{*} & t_{5}^{*} & t_{6}^{*} & t_{7}^{*} & t_{8}^{*} & t_{9}^{*} & t_{10}^{*} & t_{11}^{*} \cr
t_{1} &  &  &  &  &  &  &  &  & -1 &  & 1 \cr
t_{2} &  &  &  &  & -2 &  &  & 1 &  & -1 & 1 \cr
t_{3} &  & -2 &  & -3 &  &  &  &  &  &  & -1 \cr
t_{4} &  &  &  &  & 1 & -1 &  &  &  &  &  \cr
t_{5} &  & 2 &  & 2 & 2 &  &  & -1 &  &  &  \cr
t_{6} &  & 4 & 2 & 6 &  & 1 &  &  &  &  &  \cr
t_{7} &  &  &  &  &  &  & 1 &  &  &  &  \cr
t_{8} & 2 & 6 &  & 9 &  &  &  & 1 &  &  &  \cr
t_{9} & 1 & 2 &  & 3 &  &  &  &  & 1 &  &  \cr
t_{10} &  & 1 &  & 1 &  &  &  &  &  & 1 &  \cr
t_{11} &  & 2 & 1 & 3 &  &  &  &  &  &  & 1 \cr
}
\]
shows that the nontrivial elementary divisors of the Smith normal form of $T$ are $1$, $1$, $1$, $2$.  The element corresponding to the divisor that is greater than $1$ is $t_{4}^{*}$. This already gives
\[
\B_0(G) \cong \langle t_{4}^{*}  \mid {t_{4}^{*}}^{2} \rangle.
\]

We now deal with explicitly identifying the nonuniversal commutator relation generating $\B_0(G)$.
First, factor out by the tails $t_{i}^{*}$ whose corresponding elementary divisors are either trivial or $1$. Transforming the situation back to the original tails $t_i$, this amounts to the nontrivial expansions given by
\[
\bordermatrix{
{} & t_{5} & t_{8} \cr
t_{4}^{*} & 1 & 1 \cr
}
\]
and all the other tails $t_i$ are trivial. We thus obtain a commutativity preserving central extension of the group $G$, given by the presentation
\[
\begin{aligned}
\langle g_{1}, \,g_{2}, \,g_{3}, \,g_{4}, \,g_{5}, \,g_{6}, \,g_{7}, \,t_{4}^{*} & \mid & g_{1}^{2} &= 1, \\ 
 & & g_{2}^{2} &= 1, & [g_{2}, g_{1}]  &= g_{5}, \\ 
 & & g_{3}^{2} &= 1, & [g_{3}, g_{1}]  &= g_{6}t_{4}^{*} , & [g_{3}, g_{2}]  &= g_{7}, \\ 
 & & g_{4}^{2} &= 1, & [g_{4}, g_{3}]  &= g_{5}t_{4}^{*} , \\ 
 & & g_{5}^{2} &= 1, \\ 
 & & g_{6}^{2} &= 1, \\ 
 & & g_{7}^{2} &= 1, \\ 
 & & {t_{4}^{*}}^{2} &= 1  \rangle,
\end{aligned}
\]
whence the nonuniversal commutator relation is identified as
\[
t_{4}^{*}  = [g_{2}, g_{1}] [g_{4}, g_{3}]^{-1}.  \quad 
\]


\item \label{number:32} 
Let the group $G$ be the representative of this family given by the presentation
\[
\begin{aligned}
\langle g_{1}, \,g_{2}, \,g_{3}, \,g_{4}, \,g_{5}, \,g_{6}, \,g_{7} & \mid & g_{1}^{2} &= 1, \\ 
 & & g_{2}^{2} &= 1, & [g_{2}, g_{1}]  &= g_{5}, \\ 
 & & g_{3}^{2} &= 1, & [g_{3}, g_{1}]  &= g_{6}, \\ 
 & & g_{4}^{2} &= 1, & [g_{4}, g_{1}]  &= g_{7}, \\ 
 & & g_{5}^{2} &= 1, \\ 
 & & g_{6}^{2} &= 1, \\ 
 & & g_{7}^{2} &= 1\rangle. \\ 
\end{aligned}
\]
We add 10 tails to the presentation as to form a quotient of the universal central extension of the system: 
$g_{1}^{2} =  t_{1}$,
$g_{2}^{2} =  t_{2}$,
$[g_{2}, g_{1}] = g_{5} t_{3}$,
$g_{3}^{2} =  t_{4}$,
$[g_{3}, g_{1}] = g_{6} t_{5}$,
$g_{4}^{2} =  t_{6}$,
$[g_{4}, g_{1}] = g_{7} t_{7}$,
$g_{5}^{2} =  t_{8}$,
$g_{6}^{2} =  t_{9}$,
$g_{7}^{2} =  t_{10}$.
Carrying out consistency checks gives the following relations between the tails:
\[
\begin{aligned}
g_{4}^2 g_{1} & = g_{4} (g_{4} g_{1})& \Longrightarrow & & t_{7}^{2}t_{10} & = 1 \\
g_{3}^2 g_{1} & = g_{3} (g_{3} g_{1})& \Longrightarrow & & t_{5}^{2}t_{9} & = 1 \\
g_{2}^2 g_{1} & = g_{2} (g_{2} g_{1})& \Longrightarrow & & t_{3}^{2}t_{8} & = 1 \\
\end{aligned}
\]
Scanning through the conjugacy class representatives of $G$ and the generators of their centralizers, we see that no new relations are imposed.
Collecting the coefficients of these relations into a matrix yields
\[
T = \bordermatrix{
{} & t_{1} & t_{2} & t_{3} & t_{4} & t_{5} & t_{6} & t_{7} & t_{8} & t_{9} & t_{10} \cr
{} &  &  & 2 &  &  &  &  & 1 &  &  \cr
{} &  &  &  &  & 2 &  &  &  & 1 &  \cr
{} &  &  &  &  &  &  & 2 &  &  & 1 \cr
}.
\]
It follows readily that the nontrivial elementary divisors of the Smith normal form of $T$ are all equal to $1$. The torsion subgroup of the group generated by the tails is thus trivial, thereby showing $\B_0(G) = 1$.


\item \label{number:33} 
Let the group $G$ be the representative of this family given by the presentation
\[
\begin{aligned}
\langle g_{1}, \,g_{2}, \,g_{3}, \,g_{4}, \,g_{5}, \,g_{6}, \,g_{7} & \mid & g_{1}^{2} &= 1, \\ 
 & & g_{2}^{2} &= 1, & [g_{2}, g_{1}]  &= g_{6}, \\ 
 & & g_{3}^{2} &= 1, & [g_{3}, g_{1}]  &= g_{7}, & [g_{3}, g_{2}]  &= g_{7}, \\ 
 & & g_{4}^{2} &= 1, & [g_{4}, g_{1}]  &= g_{7}, & [g_{4}, g_{2}]  &= g_{6}, \\ 
 & & g_{5}^{2} &= 1, & [g_{5}, g_{1}]  &= g_{6}, \\ 
 & & g_{6}^{2} &= 1, \\ 
 & & g_{7}^{2} &= 1\rangle. \\ 
\end{aligned}
\]
We add 13 tails to the presentation as to form a quotient of the universal central extension of the system: 
$g_{1}^{2} =  t_{1}$,
$g_{2}^{2} =  t_{2}$,
$[g_{2}, g_{1}] = g_{6} t_{3}$,
$g_{3}^{2} =  t_{4}$,
$[g_{3}, g_{1}] = g_{7} t_{5}$,
$[g_{3}, g_{2}] = g_{7} t_{6}$,
$g_{4}^{2} =  t_{7}$,
$[g_{4}, g_{1}] = g_{7} t_{8}$,
$[g_{4}, g_{2}] = g_{6} t_{9}$,
$g_{5}^{2} =  t_{10}$,
$[g_{5}, g_{1}] = g_{6} t_{11}$,
$g_{6}^{2} =  t_{12}$,
$g_{7}^{2} =  t_{13}$.
Carrying out consistency checks gives the following relations between the tails:
\[
\begin{aligned}
g_{5}^2 g_{1} & = g_{5} (g_{5} g_{1})& \Longrightarrow & & t_{11}^{2}t_{12} & = 1 \\
g_{4}^2 g_{2} & = g_{4} (g_{4} g_{2})& \Longrightarrow & & t_{9}^{2}t_{12} & = 1 \\
g_{4}^2 g_{1} & = g_{4} (g_{4} g_{1})& \Longrightarrow & & t_{8}^{2}t_{13} & = 1 \\
g_{3}^2 g_{2} & = g_{3} (g_{3} g_{2})& \Longrightarrow & & t_{6}^{2}t_{13} & = 1 \\
g_{3}^2 g_{1} & = g_{3} (g_{3} g_{1})& \Longrightarrow & & t_{5}^{2}t_{13} & = 1 \\
g_{2}^2 g_{1} & = g_{2} (g_{2} g_{1})& \Longrightarrow & & t_{3}^{2}t_{12} & = 1 \\
\end{aligned}
\]
Scanning through the conjugacy class representatives of $G$ and the generators of their centralizers, we obtain the following relations induced on the tails:
\[
\begin{aligned}
{[g_{3} g_{7} , \, g_{1} g_{2} g_{3} ]}_G & = 1 & \Longrightarrow & & t_{5}t_{6}t_{13} & = 1 \\
{[g_{3} g_{4} g_{6} , \, g_{1} g_{4} ]}_G & = 1 & \Longrightarrow & & t_{5}t_{8}t_{13} & = 1 \\
{[g_{2} g_{6} g_{7} , \, g_{1} g_{4} ]}_G & = 1 & \Longrightarrow & & t_{3}t_{9}^{-1} & = 1 \\
{[g_{2} g_{5} g_{6} g_{7} , \, g_{1} ]}_G & = 1 & \Longrightarrow & & t_{3}t_{11}t_{12} & = 1 \\
\end{aligned}
\]
Collecting the coefficients of these relations into a matrix yields
\[
T = \bordermatrix{
{} & t_{1} & t_{2} & t_{3} & t_{4} & t_{5} & t_{6} & t_{7} & t_{8} & t_{9} & t_{10} & t_{11} & t_{12} & t_{13} \cr
{} &  &  & 1 &  &  &  &  &  &  &  & 1 & 1 &  \cr
{} &  &  &  &  & 1 &  &  & 1 &  &  &  &  & 1 \cr
{} &  &  &  &  &  & 1 &  & 1 &  &  &  &  & 1 \cr
{} &  &  &  &  &  &  &  & 2 &  &  &  &  & 1 \cr
{} &  &  &  &  &  &  &  &  & 1 &  & 1 & 1 &  \cr
{} &  &  &  &  &  &  &  &  &  &  & 2 & 1 &  \cr
}.
\]
It follows readily that the nontrivial elementary divisors of the Smith normal form of $T$ are all equal to $1$. The torsion subgroup of the group generated by the tails is thus trivial, thereby showing $\B_0(G) = 1$.


\item \label{number:34} 
Let the group $G$ be the representative of this family given by the presentation
\[
\begin{aligned}
\langle g_{1}, \,g_{2}, \,g_{3}, \,g_{4}, \,g_{5}, \,g_{6}, \,g_{7} & \mid & g_{1}^{2} &= 1, \\ 
 & & g_{2}^{2} &= 1, & [g_{2}, g_{1}]  &= g_{6}, \\ 
 & & g_{3}^{2} &= 1, & [g_{3}, g_{1}]  &= g_{7}, \\ 
 & & g_{4}^{2} &= 1, & [g_{4}, g_{2}]  &= g_{6}, \\ 
 & & g_{5}^{2} &= 1, & [g_{5}, g_{1}]  &= g_{6}, \\ 
 & & g_{6}^{2} &= 1, \\ 
 & & g_{7}^{2} &= 1\rangle. \\ 
\end{aligned}
\]
We add 11 tails to the presentation as to form a quotient of the universal central extension of the system: 
$g_{1}^{2} =  t_{1}$,
$g_{2}^{2} =  t_{2}$,
$[g_{2}, g_{1}] = g_{6} t_{3}$,
$g_{3}^{2} =  t_{4}$,
$[g_{3}, g_{1}] = g_{7} t_{5}$,
$g_{4}^{2} =  t_{6}$,
$[g_{4}, g_{2}] = g_{6} t_{7}$,
$g_{5}^{2} =  t_{8}$,
$[g_{5}, g_{1}] = g_{6} t_{9}$,
$g_{6}^{2} =  t_{10}$,
$g_{7}^{2} =  t_{11}$.
Carrying out consistency checks gives the following relations between the tails:
\[
\begin{aligned}
g_{5}^2 g_{1} & = g_{5} (g_{5} g_{1})& \Longrightarrow & & t_{9}^{2}t_{10} & = 1 \\
g_{4}^2 g_{2} & = g_{4} (g_{4} g_{2})& \Longrightarrow & & t_{7}^{2}t_{10} & = 1 \\
g_{3}^2 g_{1} & = g_{3} (g_{3} g_{1})& \Longrightarrow & & t_{5}^{2}t_{11} & = 1 \\
g_{2}^2 g_{1} & = g_{2} (g_{2} g_{1})& \Longrightarrow & & t_{3}^{2}t_{10} & = 1 \\
\end{aligned}
\]
Scanning through the conjugacy class representatives of $G$ and the generators of their centralizers, we obtain the following relations induced on the tails:
\[
\begin{aligned}
{[g_{4} g_{5} g_{6} , \, g_{1} g_{2} g_{4} ]}_G & = 1 & \Longrightarrow & & t_{7}t_{9}t_{10} & = 1 \\
{[g_{2} g_{6} , \, g_{1} g_{3} g_{4} ]}_G & = 1 & \Longrightarrow & & t_{3}t_{7}^{-1} & = 1 \\
\end{aligned}
\]
Collecting the coefficients of these relations into a matrix yields
\[
T = \bordermatrix{
{} & t_{1} & t_{2} & t_{3} & t_{4} & t_{5} & t_{6} & t_{7} & t_{8} & t_{9} & t_{10} & t_{11} \cr
{} &  &  & 1 &  &  &  &  &  & 1 & 1 &  \cr
{} &  &  &  &  & 2 &  &  &  &  &  & 1 \cr
{} &  &  &  &  &  &  & 1 &  & 1 & 1 &  \cr
{} &  &  &  &  &  &  &  &  & 2 & 1 &  \cr
}.
\]
It follows readily that the nontrivial elementary divisors of the Smith normal form of $T$ are all equal to $1$. The torsion subgroup of the group generated by the tails is thus trivial, thereby showing $\B_0(G) = 1$.


\item \label{number:35} 
Let the group $G$ be the representative of this family given by the presentation
\[
\begin{aligned}
\langle g_{1}, \,g_{2}, \,g_{3}, \,g_{4}, \,g_{5}, \,g_{6}, \,g_{7} & \mid & g_{1}^{2} &= 1, \\ 
 & & g_{2}^{2} &= 1, & [g_{2}, g_{1}]  &= g_{7}, \\ 
 & & g_{3}^{2} &= 1, \\ 
 & & g_{4}^{2} &= 1, & [g_{4}, g_{3}]  &= g_{7}, \\ 
 & & g_{5}^{2} &= 1, & [g_{5}, g_{2}]  &= g_{7}, \\ 
 & & g_{6}^{2} &= 1, & [g_{6}, g_{1}]  &= g_{7}, \\ 
 & & g_{7}^{2} &= 1\rangle. \\ 
\end{aligned}
\]
We add 11 tails to the presentation as to form a quotient of the universal central extension of the system: 
$g_{1}^{2} =  t_{1}$,
$g_{2}^{2} =  t_{2}$,
$[g_{2}, g_{1}] = g_{7} t_{3}$,
$g_{3}^{2} =  t_{4}$,
$g_{4}^{2} =  t_{5}$,
$[g_{4}, g_{3}] = g_{7} t_{6}$,
$g_{5}^{2} =  t_{7}$,
$[g_{5}, g_{2}] = g_{7} t_{8}$,
$g_{6}^{2} =  t_{9}$,
$[g_{6}, g_{1}] = g_{7} t_{10}$,
$g_{7}^{2} =  t_{11}$.
Carrying out consistency checks gives the following relations between the tails:
\[
\begin{aligned}
g_{6}^2 g_{1} & = g_{6} (g_{6} g_{1})& \Longrightarrow & & t_{10}^{2}t_{11} & = 1 \\
g_{5}^2 g_{2} & = g_{5} (g_{5} g_{2})& \Longrightarrow & & t_{8}^{2}t_{11} & = 1 \\
g_{4}^2 g_{3} & = g_{4} (g_{4} g_{3})& \Longrightarrow & & t_{6}^{2}t_{11} & = 1 \\
g_{2}^2 g_{1} & = g_{2} (g_{2} g_{1})& \Longrightarrow & & t_{3}^{2}t_{11} & = 1 \\
\end{aligned}
\]
Scanning through the conjugacy class representatives of $G$ and the generators of their centralizers, we obtain the following relations induced on the tails:
\[
\begin{aligned}
{[g_{5} g_{6} g_{7} , \, g_{1} g_{2} g_{3} ]}_G & = 1 & \Longrightarrow & & t_{8}t_{10}t_{11} & = 1 \\
{[g_{4} g_{6} g_{7} , \, g_{1} g_{3} ]}_G & = 1 & \Longrightarrow & & t_{6}t_{10}t_{11} & = 1 \\
{[g_{2} g_{7} , \, g_{1} g_{3} g_{5} ]}_G & = 1 & \Longrightarrow & & t_{3}t_{8}^{-1} & = 1 \\
\end{aligned}
\]
Collecting the coefficients of these relations into a matrix yields
\[
T = \bordermatrix{
{} & t_{1} & t_{2} & t_{3} & t_{4} & t_{5} & t_{6} & t_{7} & t_{8} & t_{9} & t_{10} & t_{11} \cr
{} &  &  & 1 &  &  &  &  &  &  & 1 & 1 \cr
{} &  &  &  &  &  & 1 &  &  &  & 1 & 1 \cr
{} &  &  &  &  &  &  &  & 1 &  & 1 & 1 \cr
{} &  &  &  &  &  &  &  &  &  & 2 & 1 \cr
}.
\]
It follows readily that the nontrivial elementary divisors of the Smith normal form of $T$ are all equal to $1$. The torsion subgroup of the group generated by the tails is thus trivial, thereby showing $\B_0(G) = 1$.


\item \label{number:36} 
Let the group $G$ be the representative of this family given by the presentation
\[
\begin{aligned}
\langle g_{1}, \,g_{2}, \,g_{3}, \,g_{4}, \,g_{5}, \,g_{6}, \,g_{7} & \mid & g_{1}^{2} &= 1, \\ 
 & & g_{2}^{2} &= 1, & [g_{2}, g_{1}]  &= g_{4}, \\ 
 & & g_{3}^{2} &= 1, & [g_{3}, g_{1}]  &= g_{5}, & [g_{3}, g_{2}]  &= g_{6}, \\ 
 & & g_{4}^{2} &= g_{7}, & [g_{4}, g_{1}]  &= g_{7}, & [g_{4}, g_{2}]  &= g_{7}, \\ 
 & & g_{5}^{2} &= 1, \\ 
 & & g_{6}^{2} &= 1, \\ 
 & & g_{7}^{2} &= 1\rangle. \\ 
\end{aligned}
\]
We add 12 tails to the presentation as to form a quotient of the universal central extension of the system: 
$g_{1}^{2} =  t_{1}$,
$g_{2}^{2} =  t_{2}$,
$[g_{2}, g_{1}] = g_{4} t_{3}$,
$g_{3}^{2} =  t_{4}$,
$[g_{3}, g_{1}] = g_{5} t_{5}$,
$[g_{3}, g_{2}] = g_{6} t_{6}$,
$g_{4}^{2} = g_{7} t_{7}$,
$[g_{4}, g_{1}] = g_{7} t_{8}$,
$[g_{4}, g_{2}] = g_{7} t_{9}$,
$g_{5}^{2} =  t_{10}$,
$g_{6}^{2} =  t_{11}$,
$g_{7}^{2} =  t_{12}$.
Carrying out consistency checks gives the following relations between the tails:
\[
\begin{aligned}
g_{4}^2 g_{2} & = g_{4} (g_{4} g_{2})& \Longrightarrow & & t_{9}^{2}t_{12} & = 1 \\
g_{4}^2 g_{1} & = g_{4} (g_{4} g_{1})& \Longrightarrow & & t_{8}^{2}t_{12} & = 1 \\
g_{3}^2 g_{2} & = g_{3} (g_{3} g_{2})& \Longrightarrow & & t_{6}^{2}t_{11} & = 1 \\
g_{3}^2 g_{1} & = g_{3} (g_{3} g_{1})& \Longrightarrow & & t_{5}^{2}t_{10} & = 1 \\
g_{2}^2 g_{1} & = g_{2} (g_{2} g_{1})& \Longrightarrow & & t_{3}^{2}t_{7}t_{9}t_{12} & = 1 \\
g_{2} g_{1}^{2} & = (g_{2} g_{1}) g_{1}& \Longrightarrow & & t_{3}^{2}t_{7}t_{8}t_{12} & = 1 \\
\end{aligned}
\]
Scanning through the conjugacy class representatives of $G$ and the generators of their centralizers, we see that no new relations are imposed.
Collecting the coefficients of these relations into a matrix yields
\[
T = \bordermatrix{
{} & t_{1} & t_{2} & t_{3} & t_{4} & t_{5} & t_{6} & t_{7} & t_{8} & t_{9} & t_{10} & t_{11} & t_{12} \cr
{} &  &  & 2 &  &  &  & 1 &  & 1 &  &  & 1 \cr
{} &  &  &  &  & 2 &  &  &  &  & 1 &  &  \cr
{} &  &  &  &  &  & 2 &  &  &  &  & 1 &  \cr
{} &  &  &  &  &  &  &  & 1 & 1 &  &  & 1 \cr
{} &  &  &  &  &  &  &  &  & 2 &  &  & 1 \cr
}.
\]
It follows readily that the nontrivial elementary divisors of the Smith normal form of $T$ are all equal to $1$. The torsion subgroup of the group generated by the tails is thus trivial, thereby showing $\B_0(G) = 1$.


\item \label{number:37} 
Let the group $G$ be the representative of this family given by the presentation
\[
\begin{aligned}
\langle g_{1}, \,g_{2}, \,g_{3}, \,g_{4}, \,g_{5}, \,g_{6}, \,g_{7} & \mid & g_{1}^{2} &= g_{5}, \\ 
 & & g_{2}^{2} &= 1, & [g_{2}, g_{1}]  &= g_{4}, \\ 
 & & g_{3}^{2} &= 1, & [g_{3}, g_{1}]  &= g_{7}, \\ 
 & & g_{4}^{2} &= g_{7}, & [g_{4}, g_{1}]  &= g_{6}, & [g_{4}, g_{2}]  &= g_{7}, \\ 
 & & g_{5}^{2} &= 1, & [g_{5}, g_{2}]  &= g_{6}g_{7}, \\ 
 & & g_{6}^{2} &= 1, \\ 
 & & g_{7}^{2} &= 1\rangle. \\ 
\end{aligned}
\]
We add 12 tails to the presentation as to form a quotient of the universal central extension of the system: 
$g_{1}^{2} = g_{5} t_{1}$,
$g_{2}^{2} =  t_{2}$,
$[g_{2}, g_{1}] = g_{4} t_{3}$,
$g_{3}^{2} =  t_{4}$,
$[g_{3}, g_{1}] = g_{7} t_{5}$,
$g_{4}^{2} = g_{7} t_{6}$,
$[g_{4}, g_{1}] = g_{6} t_{7}$,
$[g_{4}, g_{2}] = g_{7} t_{8}$,
$g_{5}^{2} =  t_{9}$,
$[g_{5}, g_{2}] = g_{6}g_{7} t_{10}$,
$g_{6}^{2} =  t_{11}$,
$g_{7}^{2} =  t_{12}$.
Carrying out consistency checks gives the following relations between the tails:
\[
\begin{aligned}
g_{5}^2 g_{2} & = g_{5} (g_{5} g_{2})& \Longrightarrow & & t_{10}^{2}t_{11}t_{12} & = 1 \\
g_{4}^2 g_{2} & = g_{4} (g_{4} g_{2})& \Longrightarrow & & t_{8}^{2}t_{12} & = 1 \\
g_{4}^2 g_{1} & = g_{4} (g_{4} g_{1})& \Longrightarrow & & t_{7}^{2}t_{11} & = 1 \\
g_{3}^2 g_{1} & = g_{3} (g_{3} g_{1})& \Longrightarrow & & t_{5}^{2}t_{12} & = 1 \\
g_{2}^2 g_{1} & = g_{2} (g_{2} g_{1})& \Longrightarrow & & t_{3}^{2}t_{6}t_{8}t_{12} & = 1 \\
g_{2} g_{1}^{2} & = (g_{2} g_{1}) g_{1}& \Longrightarrow & & t_{3}^{2}t_{6}t_{7}t_{10}t_{11}t_{12} & = 1 \\
\end{aligned}
\]
Scanning through the conjugacy class representatives of $G$ and the generators of their centralizers, we see that no new relations are imposed.
Collecting the coefficients of these relations into a matrix yields
\[
T = \bordermatrix{
{} & t_{1} & t_{2} & t_{3} & t_{4} & t_{5} & t_{6} & t_{7} & t_{8} & t_{9} & t_{10} & t_{11} & t_{12} \cr
{} &  &  & 2 &  &  & 1 &  & 1 &  &  &  & 1 \cr
{} &  &  &  &  & 2 &  &  &  &  &  &  & 1 \cr
{} &  &  &  &  &  &  & 1 & 1 &  & 1 & 1 & 1 \cr
{} &  &  &  &  &  &  &  & 2 &  &  &  & 1 \cr
{} &  &  &  &  &  &  &  &  &  & 2 & 1 & 1 \cr
}.
\]
A change of basis according to the transition matrix (specifying expansions of $t_i^{*}$ by $t_j$)
\[
\bordermatrix{
{} & t_{1}^{*} & t_{2}^{*} & t_{3}^{*} & t_{4}^{*} & t_{5}^{*} & t_{6}^{*} & t_{7}^{*} & t_{8}^{*} & t_{9}^{*} & t_{10}^{*} & t_{11}^{*} & t_{12}^{*} \cr
t_{1} &  &  &  &  &  & -1 &  &  &  &  & 1 & 1 \cr
t_{2} &  &  &  &  &  & -1 & -1 &  &  &  &  &  \cr
t_{3} & -2 &  &  &  & -2 &  &  &  &  &  & -1 & -1 \cr
t_{4} &  &  &  &  &  &  &  & -1 &  & -1 & -3 &  \cr
t_{5} & 4 &  &  &  & 3 & 1 &  &  &  &  &  &  \cr
t_{6} & -1 &  &  &  & -1 &  &  &  &  &  &  &  \cr
t_{7} & -4 & 1 &  &  & -3 &  & 1 &  &  &  &  &  \cr
t_{8} & 1 & 1 & 10 & -4 & 7 &  &  & 1 &  &  &  &  \cr
t_{9} &  &  &  &  &  &  &  &  & 1 &  &  &  \cr
t_{10} & 2 & -1 & -4 & 2 & -1 &  &  &  &  & 1 &  &  \cr
t_{11} & -1 &  & -2 & 1 & -2 &  &  &  &  &  & 1 &  \cr
t_{12} & 3 &  & 3 & -1 & 4 &  &  &  &  &  &  & 1 \cr
}
\]
shows that the nontrivial elementary divisors of the Smith normal form of $T$ are $1$, $1$, $1$, $1$, $2$.  The element corresponding to the divisor that is greater than $1$ is $t_{5}^{*}$. This already gives
\[
\B_0(G) \cong \langle t_{5}^{*}  \mid {t_{5}^{*}}^{2} \rangle.
\]

We now deal with explicitly identifying the nonuniversal commutator relation generating $\B_0(G)$.
First, factor out by the tails $t_{i}^{*}$ whose corresponding elementary divisors are either trivial or $1$. Transforming the situation back to the original tails $t_i$, this amounts to the nontrivial expansions given by
\[
\bordermatrix{
{} & t_{1} & t_{3} & t_{5} \cr
t_{5}^{*} & 1 & 1 & 1 \cr
}
\]
and all the other tails $t_i$ are trivial. We thus obtain a commutativity preserving central extension of the group $G$, given by the presentation
\[
\begin{aligned}
\langle g_{1}, \,g_{2}, \,g_{3}, \,g_{4}, \,g_{5}, \,g_{6}, \,g_{7}, \,t_{5}^{*} & \mid & g_{1}^{2} &= g_{5}t_{5}^{*} , \\ 
 & & g_{2}^{2} &= 1, & [g_{2}, g_{1}]  &= g_{4}t_{5}^{*} , \\ 
 & & g_{3}^{2} &= 1, & [g_{3}, g_{1}]  &= g_{7}t_{5}^{*} , \\ 
 & & g_{4}^{2} &= g_{7}, & [g_{4}, g_{1}]  &= g_{6}, & [g_{4}, g_{2}]  &= g_{7}, \\ 
 & & g_{5}^{2} &= 1, & [g_{5}, g_{2}]  &= g_{6}g_{7}, \\ 
 & & g_{6}^{2} &= 1, \\ 
 & & g_{7}^{2} &= 1, \\ 
 & & {t_{5}^{*}}^{2} &= 1  \rangle,
\end{aligned}
\]
whence the nonuniversal commutator relation is identified as
\[
t_{5}^{*}  = [g_{3}, g_{1}] [g_{4}, g_{2}]^{-1}.  \quad 
\]


\item \label{number:38} 
Let the group $G$ be the representative of this family given by the presentation
\[
\begin{aligned}
\langle g_{1}, \,g_{2}, \,g_{3}, \,g_{4}, \,g_{5}, \,g_{6}, \,g_{7} & \mid & g_{1}^{2} &= g_{5}, \\ 
 & & g_{2}^{2} &= 1, & [g_{2}, g_{1}]  &= g_{4}, \\ 
 & & g_{3}^{2} &= 1, & [g_{3}, g_{2}]  &= g_{6}, \\ 
 & & g_{4}^{2} &= g_{7}, & [g_{4}, g_{1}]  &= g_{6}, & [g_{4}, g_{2}]  &= g_{7}, \\ 
 & & g_{5}^{2} &= 1, & [g_{5}, g_{2}]  &= g_{6}g_{7}, \\ 
 & & g_{6}^{2} &= 1, \\ 
 & & g_{7}^{2} &= 1\rangle. \\ 
\end{aligned}
\]
We add 12 tails to the presentation as to form a quotient of the universal central extension of the system: 
$g_{1}^{2} = g_{5} t_{1}$,
$g_{2}^{2} =  t_{2}$,
$[g_{2}, g_{1}] = g_{4} t_{3}$,
$g_{3}^{2} =  t_{4}$,
$[g_{3}, g_{2}] = g_{6} t_{5}$,
$g_{4}^{2} = g_{7} t_{6}$,
$[g_{4}, g_{1}] = g_{6} t_{7}$,
$[g_{4}, g_{2}] = g_{7} t_{8}$,
$g_{5}^{2} =  t_{9}$,
$[g_{5}, g_{2}] = g_{6}g_{7} t_{10}$,
$g_{6}^{2} =  t_{11}$,
$g_{7}^{2} =  t_{12}$.
Carrying out consistency checks gives the following relations between the tails:
\[
\begin{aligned}
g_{5}^2 g_{2} & = g_{5} (g_{5} g_{2})& \Longrightarrow & & t_{10}^{2}t_{11}t_{12} & = 1 \\
g_{4}^2 g_{2} & = g_{4} (g_{4} g_{2})& \Longrightarrow & & t_{8}^{2}t_{12} & = 1 \\
g_{4}^2 g_{1} & = g_{4} (g_{4} g_{1})& \Longrightarrow & & t_{7}^{2}t_{11} & = 1 \\
g_{3}^2 g_{2} & = g_{3} (g_{3} g_{2})& \Longrightarrow & & t_{5}^{2}t_{11} & = 1 \\
g_{2}^2 g_{1} & = g_{2} (g_{2} g_{1})& \Longrightarrow & & t_{3}^{2}t_{6}t_{8}t_{12} & = 1 \\
g_{2} g_{1}^{2} & = (g_{2} g_{1}) g_{1}& \Longrightarrow & & t_{3}^{2}t_{6}t_{7}t_{10}t_{11}t_{12} & = 1 \\
\end{aligned}
\]
Scanning through the conjugacy class representatives of $G$ and the generators of their centralizers, we obtain the following relations induced on the tails:
\[
\begin{aligned}
{[g_{3} g_{4} g_{5} g_{6} , \, g_{2} g_{4} g_{7} ]}_G & = 1 & \Longrightarrow & & t_{5}t_{8}t_{10}t_{11}t_{12} & = 1 \\
\end{aligned}
\]
Collecting the coefficients of these relations into a matrix yields
\[
T = \bordermatrix{
{} & t_{1} & t_{2} & t_{3} & t_{4} & t_{5} & t_{6} & t_{7} & t_{8} & t_{9} & t_{10} & t_{11} & t_{12} \cr
{} &  &  & 2 &  &  & 1 &  & 1 &  &  &  & 1 \cr
{} &  &  &  &  & 1 &  &  & 1 &  & 1 & 1 & 1 \cr
{} &  &  &  &  &  &  & 1 & 1 &  & 1 & 1 & 1 \cr
{} &  &  &  &  &  &  &  & 2 &  &  &  & 1 \cr
{} &  &  &  &  &  &  &  &  &  & 2 & 1 & 1 \cr
}.
\]
It follows readily that the nontrivial elementary divisors of the Smith normal form of $T$ are all equal to $1$. The torsion subgroup of the group generated by the tails is thus trivial, thereby showing $\B_0(G) = 1$.


\item \label{number:39} 
Let the group $G$ be the representative of this family given by the presentation
\[
\begin{aligned}
\langle g_{1}, \,g_{2}, \,g_{3}, \,g_{4}, \,g_{5}, \,g_{6}, \,g_{7} & \mid & g_{1}^{2} &= g_{4}, \\ 
 & & g_{2}^{2} &= g_{5}, & [g_{2}, g_{1}]  &= g_{3}, \\ 
 & & g_{3}^{2} &= 1, & [g_{3}, g_{1}]  &= g_{6}, & [g_{3}, g_{2}]  &= g_{7}, \\ 
 & & g_{4}^{2} &= 1, & [g_{4}, g_{2}]  &= g_{6}, \\ 
 & & g_{5}^{2} &= 1, & [g_{5}, g_{1}]  &= g_{7}, \\ 
 & & g_{6}^{2} &= 1, \\ 
 & & g_{7}^{2} &= 1\rangle. \\ 
\end{aligned}
\]
We add 12 tails to the presentation as to form a quotient of the universal central extension of the system: 
$g_{1}^{2} = g_{4} t_{1}$,
$g_{2}^{2} = g_{5} t_{2}$,
$[g_{2}, g_{1}] = g_{3} t_{3}$,
$g_{3}^{2} =  t_{4}$,
$[g_{3}, g_{1}] = g_{6} t_{5}$,
$[g_{3}, g_{2}] = g_{7} t_{6}$,
$g_{4}^{2} =  t_{7}$,
$[g_{4}, g_{2}] = g_{6} t_{8}$,
$g_{5}^{2} =  t_{9}$,
$[g_{5}, g_{1}] = g_{7} t_{10}$,
$g_{6}^{2} =  t_{11}$,
$g_{7}^{2} =  t_{12}$.
Carrying out consistency checks gives the following relations between the tails:
\[
\begin{aligned}
g_{5}^2 g_{1} & = g_{5} (g_{5} g_{1})& \Longrightarrow & & t_{10}^{2}t_{12} & = 1 \\
g_{4}^2 g_{2} & = g_{4} (g_{4} g_{2})& \Longrightarrow & & t_{8}^{2}t_{11} & = 1 \\
g_{3}^2 g_{2} & = g_{3} (g_{3} g_{2})& \Longrightarrow & & t_{6}^{2}t_{12} & = 1 \\
g_{3}^2 g_{1} & = g_{3} (g_{3} g_{1})& \Longrightarrow & & t_{5}^{2}t_{11} & = 1 \\
g_{2}^2 g_{1} & = g_{2} (g_{2} g_{1})& \Longrightarrow & & t_{3}^{2}t_{4}t_{6}t_{10}^{-1} & = 1 \\
g_{2} g_{1}^{2} & = (g_{2} g_{1}) g_{1}& \Longrightarrow & & t_{3}^{2}t_{4}t_{5}t_{8}t_{11} & = 1 \\
\end{aligned}
\]
Scanning through the conjugacy class representatives of $G$ and the generators of their centralizers, we see that no new relations are imposed.
Collecting the coefficients of these relations into a matrix yields
\[
T = \bordermatrix{
{} & t_{1} & t_{2} & t_{3} & t_{4} & t_{5} & t_{6} & t_{7} & t_{8} & t_{9} & t_{10} & t_{11} & t_{12} \cr
{} &  &  & 2 & 1 &  & 1 &  &  &  & 1 &  & 1 \cr
{} &  &  &  &  & 1 & 1 &  & 1 &  & 1 & 1 & 1 \cr
{} &  &  &  &  &  & 2 &  &  &  &  &  & 1 \cr
{} &  &  &  &  &  &  &  & 2 &  &  & 1 &  \cr
{} &  &  &  &  &  &  &  &  &  & 2 &  & 1 \cr
}.
\]
A change of basis according to the transition matrix (specifying expansions of $t_i^{*}$ by $t_j$)
\[
\bordermatrix{
{} & t_{1}^{*} & t_{2}^{*} & t_{3}^{*} & t_{4}^{*} & t_{5}^{*} & t_{6}^{*} & t_{7}^{*} & t_{8}^{*} & t_{9}^{*} & t_{10}^{*} & t_{11}^{*} & t_{12}^{*} \cr
t_{1} &  &  &  &  &  & -1 &  &  &  &  & 3 & 1 \cr
t_{2} &  &  &  &  &  &  &  &  &  &  & 1 &  \cr
t_{3} &  & -6 &  & -4 & -6 &  &  &  &  &  & -3 & -1 \cr
t_{4} &  & -3 &  & -2 & -3 &  &  &  &  &  & -3 &  \cr
t_{5} & 1 & 6 &  & 4 & 6 & 1 &  &  &  &  &  &  \cr
t_{6} & -1 & -5 &  & -4 & -6 & -1 &  &  &  &  & -1 &  \cr
t_{7} &  &  &  &  &  &  & 1 &  &  &  &  &  \cr
t_{8} & -1 & -4 &  & -2 & -4 &  &  & 1 &  &  &  &  \cr
t_{9} &  &  &  &  &  &  &  &  & 1 &  &  &  \cr
t_{10} & 1 & 11 & 2 & 6 & 12 &  &  &  &  & 1 &  &  \cr
t_{11} &  & 1 &  & 1 & 1 &  &  &  &  &  & 1 &  \cr
t_{12} &  & 3 & 1 & 1 & 3 &  &  &  &  &  &  & 1 \cr
}
\]
shows that the nontrivial elementary divisors of the Smith normal form of $T$ are $1$, $1$, $1$, $1$, $2$.  The element corresponding to the divisor that is greater than $1$ is $t_{5}^{*}$. This already gives
\[
\B_0(G) \cong \langle t_{5}^{*}  \mid {t_{5}^{*}}^{2} \rangle.
\]

We now deal with explicitly identifying the nonuniversal commutator relation generating $\B_0(G)$.
First, factor out by the tails $t_{i}^{*}$ whose corresponding elementary divisors are either trivial or $1$. Transforming the situation back to the original tails $t_i$, this amounts to the nontrivial expansions given by
\[
\bordermatrix{
{} & t_{4} & t_{5} & t_{6} \cr
t_{5}^{*} & 1 & 1 & 1 \cr
}
\]
and all the other tails $t_i$ are trivial. We thus obtain a commutativity preserving central extension of the group $G$, given by the presentation
\[
\begin{aligned}
\langle g_{1}, \,g_{2}, \,g_{3}, \,g_{4}, \,g_{5}, \,g_{6}, \,g_{7}, \,t_{5}^{*} & \mid & g_{1}^{2} &= g_{4}, \\ 
 & & g_{2}^{2} &= g_{5}, & [g_{2}, g_{1}]  &= g_{3}, \\ 
 & & g_{3}^{2} &= t_{5}^{*} , & [g_{3}, g_{1}]  &= g_{6}t_{5}^{*} , & [g_{3}, g_{2}]  &= g_{7}t_{5}^{*} , \\ 
 & & g_{4}^{2} &= 1, & [g_{4}, g_{2}]  &= g_{6}, \\ 
 & & g_{5}^{2} &= 1, & [g_{5}, g_{1}]  &= g_{7}, \\ 
 & & g_{6}^{2} &= 1, \\ 
 & & g_{7}^{2} &= 1, \\ 
 & & {t_{5}^{*}}^{2} &= 1  \rangle,
\end{aligned}
\]
whence the nonuniversal commutator relation is identified as
\[
t_{5}^{*}  = [g_{3}, g_{2}] [g_{5}, g_{1}]^{-1}.  \quad 
\]


\item \label{number:40} 
Let the group $G$ be the representative of this family given by the presentation
\[
\begin{aligned}
\langle g_{1}, \,g_{2}, \,g_{3}, \,g_{4}, \,g_{5}, \,g_{6}, \,g_{7} & \mid & g_{1}^{2} &= 1, \\ 
 & & g_{2}^{2} &= 1, & [g_{2}, g_{1}]  &= g_{5}, \\ 
 & & g_{3}^{2} &= 1, & [g_{3}, g_{1}]  &= g_{6}, \\ 
 & & g_{4}^{2} &= 1, & [g_{4}, g_{2}]  &= g_{5}, \\ 
 & & g_{5}^{2} &= g_{7}, & [g_{5}, g_{1}]  &= g_{7}, & [g_{5}, g_{2}]  &= g_{7}, & [g_{5}, g_{4}]  &= g_{7}, \\ 
 & & g_{6}^{2} &= 1, \\ 
 & & g_{7}^{2} &= 1\rangle. \\ 
\end{aligned}
\]
We add 13 tails to the presentation as to form a quotient of the universal central extension of the system: 
$g_{1}^{2} =  t_{1}$,
$g_{2}^{2} =  t_{2}$,
$[g_{2}, g_{1}] = g_{5} t_{3}$,
$g_{3}^{2} =  t_{4}$,
$[g_{3}, g_{1}] = g_{6} t_{5}$,
$g_{4}^{2} =  t_{6}$,
$[g_{4}, g_{2}] = g_{5} t_{7}$,
$g_{5}^{2} = g_{7} t_{8}$,
$[g_{5}, g_{1}] = g_{7} t_{9}$,
$[g_{5}, g_{2}] = g_{7} t_{10}$,
$[g_{5}, g_{4}] = g_{7} t_{11}$,
$g_{6}^{2} =  t_{12}$,
$g_{7}^{2} =  t_{13}$.
Carrying out consistency checks gives the following relations between the tails:
\[
\begin{aligned}
g_{4}(g_{2} g_{1}) & = (g_{4} g_{2}) g_{1}  & \Longrightarrow & & t_{9}t_{11}t_{13} & = 1 \\
g_{5}^2 g_{4} & = g_{5} (g_{5} g_{4})& \Longrightarrow & & t_{11}^{2}t_{13} & = 1 \\
g_{5}^2 g_{2} & = g_{5} (g_{5} g_{2})& \Longrightarrow & & t_{10}^{2}t_{13} & = 1 \\
g_{4}^2 g_{2} & = g_{4} (g_{4} g_{2})& \Longrightarrow & & t_{7}^{2}t_{8}t_{11}t_{13} & = 1 \\
g_{3}^2 g_{1} & = g_{3} (g_{3} g_{1})& \Longrightarrow & & t_{5}^{2}t_{12} & = 1 \\
g_{2}^2 g_{1} & = g_{2} (g_{2} g_{1})& \Longrightarrow & & t_{3}^{2}t_{8}t_{10}t_{13} & = 1 \\
g_{4} g_{2}^{2} & = (g_{4} g_{2}) g_{2}& \Longrightarrow & & t_{7}^{2}t_{8}t_{10}t_{13} & = 1 \\
\end{aligned}
\]
Scanning through the conjugacy class representatives of $G$ and the generators of their centralizers, we obtain the following relations induced on the tails:
\[
\begin{aligned}
{[g_{2} g_{5} g_{7} , \, g_{1} g_{3} g_{4} g_{5} g_{7} ]}_G & = 1 & \Longrightarrow & & t_{3}t_{7}^{-1}t_{9}t_{10}^{-1} & = 1 \\
\end{aligned}
\]
Collecting the coefficients of these relations into a matrix yields
\[
T = \bordermatrix{
{} & t_{1} & t_{2} & t_{3} & t_{4} & t_{5} & t_{6} & t_{7} & t_{8} & t_{9} & t_{10} & t_{11} & t_{12} & t_{13} \cr
{} &  &  & 1 &  &  &  & 1 & 1 &  &  & 1 &  & 1 \cr
{} &  &  &  &  & 2 &  &  &  &  &  &  & 1 &  \cr
{} &  &  &  &  &  &  & 2 & 1 &  &  & 1 &  & 1 \cr
{} &  &  &  &  &  &  &  &  & 1 &  & 1 &  & 1 \cr
{} &  &  &  &  &  &  &  &  &  & 1 & 1 &  & 1 \cr
{} &  &  &  &  &  &  &  &  &  &  & 2 &  & 1 \cr
}.
\]
It follows readily that the nontrivial elementary divisors of the Smith normal form of $T$ are all equal to $1$. The torsion subgroup of the group generated by the tails is thus trivial, thereby showing $\B_0(G) = 1$.


\item \label{number:41} 
Let the group $G$ be the representative of this family given by the presentation
\[
\begin{aligned}
\langle g_{1}, \,g_{2}, \,g_{3}, \,g_{4}, \,g_{5}, \,g_{6}, \,g_{7} & \mid & g_{1}^{2} &= 1, \\ 
 & & g_{2}^{2} &= 1, & [g_{2}, g_{1}]  &= g_{5}, \\ 
 & & g_{3}^{2} &= 1, & [g_{3}, g_{1}]  &= g_{6}, \\ 
 & & g_{4}^{2} &= 1, & [g_{4}, g_{1}]  &= g_{7}, & [g_{4}, g_{2}]  &= g_{5}, \\ 
 & & g_{5}^{2} &= g_{7}, & [g_{5}, g_{1}]  &= g_{7}, & [g_{5}, g_{2}]  &= g_{7}, & [g_{5}, g_{4}]  &= g_{7}, \\ 
 & & g_{6}^{2} &= 1, \\ 
 & & g_{7}^{2} &= 1\rangle. \\ 
\end{aligned}
\]
We add 14 tails to the presentation as to form a quotient of the universal central extension of the system: 
$g_{1}^{2} =  t_{1}$,
$g_{2}^{2} =  t_{2}$,
$[g_{2}, g_{1}] = g_{5} t_{3}$,
$g_{3}^{2} =  t_{4}$,
$[g_{3}, g_{1}] = g_{6} t_{5}$,
$g_{4}^{2} =  t_{6}$,
$[g_{4}, g_{1}] = g_{7} t_{7}$,
$[g_{4}, g_{2}] = g_{5} t_{8}$,
$g_{5}^{2} = g_{7} t_{9}$,
$[g_{5}, g_{1}] = g_{7} t_{10}$,
$[g_{5}, g_{2}] = g_{7} t_{11}$,
$[g_{5}, g_{4}] = g_{7} t_{12}$,
$g_{6}^{2} =  t_{13}$,
$g_{7}^{2} =  t_{14}$.
Carrying out consistency checks gives the following relations between the tails:
\[
\begin{aligned}
g_{4}(g_{2} g_{1}) & = (g_{4} g_{2}) g_{1}  & \Longrightarrow & & t_{10}t_{12}t_{14} & = 1 \\
g_{5}^2 g_{4} & = g_{5} (g_{5} g_{4})& \Longrightarrow & & t_{12}^{2}t_{14} & = 1 \\
g_{5}^2 g_{2} & = g_{5} (g_{5} g_{2})& \Longrightarrow & & t_{11}^{2}t_{14} & = 1 \\
g_{4}^2 g_{2} & = g_{4} (g_{4} g_{2})& \Longrightarrow & & t_{8}^{2}t_{9}t_{12}t_{14} & = 1 \\
g_{4}^2 g_{1} & = g_{4} (g_{4} g_{1})& \Longrightarrow & & t_{7}^{2}t_{14} & = 1 \\
g_{3}^2 g_{1} & = g_{3} (g_{3} g_{1})& \Longrightarrow & & t_{5}^{2}t_{13} & = 1 \\
g_{2}^2 g_{1} & = g_{2} (g_{2} g_{1})& \Longrightarrow & & t_{3}^{2}t_{9}t_{11}t_{14} & = 1 \\
g_{4} g_{2}^{2} & = (g_{4} g_{2}) g_{2}& \Longrightarrow & & t_{8}^{2}t_{9}t_{11}t_{14} & = 1 \\
\end{aligned}
\]
Scanning through the conjugacy class representatives of $G$ and the generators of their centralizers, we obtain the following relations induced on the tails:
\[
\begin{aligned}
{[g_{4} g_{5} g_{7} , \, g_{1} ]}_G & = 1 & \Longrightarrow & & t_{7}t_{10}t_{14} & = 1 \\
{[g_{2} g_{5} g_{7} , \, g_{1} g_{3} g_{4} g_{5} g_{7} ]}_G & = 1 & \Longrightarrow & & t_{3}t_{8}^{-1}t_{10}t_{11}^{-1} & = 1 \\
\end{aligned}
\]
Collecting the coefficients of these relations into a matrix yields
\[
T = \bordermatrix{
{} & t_{1} & t_{2} & t_{3} & t_{4} & t_{5} & t_{6} & t_{7} & t_{8} & t_{9} & t_{10} & t_{11} & t_{12} & t_{13} & t_{14} \cr
{} &  &  & 1 &  &  &  &  & 1 & 1 &  &  & 1 &  & 1 \cr
{} &  &  &  &  & 2 &  &  &  &  &  &  &  & 1 &  \cr
{} &  &  &  &  &  &  & 1 &  &  &  &  & 1 &  & 1 \cr
{} &  &  &  &  &  &  &  & 2 & 1 &  &  & 1 &  & 1 \cr
{} &  &  &  &  &  &  &  &  &  & 1 &  & 1 &  & 1 \cr
{} &  &  &  &  &  &  &  &  &  &  & 1 & 1 &  & 1 \cr
{} &  &  &  &  &  &  &  &  &  &  &  & 2 &  & 1 \cr
}.
\]
It follows readily that the nontrivial elementary divisors of the Smith normal form of $T$ are all equal to $1$. The torsion subgroup of the group generated by the tails is thus trivial, thereby showing $\B_0(G) = 1$.


\item \label{number:42} 
Let the group $G$ be the representative of this family given by the presentation
\[
\begin{aligned}
\langle g_{1}, \,g_{2}, \,g_{3}, \,g_{4}, \,g_{5}, \,g_{6}, \,g_{7} & \mid & g_{1}^{2} &= 1, \\ 
 & & g_{2}^{2} &= 1, & [g_{2}, g_{1}]  &= g_{5}, \\ 
 & & g_{3}^{2} &= 1, & [g_{3}, g_{1}]  &= g_{6}, & [g_{3}, g_{2}]  &= g_{5}, \\ 
 & & g_{4}^{2} &= 1, & [g_{4}, g_{1}]  &= g_{5}, & [g_{4}, g_{2}]  &= g_{7}, \\ 
 & & g_{5}^{2} &= 1, \\ 
 & & g_{6}^{2} &= g_{7}, & [g_{6}, g_{1}]  &= g_{7}, & [g_{6}, g_{3}]  &= g_{7}, \\ 
 & & g_{7}^{2} &= 1\rangle. \\ 
\end{aligned}
\]
We add 14 tails to the presentation as to form a quotient of the universal central extension of the system: 
$g_{1}^{2} =  t_{1}$,
$g_{2}^{2} =  t_{2}$,
$[g_{2}, g_{1}] = g_{5} t_{3}$,
$g_{3}^{2} =  t_{4}$,
$[g_{3}, g_{1}] = g_{6} t_{5}$,
$[g_{3}, g_{2}] = g_{5} t_{6}$,
$g_{4}^{2} =  t_{7}$,
$[g_{4}, g_{1}] = g_{5} t_{8}$,
$[g_{4}, g_{2}] = g_{7} t_{9}$,
$g_{5}^{2} =  t_{10}$,
$g_{6}^{2} = g_{7} t_{11}$,
$[g_{6}, g_{1}] = g_{7} t_{12}$,
$[g_{6}, g_{3}] = g_{7} t_{13}$,
$g_{7}^{2} =  t_{14}$.
Carrying out consistency checks gives the following relations between the tails:
\[
\begin{aligned}
g_{6}^2 g_{3} & = g_{6} (g_{6} g_{3})& \Longrightarrow & & t_{13}^{2}t_{14} & = 1 \\
g_{6}^2 g_{1} & = g_{6} (g_{6} g_{1})& \Longrightarrow & & t_{12}^{2}t_{14} & = 1 \\
g_{4}^2 g_{2} & = g_{4} (g_{4} g_{2})& \Longrightarrow & & t_{9}^{2}t_{14} & = 1 \\
g_{4}^2 g_{1} & = g_{4} (g_{4} g_{1})& \Longrightarrow & & t_{8}^{2}t_{10} & = 1 \\
g_{3}^2 g_{2} & = g_{3} (g_{3} g_{2})& \Longrightarrow & & t_{6}^{2}t_{10} & = 1 \\
g_{3}^2 g_{1} & = g_{3} (g_{3} g_{1})& \Longrightarrow & & t_{5}^{2}t_{11}t_{13}t_{14} & = 1 \\
g_{2}^2 g_{1} & = g_{2} (g_{2} g_{1})& \Longrightarrow & & t_{3}^{2}t_{10} & = 1 \\
g_{3} g_{1}^{2} & = (g_{3} g_{1}) g_{1}& \Longrightarrow & & t_{5}^{2}t_{11}t_{12}t_{14} & = 1 \\
\end{aligned}
\]
Scanning through the conjugacy class representatives of $G$ and the generators of their centralizers, we obtain the following relations induced on the tails:
\[
\begin{aligned}
{[g_{4} g_{5} g_{6} g_{7} , \, g_{2} g_{3} g_{4} ]}_G & = 1 & \Longrightarrow & & t_{9}t_{13}t_{14} & = 1 \\
{[g_{2} g_{5} g_{7} , \, g_{1} g_{3} ]}_G & = 1 & \Longrightarrow & & t_{3}t_{6}^{-1} & = 1 \\
{[g_{2} g_{4} g_{5} g_{7} , \, g_{1} ]}_G & = 1 & \Longrightarrow & & t_{3}t_{8}t_{10} & = 1 \\
\end{aligned}
\]
Collecting the coefficients of these relations into a matrix yields
\[
T = \bordermatrix{
{} & t_{1} & t_{2} & t_{3} & t_{4} & t_{5} & t_{6} & t_{7} & t_{8} & t_{9} & t_{10} & t_{11} & t_{12} & t_{13} & t_{14} \cr
{} &  &  & 1 &  &  &  &  & 1 &  & 1 &  &  &  &  \cr
{} &  &  &  &  & 2 &  &  &  &  &  & 1 &  & 1 & 1 \cr
{} &  &  &  &  &  & 1 &  & 1 &  & 1 &  &  &  &  \cr
{} &  &  &  &  &  &  &  & 2 &  & 1 &  &  &  &  \cr
{} &  &  &  &  &  &  &  &  & 1 &  &  &  & 1 & 1 \cr
{} &  &  &  &  &  &  &  &  &  &  &  & 1 & 1 & 1 \cr
{} &  &  &  &  &  &  &  &  &  &  &  &  & 2 & 1 \cr
}.
\]
It follows readily that the nontrivial elementary divisors of the Smith normal form of $T$ are all equal to $1$. The torsion subgroup of the group generated by the tails is thus trivial, thereby showing $\B_0(G) = 1$.


\item \label{number:43} 
Let the group $G$ be the representative of this family given by the presentation
\[
\begin{aligned}
\langle g_{1}, \,g_{2}, \,g_{3}, \,g_{4}, \,g_{5}, \,g_{6}, \,g_{7} & \mid & g_{1}^{2} &= 1, \\ 
 & & g_{2}^{2} &= 1, & [g_{2}, g_{1}]  &= g_{5}, \\ 
 & & g_{3}^{2} &= 1, & [g_{3}, g_{1}]  &= g_{6}, & [g_{3}, g_{2}]  &= g_{5}g_{7}, \\ 
 & & g_{4}^{2} &= 1, & [g_{4}, g_{1}]  &= g_{5}, \\ 
 & & g_{5}^{2} &= 1, \\ 
 & & g_{6}^{2} &= g_{7}, & [g_{6}, g_{1}]  &= g_{7}, & [g_{6}, g_{3}]  &= g_{7}, \\ 
 & & g_{7}^{2} &= 1\rangle. \\ 
\end{aligned}
\]
We add 13 tails to the presentation as to form a quotient of the universal central extension of the system: 
$g_{1}^{2} =  t_{1}$,
$g_{2}^{2} =  t_{2}$,
$[g_{2}, g_{1}] = g_{5} t_{3}$,
$g_{3}^{2} =  t_{4}$,
$[g_{3}, g_{1}] = g_{6} t_{5}$,
$[g_{3}, g_{2}] = g_{5}g_{7} t_{6}$,
$g_{4}^{2} =  t_{7}$,
$[g_{4}, g_{1}] = g_{5} t_{8}$,
$g_{5}^{2} =  t_{9}$,
$g_{6}^{2} = g_{7} t_{10}$,
$[g_{6}, g_{1}] = g_{7} t_{11}$,
$[g_{6}, g_{3}] = g_{7} t_{12}$,
$g_{7}^{2} =  t_{13}$.
Carrying out consistency checks gives the following relations between the tails:
\[
\begin{aligned}
g_{6}^2 g_{3} & = g_{6} (g_{6} g_{3})& \Longrightarrow & & t_{12}^{2}t_{13} & = 1 \\
g_{6}^2 g_{1} & = g_{6} (g_{6} g_{1})& \Longrightarrow & & t_{11}^{2}t_{13} & = 1 \\
g_{4}^2 g_{1} & = g_{4} (g_{4} g_{1})& \Longrightarrow & & t_{8}^{2}t_{9} & = 1 \\
g_{3}^2 g_{2} & = g_{3} (g_{3} g_{2})& \Longrightarrow & & t_{6}^{2}t_{9}t_{13} & = 1 \\
g_{3}^2 g_{1} & = g_{3} (g_{3} g_{1})& \Longrightarrow & & t_{5}^{2}t_{10}t_{12}t_{13} & = 1 \\
g_{2}^2 g_{1} & = g_{2} (g_{2} g_{1})& \Longrightarrow & & t_{3}^{2}t_{9} & = 1 \\
g_{3} g_{1}^{2} & = (g_{3} g_{1}) g_{1}& \Longrightarrow & & t_{5}^{2}t_{10}t_{11}t_{13} & = 1 \\
\end{aligned}
\]
Scanning through the conjugacy class representatives of $G$ and the generators of their centralizers, we obtain the following relations induced on the tails:
\[
\begin{aligned}
{[g_{2} g_{4} g_{5} , \, g_{1} g_{4} ]}_G & = 1 & \Longrightarrow & & t_{3}t_{8}t_{9} & = 1 \\
\end{aligned}
\]
Collecting the coefficients of these relations into a matrix yields
\[
T = \bordermatrix{
{} & t_{1} & t_{2} & t_{3} & t_{4} & t_{5} & t_{6} & t_{7} & t_{8} & t_{9} & t_{10} & t_{11} & t_{12} & t_{13} \cr
{} &  &  & 1 &  &  &  &  & 1 & 1 &  &  &  &  \cr
{} &  &  &  &  & 2 &  &  &  &  & 1 &  & 1 & 1 \cr
{} &  &  &  &  &  & 2 &  &  & 1 &  &  &  & 1 \cr
{} &  &  &  &  &  &  &  & 2 & 1 &  &  &  &  \cr
{} &  &  &  &  &  &  &  &  &  &  & 1 & 1 & 1 \cr
{} &  &  &  &  &  &  &  &  &  &  &  & 2 & 1 \cr
}.
\]
A change of basis according to the transition matrix (specifying expansions of $t_i^{*}$ by $t_j$)
\[
\bordermatrix{
{} & t_{1}^{*} & t_{2}^{*} & t_{3}^{*} & t_{4}^{*} & t_{5}^{*} & t_{6}^{*} & t_{7}^{*} & t_{8}^{*} & t_{9}^{*} & t_{10}^{*} & t_{11}^{*} & t_{12}^{*} & t_{13}^{*} \cr
t_{1} &  &  &  &  &  &  & 1 &  & 1 &  &  &  &  \cr
t_{2} &  &  &  &  &  &  &  &  & -1 & -1 &  &  & 1 \cr
t_{3} & 1 &  &  &  &  &  & -1 &  & -1 &  &  &  &  \cr
t_{4} &  &  &  &  &  &  &  &  &  &  & -1 &  & 1 \cr
t_{5} & 2 & 2 & 2 &  &  & 2 &  &  &  &  &  &  & -1 \cr
t_{6} & 2 & 2 & 4 &  &  & 3 & 1 &  &  &  &  &  &  \cr
t_{7} &  &  &  &  &  &  &  &  &  & -1 &  &  &  \cr
t_{8} & -9 & -6 & -10 &  &  & -9 &  & 1 &  &  &  &  &  \cr
t_{9} & -3 & -2 & -3 &  &  & -3 &  &  & 1 &  &  &  &  \cr
t_{10} & 1 & 1 & 1 &  &  & 1 &  &  &  & 1 &  &  &  \cr
t_{11} & -2 & -2 & -2 & 1 &  & -2 &  &  &  &  & 1 &  &  \cr
t_{12} & -5 & -5 & -7 & -1 & 2 & -6 &  &  &  &  &  & 1 &  \cr
t_{13} & -2 & -2 & -2 &  & 1 & -2 &  &  &  &  &  &  & 1 \cr
}
\]
shows that the nontrivial elementary divisors of the Smith normal form of $T$ are $1$, $1$, $1$, $1$, $1$, $2$.  The element corresponding to the divisor that is greater than $1$ is $t_{6}^{*}$. This already gives
\[
\B_0(G) \cong \langle t_{6}^{*}  \mid {t_{6}^{*}}^{2} \rangle.
\]

We now deal with explicitly identifying the nonuniversal commutator relation generating $\B_0(G)$.
First, factor out by the tails $t_{i}^{*}$ whose corresponding elementary divisors are either trivial or $1$. Transforming the situation back to the original tails $t_i$, this amounts to the nontrivial expansions given by
\[
\bordermatrix{
{} & t_{1} & t_{2} & t_{5} & t_{6} & t_{7} \cr
t_{6}^{*} & 1 & 1 & 1 & 1 & 1 \cr
}
\]
and all the other tails $t_i$ are trivial. We thus obtain a commutativity preserving central extension of the group $G$, given by the presentation
\[
\begin{aligned}
\langle g_{1}, \,g_{2}, \,g_{3}, \,g_{4}, \,g_{5}, \,g_{6}, \,g_{7}, \,t_{6}^{*} & \mid & g_{1}^{2} &= t_{6}^{*} , \\ 
 & & g_{2}^{2} &= t_{6}^{*} , & [g_{2}, g_{1}]  &= g_{5}, \\ 
 & & g_{3}^{2} &= 1, & [g_{3}, g_{1}]  &= g_{6}t_{6}^{*} , & [g_{3}, g_{2}]  &= g_{5}g_{7}t_{6}^{*} , \\ 
 & & g_{4}^{2} &= t_{6}^{*} , & [g_{4}, g_{1}]  &= g_{5}, \\ 
 & & g_{5}^{2} &= 1, \\ 
 & & g_{6}^{2} &= g_{7}, & [g_{6}, g_{1}]  &= g_{7}, & [g_{6}, g_{3}]  &= g_{7}, \\ 
 & & g_{7}^{2} &= 1, \\ 
 & & {t_{6}^{*}}^{2} &= 1  \rangle,
\end{aligned}
\]
whence the nonuniversal commutator relation is identified as
\[
t_{6}^{*}  = [g_{3}, g_{2}] [g_{4}, g_{1}]^{-1}[g_{6}, g_{3}]^{-1}.  \quad 
\]


\item \label{number:44} 
Let the group $G$ be the representative of this family given by the presentation
\[
\begin{aligned}
\langle g_{1}, \,g_{2}, \,g_{3}, \,g_{4}, \,g_{5}, \,g_{6}, \,g_{7} & \mid & g_{1}^{2} &= 1, \\ 
 & & g_{2}^{2} &= 1, & [g_{2}, g_{1}]  &= g_{5}, \\ 
 & & g_{3}^{2} &= 1, & [g_{3}, g_{1}]  &= g_{6}, & [g_{3}, g_{2}]  &= g_{5}, \\ 
 & & g_{4}^{2} &= 1, & [g_{4}, g_{1}]  &= g_{5}, \\ 
 & & g_{5}^{2} &= 1, \\ 
 & & g_{6}^{2} &= g_{7}, & [g_{6}, g_{1}]  &= g_{7}, & [g_{6}, g_{3}]  &= g_{7}, \\ 
 & & g_{7}^{2} &= 1\rangle. \\ 
\end{aligned}
\]
We add 13 tails to the presentation as to form a quotient of the universal central extension of the system: 
$g_{1}^{2} =  t_{1}$,
$g_{2}^{2} =  t_{2}$,
$[g_{2}, g_{1}] = g_{5} t_{3}$,
$g_{3}^{2} =  t_{4}$,
$[g_{3}, g_{1}] = g_{6} t_{5}$,
$[g_{3}, g_{2}] = g_{5} t_{6}$,
$g_{4}^{2} =  t_{7}$,
$[g_{4}, g_{1}] = g_{5} t_{8}$,
$g_{5}^{2} =  t_{9}$,
$g_{6}^{2} = g_{7} t_{10}$,
$[g_{6}, g_{1}] = g_{7} t_{11}$,
$[g_{6}, g_{3}] = g_{7} t_{12}$,
$g_{7}^{2} =  t_{13}$.
Carrying out consistency checks gives the following relations between the tails:
\[
\begin{aligned}
g_{6}^2 g_{3} & = g_{6} (g_{6} g_{3})& \Longrightarrow & & t_{12}^{2}t_{13} & = 1 \\
g_{6}^2 g_{1} & = g_{6} (g_{6} g_{1})& \Longrightarrow & & t_{11}^{2}t_{13} & = 1 \\
g_{4}^2 g_{1} & = g_{4} (g_{4} g_{1})& \Longrightarrow & & t_{8}^{2}t_{9} & = 1 \\
g_{3}^2 g_{2} & = g_{3} (g_{3} g_{2})& \Longrightarrow & & t_{6}^{2}t_{9} & = 1 \\
g_{3}^2 g_{1} & = g_{3} (g_{3} g_{1})& \Longrightarrow & & t_{5}^{2}t_{10}t_{12}t_{13} & = 1 \\
g_{2}^2 g_{1} & = g_{2} (g_{2} g_{1})& \Longrightarrow & & t_{3}^{2}t_{9} & = 1 \\
g_{3} g_{1}^{2} & = (g_{3} g_{1}) g_{1}& \Longrightarrow & & t_{5}^{2}t_{10}t_{11}t_{13} & = 1 \\
\end{aligned}
\]
Scanning through the conjugacy class representatives of $G$ and the generators of their centralizers, we obtain the following relations induced on the tails:
\[
\begin{aligned}
{[g_{2} g_{5} , \, g_{1} g_{3} g_{4} ]}_G & = 1 & \Longrightarrow & & t_{3}t_{6}^{-1} & = 1 \\
{[g_{2} g_{4} g_{5} , \, g_{1} g_{4} ]}_G & = 1 & \Longrightarrow & & t_{3}t_{8}t_{9} & = 1 \\
\end{aligned}
\]
Collecting the coefficients of these relations into a matrix yields
\[
T = \bordermatrix{
{} & t_{1} & t_{2} & t_{3} & t_{4} & t_{5} & t_{6} & t_{7} & t_{8} & t_{9} & t_{10} & t_{11} & t_{12} & t_{13} \cr
{} &  &  & 1 &  &  &  &  & 1 & 1 &  &  &  &  \cr
{} &  &  &  &  & 2 &  &  &  &  & 1 &  & 1 & 1 \cr
{} &  &  &  &  &  & 1 &  & 1 & 1 &  &  &  &  \cr
{} &  &  &  &  &  &  &  & 2 & 1 &  &  &  &  \cr
{} &  &  &  &  &  &  &  &  &  &  & 1 & 1 & 1 \cr
{} &  &  &  &  &  &  &  &  &  &  &  & 2 & 1 \cr
}.
\]
It follows readily that the nontrivial elementary divisors of the Smith normal form of $T$ are all equal to $1$. The torsion subgroup of the group generated by the tails is thus trivial, thereby showing $\B_0(G) = 1$.


\item \label{number:45} 
Let the group $G$ be the representative of this family given by the presentation
\[
\begin{aligned}
\langle g_{1}, \,g_{2}, \,g_{3}, \,g_{4}, \,g_{5}, \,g_{6}, \,g_{7} & \mid & g_{1}^{2} &= g_{4}, \\ 
 & & g_{2}^{2} &= g_{5}, & [g_{2}, g_{1}]  &= g_{3}, \\ 
 & & g_{3}^{2} &= g_{6}g_{7}, & [g_{3}, g_{1}]  &= g_{6}, \\ 
 & & g_{4}^{2} &= 1, & [g_{4}, g_{2}]  &= g_{7}, \\ 
 & & g_{5}^{2} &= 1, & [g_{5}, g_{1}]  &= g_{6}g_{7}, \\ 
 & & g_{6}^{2} &= 1, \\ 
 & & g_{7}^{2} &= 1\rangle. \\ 
\end{aligned}
\]
We add 11 tails to the presentation as to form a quotient of the universal central extension of the system: 
$g_{1}^{2} = g_{4} t_{1}$,
$g_{2}^{2} = g_{5} t_{2}$,
$[g_{2}, g_{1}] = g_{3} t_{3}$,
$g_{3}^{2} = g_{6}g_{7} t_{4}$,
$[g_{3}, g_{1}] = g_{6} t_{5}$,
$g_{4}^{2} =  t_{6}$,
$[g_{4}, g_{2}] = g_{7} t_{7}$,
$g_{5}^{2} =  t_{8}$,
$[g_{5}, g_{1}] = g_{6}g_{7} t_{9}$,
$g_{6}^{2} =  t_{10}$,
$g_{7}^{2} =  t_{11}$.
Carrying out consistency checks gives the following relations between the tails:
\[
\begin{aligned}
g_{5}^2 g_{1} & = g_{5} (g_{5} g_{1})& \Longrightarrow & & t_{9}^{2}t_{10}t_{11} & = 1 \\
g_{4}^2 g_{2} & = g_{4} (g_{4} g_{2})& \Longrightarrow & & t_{7}^{2}t_{11} & = 1 \\
g_{3}^2 g_{1} & = g_{3} (g_{3} g_{1})& \Longrightarrow & & t_{5}^{2}t_{10} & = 1 \\
g_{2}^2 g_{1} & = g_{2} (g_{2} g_{1})& \Longrightarrow & & t_{3}^{2}t_{4}t_{9}^{-1} & = 1 \\
g_{2} g_{1}^{2} & = (g_{2} g_{1}) g_{1}& \Longrightarrow & & t_{3}^{2}t_{4}t_{5}t_{7}t_{10}t_{11} & = 1 \\
\end{aligned}
\]
Scanning through the conjugacy class representatives of $G$ and the generators of their centralizers, we see that no new relations are imposed.
Collecting the coefficients of these relations into a matrix yields
\[
T = \bordermatrix{
{} & t_{1} & t_{2} & t_{3} & t_{4} & t_{5} & t_{6} & t_{7} & t_{8} & t_{9} & t_{10} & t_{11} \cr
{} &  &  & 2 & 1 &  &  &  &  & 1 & 1 & 1 \cr
{} &  &  &  &  & 1 &  & 1 &  & 1 & 1 & 1 \cr
{} &  &  &  &  &  &  & 2 &  &  &  & 1 \cr
{} &  &  &  &  &  &  &  &  & 2 & 1 & 1 \cr
}.
\]
It follows readily that the nontrivial elementary divisors of the Smith normal form of $T$ are all equal to $1$. The torsion subgroup of the group generated by the tails is thus trivial, thereby showing $\B_0(G) = 1$.


\item \label{number:46} 
Let the group $G$ be the representative of this family given by the presentation
\[
\begin{aligned}
\langle g_{1}, \,g_{2}, \,g_{3}, \,g_{4}, \,g_{5}, \,g_{6}, \,g_{7} & \mid & g_{1}^{2} &= 1, \\ 
 & & g_{2}^{2} &= 1, & [g_{2}, g_{1}]  &= g_{5}, \\ 
 & & g_{3}^{2} &= 1, & [g_{3}, g_{1}]  &= g_{6}, \\ 
 & & g_{4}^{2} &= 1, & [g_{4}, g_{3}]  &= g_{7}, \\ 
 & & g_{5}^{2} &= g_{7}, & [g_{5}, g_{1}]  &= g_{7}, & [g_{5}, g_{2}]  &= g_{7}, \\ 
 & & g_{6}^{2} &= 1, \\ 
 & & g_{7}^{2} &= 1\rangle. \\ 
\end{aligned}
\]
We add 12 tails to the presentation as to form a quotient of the universal central extension of the system: 
$g_{1}^{2} =  t_{1}$,
$g_{2}^{2} =  t_{2}$,
$[g_{2}, g_{1}] = g_{5} t_{3}$,
$g_{3}^{2} =  t_{4}$,
$[g_{3}, g_{1}] = g_{6} t_{5}$,
$g_{4}^{2} =  t_{6}$,
$[g_{4}, g_{3}] = g_{7} t_{7}$,
$g_{5}^{2} = g_{7} t_{8}$,
$[g_{5}, g_{1}] = g_{7} t_{9}$,
$[g_{5}, g_{2}] = g_{7} t_{10}$,
$g_{6}^{2} =  t_{11}$,
$g_{7}^{2} =  t_{12}$.
Carrying out consistency checks gives the following relations between the tails:
\[
\begin{aligned}
g_{5}^2 g_{2} & = g_{5} (g_{5} g_{2})& \Longrightarrow & & t_{10}^{2}t_{12} & = 1 \\
g_{5}^2 g_{1} & = g_{5} (g_{5} g_{1})& \Longrightarrow & & t_{9}^{2}t_{12} & = 1 \\
g_{4}^2 g_{3} & = g_{4} (g_{4} g_{3})& \Longrightarrow & & t_{7}^{2}t_{12} & = 1 \\
g_{3}^2 g_{1} & = g_{3} (g_{3} g_{1})& \Longrightarrow & & t_{5}^{2}t_{11} & = 1 \\
g_{2}^2 g_{1} & = g_{2} (g_{2} g_{1})& \Longrightarrow & & t_{3}^{2}t_{8}t_{10}t_{12} & = 1 \\
g_{2} g_{1}^{2} & = (g_{2} g_{1}) g_{1}& \Longrightarrow & & t_{3}^{2}t_{8}t_{9}t_{12} & = 1 \\
\end{aligned}
\]
Scanning through the conjugacy class representatives of $G$ and the generators of their centralizers, we obtain the following relations induced on the tails:
\[
\begin{aligned}
{[g_{4} g_{5} g_{7} , \, g_{1} g_{3} ]}_G & = 1 & \Longrightarrow & & t_{7}t_{9}t_{12} & = 1 \\
\end{aligned}
\]
Collecting the coefficients of these relations into a matrix yields
\[
T = \bordermatrix{
{} & t_{1} & t_{2} & t_{3} & t_{4} & t_{5} & t_{6} & t_{7} & t_{8} & t_{9} & t_{10} & t_{11} & t_{12} \cr
{} &  &  & 2 &  &  &  &  & 1 &  & 1 &  & 1 \cr
{} &  &  &  &  & 2 &  &  &  &  &  & 1 &  \cr
{} &  &  &  &  &  &  & 1 &  &  & 1 &  & 1 \cr
{} &  &  &  &  &  &  &  &  & 1 & 1 &  & 1 \cr
{} &  &  &  &  &  &  &  &  &  & 2 &  & 1 \cr
}.
\]
It follows readily that the nontrivial elementary divisors of the Smith normal form of $T$ are all equal to $1$. The torsion subgroup of the group generated by the tails is thus trivial, thereby showing $\B_0(G) = 1$.


\item \label{number:47} 
Let the group $G$ be the representative of this family given by the presentation
\[
\begin{aligned}
\langle g_{1}, \,g_{2}, \,g_{3}, \,g_{4}, \,g_{5}, \,g_{6}, \,g_{7} & \mid & g_{1}^{2} &= g_{5}, \\ 
 & & g_{2}^{2} &= 1, & [g_{2}, g_{1}]  &= g_{4}, \\ 
 & & g_{3}^{2} &= 1, & [g_{3}, g_{2}]  &= g_{7}, \\ 
 & & g_{4}^{2} &= 1, & [g_{4}, g_{1}]  &= g_{6}, \\ 
 & & g_{5}^{2} &= g_{7}, & [g_{5}, g_{2}]  &= g_{6}, \\ 
 & & g_{6}^{2} &= 1, \\ 
 & & g_{7}^{2} &= 1\rangle. \\ 
\end{aligned}
\]
We add 11 tails to the presentation as to form a quotient of the universal central extension of the system: 
$g_{1}^{2} = g_{5} t_{1}$,
$g_{2}^{2} =  t_{2}$,
$[g_{2}, g_{1}] = g_{4} t_{3}$,
$g_{3}^{2} =  t_{4}$,
$[g_{3}, g_{2}] = g_{7} t_{5}$,
$g_{4}^{2} =  t_{6}$,
$[g_{4}, g_{1}] = g_{6} t_{7}$,
$g_{5}^{2} = g_{7} t_{8}$,
$[g_{5}, g_{2}] = g_{6} t_{9}$,
$g_{6}^{2} =  t_{10}$,
$g_{7}^{2} =  t_{11}$.
Carrying out consistency checks gives the following relations between the tails:
\[
\begin{aligned}
g_{5}^2 g_{2} & = g_{5} (g_{5} g_{2})& \Longrightarrow & & t_{9}^{2}t_{10} & = 1 \\
g_{4}^2 g_{1} & = g_{4} (g_{4} g_{1})& \Longrightarrow & & t_{7}^{2}t_{10} & = 1 \\
g_{3}^2 g_{2} & = g_{3} (g_{3} g_{2})& \Longrightarrow & & t_{5}^{2}t_{11} & = 1 \\
g_{2}^2 g_{1} & = g_{2} (g_{2} g_{1})& \Longrightarrow & & t_{3}^{2}t_{6} & = 1 \\
g_{2} g_{1}^{2} & = (g_{2} g_{1}) g_{1}& \Longrightarrow & & t_{3}^{2}t_{6}t_{7}t_{9}t_{10} & = 1 \\
\end{aligned}
\]
Scanning through the conjugacy class representatives of $G$ and the generators of their centralizers, we see that no new relations are imposed.
Collecting the coefficients of these relations into a matrix yields
\[
T = \bordermatrix{
{} & t_{1} & t_{2} & t_{3} & t_{4} & t_{5} & t_{6} & t_{7} & t_{8} & t_{9} & t_{10} & t_{11} \cr
{} &  &  & 2 &  &  & 1 &  &  &  &  &  \cr
{} &  &  &  &  & 2 &  &  &  &  &  & 1 \cr
{} &  &  &  &  &  &  & 1 &  & 1 & 1 &  \cr
{} &  &  &  &  &  &  &  &  & 2 & 1 &  \cr
}.
\]
It follows readily that the nontrivial elementary divisors of the Smith normal form of $T$ are all equal to $1$. The torsion subgroup of the group generated by the tails is thus trivial, thereby showing $\B_0(G) = 1$.


\item \label{number:48} 
Let the group $G$ be the representative of this family given by the presentation
\[
\begin{aligned}
\langle g_{1}, \,g_{2}, \,g_{3}, \,g_{4}, \,g_{5}, \,g_{6}, \,g_{7} & \mid & g_{1}^{2} &= 1, \\ 
 & & g_{2}^{2} &= 1, & [g_{2}, g_{1}]  &= g_{5}, \\ 
 & & g_{3}^{2} &= 1, & [g_{3}, g_{1}]  &= g_{6}, \\ 
 & & g_{4}^{2} &= 1, & [g_{4}, g_{1}]  &= g_{7}, \\ 
 & & g_{5}^{2} &= g_{7}, & [g_{5}, g_{1}]  &= g_{7}, & [g_{5}, g_{2}]  &= g_{7}, \\ 
 & & g_{6}^{2} &= 1, \\ 
 & & g_{7}^{2} &= 1\rangle. \\ 
\end{aligned}
\]
We add 12 tails to the presentation as to form a quotient of the universal central extension of the system: 
$g_{1}^{2} =  t_{1}$,
$g_{2}^{2} =  t_{2}$,
$[g_{2}, g_{1}] = g_{5} t_{3}$,
$g_{3}^{2} =  t_{4}$,
$[g_{3}, g_{1}] = g_{6} t_{5}$,
$g_{4}^{2} =  t_{6}$,
$[g_{4}, g_{1}] = g_{7} t_{7}$,
$g_{5}^{2} = g_{7} t_{8}$,
$[g_{5}, g_{1}] = g_{7} t_{9}$,
$[g_{5}, g_{2}] = g_{7} t_{10}$,
$g_{6}^{2} =  t_{11}$,
$g_{7}^{2} =  t_{12}$.
Carrying out consistency checks gives the following relations between the tails:
\[
\begin{aligned}
g_{5}^2 g_{2} & = g_{5} (g_{5} g_{2})& \Longrightarrow & & t_{10}^{2}t_{12} & = 1 \\
g_{5}^2 g_{1} & = g_{5} (g_{5} g_{1})& \Longrightarrow & & t_{9}^{2}t_{12} & = 1 \\
g_{4}^2 g_{1} & = g_{4} (g_{4} g_{1})& \Longrightarrow & & t_{7}^{2}t_{12} & = 1 \\
g_{3}^2 g_{1} & = g_{3} (g_{3} g_{1})& \Longrightarrow & & t_{5}^{2}t_{11} & = 1 \\
g_{2}^2 g_{1} & = g_{2} (g_{2} g_{1})& \Longrightarrow & & t_{3}^{2}t_{8}t_{10}t_{12} & = 1 \\
g_{2} g_{1}^{2} & = (g_{2} g_{1}) g_{1}& \Longrightarrow & & t_{3}^{2}t_{8}t_{9}t_{12} & = 1 \\
\end{aligned}
\]
Scanning through the conjugacy class representatives of $G$ and the generators of their centralizers, we obtain the following relations induced on the tails:
\[
\begin{aligned}
{[g_{4} g_{5} g_{7} , \, g_{1} ]}_G & = 1 & \Longrightarrow & & t_{7}t_{9}t_{12} & = 1 \\
\end{aligned}
\]
Collecting the coefficients of these relations into a matrix yields
\[
T = \bordermatrix{
{} & t_{1} & t_{2} & t_{3} & t_{4} & t_{5} & t_{6} & t_{7} & t_{8} & t_{9} & t_{10} & t_{11} & t_{12} \cr
{} &  &  & 2 &  &  &  &  & 1 &  & 1 &  & 1 \cr
{} &  &  &  &  & 2 &  &  &  &  &  & 1 &  \cr
{} &  &  &  &  &  &  & 1 &  &  & 1 &  & 1 \cr
{} &  &  &  &  &  &  &  &  & 1 & 1 &  & 1 \cr
{} &  &  &  &  &  &  &  &  &  & 2 &  & 1 \cr
}.
\]
It follows readily that the nontrivial elementary divisors of the Smith normal form of $T$ are all equal to $1$. The torsion subgroup of the group generated by the tails is thus trivial, thereby showing $\B_0(G) = 1$.


\item \label{number:49} 
Let the group $G$ be the representative of this family given by the presentation
\[
\begin{aligned}
\langle g_{1}, \,g_{2}, \,g_{3}, \,g_{4}, \,g_{5}, \,g_{6}, \,g_{7} & \mid & g_{1}^{2} &= 1, \\ 
 & & g_{2}^{2} &= g_{5}, & [g_{2}, g_{1}]  &= g_{5}, \\ 
 & & g_{3}^{2} &= 1, & [g_{3}, g_{1}]  &= g_{6}, \\ 
 & & g_{4}^{2} &= 1, & [g_{4}, g_{3}]  &= g_{7}, \\ 
 & & g_{5}^{2} &= g_{7}, & [g_{5}, g_{1}]  &= g_{7}, \\ 
 & & g_{6}^{2} &= 1, \\ 
 & & g_{7}^{2} &= 1\rangle. \\ 
\end{aligned}
\]
We add 11 tails to the presentation as to form a quotient of the universal central extension of the system: 
$g_{1}^{2} =  t_{1}$,
$g_{2}^{2} = g_{5} t_{2}$,
$[g_{2}, g_{1}] = g_{5} t_{3}$,
$g_{3}^{2} =  t_{4}$,
$[g_{3}, g_{1}] = g_{6} t_{5}$,
$g_{4}^{2} =  t_{6}$,
$[g_{4}, g_{3}] = g_{7} t_{7}$,
$g_{5}^{2} = g_{7} t_{8}$,
$[g_{5}, g_{1}] = g_{7} t_{9}$,
$g_{6}^{2} =  t_{10}$,
$g_{7}^{2} =  t_{11}$.
Carrying out consistency checks gives the following relations between the tails:
\[
\begin{aligned}
g_{5}^2 g_{1} & = g_{5} (g_{5} g_{1})& \Longrightarrow & & t_{9}^{2}t_{11} & = 1 \\
g_{4}^2 g_{3} & = g_{4} (g_{4} g_{3})& \Longrightarrow & & t_{7}^{2}t_{11} & = 1 \\
g_{3}^2 g_{1} & = g_{3} (g_{3} g_{1})& \Longrightarrow & & t_{5}^{2}t_{10} & = 1 \\
g_{2}^2 g_{1} & = g_{2} (g_{2} g_{1})& \Longrightarrow & & t_{3}^{2}t_{8}t_{9}^{-1} & = 1 \\
\end{aligned}
\]
Scanning through the conjugacy class representatives of $G$ and the generators of their centralizers, we obtain the following relations induced on the tails:
\[
\begin{aligned}
{[g_{4} g_{5} g_{7} , \, g_{1} g_{3} ]}_G & = 1 & \Longrightarrow & & t_{7}t_{9}t_{11} & = 1 \\
\end{aligned}
\]
Collecting the coefficients of these relations into a matrix yields
\[
T = \bordermatrix{
{} & t_{1} & t_{2} & t_{3} & t_{4} & t_{5} & t_{6} & t_{7} & t_{8} & t_{9} & t_{10} & t_{11} \cr
{} &  &  & 2 &  &  &  &  & 1 & 1 &  & 1 \cr
{} &  &  &  &  & 2 &  &  &  &  & 1 &  \cr
{} &  &  &  &  &  &  & 1 &  & 1 &  & 1 \cr
{} &  &  &  &  &  &  &  &  & 2 &  & 1 \cr
}.
\]
It follows readily that the nontrivial elementary divisors of the Smith normal form of $T$ are all equal to $1$. The torsion subgroup of the group generated by the tails is thus trivial, thereby showing $\B_0(G) = 1$.


\item \label{number:50} 
Let the group $G$ be the representative of this family given by the presentation
\[
\begin{aligned}
\langle g_{1}, \,g_{2}, \,g_{3}, \,g_{4}, \,g_{5}, \,g_{6}, \,g_{7} & \mid & g_{1}^{2} &= g_{6}, \\ 
 & & g_{2}^{2} &= g_{4}, & [g_{2}, g_{1}]  &= g_{4}, \\ 
 & & g_{3}^{2} &= 1, & [g_{3}, g_{1}]  &= g_{5}, \\ 
 & & g_{4}^{2} &= g_{7}, & [g_{4}, g_{1}]  &= g_{7}, \\ 
 & & g_{5}^{2} &= 1, & [g_{5}, g_{1}]  &= g_{7}, \\ 
 & & g_{6}^{2} &= 1, & [g_{6}, g_{3}]  &= g_{7}, \\ 
 & & g_{7}^{2} &= 1\rangle. \\ 
\end{aligned}
\]
We add 12 tails to the presentation as to form a quotient of the universal central extension of the system: 
$g_{1}^{2} = g_{6} t_{1}$,
$g_{2}^{2} = g_{4} t_{2}$,
$[g_{2}, g_{1}] = g_{4} t_{3}$,
$g_{3}^{2} =  t_{4}$,
$[g_{3}, g_{1}] = g_{5} t_{5}$,
$g_{4}^{2} = g_{7} t_{6}$,
$[g_{4}, g_{1}] = g_{7} t_{7}$,
$g_{5}^{2} =  t_{8}$,
$[g_{5}, g_{1}] = g_{7} t_{9}$,
$g_{6}^{2} =  t_{10}$,
$[g_{6}, g_{3}] = g_{7} t_{11}$,
$g_{7}^{2} =  t_{12}$.
Carrying out consistency checks gives the following relations between the tails:
\[
\begin{aligned}
g_{6}^2 g_{3} & = g_{6} (g_{6} g_{3})& \Longrightarrow & & t_{11}^{2}t_{12} & = 1 \\
g_{5}^2 g_{1} & = g_{5} (g_{5} g_{1})& \Longrightarrow & & t_{9}^{2}t_{12} & = 1 \\
g_{4}^2 g_{1} & = g_{4} (g_{4} g_{1})& \Longrightarrow & & t_{7}^{2}t_{12} & = 1 \\
g_{3}^2 g_{1} & = g_{3} (g_{3} g_{1})& \Longrightarrow & & t_{5}^{2}t_{8} & = 1 \\
g_{2}^2 g_{1} & = g_{2} (g_{2} g_{1})& \Longrightarrow & & t_{3}^{2}t_{6}t_{7}^{-1} & = 1 \\
g_{3} g_{1}^{2} & = (g_{3} g_{1}) g_{1}& \Longrightarrow & & t_{5}^{2}t_{8}t_{9}t_{11}t_{12} & = 1 \\
\end{aligned}
\]
Scanning through the conjugacy class representatives of $G$ and the generators of their centralizers, we obtain the following relations induced on the tails:
\[
\begin{aligned}
{[g_{4} g_{6} g_{7} , \, g_{1} g_{3} ]}_G & = 1 & \Longrightarrow & & t_{7}t_{11}t_{12} & = 1 \\
\end{aligned}
\]
Collecting the coefficients of these relations into a matrix yields
\[
T = \bordermatrix{
{} & t_{1} & t_{2} & t_{3} & t_{4} & t_{5} & t_{6} & t_{7} & t_{8} & t_{9} & t_{10} & t_{11} & t_{12} \cr
{} &  &  & 2 &  &  & 1 &  &  &  &  & 1 & 1 \cr
{} &  &  &  &  & 2 &  &  & 1 &  &  &  &  \cr
{} &  &  &  &  &  &  & 1 &  &  &  & 1 & 1 \cr
{} &  &  &  &  &  &  &  &  & 1 &  & 1 & 1 \cr
{} &  &  &  &  &  &  &  &  &  &  & 2 & 1 \cr
}.
\]
It follows readily that the nontrivial elementary divisors of the Smith normal form of $T$ are all equal to $1$. The torsion subgroup of the group generated by the tails is thus trivial, thereby showing $\B_0(G) = 1$.


\item \label{number:51} 
Let the group $G$ be the representative of this family given by the presentation
\[
\begin{aligned}
\langle g_{1}, \,g_{2}, \,g_{3}, \,g_{4}, \,g_{5}, \,g_{6}, \,g_{7} & \mid & g_{1}^{2} &= 1, \\ 
 & & g_{2}^{2} &= g_{5}, & [g_{2}, g_{1}]  &= g_{5}, \\ 
 & & g_{3}^{2} &= 1, & [g_{3}, g_{1}]  &= g_{6}, \\ 
 & & g_{4}^{2} &= 1, & [g_{4}, g_{2}]  &= g_{7}, \\ 
 & & g_{5}^{2} &= g_{7}, & [g_{5}, g_{1}]  &= g_{7}, \\ 
 & & g_{6}^{2} &= 1, \\ 
 & & g_{7}^{2} &= 1\rangle. \\ 
\end{aligned}
\]
We add 11 tails to the presentation as to form a quotient of the universal central extension of the system: 
$g_{1}^{2} =  t_{1}$,
$g_{2}^{2} = g_{5} t_{2}$,
$[g_{2}, g_{1}] = g_{5} t_{3}$,
$g_{3}^{2} =  t_{4}$,
$[g_{3}, g_{1}] = g_{6} t_{5}$,
$g_{4}^{2} =  t_{6}$,
$[g_{4}, g_{2}] = g_{7} t_{7}$,
$g_{5}^{2} = g_{7} t_{8}$,
$[g_{5}, g_{1}] = g_{7} t_{9}$,
$g_{6}^{2} =  t_{10}$,
$g_{7}^{2} =  t_{11}$.
Carrying out consistency checks gives the following relations between the tails:
\[
\begin{aligned}
g_{5}^2 g_{1} & = g_{5} (g_{5} g_{1})& \Longrightarrow & & t_{9}^{2}t_{11} & = 1 \\
g_{4}^2 g_{2} & = g_{4} (g_{4} g_{2})& \Longrightarrow & & t_{7}^{2}t_{11} & = 1 \\
g_{3}^2 g_{1} & = g_{3} (g_{3} g_{1})& \Longrightarrow & & t_{5}^{2}t_{10} & = 1 \\
g_{2}^2 g_{1} & = g_{2} (g_{2} g_{1})& \Longrightarrow & & t_{3}^{2}t_{8}t_{9}^{-1} & = 1 \\
\end{aligned}
\]
Scanning through the conjugacy class representatives of $G$ and the generators of their centralizers, we obtain the following relations induced on the tails:
\[
\begin{aligned}
{[g_{4} g_{5} g_{7} , \, g_{1} g_{2} g_{3} ]}_G & = 1 & \Longrightarrow & & t_{7}t_{9}t_{11} & = 1 \\
\end{aligned}
\]
Collecting the coefficients of these relations into a matrix yields
\[
T = \bordermatrix{
{} & t_{1} & t_{2} & t_{3} & t_{4} & t_{5} & t_{6} & t_{7} & t_{8} & t_{9} & t_{10} & t_{11} \cr
{} &  &  & 2 &  &  &  &  & 1 & 1 &  & 1 \cr
{} &  &  &  &  & 2 &  &  &  &  & 1 &  \cr
{} &  &  &  &  &  &  & 1 &  & 1 &  & 1 \cr
{} &  &  &  &  &  &  &  &  & 2 &  & 1 \cr
}.
\]
It follows readily that the nontrivial elementary divisors of the Smith normal form of $T$ are all equal to $1$. The torsion subgroup of the group generated by the tails is thus trivial, thereby showing $\B_0(G) = 1$.


\item \label{number:52} 
Let the group $G$ be the representative of this family given by the presentation
\[
\begin{aligned}
\langle g_{1}, \,g_{2}, \,g_{3}, \,g_{4}, \,g_{5}, \,g_{6}, \,g_{7} & \mid & g_{1}^{2} &= g_{5}, \\ 
 & & g_{2}^{2} &= 1, & [g_{2}, g_{1}]  &= g_{4}, \\ 
 & & g_{3}^{2} &= 1, & [g_{3}, g_{1}]  &= g_{7}, \\ 
 & & g_{4}^{2} &= 1, & [g_{4}, g_{1}]  &= g_{6}, \\ 
 & & g_{5}^{2} &= g_{7}, & [g_{5}, g_{2}]  &= g_{6}, \\ 
 & & g_{6}^{2} &= 1, \\ 
 & & g_{7}^{2} &= 1\rangle. \\ 
\end{aligned}
\]
We add 11 tails to the presentation as to form a quotient of the universal central extension of the system: 
$g_{1}^{2} = g_{5} t_{1}$,
$g_{2}^{2} =  t_{2}$,
$[g_{2}, g_{1}] = g_{4} t_{3}$,
$g_{3}^{2} =  t_{4}$,
$[g_{3}, g_{1}] = g_{7} t_{5}$,
$g_{4}^{2} =  t_{6}$,
$[g_{4}, g_{1}] = g_{6} t_{7}$,
$g_{5}^{2} = g_{7} t_{8}$,
$[g_{5}, g_{2}] = g_{6} t_{9}$,
$g_{6}^{2} =  t_{10}$,
$g_{7}^{2} =  t_{11}$.
Carrying out consistency checks gives the following relations between the tails:
\[
\begin{aligned}
g_{5}^2 g_{2} & = g_{5} (g_{5} g_{2})& \Longrightarrow & & t_{9}^{2}t_{10} & = 1 \\
g_{4}^2 g_{1} & = g_{4} (g_{4} g_{1})& \Longrightarrow & & t_{7}^{2}t_{10} & = 1 \\
g_{3}^2 g_{1} & = g_{3} (g_{3} g_{1})& \Longrightarrow & & t_{5}^{2}t_{11} & = 1 \\
g_{2}^2 g_{1} & = g_{2} (g_{2} g_{1})& \Longrightarrow & & t_{3}^{2}t_{6} & = 1 \\
g_{2} g_{1}^{2} & = (g_{2} g_{1}) g_{1}& \Longrightarrow & & t_{3}^{2}t_{6}t_{7}t_{9}t_{10} & = 1 \\
\end{aligned}
\]
Scanning through the conjugacy class representatives of $G$ and the generators of their centralizers, we see that no new relations are imposed.
Collecting the coefficients of these relations into a matrix yields
\[
T = \bordermatrix{
{} & t_{1} & t_{2} & t_{3} & t_{4} & t_{5} & t_{6} & t_{7} & t_{8} & t_{9} & t_{10} & t_{11} \cr
{} &  &  & 2 &  &  & 1 &  &  &  &  &  \cr
{} &  &  &  &  & 2 &  &  &  &  &  & 1 \cr
{} &  &  &  &  &  &  & 1 &  & 1 & 1 &  \cr
{} &  &  &  &  &  &  &  &  & 2 & 1 &  \cr
}.
\]
It follows readily that the nontrivial elementary divisors of the Smith normal form of $T$ are all equal to $1$. The torsion subgroup of the group generated by the tails is thus trivial, thereby showing $\B_0(G) = 1$.


\item \label{number:53} 
Let the group $G$ be the representative of this family given by the presentation
\[
\begin{aligned}
\langle g_{1}, \,g_{2}, \,g_{3}, \,g_{4}, \,g_{5}, \,g_{6}, \,g_{7} & \mid & g_{1}^{2} &= 1, \\ 
 & & g_{2}^{2} &= 1, & [g_{2}, g_{1}]  &= g_{6}, \\ 
 & & g_{3}^{2} &= 1, \\ 
 & & g_{4}^{2} &= 1, & [g_{4}, g_{3}]  &= g_{7}, \\ 
 & & g_{5}^{2} &= 1, & [g_{5}, g_{1}]  &= g_{7}, \\ 
 & & g_{6}^{2} &= g_{7}, & [g_{6}, g_{1}]  &= g_{7}, & [g_{6}, g_{2}]  &= g_{7}, \\ 
 & & g_{7}^{2} &= 1\rangle. \\ 
\end{aligned}
\]
We add 12 tails to the presentation as to form a quotient of the universal central extension of the system: 
$g_{1}^{2} =  t_{1}$,
$g_{2}^{2} =  t_{2}$,
$[g_{2}, g_{1}] = g_{6} t_{3}$,
$g_{3}^{2} =  t_{4}$,
$g_{4}^{2} =  t_{5}$,
$[g_{4}, g_{3}] = g_{7} t_{6}$,
$g_{5}^{2} =  t_{7}$,
$[g_{5}, g_{1}] = g_{7} t_{8}$,
$g_{6}^{2} = g_{7} t_{9}$,
$[g_{6}, g_{1}] = g_{7} t_{10}$,
$[g_{6}, g_{2}] = g_{7} t_{11}$,
$g_{7}^{2} =  t_{12}$.
Carrying out consistency checks gives the following relations between the tails:
\[
\begin{aligned}
g_{6}^2 g_{2} & = g_{6} (g_{6} g_{2})& \Longrightarrow & & t_{11}^{2}t_{12} & = 1 \\
g_{6}^2 g_{1} & = g_{6} (g_{6} g_{1})& \Longrightarrow & & t_{10}^{2}t_{12} & = 1 \\
g_{5}^2 g_{1} & = g_{5} (g_{5} g_{1})& \Longrightarrow & & t_{8}^{2}t_{12} & = 1 \\
g_{4}^2 g_{3} & = g_{4} (g_{4} g_{3})& \Longrightarrow & & t_{6}^{2}t_{12} & = 1 \\
g_{2}^2 g_{1} & = g_{2} (g_{2} g_{1})& \Longrightarrow & & t_{3}^{2}t_{9}t_{11}t_{12} & = 1 \\
g_{2} g_{1}^{2} & = (g_{2} g_{1}) g_{1}& \Longrightarrow & & t_{3}^{2}t_{9}t_{10}t_{12} & = 1 \\
\end{aligned}
\]
Scanning through the conjugacy class representatives of $G$ and the generators of their centralizers, we obtain the following relations induced on the tails:
\[
\begin{aligned}
{[g_{5} g_{6} g_{7} , \, g_{1} ]}_G & = 1 & \Longrightarrow & & t_{8}t_{10}t_{12} & = 1 \\
{[g_{4} g_{6} g_{7} , \, g_{1} g_{3} ]}_G & = 1 & \Longrightarrow & & t_{6}t_{10}t_{12} & = 1 \\
\end{aligned}
\]
Collecting the coefficients of these relations into a matrix yields
\[
T = \bordermatrix{
{} & t_{1} & t_{2} & t_{3} & t_{4} & t_{5} & t_{6} & t_{7} & t_{8} & t_{9} & t_{10} & t_{11} & t_{12} \cr
{} &  &  & 2 &  &  &  &  &  & 1 &  & 1 & 1 \cr
{} &  &  &  &  &  & 1 &  &  &  &  & 1 & 1 \cr
{} &  &  &  &  &  &  &  & 1 &  &  & 1 & 1 \cr
{} &  &  &  &  &  &  &  &  &  & 1 & 1 & 1 \cr
{} &  &  &  &  &  &  &  &  &  &  & 2 & 1 \cr
}.
\]
It follows readily that the nontrivial elementary divisors of the Smith normal form of $T$ are all equal to $1$. The torsion subgroup of the group generated by the tails is thus trivial, thereby showing $\B_0(G) = 1$.


\item \label{number:54} 
Let the group $G$ be the representative of this family given by the presentation
\[
\begin{aligned}
\langle g_{1}, \,g_{2}, \,g_{3}, \,g_{4}, \,g_{5}, \,g_{6}, \,g_{7} & \mid & g_{1}^{2} &= g_{6}, \\ 
 & & g_{2}^{2} &= 1, & [g_{2}, g_{1}]  &= g_{5}, \\ 
 & & g_{3}^{2} &= 1, \\ 
 & & g_{4}^{2} &= 1, & [g_{4}, g_{3}]  &= g_{7}, \\ 
 & & g_{5}^{2} &= 1, & [g_{5}, g_{1}]  &= g_{7}, \\ 
 & & g_{6}^{2} &= 1, & [g_{6}, g_{2}]  &= g_{7}, \\ 
 & & g_{7}^{2} &= 1\rangle. \\ 
\end{aligned}
\]
We add 11 tails to the presentation as to form a quotient of the universal central extension of the system: 
$g_{1}^{2} = g_{6} t_{1}$,
$g_{2}^{2} =  t_{2}$,
$[g_{2}, g_{1}] = g_{5} t_{3}$,
$g_{3}^{2} =  t_{4}$,
$g_{4}^{2} =  t_{5}$,
$[g_{4}, g_{3}] = g_{7} t_{6}$,
$g_{5}^{2} =  t_{7}$,
$[g_{5}, g_{1}] = g_{7} t_{8}$,
$g_{6}^{2} =  t_{9}$,
$[g_{6}, g_{2}] = g_{7} t_{10}$,
$g_{7}^{2} =  t_{11}$.
Carrying out consistency checks gives the following relations between the tails:
\[
\begin{aligned}
g_{6}^2 g_{2} & = g_{6} (g_{6} g_{2})& \Longrightarrow & & t_{10}^{2}t_{11} & = 1 \\
g_{5}^2 g_{1} & = g_{5} (g_{5} g_{1})& \Longrightarrow & & t_{8}^{2}t_{11} & = 1 \\
g_{4}^2 g_{3} & = g_{4} (g_{4} g_{3})& \Longrightarrow & & t_{6}^{2}t_{11} & = 1 \\
g_{2}^2 g_{1} & = g_{2} (g_{2} g_{1})& \Longrightarrow & & t_{3}^{2}t_{7} & = 1 \\
g_{2} g_{1}^{2} & = (g_{2} g_{1}) g_{1}& \Longrightarrow & & t_{3}^{2}t_{7}t_{8}t_{10}t_{11} & = 1 \\
\end{aligned}
\]
Scanning through the conjugacy class representatives of $G$ and the generators of their centralizers, we obtain the following relations induced on the tails:
\[
\begin{aligned}
{[g_{4} g_{6} g_{7} , \, g_{2} g_{3} g_{4} ]}_G & = 1 & \Longrightarrow & & t_{6}t_{10}t_{11} & = 1 \\
\end{aligned}
\]
Collecting the coefficients of these relations into a matrix yields
\[
T = \bordermatrix{
{} & t_{1} & t_{2} & t_{3} & t_{4} & t_{5} & t_{6} & t_{7} & t_{8} & t_{9} & t_{10} & t_{11} \cr
{} &  &  & 2 &  &  &  & 1 &  &  &  &  \cr
{} &  &  &  &  &  & 1 &  &  &  & 1 & 1 \cr
{} &  &  &  &  &  &  &  & 1 &  & 1 & 1 \cr
{} &  &  &  &  &  &  &  &  &  & 2 & 1 \cr
}.
\]
It follows readily that the nontrivial elementary divisors of the Smith normal form of $T$ are all equal to $1$. The torsion subgroup of the group generated by the tails is thus trivial, thereby showing $\B_0(G) = 1$.


\item \label{number:55} 
Let the group $G$ be the representative of this family given by the presentation
\[
\begin{aligned}
\langle g_{1}, \,g_{2}, \,g_{3}, \,g_{4}, \,g_{5}, \,g_{6}, \,g_{7} & \mid & g_{1}^{2} &= 1, \\ 
 & & g_{2}^{2} &= 1, & [g_{2}, g_{1}]  &= g_{4}, \\ 
 & & g_{3}^{2} &= 1, & [g_{3}, g_{1}]  &= g_{5}, \\ 
 & & g_{4}^{2} &= g_{6}, & [g_{4}, g_{1}]  &= g_{6}, & [g_{4}, g_{2}]  &= g_{6}, & [g_{4}, g_{3}]  &= g_{7}, \\ 
 & & g_{5}^{2} &= g_{6}g_{7}, & [g_{5}, g_{1}]  &= g_{6}g_{7}, & [g_{5}, g_{2}]  &= g_{7}, & [g_{5}, g_{3}]  &= g_{6}g_{7}, \\ 
 & & g_{6}^{2} &= 1, \\ 
 & & g_{7}^{2} &= 1\rangle. \\ 
\end{aligned}
\]
We add 15 tails to the presentation as to form a quotient of the universal central extension of the system: 
$g_{1}^{2} =  t_{1}$,
$g_{2}^{2} =  t_{2}$,
$[g_{2}, g_{1}] = g_{4} t_{3}$,
$g_{3}^{2} =  t_{4}$,
$[g_{3}, g_{1}] = g_{5} t_{5}$,
$g_{4}^{2} = g_{6} t_{6}$,
$[g_{4}, g_{1}] = g_{6} t_{7}$,
$[g_{4}, g_{2}] = g_{6} t_{8}$,
$[g_{4}, g_{3}] = g_{7} t_{9}$,
$g_{5}^{2} = g_{6}g_{7} t_{10}$,
$[g_{5}, g_{1}] = g_{6}g_{7} t_{11}$,
$[g_{5}, g_{2}] = g_{7} t_{12}$,
$[g_{5}, g_{3}] = g_{6}g_{7} t_{13}$,
$g_{6}^{2} =  t_{14}$,
$g_{7}^{2} =  t_{15}$.
Carrying out consistency checks gives the following relations between the tails:
\[
\begin{aligned}
g_{3}(g_{2} g_{1}) & = (g_{3} g_{2}) g_{1}  & \Longrightarrow & & t_{9}t_{12}^{-1} & = 1 \\
g_{5}^2 g_{3} & = g_{5} (g_{5} g_{3})& \Longrightarrow & & t_{13}^{2}t_{14}t_{15} & = 1 \\
g_{5}^2 g_{2} & = g_{5} (g_{5} g_{2})& \Longrightarrow & & t_{12}^{2}t_{15} & = 1 \\
g_{5}^2 g_{1} & = g_{5} (g_{5} g_{1})& \Longrightarrow & & t_{11}^{2}t_{14}t_{15} & = 1 \\
g_{4}^2 g_{2} & = g_{4} (g_{4} g_{2})& \Longrightarrow & & t_{8}^{2}t_{14} & = 1 \\
g_{4}^2 g_{1} & = g_{4} (g_{4} g_{1})& \Longrightarrow & & t_{7}^{2}t_{14} & = 1 \\
g_{3}^2 g_{1} & = g_{3} (g_{3} g_{1})& \Longrightarrow & & t_{5}^{2}t_{10}t_{13}t_{14}t_{15} & = 1 \\
g_{2}^2 g_{1} & = g_{2} (g_{2} g_{1})& \Longrightarrow & & t_{3}^{2}t_{6}t_{8}t_{14} & = 1 \\
g_{3} g_{1}^{2} & = (g_{3} g_{1}) g_{1}& \Longrightarrow & & t_{5}^{2}t_{10}t_{11}t_{14}t_{15} & = 1 \\
g_{2} g_{1}^{2} & = (g_{2} g_{1}) g_{1}& \Longrightarrow & & t_{3}^{2}t_{6}t_{7}t_{14} & = 1 \\
\end{aligned}
\]
Scanning through the conjugacy class representatives of $G$ and the generators of their centralizers, we obtain the following relations induced on the tails:
\[
\begin{aligned}
{[g_{3} g_{4} g_{5} g_{6} g_{7} , \, g_{2} g_{5} ]}_G & = 1 & \Longrightarrow & & t_{8}t_{12}t_{13}^{-1} & = 1 \\
\end{aligned}
\]
Collecting the coefficients of these relations into a matrix yields
\[
T = \bordermatrix{
{} & t_{1} & t_{2} & t_{3} & t_{4} & t_{5} & t_{6} & t_{7} & t_{8} & t_{9} & t_{10} & t_{11} & t_{12} & t_{13} & t_{14} & t_{15} \cr
{} &  &  & 2 &  &  & 1 &  &  &  &  &  & 1 & 1 & 1 & 1 \cr
{} &  &  &  &  & 2 &  &  &  &  & 1 &  &  & 1 & 1 & 1 \cr
{} &  &  &  &  &  &  & 1 &  &  &  &  & 1 & 1 & 1 & 1 \cr
{} &  &  &  &  &  &  &  & 1 &  &  &  & 1 & 1 & 1 & 1 \cr
{} &  &  &  &  &  &  &  &  & 1 &  &  & 1 &  &  & 1 \cr
{} &  &  &  &  &  &  &  &  &  &  & 1 &  & 1 & 1 & 1 \cr
{} &  &  &  &  &  &  &  &  &  &  &  & 2 &  &  & 1 \cr
{} &  &  &  &  &  &  &  &  &  &  &  &  & 2 & 1 & 1 \cr
}.
\]
It follows readily that the nontrivial elementary divisors of the Smith normal form of $T$ are all equal to $1$. The torsion subgroup of the group generated by the tails is thus trivial, thereby showing $\B_0(G) = 1$.


\item \label{number:56} 
Let the group $G$ be the representative of this family given by the presentation
\[
\begin{aligned}
\langle g_{1}, \,g_{2}, \,g_{3}, \,g_{4}, \,g_{5}, \,g_{6}, \,g_{7} & \mid & g_{1}^{2} &= 1, \\ 
 & & g_{2}^{2} &= 1, & [g_{2}, g_{1}]  &= g_{4}, \\ 
 & & g_{3}^{2} &= 1, & [g_{3}, g_{1}]  &= g_{5}, \\ 
 & & g_{4}^{2} &= g_{6}, & [g_{4}, g_{1}]  &= g_{6}, & [g_{4}, g_{2}]  &= g_{6}, & [g_{4}, g_{3}]  &= g_{7}, \\ 
 & & g_{5}^{2} &= g_{7}, & [g_{5}, g_{1}]  &= g_{7}, & [g_{5}, g_{2}]  &= g_{7}, & [g_{5}, g_{3}]  &= g_{7}, \\ 
 & & g_{6}^{2} &= 1, \\ 
 & & g_{7}^{2} &= 1\rangle. \\ 
\end{aligned}
\]
We add 15 tails to the presentation as to form a quotient of the universal central extension of the system: 
$g_{1}^{2} =  t_{1}$,
$g_{2}^{2} =  t_{2}$,
$[g_{2}, g_{1}] = g_{4} t_{3}$,
$g_{3}^{2} =  t_{4}$,
$[g_{3}, g_{1}] = g_{5} t_{5}$,
$g_{4}^{2} = g_{6} t_{6}$,
$[g_{4}, g_{1}] = g_{6} t_{7}$,
$[g_{4}, g_{2}] = g_{6} t_{8}$,
$[g_{4}, g_{3}] = g_{7} t_{9}$,
$g_{5}^{2} = g_{7} t_{10}$,
$[g_{5}, g_{1}] = g_{7} t_{11}$,
$[g_{5}, g_{2}] = g_{7} t_{12}$,
$[g_{5}, g_{3}] = g_{7} t_{13}$,
$g_{6}^{2} =  t_{14}$,
$g_{7}^{2} =  t_{15}$.
Carrying out consistency checks gives the following relations between the tails:
\[
\begin{aligned}
g_{3}(g_{2} g_{1}) & = (g_{3} g_{2}) g_{1}  & \Longrightarrow & & t_{9}t_{12}^{-1} & = 1 \\
g_{5}^2 g_{3} & = g_{5} (g_{5} g_{3})& \Longrightarrow & & t_{13}^{2}t_{15} & = 1 \\
g_{5}^2 g_{2} & = g_{5} (g_{5} g_{2})& \Longrightarrow & & t_{12}^{2}t_{15} & = 1 \\
g_{5}^2 g_{1} & = g_{5} (g_{5} g_{1})& \Longrightarrow & & t_{11}^{2}t_{15} & = 1 \\
g_{4}^2 g_{2} & = g_{4} (g_{4} g_{2})& \Longrightarrow & & t_{8}^{2}t_{14} & = 1 \\
g_{4}^2 g_{1} & = g_{4} (g_{4} g_{1})& \Longrightarrow & & t_{7}^{2}t_{14} & = 1 \\
g_{3}^2 g_{1} & = g_{3} (g_{3} g_{1})& \Longrightarrow & & t_{5}^{2}t_{10}t_{13}t_{15} & = 1 \\
g_{2}^2 g_{1} & = g_{2} (g_{2} g_{1})& \Longrightarrow & & t_{3}^{2}t_{6}t_{8}t_{14} & = 1 \\
g_{3} g_{1}^{2} & = (g_{3} g_{1}) g_{1}& \Longrightarrow & & t_{5}^{2}t_{10}t_{11}t_{15} & = 1 \\
g_{2} g_{1}^{2} & = (g_{2} g_{1}) g_{1}& \Longrightarrow & & t_{3}^{2}t_{6}t_{7}t_{14} & = 1 \\
\end{aligned}
\]
Scanning through the conjugacy class representatives of $G$ and the generators of their centralizers, we obtain the following relations induced on the tails:
\[
\begin{aligned}
{[g_{5} g_{7} , \, g_{2} g_{3} g_{4} ]}_G & = 1 & \Longrightarrow & & t_{12}t_{13}t_{15} & = 1 \\
\end{aligned}
\]
Collecting the coefficients of these relations into a matrix yields
\[
T = \bordermatrix{
{} & t_{1} & t_{2} & t_{3} & t_{4} & t_{5} & t_{6} & t_{7} & t_{8} & t_{9} & t_{10} & t_{11} & t_{12} & t_{13} & t_{14} & t_{15} \cr
{} &  &  & 2 &  &  & 1 &  & 1 &  &  &  &  &  & 1 &  \cr
{} &  &  &  &  & 2 &  &  &  &  & 1 &  &  & 1 &  & 1 \cr
{} &  &  &  &  &  &  & 1 & 1 &  &  &  &  &  & 1 &  \cr
{} &  &  &  &  &  &  &  & 2 &  &  &  &  &  & 1 &  \cr
{} &  &  &  &  &  &  &  &  & 1 &  &  &  & 1 &  & 1 \cr
{} &  &  &  &  &  &  &  &  &  &  & 1 &  & 1 &  & 1 \cr
{} &  &  &  &  &  &  &  &  &  &  &  & 1 & 1 &  & 1 \cr
{} &  &  &  &  &  &  &  &  &  &  &  &  & 2 &  & 1 \cr
}.
\]
It follows readily that the nontrivial elementary divisors of the Smith normal form of $T$ are all equal to $1$. The torsion subgroup of the group generated by the tails is thus trivial, thereby showing $\B_0(G) = 1$.


\item \label{number:57} 
Let the group $G$ be the representative of this family given by the presentation
\[
\begin{aligned}
\langle g_{1}, \,g_{2}, \,g_{3}, \,g_{4}, \,g_{5}, \,g_{6}, \,g_{7} & \mid & g_{1}^{2} &= 1, \\ 
 & & g_{2}^{2} &= 1, & [g_{2}, g_{1}]  &= g_{4}, \\ 
 & & g_{3}^{2} &= 1, & [g_{3}, g_{1}]  &= g_{5}, \\ 
 & & g_{4}^{2} &= g_{6}, & [g_{4}, g_{1}]  &= g_{6}, & [g_{4}, g_{2}]  &= g_{6}, & [g_{4}, g_{3}]  &= g_{7}, \\ 
 & & g_{5}^{2} &= 1, & [g_{5}, g_{2}]  &= g_{7}, \\ 
 & & g_{6}^{2} &= 1, \\ 
 & & g_{7}^{2} &= 1\rangle. \\ 
\end{aligned}
\]
We add 13 tails to the presentation as to form a quotient of the universal central extension of the system: 
$g_{1}^{2} =  t_{1}$,
$g_{2}^{2} =  t_{2}$,
$[g_{2}, g_{1}] = g_{4} t_{3}$,
$g_{3}^{2} =  t_{4}$,
$[g_{3}, g_{1}] = g_{5} t_{5}$,
$g_{4}^{2} = g_{6} t_{6}$,
$[g_{4}, g_{1}] = g_{6} t_{7}$,
$[g_{4}, g_{2}] = g_{6} t_{8}$,
$[g_{4}, g_{3}] = g_{7} t_{9}$,
$g_{5}^{2} =  t_{10}$,
$[g_{5}, g_{2}] = g_{7} t_{11}$,
$g_{6}^{2} =  t_{12}$,
$g_{7}^{2} =  t_{13}$.
Carrying out consistency checks gives the following relations between the tails:
\[
\begin{aligned}
g_{3}(g_{2} g_{1}) & = (g_{3} g_{2}) g_{1}  & \Longrightarrow & & t_{9}t_{11}^{-1} & = 1 \\
g_{5}^2 g_{2} & = g_{5} (g_{5} g_{2})& \Longrightarrow & & t_{11}^{2}t_{13} & = 1 \\
g_{4}^2 g_{2} & = g_{4} (g_{4} g_{2})& \Longrightarrow & & t_{8}^{2}t_{12} & = 1 \\
g_{4}^2 g_{1} & = g_{4} (g_{4} g_{1})& \Longrightarrow & & t_{7}^{2}t_{12} & = 1 \\
g_{3}^2 g_{1} & = g_{3} (g_{3} g_{1})& \Longrightarrow & & t_{5}^{2}t_{10} & = 1 \\
g_{2}^2 g_{1} & = g_{2} (g_{2} g_{1})& \Longrightarrow & & t_{3}^{2}t_{6}t_{8}t_{12} & = 1 \\
g_{2} g_{1}^{2} & = (g_{2} g_{1}) g_{1}& \Longrightarrow & & t_{3}^{2}t_{6}t_{7}t_{12} & = 1 \\
\end{aligned}
\]
Scanning through the conjugacy class representatives of $G$ and the generators of their centralizers, we see that no new relations are imposed.
Collecting the coefficients of these relations into a matrix yields
\[
T = \bordermatrix{
{} & t_{1} & t_{2} & t_{3} & t_{4} & t_{5} & t_{6} & t_{7} & t_{8} & t_{9} & t_{10} & t_{11} & t_{12} & t_{13} \cr
{} &  &  & 2 &  &  & 1 &  & 1 &  &  &  & 1 &  \cr
{} &  &  &  &  & 2 &  &  &  &  & 1 &  &  &  \cr
{} &  &  &  &  &  &  & 1 & 1 &  &  &  & 1 &  \cr
{} &  &  &  &  &  &  &  & 2 &  &  &  & 1 &  \cr
{} &  &  &  &  &  &  &  &  & 1 &  & 1 &  & 1 \cr
{} &  &  &  &  &  &  &  &  &  &  & 2 &  & 1 \cr
}.
\]
It follows readily that the nontrivial elementary divisors of the Smith normal form of $T$ are all equal to $1$. The torsion subgroup of the group generated by the tails is thus trivial, thereby showing $\B_0(G) = 1$.


\item \label{number:58} 
Let the group $G$ be the representative of this family given by the presentation
\[
\begin{aligned}
\langle g_{1}, \,g_{2}, \,g_{3}, \,g_{4}, \,g_{5}, \,g_{6}, \,g_{7} & \mid & g_{1}^{2} &= 1, \\ 
 & & g_{2}^{2} &= g_{4}, & [g_{2}, g_{1}]  &= g_{4}, \\ 
 & & g_{3}^{2} &= 1, & [g_{3}, g_{1}]  &= g_{5}, & [g_{3}, g_{2}]  &= g_{6}, \\ 
 & & g_{4}^{2} &= g_{6}, & [g_{4}, g_{1}]  &= g_{6}, \\ 
 & & g_{5}^{2} &= g_{7}, & [g_{5}, g_{1}]  &= g_{7}, & [g_{5}, g_{3}]  &= g_{7}, \\ 
 & & g_{6}^{2} &= 1, \\ 
 & & g_{7}^{2} &= 1\rangle. \\ 
\end{aligned}
\]
We add 13 tails to the presentation as to form a quotient of the universal central extension of the system: 
$g_{1}^{2} =  t_{1}$,
$g_{2}^{2} = g_{4} t_{2}$,
$[g_{2}, g_{1}] = g_{4} t_{3}$,
$g_{3}^{2} =  t_{4}$,
$[g_{3}, g_{1}] = g_{5} t_{5}$,
$[g_{3}, g_{2}] = g_{6} t_{6}$,
$g_{4}^{2} = g_{6} t_{7}$,
$[g_{4}, g_{1}] = g_{6} t_{8}$,
$g_{5}^{2} = g_{7} t_{9}$,
$[g_{5}, g_{1}] = g_{7} t_{10}$,
$[g_{5}, g_{3}] = g_{7} t_{11}$,
$g_{6}^{2} =  t_{12}$,
$g_{7}^{2} =  t_{13}$.
Carrying out consistency checks gives the following relations between the tails:
\[
\begin{aligned}
g_{5}^2 g_{3} & = g_{5} (g_{5} g_{3})& \Longrightarrow & & t_{11}^{2}t_{13} & = 1 \\
g_{5}^2 g_{1} & = g_{5} (g_{5} g_{1})& \Longrightarrow & & t_{10}^{2}t_{13} & = 1 \\
g_{4}^2 g_{1} & = g_{4} (g_{4} g_{1})& \Longrightarrow & & t_{8}^{2}t_{12} & = 1 \\
g_{3}^2 g_{2} & = g_{3} (g_{3} g_{2})& \Longrightarrow & & t_{6}^{2}t_{12} & = 1 \\
g_{3}^2 g_{1} & = g_{3} (g_{3} g_{1})& \Longrightarrow & & t_{5}^{2}t_{9}t_{11}t_{13} & = 1 \\
g_{2}^2 g_{1} & = g_{2} (g_{2} g_{1})& \Longrightarrow & & t_{3}^{2}t_{7}t_{8}^{-1} & = 1 \\
g_{3} g_{1}^{2} & = (g_{3} g_{1}) g_{1}& \Longrightarrow & & t_{5}^{2}t_{9}t_{10}t_{13} & = 1 \\
\end{aligned}
\]
Scanning through the conjugacy class representatives of $G$ and the generators of their centralizers, we see that no new relations are imposed.
Collecting the coefficients of these relations into a matrix yields
\[
T = \bordermatrix{
{} & t_{1} & t_{2} & t_{3} & t_{4} & t_{5} & t_{6} & t_{7} & t_{8} & t_{9} & t_{10} & t_{11} & t_{12} & t_{13} \cr
{} &  &  & 2 &  &  &  & 1 & 1 &  &  &  & 1 &  \cr
{} &  &  &  &  & 2 &  &  &  & 1 &  & 1 &  & 1 \cr
{} &  &  &  &  &  & 2 &  &  &  &  &  & 1 &  \cr
{} &  &  &  &  &  &  &  & 2 &  &  &  & 1 &  \cr
{} &  &  &  &  &  &  &  &  &  & 1 & 1 &  & 1 \cr
{} &  &  &  &  &  &  &  &  &  &  & 2 &  & 1 \cr
}.
\]
A change of basis according to the transition matrix (specifying expansions of $t_i^{*}$ by $t_j$)
\[
\bordermatrix{
{} & t_{1}^{*} & t_{2}^{*} & t_{3}^{*} & t_{4}^{*} & t_{5}^{*} & t_{6}^{*} & t_{7}^{*} & t_{8}^{*} & t_{9}^{*} & t_{10}^{*} & t_{11}^{*} & t_{12}^{*} & t_{13}^{*} \cr
t_{1} &  &  &  &  &  &  & 5 & 1 &  & 2 &  & 1 & 1 \cr
t_{2} &  &  &  &  &  &  & -8 &  & -1 & 1 &  &  &  \cr
t_{3} & 22 & 4 &  & 16 & 8 & 12 &  &  &  & -1 &  & -1 & -1 \cr
t_{4} &  &  &  &  &  &  &  & -1 &  & -1 &  & -1 &  \cr
t_{5} & -32 & -6 &  & -24 & -12 & -18 &  &  &  & -1 &  &  &  \cr
t_{6} & 48 & 10 &  & 36 & 18 & 27 & 8 &  &  &  &  &  &  \cr
t_{7} & 11 & 2 &  & 8 & 4 & 6 & -5 & -1 &  &  &  &  &  \cr
t_{8} & -41 & -8 &  & -30 & -16 & -23 &  & 1 &  &  &  &  &  \cr
t_{9} & -16 & -3 &  & -12 & -6 & -9 &  &  & 1 &  &  &  &  \cr
t_{10} & 32 & 6 &  & 24 & 13 & 18 &  &  &  & 1 &  &  &  \cr
t_{11} & -142 & -27 & 2 & -106 & -55 & -79 &  &  &  &  & 1 &  &  \cr
t_{12} & 9 & 2 &  & 7 & 3 & 5 &  &  &  &  &  & 1 &  \cr
t_{13} & -63 & -12 & 1 & -47 & -24 & -35 &  &  &  &  &  &  & 1 \cr
}
\]
shows that the nontrivial elementary divisors of the Smith normal form of $T$ are $1$, $1$, $1$, $1$, $1$, $2$.  The element corresponding to the divisor that is greater than $1$ is $t_{6}^{*}$. This already gives
\[
\B_0(G) \cong \langle t_{6}^{*}  \mid {t_{6}^{*}}^{2} \rangle.
\]

We now deal with explicitly identifying the nonuniversal commutator relation generating $\B_0(G)$.
First, factor out by the tails $t_{i}^{*}$ whose corresponding elementary divisors are either trivial or $1$. Transforming the situation back to the original tails $t_i$, this amounts to the nontrivial expansion 
$t_{6} = t_{6}^{*}$ and all the other tails $t_i$ are trivial. We thus obtain a commutativity preserving central extension of the group $G$, given by the presentation
\[
\begin{aligned}
\langle g_{1}, \,g_{2}, \,g_{3}, \,g_{4}, \,g_{5}, \,g_{6}, \,g_{7}, \,t_{6}^{*} & \mid & g_{1}^{2} &= 1, \\ 
 & & g_{2}^{2} &= g_{4}, & [g_{2}, g_{1}]  &= g_{4}, \\ 
 & & g_{3}^{2} &= 1, & [g_{3}, g_{1}]  &= g_{5}, & [g_{3}, g_{2}]  &= g_{6}t_{6}^{*} , \\ 
 & & g_{4}^{2} &= g_{6}, & [g_{4}, g_{1}]  &= g_{6}, \\ 
 & & g_{5}^{2} &= g_{7}, & [g_{5}, g_{1}]  &= g_{7}, & [g_{5}, g_{3}]  &= g_{7}, \\ 
 & & g_{6}^{2} &= 1, \\ 
 & & g_{7}^{2} &= 1, \\ 
 & & {t_{6}^{*}}^{2} &= 1  \rangle,
\end{aligned}
\]
whence the nonuniversal commutator relation is identified as
\[
t_{6}^{*}  = [g_{3}, g_{2}] [g_{4}, g_{1}]^{-1}.  \quad 
\]


\item \label{number:59} 
Let the group $G$ be the representative of this family given by the presentation
\[
\begin{aligned}
\langle g_{1}, \,g_{2}, \,g_{3}, \,g_{4}, \,g_{5}, \,g_{6}, \,g_{7} & \mid & g_{1}^{2} &= 1, \\ 
 & & g_{2}^{2} &= g_{4}, & [g_{2}, g_{1}]  &= g_{4}, \\ 
 & & g_{3}^{2} &= 1, & [g_{3}, g_{1}]  &= g_{5}, \\ 
 & & g_{4}^{2} &= g_{6}, & [g_{4}, g_{1}]  &= g_{6}, \\ 
 & & g_{5}^{2} &= g_{7}, & [g_{5}, g_{1}]  &= g_{7}, & [g_{5}, g_{3}]  &= g_{7}, \\ 
 & & g_{6}^{2} &= 1, \\ 
 & & g_{7}^{2} &= 1\rangle. \\ 
\end{aligned}
\]
We add 12 tails to the presentation as to form a quotient of the universal central extension of the system: 
$g_{1}^{2} =  t_{1}$,
$g_{2}^{2} = g_{4} t_{2}$,
$[g_{2}, g_{1}] = g_{4} t_{3}$,
$g_{3}^{2} =  t_{4}$,
$[g_{3}, g_{1}] = g_{5} t_{5}$,
$g_{4}^{2} = g_{6} t_{6}$,
$[g_{4}, g_{1}] = g_{6} t_{7}$,
$g_{5}^{2} = g_{7} t_{8}$,
$[g_{5}, g_{1}] = g_{7} t_{9}$,
$[g_{5}, g_{3}] = g_{7} t_{10}$,
$g_{6}^{2} =  t_{11}$,
$g_{7}^{2} =  t_{12}$.
Carrying out consistency checks gives the following relations between the tails:
\[
\begin{aligned}
g_{5}^2 g_{3} & = g_{5} (g_{5} g_{3})& \Longrightarrow & & t_{10}^{2}t_{12} & = 1 \\
g_{5}^2 g_{1} & = g_{5} (g_{5} g_{1})& \Longrightarrow & & t_{9}^{2}t_{12} & = 1 \\
g_{4}^2 g_{1} & = g_{4} (g_{4} g_{1})& \Longrightarrow & & t_{7}^{2}t_{11} & = 1 \\
g_{3}^2 g_{1} & = g_{3} (g_{3} g_{1})& \Longrightarrow & & t_{5}^{2}t_{8}t_{10}t_{12} & = 1 \\
g_{2}^2 g_{1} & = g_{2} (g_{2} g_{1})& \Longrightarrow & & t_{3}^{2}t_{6}t_{7}^{-1} & = 1 \\
g_{3} g_{1}^{2} & = (g_{3} g_{1}) g_{1}& \Longrightarrow & & t_{5}^{2}t_{8}t_{9}t_{12} & = 1 \\
\end{aligned}
\]
Scanning through the conjugacy class representatives of $G$ and the generators of their centralizers, we see that no new relations are imposed.
Collecting the coefficients of these relations into a matrix yields
\[
T = \bordermatrix{
{} & t_{1} & t_{2} & t_{3} & t_{4} & t_{5} & t_{6} & t_{7} & t_{8} & t_{9} & t_{10} & t_{11} & t_{12} \cr
{} &  &  & 2 &  &  & 1 & 1 &  &  &  & 1 &  \cr
{} &  &  &  &  & 2 &  &  & 1 &  & 1 &  & 1 \cr
{} &  &  &  &  &  &  & 2 &  &  &  & 1 &  \cr
{} &  &  &  &  &  &  &  &  & 1 & 1 &  & 1 \cr
{} &  &  &  &  &  &  &  &  &  & 2 &  & 1 \cr
}.
\]
It follows readily that the nontrivial elementary divisors of the Smith normal form of $T$ are all equal to $1$. The torsion subgroup of the group generated by the tails is thus trivial, thereby showing $\B_0(G) = 1$.


\item \label{number:60} 
Let the group $G$ be the representative of this family given by the presentation
\[
\begin{aligned}
\langle g_{1}, \,g_{2}, \,g_{3}, \,g_{4}, \,g_{5}, \,g_{6}, \,g_{7} & \mid & g_{1}^{2} &= 1, \\ 
 & & g_{2}^{2} &= g_{4}, & [g_{2}, g_{1}]  &= g_{4}, \\ 
 & & g_{3}^{2} &= g_{5}, & [g_{3}, g_{1}]  &= g_{5}, & [g_{3}, g_{2}]  &= g_{6}, \\ 
 & & g_{4}^{2} &= g_{6}, & [g_{4}, g_{1}]  &= g_{6}, \\ 
 & & g_{5}^{2} &= g_{7}, & [g_{5}, g_{1}]  &= g_{7}, \\ 
 & & g_{6}^{2} &= 1, \\ 
 & & g_{7}^{2} &= 1\rangle. \\ 
\end{aligned}
\]
We add 12 tails to the presentation as to form a quotient of the universal central extension of the system: 
$g_{1}^{2} =  t_{1}$,
$g_{2}^{2} = g_{4} t_{2}$,
$[g_{2}, g_{1}] = g_{4} t_{3}$,
$g_{3}^{2} = g_{5} t_{4}$,
$[g_{3}, g_{1}] = g_{5} t_{5}$,
$[g_{3}, g_{2}] = g_{6} t_{6}$,
$g_{4}^{2} = g_{6} t_{7}$,
$[g_{4}, g_{1}] = g_{6} t_{8}$,
$g_{5}^{2} = g_{7} t_{9}$,
$[g_{5}, g_{1}] = g_{7} t_{10}$,
$g_{6}^{2} =  t_{11}$,
$g_{7}^{2} =  t_{12}$.
Carrying out consistency checks gives the following relations between the tails:
\[
\begin{aligned}
g_{5}^2 g_{1} & = g_{5} (g_{5} g_{1})& \Longrightarrow & & t_{10}^{2}t_{12} & = 1 \\
g_{4}^2 g_{1} & = g_{4} (g_{4} g_{1})& \Longrightarrow & & t_{8}^{2}t_{11} & = 1 \\
g_{3}^2 g_{2} & = g_{3} (g_{3} g_{2})& \Longrightarrow & & t_{6}^{2}t_{11} & = 1 \\
g_{3}^2 g_{1} & = g_{3} (g_{3} g_{1})& \Longrightarrow & & t_{5}^{2}t_{9}t_{10}^{-1} & = 1 \\
g_{2}^2 g_{1} & = g_{2} (g_{2} g_{1})& \Longrightarrow & & t_{3}^{2}t_{7}t_{8}^{-1} & = 1 \\
\end{aligned}
\]
Scanning through the conjugacy class representatives of $G$ and the generators of their centralizers, we see that no new relations are imposed.
Collecting the coefficients of these relations into a matrix yields
\[
T = \bordermatrix{
{} & t_{1} & t_{2} & t_{3} & t_{4} & t_{5} & t_{6} & t_{7} & t_{8} & t_{9} & t_{10} & t_{11} & t_{12} \cr
{} &  &  & 2 &  &  &  & 1 & 1 &  &  & 1 &  \cr
{} &  &  &  &  & 2 &  &  &  & 1 & 1 &  & 1 \cr
{} &  &  &  &  &  & 2 &  &  &  &  & 1 &  \cr
{} &  &  &  &  &  &  &  & 2 &  &  & 1 &  \cr
{} &  &  &  &  &  &  &  &  &  & 2 &  & 1 \cr
}.
\]
A change of basis according to the transition matrix (specifying expansions of $t_i^{*}$ by $t_j$)
\[
\bordermatrix{
{} & t_{1}^{*} & t_{2}^{*} & t_{3}^{*} & t_{4}^{*} & t_{5}^{*} & t_{6}^{*} & t_{7}^{*} & t_{8}^{*} & t_{9}^{*} & t_{10}^{*} & t_{11}^{*} & t_{12}^{*} \cr
t_{1} &  &  &  &  &  & -1 & -1 &  &  &  & 1 &  \cr
t_{2} &  &  &  &  &  & 1 & 1 &  & -1 &  &  &  \cr
t_{3} & -2 & -4 &  &  & -4 &  &  &  &  &  & -1 &  \cr
t_{4} &  &  &  &  &  &  &  & -1 &  &  &  & -1 \cr
t_{5} & 4 & 6 &  &  & 6 & 1 &  &  &  &  &  &  \cr
t_{6} & -6 & -8 &  &  & -9 & -2 & -1 &  &  &  &  &  \cr
t_{7} & -1 & -2 &  &  & -2 &  & 1 &  &  &  &  &  \cr
t_{8} & 11 & 18 & 2 &  & 19 &  &  & 1 &  &  &  &  \cr
t_{9} & 2 & 3 &  &  & 3 &  &  &  & 1 &  &  &  \cr
t_{10} & -24 & -39 & -4 & 2 & -41 &  &  &  &  & 1 &  &  \cr
t_{11} & 2 & 4 & 1 &  & 4 &  &  &  &  &  & 1 &  \cr
t_{12} & -11 & -18 & -2 & 1 & -19 &  &  &  &  &  &  & 1 \cr
}
\]
shows that the nontrivial elementary divisors of the Smith normal form of $T$ are $1$, $1$, $1$, $1$, $2$.  The element corresponding to the divisor that is greater than $1$ is $t_{5}^{*}$. This already gives
\[
\B_0(G) \cong \langle t_{5}^{*}  \mid {t_{5}^{*}}^{2} \rangle.
\]

We now deal with explicitly identifying the nonuniversal commutator relation generating $\B_0(G)$.
First, factor out by the tails $t_{i}^{*}$ whose corresponding elementary divisors are either trivial or $1$. Transforming the situation back to the original tails $t_i$, this amounts to the nontrivial expansions given by
\[
\bordermatrix{
{} & t_{1} & t_{3} & t_{5} & t_{6} \cr
t_{5}^{*} & 1 & 1 & 1 & 1 \cr
}
\]
and all the other tails $t_i$ are trivial. We thus obtain a commutativity preserving central extension of the group $G$, given by the presentation
\[
\begin{aligned}
\langle g_{1}, \,g_{2}, \,g_{3}, \,g_{4}, \,g_{5}, \,g_{6}, \,g_{7}, \,t_{5}^{*} & \mid & g_{1}^{2} &= t_{5}^{*} , \\ 
 & & g_{2}^{2} &= g_{4}, & [g_{2}, g_{1}]  &= g_{4}t_{5}^{*} , \\ 
 & & g_{3}^{2} &= g_{5}, & [g_{3}, g_{1}]  &= g_{5}t_{5}^{*} , & [g_{3}, g_{2}]  &= g_{6}t_{5}^{*} , \\ 
 & & g_{4}^{2} &= g_{6}, & [g_{4}, g_{1}]  &= g_{6}, \\ 
 & & g_{5}^{2} &= g_{7}, & [g_{5}, g_{1}]  &= g_{7}, \\ 
 & & g_{6}^{2} &= 1, \\ 
 & & g_{7}^{2} &= 1, \\ 
 & & {t_{5}^{*}}^{2} &= 1  \rangle,
\end{aligned}
\]
whence the nonuniversal commutator relation is identified as
\[
t_{5}^{*}  = [g_{3}, g_{2}] [g_{4}, g_{1}]^{-1}.  \quad 
\]


\item \label{number:61} 
Let the group $G$ be the representative of this family given by the presentation
\[
\begin{aligned}
\langle g_{1}, \,g_{2}, \,g_{3}, \,g_{4}, \,g_{5}, \,g_{6}, \,g_{7} & \mid & g_{1}^{2} &= 1, \\ 
 & & g_{2}^{2} &= g_{4}, & [g_{2}, g_{1}]  &= g_{4}, \\ 
 & & g_{3}^{2} &= g_{5}, & [g_{3}, g_{1}]  &= g_{5}, \\ 
 & & g_{4}^{2} &= g_{6}, & [g_{4}, g_{1}]  &= g_{6}, \\ 
 & & g_{5}^{2} &= g_{7}, & [g_{5}, g_{1}]  &= g_{7}, \\ 
 & & g_{6}^{2} &= 1, \\ 
 & & g_{7}^{2} &= 1\rangle. \\ 
\end{aligned}
\]
We add 11 tails to the presentation as to form a quotient of the universal central extension of the system: 
$g_{1}^{2} =  t_{1}$,
$g_{2}^{2} = g_{4} t_{2}$,
$[g_{2}, g_{1}] = g_{4} t_{3}$,
$g_{3}^{2} = g_{5} t_{4}$,
$[g_{3}, g_{1}] = g_{5} t_{5}$,
$g_{4}^{2} = g_{6} t_{6}$,
$[g_{4}, g_{1}] = g_{6} t_{7}$,
$g_{5}^{2} = g_{7} t_{8}$,
$[g_{5}, g_{1}] = g_{7} t_{9}$,
$g_{6}^{2} =  t_{10}$,
$g_{7}^{2} =  t_{11}$.
Carrying out consistency checks gives the following relations between the tails:
\[
\begin{aligned}
g_{5}^2 g_{1} & = g_{5} (g_{5} g_{1})& \Longrightarrow & & t_{9}^{2}t_{11} & = 1 \\
g_{4}^2 g_{1} & = g_{4} (g_{4} g_{1})& \Longrightarrow & & t_{7}^{2}t_{10} & = 1 \\
g_{3}^2 g_{1} & = g_{3} (g_{3} g_{1})& \Longrightarrow & & t_{5}^{2}t_{8}t_{9}^{-1} & = 1 \\
g_{2}^2 g_{1} & = g_{2} (g_{2} g_{1})& \Longrightarrow & & t_{3}^{2}t_{6}t_{7}^{-1} & = 1 \\
\end{aligned}
\]
Scanning through the conjugacy class representatives of $G$ and the generators of their centralizers, we see that no new relations are imposed.
Collecting the coefficients of these relations into a matrix yields
\[
T = \bordermatrix{
{} & t_{1} & t_{2} & t_{3} & t_{4} & t_{5} & t_{6} & t_{7} & t_{8} & t_{9} & t_{10} & t_{11} \cr
{} &  &  & 2 &  &  & 1 & 1 &  &  & 1 &  \cr
{} &  &  &  &  & 2 &  &  & 1 & 1 &  & 1 \cr
{} &  &  &  &  &  &  & 2 &  &  & 1 &  \cr
{} &  &  &  &  &  &  &  &  & 2 &  & 1 \cr
}.
\]
It follows readily that the nontrivial elementary divisors of the Smith normal form of $T$ are all equal to $1$. The torsion subgroup of the group generated by the tails is thus trivial, thereby showing $\B_0(G) = 1$.


\item \label{number:62} 
Let the group $G$ be the representative of this family given by the presentation
\[
\begin{aligned}
\langle g_{1}, \,g_{2}, \,g_{3}, \,g_{4}, \,g_{5}, \,g_{6}, \,g_{7} & \mid & g_{1}^{2} &= 1, \\ 
 & & g_{2}^{2} &= 1, & [g_{2}, g_{1}]  &= g_{4}, \\ 
 & & g_{3}^{2} &= 1, & [g_{3}, g_{1}]  &= g_{5}, & [g_{3}, g_{2}]  &= g_{6}, \\ 
 & & g_{4}^{2} &= g_{7}, & [g_{4}, g_{1}]  &= g_{7}, & [g_{4}, g_{2}]  &= g_{7}, & [g_{4}, g_{3}]  &= g_{7}, \\ 
 & & g_{5}^{2} &= g_{7}, & [g_{5}, g_{1}]  &= g_{7}, & [g_{5}, g_{3}]  &= g_{7}, \\ 
 & & g_{6}^{2} &= g_{7}, & [g_{6}, g_{1}]  &= g_{7}, & [g_{6}, g_{2}]  &= g_{7}, & [g_{6}, g_{3}]  &= g_{7}, \\ 
 & & g_{7}^{2} &= 1\rangle. \\ 
\end{aligned}
\]
We add 18 tails to the presentation as to form a quotient of the universal central extension of the system: 
$g_{1}^{2} =  t_{1}$,
$g_{2}^{2} =  t_{2}$,
$[g_{2}, g_{1}] = g_{4} t_{3}$,
$g_{3}^{2} =  t_{4}$,
$[g_{3}, g_{1}] = g_{5} t_{5}$,
$[g_{3}, g_{2}] = g_{6} t_{6}$,
$g_{4}^{2} = g_{7} t_{7}$,
$[g_{4}, g_{1}] = g_{7} t_{8}$,
$[g_{4}, g_{2}] = g_{7} t_{9}$,
$[g_{4}, g_{3}] = g_{7} t_{10}$,
$g_{5}^{2} = g_{7} t_{11}$,
$[g_{5}, g_{1}] = g_{7} t_{12}$,
$[g_{5}, g_{3}] = g_{7} t_{13}$,
$g_{6}^{2} = g_{7} t_{14}$,
$[g_{6}, g_{1}] = g_{7} t_{15}$,
$[g_{6}, g_{2}] = g_{7} t_{16}$,
$[g_{6}, g_{3}] = g_{7} t_{17}$,
$g_{7}^{2} =  t_{18}$.
Carrying out consistency checks gives the following relations between the tails:
\[
\begin{aligned}
g_{3}(g_{2} g_{1}) & = (g_{3} g_{2}) g_{1}  & \Longrightarrow & & t_{10}t_{15}t_{18} & = 1 \\
g_{6}^2 g_{3} & = g_{6} (g_{6} g_{3})& \Longrightarrow & & t_{17}^{2}t_{18} & = 1 \\
g_{6}^2 g_{2} & = g_{6} (g_{6} g_{2})& \Longrightarrow & & t_{16}^{2}t_{18} & = 1 \\
g_{6}^2 g_{1} & = g_{6} (g_{6} g_{1})& \Longrightarrow & & t_{15}^{2}t_{18} & = 1 \\
g_{5}^2 g_{3} & = g_{5} (g_{5} g_{3})& \Longrightarrow & & t_{13}^{2}t_{18} & = 1 \\
g_{5}^2 g_{1} & = g_{5} (g_{5} g_{1})& \Longrightarrow & & t_{12}^{2}t_{18} & = 1 \\
g_{4}^2 g_{2} & = g_{4} (g_{4} g_{2})& \Longrightarrow & & t_{9}^{2}t_{18} & = 1 \\
g_{4}^2 g_{1} & = g_{4} (g_{4} g_{1})& \Longrightarrow & & t_{8}^{2}t_{18} & = 1 \\
g_{3}^2 g_{2} & = g_{3} (g_{3} g_{2})& \Longrightarrow & & t_{6}^{2}t_{14}t_{17}t_{18} & = 1 \\
g_{3}^2 g_{1} & = g_{3} (g_{3} g_{1})& \Longrightarrow & & t_{5}^{2}t_{11}t_{13}t_{18} & = 1 \\
g_{2}^2 g_{1} & = g_{2} (g_{2} g_{1})& \Longrightarrow & & t_{3}^{2}t_{7}t_{9}t_{18} & = 1 \\
g_{3} g_{2}^{2} & = (g_{3} g_{2}) g_{2}& \Longrightarrow & & t_{6}^{2}t_{14}t_{16}t_{18} & = 1 \\
g_{3} g_{1}^{2} & = (g_{3} g_{1}) g_{1}& \Longrightarrow & & t_{5}^{2}t_{11}t_{12}t_{18} & = 1 \\
g_{2} g_{1}^{2} & = (g_{2} g_{1}) g_{1}& \Longrightarrow & & t_{3}^{2}t_{7}t_{8}t_{18} & = 1 \\
\end{aligned}
\]
Scanning through the conjugacy class representatives of $G$ and the generators of their centralizers, we obtain the following relations induced on the tails:
\[
\begin{aligned}
{[g_{6} g_{7} , \, g_{1} g_{3} ]}_G & = 1 & \Longrightarrow & & t_{15}t_{17}t_{18} & = 1 \\
{[g_{5} g_{6} g_{7} , \, g_{1} ]}_G & = 1 & \Longrightarrow & & t_{12}t_{15}t_{18} & = 1 \\
{[g_{4} g_{7} , \, g_{1} g_{3} ]}_G & = 1 & \Longrightarrow & & t_{8}t_{10}t_{18} & = 1 \\
\end{aligned}
\]
Collecting the coefficients of these relations into a matrix yields
\[
T = \bordermatrix{
{} & t_{1} & t_{2} & t_{3} & t_{4} & t_{5} & t_{6} & t_{7} & t_{8} & t_{9} & t_{10} & t_{11} & t_{12} & t_{13} & t_{14} & t_{15} & t_{16} & t_{17} & t_{18} \cr
{} &  &  & 2 &  &  &  & 1 &  &  &  &  &  &  &  &  &  & 1 & 1 \cr
{} &  &  &  &  & 2 &  &  &  &  &  & 1 &  &  &  &  &  & 1 & 1 \cr
{} &  &  &  &  &  & 2 &  &  &  &  &  &  &  & 1 &  &  & 1 & 1 \cr
{} &  &  &  &  &  &  &  & 1 &  &  &  &  &  &  &  &  & 1 & 1 \cr
{} &  &  &  &  &  &  &  &  & 1 &  &  &  &  &  &  &  & 1 & 1 \cr
{} &  &  &  &  &  &  &  &  &  & 1 &  &  &  &  &  &  & 1 & 1 \cr
{} &  &  &  &  &  &  &  &  &  &  &  & 1 &  &  &  &  & 1 & 1 \cr
{} &  &  &  &  &  &  &  &  &  &  &  &  & 1 &  &  &  & 1 & 1 \cr
{} &  &  &  &  &  &  &  &  &  &  &  &  &  &  & 1 &  & 1 & 1 \cr
{} &  &  &  &  &  &  &  &  &  &  &  &  &  &  &  & 1 & 1 & 1 \cr
{} &  &  &  &  &  &  &  &  &  &  &  &  &  &  &  &  & 2 & 1 \cr
}.
\]
It follows readily that the nontrivial elementary divisors of the Smith normal form of $T$ are all equal to $1$. The torsion subgroup of the group generated by the tails is thus trivial, thereby showing $\B_0(G) = 1$.


\item \label{number:63} 
Let the group $G$ be the representative of this family given by the presentation
\[
\begin{aligned}
\langle g_{1}, \,g_{2}, \,g_{3}, \,g_{4}, \,g_{5}, \,g_{6}, \,g_{7} & \mid & g_{1}^{2} &= 1, \\ 
 & & g_{2}^{2} &= 1, & [g_{2}, g_{1}]  &= g_{4}, \\ 
 & & g_{3}^{2} &= 1, & [g_{3}, g_{1}]  &= g_{5}, & [g_{3}, g_{2}]  &= g_{6}, \\ 
 & & g_{4}^{2} &= g_{7}, & [g_{4}, g_{1}]  &= g_{7}, & [g_{4}, g_{2}]  &= g_{7}, & [g_{4}, g_{3}]  &= g_{7}, \\ 
 & & g_{5}^{2} &= 1, \\ 
 & & g_{6}^{2} &= 1, & [g_{6}, g_{1}]  &= g_{7}, \\ 
 & & g_{7}^{2} &= 1\rangle. \\ 
\end{aligned}
\]
We add 14 tails to the presentation as to form a quotient of the universal central extension of the system: 
$g_{1}^{2} =  t_{1}$,
$g_{2}^{2} =  t_{2}$,
$[g_{2}, g_{1}] = g_{4} t_{3}$,
$g_{3}^{2} =  t_{4}$,
$[g_{3}, g_{1}] = g_{5} t_{5}$,
$[g_{3}, g_{2}] = g_{6} t_{6}$,
$g_{4}^{2} = g_{7} t_{7}$,
$[g_{4}, g_{1}] = g_{7} t_{8}$,
$[g_{4}, g_{2}] = g_{7} t_{9}$,
$[g_{4}, g_{3}] = g_{7} t_{10}$,
$g_{5}^{2} =  t_{11}$,
$g_{6}^{2} =  t_{12}$,
$[g_{6}, g_{1}] = g_{7} t_{13}$,
$g_{7}^{2} =  t_{14}$.
Carrying out consistency checks gives the following relations between the tails:
\[
\begin{aligned}
g_{3}(g_{2} g_{1}) & = (g_{3} g_{2}) g_{1}  & \Longrightarrow & & t_{10}t_{13}t_{14} & = 1 \\
g_{6}^2 g_{1} & = g_{6} (g_{6} g_{1})& \Longrightarrow & & t_{13}^{2}t_{14} & = 1 \\
g_{4}^2 g_{2} & = g_{4} (g_{4} g_{2})& \Longrightarrow & & t_{9}^{2}t_{14} & = 1 \\
g_{4}^2 g_{1} & = g_{4} (g_{4} g_{1})& \Longrightarrow & & t_{8}^{2}t_{14} & = 1 \\
g_{3}^2 g_{2} & = g_{3} (g_{3} g_{2})& \Longrightarrow & & t_{6}^{2}t_{12} & = 1 \\
g_{3}^2 g_{1} & = g_{3} (g_{3} g_{1})& \Longrightarrow & & t_{5}^{2}t_{11} & = 1 \\
g_{2}^2 g_{1} & = g_{2} (g_{2} g_{1})& \Longrightarrow & & t_{3}^{2}t_{7}t_{9}t_{14} & = 1 \\
g_{2} g_{1}^{2} & = (g_{2} g_{1}) g_{1}& \Longrightarrow & & t_{3}^{2}t_{7}t_{8}t_{14} & = 1 \\
\end{aligned}
\]
Scanning through the conjugacy class representatives of $G$ and the generators of their centralizers, we obtain the following relations induced on the tails:
\[
\begin{aligned}
{[g_{4} g_{7} , \, g_{1} g_{3} ]}_G & = 1 & \Longrightarrow & & t_{8}t_{10}t_{14} & = 1 \\
\end{aligned}
\]
Collecting the coefficients of these relations into a matrix yields
\[
T = \bordermatrix{
{} & t_{1} & t_{2} & t_{3} & t_{4} & t_{5} & t_{6} & t_{7} & t_{8} & t_{9} & t_{10} & t_{11} & t_{12} & t_{13} & t_{14} \cr
{} &  &  & 2 &  &  &  & 1 &  &  &  &  &  & 1 & 1 \cr
{} &  &  &  &  & 2 &  &  &  &  &  & 1 &  &  &  \cr
{} &  &  &  &  &  & 2 &  &  &  &  &  & 1 &  &  \cr
{} &  &  &  &  &  &  &  & 1 &  &  &  &  & 1 & 1 \cr
{} &  &  &  &  &  &  &  &  & 1 &  &  &  & 1 & 1 \cr
{} &  &  &  &  &  &  &  &  &  & 1 &  &  & 1 & 1 \cr
{} &  &  &  &  &  &  &  &  &  &  &  &  & 2 & 1 \cr
}.
\]
It follows readily that the nontrivial elementary divisors of the Smith normal form of $T$ are all equal to $1$. The torsion subgroup of the group generated by the tails is thus trivial, thereby showing $\B_0(G) = 1$.


\item \label{number:64} 
Let the group $G$ be the representative of this family given by the presentation
\[
\begin{aligned}
\langle g_{1}, \,g_{2}, \,g_{3}, \,g_{4}, \,g_{5}, \,g_{6}, \,g_{7} & \mid & g_{1}^{2} &= g_{4}, \\ 
 & & g_{2}^{2} &= g_{4}, & [g_{2}, g_{1}]  &= g_{4}, \\ 
 & & g_{3}^{2} &= 1, & [g_{3}, g_{1}]  &= g_{5}, & [g_{3}, g_{2}]  &= g_{6}, \\ 
 & & g_{4}^{2} &= 1, & [g_{4}, g_{3}]  &= g_{7}, \\ 
 & & g_{5}^{2} &= g_{7}, & [g_{5}, g_{3}]  &= g_{7}, \\ 
 & & g_{6}^{2} &= g_{7}, & [g_{6}, g_{1}]  &= g_{7}, & [g_{6}, g_{3}]  &= g_{7}, \\ 
 & & g_{7}^{2} &= 1\rangle. \\ 
\end{aligned}
\]
We add 14 tails to the presentation as to form a quotient of the universal central extension of the system: 
$g_{1}^{2} = g_{4} t_{1}$,
$g_{2}^{2} = g_{4} t_{2}$,
$[g_{2}, g_{1}] = g_{4} t_{3}$,
$g_{3}^{2} =  t_{4}$,
$[g_{3}, g_{1}] = g_{5} t_{5}$,
$[g_{3}, g_{2}] = g_{6} t_{6}$,
$g_{4}^{2} =  t_{7}$,
$[g_{4}, g_{3}] = g_{7} t_{8}$,
$g_{5}^{2} = g_{7} t_{9}$,
$[g_{5}, g_{3}] = g_{7} t_{10}$,
$g_{6}^{2} = g_{7} t_{11}$,
$[g_{6}, g_{1}] = g_{7} t_{12}$,
$[g_{6}, g_{3}] = g_{7} t_{13}$,
$g_{7}^{2} =  t_{14}$.
Carrying out consistency checks gives the following relations between the tails:
\[
\begin{aligned}
g_{3}(g_{2} g_{1}) & = (g_{3} g_{2}) g_{1}  & \Longrightarrow & & t_{8}t_{12}t_{14} & = 1 \\
g_{6}^2 g_{3} & = g_{6} (g_{6} g_{3})& \Longrightarrow & & t_{13}^{2}t_{14} & = 1 \\
g_{6}^2 g_{1} & = g_{6} (g_{6} g_{1})& \Longrightarrow & & t_{12}^{2}t_{14} & = 1 \\
g_{5}^2 g_{3} & = g_{5} (g_{5} g_{3})& \Longrightarrow & & t_{10}^{2}t_{14} & = 1 \\
g_{3}^2 g_{2} & = g_{3} (g_{3} g_{2})& \Longrightarrow & & t_{6}^{2}t_{11}t_{13}t_{14} & = 1 \\
g_{3}^2 g_{1} & = g_{3} (g_{3} g_{1})& \Longrightarrow & & t_{5}^{2}t_{9}t_{10}t_{14} & = 1 \\
g_{2}^2 g_{1} & = g_{2} (g_{2} g_{1})& \Longrightarrow & & t_{3}^{2}t_{7} & = 1 \\
g_{3} g_{2}^{2} & = (g_{3} g_{2}) g_{2}& \Longrightarrow & & t_{6}^{2}t_{8}t_{11}t_{14} & = 1 \\
g_{3} g_{1}^{2} & = (g_{3} g_{1}) g_{1}& \Longrightarrow & & t_{5}^{2}t_{8}t_{9}t_{14} & = 1 \\
\end{aligned}
\]
Scanning through the conjugacy class representatives of $G$ and the generators of their centralizers, we see that no new relations are imposed.
Collecting the coefficients of these relations into a matrix yields
\[
T = \bordermatrix{
{} & t_{1} & t_{2} & t_{3} & t_{4} & t_{5} & t_{6} & t_{7} & t_{8} & t_{9} & t_{10} & t_{11} & t_{12} & t_{13} & t_{14} \cr
{} &  &  & 2 &  &  &  & 1 &  &  &  &  &  &  &  \cr
{} &  &  &  &  & 2 &  &  &  & 1 &  &  &  & 1 & 1 \cr
{} &  &  &  &  &  & 2 &  &  &  &  & 1 &  & 1 & 1 \cr
{} &  &  &  &  &  &  &  & 1 &  &  &  &  & 1 & 1 \cr
{} &  &  &  &  &  &  &  &  &  & 1 &  &  & 1 & 1 \cr
{} &  &  &  &  &  &  &  &  &  &  &  & 1 & 1 & 1 \cr
{} &  &  &  &  &  &  &  &  &  &  &  &  & 2 & 1 \cr
}.
\]
It follows readily that the nontrivial elementary divisors of the Smith normal form of $T$ are all equal to $1$. The torsion subgroup of the group generated by the tails is thus trivial, thereby showing $\B_0(G) = 1$.


\item \label{number:65} 
Let the group $G$ be the representative of this family given by the presentation
\[
\begin{aligned}
\langle g_{1}, \,g_{2}, \,g_{3}, \,g_{4}, \,g_{5}, \,g_{6}, \,g_{7} & \mid & g_{1}^{2} &= 1, \\ 
 & & g_{2}^{2} &= 1, & [g_{2}, g_{1}]  &= g_{4}, \\ 
 & & g_{3}^{2} &= 1, & [g_{3}, g_{1}]  &= g_{5}, & [g_{3}, g_{2}]  &= g_{6}, \\ 
 & & g_{4}^{2} &= 1, & [g_{4}, g_{3}]  &= g_{7}, \\ 
 & & g_{5}^{2} &= 1, \\ 
 & & g_{6}^{2} &= 1, & [g_{6}, g_{1}]  &= g_{7}, \\ 
 & & g_{7}^{2} &= 1\rangle. \\ 
\end{aligned}
\]
We add 12 tails to the presentation as to form a quotient of the universal central extension of the system: 
$g_{1}^{2} =  t_{1}$,
$g_{2}^{2} =  t_{2}$,
$[g_{2}, g_{1}] = g_{4} t_{3}$,
$g_{3}^{2} =  t_{4}$,
$[g_{3}, g_{1}] = g_{5} t_{5}$,
$[g_{3}, g_{2}] = g_{6} t_{6}$,
$g_{4}^{2} =  t_{7}$,
$[g_{4}, g_{3}] = g_{7} t_{8}$,
$g_{5}^{2} =  t_{9}$,
$g_{6}^{2} =  t_{10}$,
$[g_{6}, g_{1}] = g_{7} t_{11}$,
$g_{7}^{2} =  t_{12}$.
Carrying out consistency checks gives the following relations between the tails:
\[
\begin{aligned}
g_{3}(g_{2} g_{1}) & = (g_{3} g_{2}) g_{1}  & \Longrightarrow & & t_{8}t_{11}t_{12} & = 1 \\
g_{6}^2 g_{1} & = g_{6} (g_{6} g_{1})& \Longrightarrow & & t_{11}^{2}t_{12} & = 1 \\
g_{3}^2 g_{2} & = g_{3} (g_{3} g_{2})& \Longrightarrow & & t_{6}^{2}t_{10} & = 1 \\
g_{3}^2 g_{1} & = g_{3} (g_{3} g_{1})& \Longrightarrow & & t_{5}^{2}t_{9} & = 1 \\
g_{2}^2 g_{1} & = g_{2} (g_{2} g_{1})& \Longrightarrow & & t_{3}^{2}t_{7} & = 1 \\
\end{aligned}
\]
Scanning through the conjugacy class representatives of $G$ and the generators of their centralizers, we see that no new relations are imposed.
Collecting the coefficients of these relations into a matrix yields
\[
T = \bordermatrix{
{} & t_{1} & t_{2} & t_{3} & t_{4} & t_{5} & t_{6} & t_{7} & t_{8} & t_{9} & t_{10} & t_{11} & t_{12} \cr
{} &  &  & 2 &  &  &  & 1 &  &  &  &  &  \cr
{} &  &  &  &  & 2 &  &  &  & 1 &  &  &  \cr
{} &  &  &  &  &  & 2 &  &  &  & 1 &  &  \cr
{} &  &  &  &  &  &  &  & 1 &  &  & 1 & 1 \cr
{} &  &  &  &  &  &  &  &  &  &  & 2 & 1 \cr
}.
\]
It follows readily that the nontrivial elementary divisors of the Smith normal form of $T$ are all equal to $1$. The torsion subgroup of the group generated by the tails is thus trivial, thereby showing $\B_0(G) = 1$.


\item \label{number:66} 
Let the group $G$ be the representative of this family given by the presentation
\[
\begin{aligned}
\langle g_{1}, \,g_{2}, \,g_{3}, \,g_{4}, \,g_{5}, \,g_{6}, \,g_{7} & \mid & g_{1}^{2} &= 1, \\ 
 & & g_{2}^{2} &= 1, & [g_{2}, g_{1}]  &= g_{4}, \\ 
 & & g_{3}^{2} &= 1, & [g_{3}, g_{1}]  &= g_{5}, & [g_{3}, g_{2}]  &= g_{6}, \\ 
 & & g_{4}^{2} &= g_{7}, & [g_{4}, g_{1}]  &= g_{7}, & [g_{4}, g_{2}]  &= g_{7}, \\ 
 & & g_{5}^{2} &= 1, & [g_{5}, g_{2}]  &= g_{7}, \\ 
 & & g_{6}^{2} &= 1, & [g_{6}, g_{1}]  &= g_{7}, \\ 
 & & g_{7}^{2} &= 1\rangle. \\ 
\end{aligned}
\]
We add 14 tails to the presentation as to form a quotient of the universal central extension of the system: 
$g_{1}^{2} =  t_{1}$,
$g_{2}^{2} =  t_{2}$,
$[g_{2}, g_{1}] = g_{4} t_{3}$,
$g_{3}^{2} =  t_{4}$,
$[g_{3}, g_{1}] = g_{5} t_{5}$,
$[g_{3}, g_{2}] = g_{6} t_{6}$,
$g_{4}^{2} = g_{7} t_{7}$,
$[g_{4}, g_{1}] = g_{7} t_{8}$,
$[g_{4}, g_{2}] = g_{7} t_{9}$,
$g_{5}^{2} =  t_{10}$,
$[g_{5}, g_{2}] = g_{7} t_{11}$,
$g_{6}^{2} =  t_{12}$,
$[g_{6}, g_{1}] = g_{7} t_{13}$,
$g_{7}^{2} =  t_{14}$.
Carrying out consistency checks gives the following relations between the tails:
\[
\begin{aligned}
g_{3}(g_{2} g_{1}) & = (g_{3} g_{2}) g_{1}  & \Longrightarrow & & t_{11}^{-1}t_{13} & = 1 \\
g_{6}^2 g_{1} & = g_{6} (g_{6} g_{1})& \Longrightarrow & & t_{13}^{2}t_{14} & = 1 \\
g_{4}^2 g_{2} & = g_{4} (g_{4} g_{2})& \Longrightarrow & & t_{9}^{2}t_{14} & = 1 \\
g_{4}^2 g_{1} & = g_{4} (g_{4} g_{1})& \Longrightarrow & & t_{8}^{2}t_{14} & = 1 \\
g_{3}^2 g_{2} & = g_{3} (g_{3} g_{2})& \Longrightarrow & & t_{6}^{2}t_{12} & = 1 \\
g_{3}^2 g_{1} & = g_{3} (g_{3} g_{1})& \Longrightarrow & & t_{5}^{2}t_{10} & = 1 \\
g_{2}^2 g_{1} & = g_{2} (g_{2} g_{1})& \Longrightarrow & & t_{3}^{2}t_{7}t_{9}t_{14} & = 1 \\
g_{2} g_{1}^{2} & = (g_{2} g_{1}) g_{1}& \Longrightarrow & & t_{3}^{2}t_{7}t_{8}t_{14} & = 1 \\
\end{aligned}
\]
Scanning through the conjugacy class representatives of $G$ and the generators of their centralizers, we obtain the following relations induced on the tails:
\[
\begin{aligned}
{[g_{4} g_{6} g_{7} , \, g_{1} ]}_G & = 1 & \Longrightarrow & & t_{8}t_{13}t_{14} & = 1 \\
\end{aligned}
\]
Collecting the coefficients of these relations into a matrix yields
\[
T = \bordermatrix{
{} & t_{1} & t_{2} & t_{3} & t_{4} & t_{5} & t_{6} & t_{7} & t_{8} & t_{9} & t_{10} & t_{11} & t_{12} & t_{13} & t_{14} \cr
{} &  &  & 2 &  &  &  & 1 &  &  &  &  &  & 1 & 1 \cr
{} &  &  &  &  & 2 &  &  &  &  & 1 &  &  &  &  \cr
{} &  &  &  &  &  & 2 &  &  &  &  &  & 1 &  &  \cr
{} &  &  &  &  &  &  &  & 1 &  &  &  &  & 1 & 1 \cr
{} &  &  &  &  &  &  &  &  & 1 &  &  &  & 1 & 1 \cr
{} &  &  &  &  &  &  &  &  &  &  & 1 &  & 1 & 1 \cr
{} &  &  &  &  &  &  &  &  &  &  &  &  & 2 & 1 \cr
}.
\]
It follows readily that the nontrivial elementary divisors of the Smith normal form of $T$ are all equal to $1$. The torsion subgroup of the group generated by the tails is thus trivial, thereby showing $\B_0(G) = 1$.


\item \label{number:67} 
Let the group $G$ be the representative of this family given by the presentation
\[
\begin{aligned}
\langle g_{1}, \,g_{2}, \,g_{3}, \,g_{4}, \,g_{5}, \,g_{6}, \,g_{7} & \mid & g_{1}^{2} &= 1, \\ 
 & & g_{2}^{2} &= 1, & [g_{2}, g_{1}]  &= g_{5}, \\ 
 & & g_{3}^{2} &= 1, & [g_{3}, g_{1}]  &= g_{6}, \\ 
 & & g_{4}^{2} &= 1, & [g_{4}, g_{2}]  &= g_{5}, \\ 
 & & g_{5}^{2} &= g_{7}, & [g_{5}, g_{1}]  &= g_{7}, & [g_{5}, g_{2}]  &= g_{7}, & [g_{5}, g_{4}]  &= g_{7}, \\ 
 & & g_{6}^{2} &= g_{7}, & [g_{6}, g_{1}]  &= g_{7}, & [g_{6}, g_{3}]  &= g_{7}, \\ 
 & & g_{7}^{2} &= 1\rangle. \\ 
\end{aligned}
\]
We add 15 tails to the presentation as to form a quotient of the universal central extension of the system: 
$g_{1}^{2} =  t_{1}$,
$g_{2}^{2} =  t_{2}$,
$[g_{2}, g_{1}] = g_{5} t_{3}$,
$g_{3}^{2} =  t_{4}$,
$[g_{3}, g_{1}] = g_{6} t_{5}$,
$g_{4}^{2} =  t_{6}$,
$[g_{4}, g_{2}] = g_{5} t_{7}$,
$g_{5}^{2} = g_{7} t_{8}$,
$[g_{5}, g_{1}] = g_{7} t_{9}$,
$[g_{5}, g_{2}] = g_{7} t_{10}$,
$[g_{5}, g_{4}] = g_{7} t_{11}$,
$g_{6}^{2} = g_{7} t_{12}$,
$[g_{6}, g_{1}] = g_{7} t_{13}$,
$[g_{6}, g_{3}] = g_{7} t_{14}$,
$g_{7}^{2} =  t_{15}$.
Carrying out consistency checks gives the following relations between the tails:
\[
\begin{aligned}
g_{4}(g_{2} g_{1}) & = (g_{4} g_{2}) g_{1}  & \Longrightarrow & & t_{9}t_{11}t_{15} & = 1 \\
g_{6}^2 g_{3} & = g_{6} (g_{6} g_{3})& \Longrightarrow & & t_{14}^{2}t_{15} & = 1 \\
g_{6}^2 g_{1} & = g_{6} (g_{6} g_{1})& \Longrightarrow & & t_{13}^{2}t_{15} & = 1 \\
g_{5}^2 g_{4} & = g_{5} (g_{5} g_{4})& \Longrightarrow & & t_{11}^{2}t_{15} & = 1 \\
g_{5}^2 g_{2} & = g_{5} (g_{5} g_{2})& \Longrightarrow & & t_{10}^{2}t_{15} & = 1 \\
g_{4}^2 g_{2} & = g_{4} (g_{4} g_{2})& \Longrightarrow & & t_{7}^{2}t_{8}t_{11}t_{15} & = 1 \\
g_{3}^2 g_{1} & = g_{3} (g_{3} g_{1})& \Longrightarrow & & t_{5}^{2}t_{12}t_{14}t_{15} & = 1 \\
g_{2}^2 g_{1} & = g_{2} (g_{2} g_{1})& \Longrightarrow & & t_{3}^{2}t_{8}t_{10}t_{15} & = 1 \\
g_{4} g_{2}^{2} & = (g_{4} g_{2}) g_{2}& \Longrightarrow & & t_{7}^{2}t_{8}t_{10}t_{15} & = 1 \\
g_{3} g_{1}^{2} & = (g_{3} g_{1}) g_{1}& \Longrightarrow & & t_{5}^{2}t_{12}t_{13}t_{15} & = 1 \\
\end{aligned}
\]
Scanning through the conjugacy class representatives of $G$ and the generators of their centralizers, we obtain the following relations induced on the tails:
\[
\begin{aligned}
{[g_{5} g_{6} g_{7} , \, g_{1} ]}_G & = 1 & \Longrightarrow & & t_{9}t_{13}t_{15} & = 1 \\
{[g_{2} g_{5} g_{7} , \, g_{1} g_{3} g_{4} g_{5} g_{7} ]}_G & = 1 & \Longrightarrow & & t_{3}t_{7}^{-1}t_{9}t_{10}^{-1} & = 1 \\
\end{aligned}
\]
Collecting the coefficients of these relations into a matrix yields
\[
T = \bordermatrix{
{} & t_{1} & t_{2} & t_{3} & t_{4} & t_{5} & t_{6} & t_{7} & t_{8} & t_{9} & t_{10} & t_{11} & t_{12} & t_{13} & t_{14} & t_{15} \cr
{} &  &  & 1 &  &  &  & 1 & 1 &  &  &  &  &  & 1 & 1 \cr
{} &  &  &  &  & 2 &  &  &  &  &  &  & 1 &  & 1 & 1 \cr
{} &  &  &  &  &  &  & 2 & 1 &  &  &  &  &  & 1 & 1 \cr
{} &  &  &  &  &  &  &  &  & 1 &  &  &  &  & 1 & 1 \cr
{} &  &  &  &  &  &  &  &  &  & 1 &  &  &  & 1 & 1 \cr
{} &  &  &  &  &  &  &  &  &  &  & 1 &  &  & 1 & 1 \cr
{} &  &  &  &  &  &  &  &  &  &  &  &  & 1 & 1 & 1 \cr
{} &  &  &  &  &  &  &  &  &  &  &  &  &  & 2 & 1 \cr
}.
\]
It follows readily that the nontrivial elementary divisors of the Smith normal form of $T$ are all equal to $1$. The torsion subgroup of the group generated by the tails is thus trivial, thereby showing $\B_0(G) = 1$.


\item \label{number:68} 
Let the group $G$ be the representative of this family given by the presentation
\[
\begin{aligned}
\langle g_{1}, \,g_{2}, \,g_{3}, \,g_{4}, \,g_{5}, \,g_{6}, \,g_{7} & \mid & g_{1}^{2} &= 1, \\ 
 & & g_{2}^{2} &= 1, & [g_{2}, g_{1}]  &= g_{5}, \\ 
 & & g_{3}^{2} &= 1, & [g_{3}, g_{1}]  &= g_{6}, & [g_{3}, g_{2}]  &= g_{7}, \\ 
 & & g_{4}^{2} &= 1, & [g_{4}, g_{2}]  &= g_{5}, \\ 
 & & g_{5}^{2} &= g_{7}, & [g_{5}, g_{1}]  &= g_{7}, & [g_{5}, g_{2}]  &= g_{7}, & [g_{5}, g_{4}]  &= g_{7}, \\ 
 & & g_{6}^{2} &= g_{7}, & [g_{6}, g_{1}]  &= g_{7}, & [g_{6}, g_{3}]  &= g_{7}, \\ 
 & & g_{7}^{2} &= 1\rangle. \\ 
\end{aligned}
\]
We add 16 tails to the presentation as to form a quotient of the universal central extension of the system: 
$g_{1}^{2} =  t_{1}$,
$g_{2}^{2} =  t_{2}$,
$[g_{2}, g_{1}] = g_{5} t_{3}$,
$g_{3}^{2} =  t_{4}$,
$[g_{3}, g_{1}] = g_{6} t_{5}$,
$[g_{3}, g_{2}] = g_{7} t_{6}$,
$g_{4}^{2} =  t_{7}$,
$[g_{4}, g_{2}] = g_{5} t_{8}$,
$g_{5}^{2} = g_{7} t_{9}$,
$[g_{5}, g_{1}] = g_{7} t_{10}$,
$[g_{5}, g_{2}] = g_{7} t_{11}$,
$[g_{5}, g_{4}] = g_{7} t_{12}$,
$g_{6}^{2} = g_{7} t_{13}$,
$[g_{6}, g_{1}] = g_{7} t_{14}$,
$[g_{6}, g_{3}] = g_{7} t_{15}$,
$g_{7}^{2} =  t_{16}$.
Carrying out consistency checks gives the following relations between the tails:
\[
\begin{aligned}
g_{4}(g_{2} g_{1}) & = (g_{4} g_{2}) g_{1}  & \Longrightarrow & & t_{10}t_{12}t_{16} & = 1 \\
g_{6}^2 g_{3} & = g_{6} (g_{6} g_{3})& \Longrightarrow & & t_{15}^{2}t_{16} & = 1 \\
g_{6}^2 g_{1} & = g_{6} (g_{6} g_{1})& \Longrightarrow & & t_{14}^{2}t_{16} & = 1 \\
g_{5}^2 g_{4} & = g_{5} (g_{5} g_{4})& \Longrightarrow & & t_{12}^{2}t_{16} & = 1 \\
g_{5}^2 g_{2} & = g_{5} (g_{5} g_{2})& \Longrightarrow & & t_{11}^{2}t_{16} & = 1 \\
g_{4}^2 g_{2} & = g_{4} (g_{4} g_{2})& \Longrightarrow & & t_{8}^{2}t_{9}t_{12}t_{16} & = 1 \\
g_{3}^2 g_{2} & = g_{3} (g_{3} g_{2})& \Longrightarrow & & t_{6}^{2}t_{16} & = 1 \\
g_{3}^2 g_{1} & = g_{3} (g_{3} g_{1})& \Longrightarrow & & t_{5}^{2}t_{13}t_{15}t_{16} & = 1 \\
g_{2}^2 g_{1} & = g_{2} (g_{2} g_{1})& \Longrightarrow & & t_{3}^{2}t_{9}t_{11}t_{16} & = 1 \\
g_{4} g_{2}^{2} & = (g_{4} g_{2}) g_{2}& \Longrightarrow & & t_{8}^{2}t_{9}t_{11}t_{16} & = 1 \\
g_{3} g_{1}^{2} & = (g_{3} g_{1}) g_{1}& \Longrightarrow & & t_{5}^{2}t_{13}t_{14}t_{16} & = 1 \\
\end{aligned}
\]
Scanning through the conjugacy class representatives of $G$ and the generators of their centralizers, we obtain the following relations induced on the tails:
\[
\begin{aligned}
{[g_{5} g_{6} g_{7} , \, g_{1} ]}_G & = 1 & \Longrightarrow & & t_{10}t_{14}t_{16} & = 1 \\
{[g_{3} g_{6} g_{7} , \, g_{2} g_{4} g_{6} g_{7} ]}_G & = 1 & \Longrightarrow & & t_{6}t_{15}^{-1} & = 1 \\
{[g_{2} g_{5} g_{7} , \, g_{1} g_{3} g_{4} ]}_G & = 1 & \Longrightarrow & & t_{3}t_{6}^{-1}t_{8}^{-1}t_{10} & = 1 \\
\end{aligned}
\]
Collecting the coefficients of these relations into a matrix yields
\[
T = \bordermatrix{
{} & t_{1} & t_{2} & t_{3} & t_{4} & t_{5} & t_{6} & t_{7} & t_{8} & t_{9} & t_{10} & t_{11} & t_{12} & t_{13} & t_{14} & t_{15} & t_{16} \cr
{} &  &  & 1 &  &  &  &  & 1 & 1 &  &  &  &  &  & 1 & 1 \cr
{} &  &  &  &  & 2 &  &  &  &  &  &  &  & 1 &  & 1 & 1 \cr
{} &  &  &  &  &  & 1 &  &  &  &  &  &  &  &  & 1 & 1 \cr
{} &  &  &  &  &  &  &  & 2 & 1 &  &  &  &  &  & 1 & 1 \cr
{} &  &  &  &  &  &  &  &  &  & 1 &  &  &  &  & 1 & 1 \cr
{} &  &  &  &  &  &  &  &  &  &  & 1 &  &  &  & 1 & 1 \cr
{} &  &  &  &  &  &  &  &  &  &  &  & 1 &  &  & 1 & 1 \cr
{} &  &  &  &  &  &  &  &  &  &  &  &  &  & 1 & 1 & 1 \cr
{} &  &  &  &  &  &  &  &  &  &  &  &  &  &  & 2 & 1 \cr
}.
\]
It follows readily that the nontrivial elementary divisors of the Smith normal form of $T$ are all equal to $1$. The torsion subgroup of the group generated by the tails is thus trivial, thereby showing $\B_0(G) = 1$.


\item \label{number:69} 
Let the group $G$ be the representative of this family given by the presentation
\[
\begin{aligned}
\langle g_{1}, \,g_{2}, \,g_{3}, \,g_{4}, \,g_{5}, \,g_{6}, \,g_{7} & \mid & g_{1}^{2} &= 1, \\ 
 & & g_{2}^{2} &= 1, & [g_{2}, g_{1}]  &= g_{5}, \\ 
 & & g_{3}^{2} &= 1, & [g_{3}, g_{1}]  &= g_{6}, \\ 
 & & g_{4}^{2} &= 1, & [g_{4}, g_{1}]  &= g_{7}, \\ 
 & & g_{5}^{2} &= g_{7}, & [g_{5}, g_{1}]  &= g_{7}, & [g_{5}, g_{2}]  &= g_{7}, & [g_{5}, g_{3}]  &= g_{7}, \\ 
 & & g_{6}^{2} &= 1, & [g_{6}, g_{2}]  &= g_{7}, \\ 
 & & g_{7}^{2} &= 1\rangle. \\ 
\end{aligned}
\]
We add 14 tails to the presentation as to form a quotient of the universal central extension of the system: 
$g_{1}^{2} =  t_{1}$,
$g_{2}^{2} =  t_{2}$,
$[g_{2}, g_{1}] = g_{5} t_{3}$,
$g_{3}^{2} =  t_{4}$,
$[g_{3}, g_{1}] = g_{6} t_{5}$,
$g_{4}^{2} =  t_{6}$,
$[g_{4}, g_{1}] = g_{7} t_{7}$,
$g_{5}^{2} = g_{7} t_{8}$,
$[g_{5}, g_{1}] = g_{7} t_{9}$,
$[g_{5}, g_{2}] = g_{7} t_{10}$,
$[g_{5}, g_{3}] = g_{7} t_{11}$,
$g_{6}^{2} =  t_{12}$,
$[g_{6}, g_{2}] = g_{7} t_{13}$,
$g_{7}^{2} =  t_{14}$.
Carrying out consistency checks gives the following relations between the tails:
\[
\begin{aligned}
g_{3}(g_{2} g_{1}) & = (g_{3} g_{2}) g_{1}  & \Longrightarrow & & t_{11}t_{13}^{-1} & = 1 \\
g_{6}^2 g_{2} & = g_{6} (g_{6} g_{2})& \Longrightarrow & & t_{13}^{2}t_{14} & = 1 \\
g_{5}^2 g_{2} & = g_{5} (g_{5} g_{2})& \Longrightarrow & & t_{10}^{2}t_{14} & = 1 \\
g_{5}^2 g_{1} & = g_{5} (g_{5} g_{1})& \Longrightarrow & & t_{9}^{2}t_{14} & = 1 \\
g_{4}^2 g_{1} & = g_{4} (g_{4} g_{1})& \Longrightarrow & & t_{7}^{2}t_{14} & = 1 \\
g_{3}^2 g_{1} & = g_{3} (g_{3} g_{1})& \Longrightarrow & & t_{5}^{2}t_{12} & = 1 \\
g_{2}^2 g_{1} & = g_{2} (g_{2} g_{1})& \Longrightarrow & & t_{3}^{2}t_{8}t_{10}t_{14} & = 1 \\
g_{2} g_{1}^{2} & = (g_{2} g_{1}) g_{1}& \Longrightarrow & & t_{3}^{2}t_{8}t_{9}t_{14} & = 1 \\
\end{aligned}
\]
Scanning through the conjugacy class representatives of $G$ and the generators of their centralizers, we obtain the following relations induced on the tails:
\[
\begin{aligned}
{[g_{5} g_{7} , \, g_{1} g_{3} ]}_G & = 1 & \Longrightarrow & & t_{9}t_{11}t_{14} & = 1 \\
{[g_{4} g_{6} g_{7} , \, g_{1} g_{2} g_{3} ]}_G & = 1 & \Longrightarrow & & t_{7}t_{13}t_{14} & = 1 \\
\end{aligned}
\]
Collecting the coefficients of these relations into a matrix yields
\[
T = \bordermatrix{
{} & t_{1} & t_{2} & t_{3} & t_{4} & t_{5} & t_{6} & t_{7} & t_{8} & t_{9} & t_{10} & t_{11} & t_{12} & t_{13} & t_{14} \cr
{} &  &  & 2 &  &  &  &  & 1 &  &  &  &  & 1 & 1 \cr
{} &  &  &  &  & 2 &  &  &  &  &  &  & 1 &  &  \cr
{} &  &  &  &  &  &  & 1 &  &  &  &  &  & 1 & 1 \cr
{} &  &  &  &  &  &  &  &  & 1 &  &  &  & 1 & 1 \cr
{} &  &  &  &  &  &  &  &  &  & 1 &  &  & 1 & 1 \cr
{} &  &  &  &  &  &  &  &  &  &  & 1 &  & 1 & 1 \cr
{} &  &  &  &  &  &  &  &  &  &  &  &  & 2 & 1 \cr
}.
\]
It follows readily that the nontrivial elementary divisors of the Smith normal form of $T$ are all equal to $1$. The torsion subgroup of the group generated by the tails is thus trivial, thereby showing $\B_0(G) = 1$.


\item \label{number:70} 
Let the group $G$ be the representative of this family given by the presentation
\[
\begin{aligned}
\langle g_{1}, \,g_{2}, \,g_{3}, \,g_{4}, \,g_{5}, \,g_{6}, \,g_{7} & \mid & g_{1}^{2} &= g_{6}, \\ 
 & & g_{2}^{2} &= g_{4}g_{6}, & [g_{2}, g_{1}]  &= g_{4}, \\ 
 & & g_{3}^{2} &= 1, & [g_{3}, g_{1}]  &= g_{5}, \\ 
 & & g_{4}^{2} &= g_{7}, & [g_{4}, g_{1}]  &= g_{7}, & [g_{4}, g_{3}]  &= g_{7}, \\ 
 & & g_{5}^{2} &= g_{7}, & [g_{5}, g_{2}]  &= g_{7}, & [g_{5}, g_{3}]  &= g_{7}, \\ 
 & & g_{6}^{2} &= 1, & [g_{6}, g_{3}]  &= g_{7}, \\ 
 & & g_{7}^{2} &= 1\rangle. \\ 
\end{aligned}
\]
We add 14 tails to the presentation as to form a quotient of the universal central extension of the system: 
$g_{1}^{2} = g_{6} t_{1}$,
$g_{2}^{2} = g_{4}g_{6} t_{2}$,
$[g_{2}, g_{1}] = g_{4} t_{3}$,
$g_{3}^{2} =  t_{4}$,
$[g_{3}, g_{1}] = g_{5} t_{5}$,
$g_{4}^{2} = g_{7} t_{6}$,
$[g_{4}, g_{1}] = g_{7} t_{7}$,
$[g_{4}, g_{3}] = g_{7} t_{8}$,
$g_{5}^{2} = g_{7} t_{9}$,
$[g_{5}, g_{2}] = g_{7} t_{10}$,
$[g_{5}, g_{3}] = g_{7} t_{11}$,
$g_{6}^{2} =  t_{12}$,
$[g_{6}, g_{3}] = g_{7} t_{13}$,
$g_{7}^{2} =  t_{14}$.
Carrying out consistency checks gives the following relations between the tails:
\[
\begin{aligned}
g_{3}(g_{2} g_{1}) & = (g_{3} g_{2}) g_{1}  & \Longrightarrow & & t_{8}t_{10}^{-1} & = 1 \\
g_{6}^2 g_{3} & = g_{6} (g_{6} g_{3})& \Longrightarrow & & t_{13}^{2}t_{14} & = 1 \\
g_{5}^2 g_{3} & = g_{5} (g_{5} g_{3})& \Longrightarrow & & t_{11}^{2}t_{14} & = 1 \\
g_{5}^2 g_{2} & = g_{5} (g_{5} g_{2})& \Longrightarrow & & t_{10}^{2}t_{14} & = 1 \\
g_{4}^2 g_{1} & = g_{4} (g_{4} g_{1})& \Longrightarrow & & t_{7}^{2}t_{14} & = 1 \\
g_{3}^2 g_{1} & = g_{3} (g_{3} g_{1})& \Longrightarrow & & t_{5}^{2}t_{9}t_{11}t_{14} & = 1 \\
g_{2}^2 g_{1} & = g_{2} (g_{2} g_{1})& \Longrightarrow & & t_{3}^{2}t_{6}t_{7}^{-1} & = 1 \\
g_{3} g_{2}^{2} & = (g_{3} g_{2}) g_{2}& \Longrightarrow & & t_{8}t_{13}t_{14} & = 1 \\
g_{3} g_{1}^{2} & = (g_{3} g_{1}) g_{1}& \Longrightarrow & & t_{5}^{2}t_{9}t_{13}t_{14} & = 1 \\
\end{aligned}
\]
Scanning through the conjugacy class representatives of $G$ and the generators of their centralizers, we obtain the following relations induced on the tails:
\[
\begin{aligned}
{[g_{4} g_{7} , \, g_{1} g_{3} ]}_G & = 1 & \Longrightarrow & & t_{7}t_{8}t_{14} & = 1 \\
\end{aligned}
\]
Collecting the coefficients of these relations into a matrix yields
\[
T = \bordermatrix{
{} & t_{1} & t_{2} & t_{3} & t_{4} & t_{5} & t_{6} & t_{7} & t_{8} & t_{9} & t_{10} & t_{11} & t_{12} & t_{13} & t_{14} \cr
{} &  &  & 2 &  &  & 1 &  &  &  &  &  &  & 1 & 1 \cr
{} &  &  &  &  & 2 &  &  &  & 1 &  &  &  & 1 & 1 \cr
{} &  &  &  &  &  &  & 1 &  &  &  &  &  & 1 & 1 \cr
{} &  &  &  &  &  &  &  & 1 &  &  &  &  & 1 & 1 \cr
{} &  &  &  &  &  &  &  &  &  & 1 &  &  & 1 & 1 \cr
{} &  &  &  &  &  &  &  &  &  &  & 1 &  & 1 & 1 \cr
{} &  &  &  &  &  &  &  &  &  &  &  &  & 2 & 1 \cr
}.
\]
It follows readily that the nontrivial elementary divisors of the Smith normal form of $T$ are all equal to $1$. The torsion subgroup of the group generated by the tails is thus trivial, thereby showing $\B_0(G) = 1$.


\item \label{number:71} 
Let the group $G$ be the representative of this family given by the presentation
\[
\begin{aligned}
\langle g_{1}, \,g_{2}, \,g_{3}, \,g_{4}, \,g_{5}, \,g_{6}, \,g_{7} & \mid & g_{1}^{2} &= g_{6}, \\ 
 & & g_{2}^{2} &= g_{4}g_{6}, & [g_{2}, g_{1}]  &= g_{4}, \\ 
 & & g_{3}^{2} &= 1, & [g_{3}, g_{1}]  &= g_{5}, \\ 
 & & g_{4}^{2} &= 1, & [g_{4}, g_{1}]  &= g_{7}, & [g_{4}, g_{2}]  &= g_{7}, & [g_{4}, g_{3}]  &= g_{7}, \\ 
 & & g_{5}^{2} &= 1, & [g_{5}, g_{1}]  &= g_{7}, & [g_{5}, g_{2}]  &= g_{7}, \\ 
 & & g_{6}^{2} &= 1, & [g_{6}, g_{2}]  &= g_{7}, & [g_{6}, g_{3}]  &= g_{7}, \\ 
 & & g_{7}^{2} &= 1\rangle. \\ 
\end{aligned}
\]
We add 16 tails to the presentation as to form a quotient of the universal central extension of the system: 
$g_{1}^{2} = g_{6} t_{1}$,
$g_{2}^{2} = g_{4}g_{6} t_{2}$,
$[g_{2}, g_{1}] = g_{4} t_{3}$,
$g_{3}^{2} =  t_{4}$,
$[g_{3}, g_{1}] = g_{5} t_{5}$,
$g_{4}^{2} =  t_{6}$,
$[g_{4}, g_{1}] = g_{7} t_{7}$,
$[g_{4}, g_{2}] = g_{7} t_{8}$,
$[g_{4}, g_{3}] = g_{7} t_{9}$,
$g_{5}^{2} =  t_{10}$,
$[g_{5}, g_{1}] = g_{7} t_{11}$,
$[g_{5}, g_{2}] = g_{7} t_{12}$,
$g_{6}^{2} =  t_{13}$,
$[g_{6}, g_{2}] = g_{7} t_{14}$,
$[g_{6}, g_{3}] = g_{7} t_{15}$,
$g_{7}^{2} =  t_{16}$.
Carrying out consistency checks gives the following relations between the tails:
\[
\begin{aligned}
g_{3}(g_{2} g_{1}) & = (g_{3} g_{2}) g_{1}  & \Longrightarrow & & t_{9}t_{12}^{-1} & = 1 \\
g_{6}^2 g_{3} & = g_{6} (g_{6} g_{3})& \Longrightarrow & & t_{15}^{2}t_{16} & = 1 \\
g_{6}^2 g_{2} & = g_{6} (g_{6} g_{2})& \Longrightarrow & & t_{14}^{2}t_{16} & = 1 \\
g_{5}^2 g_{2} & = g_{5} (g_{5} g_{2})& \Longrightarrow & & t_{12}^{2}t_{16} & = 1 \\
g_{5}^2 g_{1} & = g_{5} (g_{5} g_{1})& \Longrightarrow & & t_{11}^{2}t_{16} & = 1 \\
g_{4}^2 g_{2} & = g_{4} (g_{4} g_{2})& \Longrightarrow & & t_{8}^{2}t_{16} & = 1 \\
g_{4}^2 g_{1} & = g_{4} (g_{4} g_{1})& \Longrightarrow & & t_{7}^{2}t_{16} & = 1 \\
g_{3}^2 g_{1} & = g_{3} (g_{3} g_{1})& \Longrightarrow & & t_{5}^{2}t_{10} & = 1 \\
g_{2}^2 g_{1} & = g_{2} (g_{2} g_{1})& \Longrightarrow & & t_{3}^{2}t_{6}t_{7}^{-1}t_{8} & = 1 \\
g_{3} g_{2}^{2} & = (g_{3} g_{2}) g_{2}& \Longrightarrow & & t_{9}t_{15}t_{16} & = 1 \\
g_{3} g_{1}^{2} & = (g_{3} g_{1}) g_{1}& \Longrightarrow & & t_{5}^{2}t_{10}t_{11}t_{15}t_{16} & = 1 \\
g_{2} g_{1}^{2} & = (g_{2} g_{1}) g_{1}& \Longrightarrow & & t_{3}^{2}t_{6}t_{7}t_{14}t_{16} & = 1 \\
\end{aligned}
\]
Scanning through the conjugacy class representatives of $G$ and the generators of their centralizers, we obtain the following relations induced on the tails:
\[
\begin{aligned}
{[g_{6} g_{7} , \, g_{2} g_{3} g_{4} ]}_G & = 1 & \Longrightarrow & & t_{14}t_{15}t_{16} & = 1 \\
{[g_{4} g_{7} , \, g_{1} g_{3} ]}_G & = 1 & \Longrightarrow & & t_{7}t_{9}t_{16} & = 1 \\
\end{aligned}
\]
Collecting the coefficients of these relations into a matrix yields
\[
T = \bordermatrix{
{} & t_{1} & t_{2} & t_{3} & t_{4} & t_{5} & t_{6} & t_{7} & t_{8} & t_{9} & t_{10} & t_{11} & t_{12} & t_{13} & t_{14} & t_{15} & t_{16} \cr
{} &  &  & 2 &  &  & 1 &  &  &  &  &  &  &  &  &  &  \cr
{} &  &  &  &  & 2 &  &  &  &  & 1 &  &  &  &  &  &  \cr
{} &  &  &  &  &  &  & 1 &  &  &  &  &  &  &  & 1 & 1 \cr
{} &  &  &  &  &  &  &  & 1 &  &  &  &  &  &  & 1 & 1 \cr
{} &  &  &  &  &  &  &  &  & 1 &  &  &  &  &  & 1 & 1 \cr
{} &  &  &  &  &  &  &  &  &  &  & 1 &  &  &  & 1 & 1 \cr
{} &  &  &  &  &  &  &  &  &  &  &  & 1 &  &  & 1 & 1 \cr
{} &  &  &  &  &  &  &  &  &  &  &  &  &  & 1 & 1 & 1 \cr
{} &  &  &  &  &  &  &  &  &  &  &  &  &  &  & 2 & 1 \cr
}.
\]
It follows readily that the nontrivial elementary divisors of the Smith normal form of $T$ are all equal to $1$. The torsion subgroup of the group generated by the tails is thus trivial, thereby showing $\B_0(G) = 1$.


\item \label{number:72} 
Let the group $G$ be the representative of this family given by the presentation
\[
\begin{aligned}
\langle g_{1}, \,g_{2}, \,g_{3}, \,g_{4}, \,g_{5}, \,g_{6}, \,g_{7} & \mid & g_{1}^{2} &= 1, \\ 
 & & g_{2}^{2} &= 1, & [g_{2}, g_{1}]  &= g_{5}, \\ 
 & & g_{3}^{2} &= 1, & [g_{3}, g_{1}]  &= g_{6}, \\ 
 & & g_{4}^{2} &= 1, & [g_{4}, g_{1}]  &= g_{7}, \\ 
 & & g_{5}^{2} &= 1, & [g_{5}, g_{3}]  &= g_{7}, \\ 
 & & g_{6}^{2} &= 1, & [g_{6}, g_{2}]  &= g_{7}, \\ 
 & & g_{7}^{2} &= 1\rangle. \\ 
\end{aligned}
\]
We add 12 tails to the presentation as to form a quotient of the universal central extension of the system: 
$g_{1}^{2} =  t_{1}$,
$g_{2}^{2} =  t_{2}$,
$[g_{2}, g_{1}] = g_{5} t_{3}$,
$g_{3}^{2} =  t_{4}$,
$[g_{3}, g_{1}] = g_{6} t_{5}$,
$g_{4}^{2} =  t_{6}$,
$[g_{4}, g_{1}] = g_{7} t_{7}$,
$g_{5}^{2} =  t_{8}$,
$[g_{5}, g_{3}] = g_{7} t_{9}$,
$g_{6}^{2} =  t_{10}$,
$[g_{6}, g_{2}] = g_{7} t_{11}$,
$g_{7}^{2} =  t_{12}$.
Carrying out consistency checks gives the following relations between the tails:
\[
\begin{aligned}
g_{3}(g_{2} g_{1}) & = (g_{3} g_{2}) g_{1}  & \Longrightarrow & & t_{9}t_{11}^{-1} & = 1 \\
g_{6}^2 g_{2} & = g_{6} (g_{6} g_{2})& \Longrightarrow & & t_{11}^{2}t_{12} & = 1 \\
g_{4}^2 g_{1} & = g_{4} (g_{4} g_{1})& \Longrightarrow & & t_{7}^{2}t_{12} & = 1 \\
g_{3}^2 g_{1} & = g_{3} (g_{3} g_{1})& \Longrightarrow & & t_{5}^{2}t_{10} & = 1 \\
g_{2}^2 g_{1} & = g_{2} (g_{2} g_{1})& \Longrightarrow & & t_{3}^{2}t_{8} & = 1 \\
\end{aligned}
\]
Scanning through the conjugacy class representatives of $G$ and the generators of their centralizers, we obtain the following relations induced on the tails:
\[
\begin{aligned}
{[g_{4} g_{6} g_{7} , \, g_{1} g_{2} g_{3} ]}_G & = 1 & \Longrightarrow & & t_{7}t_{11}t_{12} & = 1 \\
\end{aligned}
\]
Collecting the coefficients of these relations into a matrix yields
\[
T = \bordermatrix{
{} & t_{1} & t_{2} & t_{3} & t_{4} & t_{5} & t_{6} & t_{7} & t_{8} & t_{9} & t_{10} & t_{11} & t_{12} \cr
{} &  &  & 2 &  &  &  &  & 1 &  &  &  &  \cr
{} &  &  &  &  & 2 &  &  &  &  & 1 &  &  \cr
{} &  &  &  &  &  &  & 1 &  &  &  & 1 & 1 \cr
{} &  &  &  &  &  &  &  &  & 1 &  & 1 & 1 \cr
{} &  &  &  &  &  &  &  &  &  &  & 2 & 1 \cr
}.
\]
It follows readily that the nontrivial elementary divisors of the Smith normal form of $T$ are all equal to $1$. The torsion subgroup of the group generated by the tails is thus trivial, thereby showing $\B_0(G) = 1$.


\item \label{number:73} 
Let the group $G$ be the representative of this family given by the presentation
\[
\begin{aligned}
\langle g_{1}, \,g_{2}, \,g_{3}, \,g_{4}, \,g_{5}, \,g_{6}, \,g_{7} & \mid & g_{1}^{2} &= g_{6}, \\ 
 & & g_{2}^{2} &= g_{4}g_{6}, & [g_{2}, g_{1}]  &= g_{4}, \\ 
 & & g_{3}^{2} &= 1, & [g_{3}, g_{1}]  &= g_{5}, \\ 
 & & g_{4}^{2} &= g_{7}, & [g_{4}, g_{1}]  &= g_{7}, & [g_{4}, g_{3}]  &= g_{7}, \\ 
 & & g_{5}^{2} &= 1, & [g_{5}, g_{1}]  &= g_{7}, & [g_{5}, g_{2}]  &= g_{7}, \\ 
 & & g_{6}^{2} &= 1, & [g_{6}, g_{3}]  &= g_{7}, \\ 
 & & g_{7}^{2} &= 1\rangle. \\ 
\end{aligned}
\]
We add 14 tails to the presentation as to form a quotient of the universal central extension of the system: 
$g_{1}^{2} = g_{6} t_{1}$,
$g_{2}^{2} = g_{4}g_{6} t_{2}$,
$[g_{2}, g_{1}] = g_{4} t_{3}$,
$g_{3}^{2} =  t_{4}$,
$[g_{3}, g_{1}] = g_{5} t_{5}$,
$g_{4}^{2} = g_{7} t_{6}$,
$[g_{4}, g_{1}] = g_{7} t_{7}$,
$[g_{4}, g_{3}] = g_{7} t_{8}$,
$g_{5}^{2} =  t_{9}$,
$[g_{5}, g_{1}] = g_{7} t_{10}$,
$[g_{5}, g_{2}] = g_{7} t_{11}$,
$g_{6}^{2} =  t_{12}$,
$[g_{6}, g_{3}] = g_{7} t_{13}$,
$g_{7}^{2} =  t_{14}$.
Carrying out consistency checks gives the following relations between the tails:
\[
\begin{aligned}
g_{3}(g_{2} g_{1}) & = (g_{3} g_{2}) g_{1}  & \Longrightarrow & & t_{8}t_{11}^{-1} & = 1 \\
g_{6}^2 g_{3} & = g_{6} (g_{6} g_{3})& \Longrightarrow & & t_{13}^{2}t_{14} & = 1 \\
g_{5}^2 g_{2} & = g_{5} (g_{5} g_{2})& \Longrightarrow & & t_{11}^{2}t_{14} & = 1 \\
g_{5}^2 g_{1} & = g_{5} (g_{5} g_{1})& \Longrightarrow & & t_{10}^{2}t_{14} & = 1 \\
g_{4}^2 g_{1} & = g_{4} (g_{4} g_{1})& \Longrightarrow & & t_{7}^{2}t_{14} & = 1 \\
g_{3}^2 g_{1} & = g_{3} (g_{3} g_{1})& \Longrightarrow & & t_{5}^{2}t_{9} & = 1 \\
g_{2}^2 g_{1} & = g_{2} (g_{2} g_{1})& \Longrightarrow & & t_{3}^{2}t_{6}t_{7}^{-1} & = 1 \\
g_{3} g_{2}^{2} & = (g_{3} g_{2}) g_{2}& \Longrightarrow & & t_{8}t_{13}t_{14} & = 1 \\
g_{3} g_{1}^{2} & = (g_{3} g_{1}) g_{1}& \Longrightarrow & & t_{5}^{2}t_{9}t_{10}t_{13}t_{14} & = 1 \\
\end{aligned}
\]
Scanning through the conjugacy class representatives of $G$ and the generators of their centralizers, we obtain the following relations induced on the tails:
\[
\begin{aligned}
{[g_{4} g_{7} , \, g_{1} g_{3} ]}_G & = 1 & \Longrightarrow & & t_{7}t_{8}t_{14} & = 1 \\
\end{aligned}
\]
Collecting the coefficients of these relations into a matrix yields
\[
T = \bordermatrix{
{} & t_{1} & t_{2} & t_{3} & t_{4} & t_{5} & t_{6} & t_{7} & t_{8} & t_{9} & t_{10} & t_{11} & t_{12} & t_{13} & t_{14} \cr
{} &  &  & 2 &  &  & 1 &  &  &  &  &  &  & 1 & 1 \cr
{} &  &  &  &  & 2 &  &  &  & 1 &  &  &  &  &  \cr
{} &  &  &  &  &  &  & 1 &  &  &  &  &  & 1 & 1 \cr
{} &  &  &  &  &  &  &  & 1 &  &  &  &  & 1 & 1 \cr
{} &  &  &  &  &  &  &  &  &  & 1 &  &  & 1 & 1 \cr
{} &  &  &  &  &  &  &  &  &  &  & 1 &  & 1 & 1 \cr
{} &  &  &  &  &  &  &  &  &  &  &  &  & 2 & 1 \cr
}.
\]
It follows readily that the nontrivial elementary divisors of the Smith normal form of $T$ are all equal to $1$. The torsion subgroup of the group generated by the tails is thus trivial, thereby showing $\B_0(G) = 1$.


\item \label{number:74} 
Let the group $G$ be the representative of this family given by the presentation
\[
\begin{aligned}
\langle g_{1}, \,g_{2}, \,g_{3}, \,g_{4}, \,g_{5}, \,g_{6}, \,g_{7} & \mid & g_{1}^{2} &= 1, \\ 
 & & g_{2}^{2} &= g_{5}, & [g_{2}, g_{1}]  &= g_{5}, \\ 
 & & g_{3}^{2} &= 1, & [g_{3}, g_{1}]  &= g_{6}, \\ 
 & & g_{4}^{2} &= 1, & [g_{4}, g_{2}]  &= g_{7}, \\ 
 & & g_{5}^{2} &= g_{7}, & [g_{5}, g_{1}]  &= g_{7}, \\ 
 & & g_{6}^{2} &= g_{7}, & [g_{6}, g_{1}]  &= g_{7}, & [g_{6}, g_{3}]  &= g_{7}, \\ 
 & & g_{7}^{2} &= 1\rangle. \\ 
\end{aligned}
\]
We add 13 tails to the presentation as to form a quotient of the universal central extension of the system: 
$g_{1}^{2} =  t_{1}$,
$g_{2}^{2} = g_{5} t_{2}$,
$[g_{2}, g_{1}] = g_{5} t_{3}$,
$g_{3}^{2} =  t_{4}$,
$[g_{3}, g_{1}] = g_{6} t_{5}$,
$g_{4}^{2} =  t_{6}$,
$[g_{4}, g_{2}] = g_{7} t_{7}$,
$g_{5}^{2} = g_{7} t_{8}$,
$[g_{5}, g_{1}] = g_{7} t_{9}$,
$g_{6}^{2} = g_{7} t_{10}$,
$[g_{6}, g_{1}] = g_{7} t_{11}$,
$[g_{6}, g_{3}] = g_{7} t_{12}$,
$g_{7}^{2} =  t_{13}$.
Carrying out consistency checks gives the following relations between the tails:
\[
\begin{aligned}
g_{6}^2 g_{3} & = g_{6} (g_{6} g_{3})& \Longrightarrow & & t_{12}^{2}t_{13} & = 1 \\
g_{6}^2 g_{1} & = g_{6} (g_{6} g_{1})& \Longrightarrow & & t_{11}^{2}t_{13} & = 1 \\
g_{5}^2 g_{1} & = g_{5} (g_{5} g_{1})& \Longrightarrow & & t_{9}^{2}t_{13} & = 1 \\
g_{4}^2 g_{2} & = g_{4} (g_{4} g_{2})& \Longrightarrow & & t_{7}^{2}t_{13} & = 1 \\
g_{3}^2 g_{1} & = g_{3} (g_{3} g_{1})& \Longrightarrow & & t_{5}^{2}t_{10}t_{12}t_{13} & = 1 \\
g_{2}^2 g_{1} & = g_{2} (g_{2} g_{1})& \Longrightarrow & & t_{3}^{2}t_{8}t_{9}^{-1} & = 1 \\
g_{3} g_{1}^{2} & = (g_{3} g_{1}) g_{1}& \Longrightarrow & & t_{5}^{2}t_{10}t_{11}t_{13} & = 1 \\
\end{aligned}
\]
Scanning through the conjugacy class representatives of $G$ and the generators of their centralizers, we obtain the following relations induced on the tails:
\[
\begin{aligned}
{[g_{5} g_{6} g_{7} , \, g_{1} ]}_G & = 1 & \Longrightarrow & & t_{9}t_{11}t_{13} & = 1 \\
{[g_{4} g_{6} g_{7} , \, g_{2} g_{3} g_{4} ]}_G & = 1 & \Longrightarrow & & t_{7}t_{12}t_{13} & = 1 \\
\end{aligned}
\]
Collecting the coefficients of these relations into a matrix yields
\[
T = \bordermatrix{
{} & t_{1} & t_{2} & t_{3} & t_{4} & t_{5} & t_{6} & t_{7} & t_{8} & t_{9} & t_{10} & t_{11} & t_{12} & t_{13} \cr
{} &  &  & 2 &  &  &  &  & 1 &  &  &  & 1 & 1 \cr
{} &  &  &  &  & 2 &  &  &  &  & 1 &  & 1 & 1 \cr
{} &  &  &  &  &  &  & 1 &  &  &  &  & 1 & 1 \cr
{} &  &  &  &  &  &  &  &  & 1 &  &  & 1 & 1 \cr
{} &  &  &  &  &  &  &  &  &  &  & 1 & 1 & 1 \cr
{} &  &  &  &  &  &  &  &  &  &  &  & 2 & 1 \cr
}.
\]
It follows readily that the nontrivial elementary divisors of the Smith normal form of $T$ are all equal to $1$. The torsion subgroup of the group generated by the tails is thus trivial, thereby showing $\B_0(G) = 1$.


\item \label{number:75} 
Let the group $G$ be the representative of this family given by the presentation
\[
\begin{aligned}
\langle g_{1}, \,g_{2}, \,g_{3}, \,g_{4}, \,g_{5}, \,g_{6}, \,g_{7} & \mid & g_{1}^{2} &= g_{6}, \\ 
 & & g_{2}^{2} &= 1, & [g_{2}, g_{1}]  &= g_{4}, \\ 
 & & g_{3}^{2} &= 1, & [g_{3}, g_{1}]  &= g_{5}, \\ 
 & & g_{4}^{2} &= g_{7}, & [g_{4}, g_{1}]  &= g_{7}, & [g_{4}, g_{2}]  &= g_{7}, \\ 
 & & g_{5}^{2} &= 1, & [g_{5}, g_{1}]  &= g_{7}, \\ 
 & & g_{6}^{2} &= 1, & [g_{6}, g_{3}]  &= g_{7}, \\ 
 & & g_{7}^{2} &= 1\rangle. \\ 
\end{aligned}
\]
We add 13 tails to the presentation as to form a quotient of the universal central extension of the system: 
$g_{1}^{2} = g_{6} t_{1}$,
$g_{2}^{2} =  t_{2}$,
$[g_{2}, g_{1}] = g_{4} t_{3}$,
$g_{3}^{2} =  t_{4}$,
$[g_{3}, g_{1}] = g_{5} t_{5}$,
$g_{4}^{2} = g_{7} t_{6}$,
$[g_{4}, g_{1}] = g_{7} t_{7}$,
$[g_{4}, g_{2}] = g_{7} t_{8}$,
$g_{5}^{2} =  t_{9}$,
$[g_{5}, g_{1}] = g_{7} t_{10}$,
$g_{6}^{2} =  t_{11}$,
$[g_{6}, g_{3}] = g_{7} t_{12}$,
$g_{7}^{2} =  t_{13}$.
Carrying out consistency checks gives the following relations between the tails:
\[
\begin{aligned}
g_{6}^2 g_{3} & = g_{6} (g_{6} g_{3})& \Longrightarrow & & t_{12}^{2}t_{13} & = 1 \\
g_{5}^2 g_{1} & = g_{5} (g_{5} g_{1})& \Longrightarrow & & t_{10}^{2}t_{13} & = 1 \\
g_{4}^2 g_{2} & = g_{4} (g_{4} g_{2})& \Longrightarrow & & t_{8}^{2}t_{13} & = 1 \\
g_{4}^2 g_{1} & = g_{4} (g_{4} g_{1})& \Longrightarrow & & t_{7}^{2}t_{13} & = 1 \\
g_{3}^2 g_{1} & = g_{3} (g_{3} g_{1})& \Longrightarrow & & t_{5}^{2}t_{9} & = 1 \\
g_{2}^2 g_{1} & = g_{2} (g_{2} g_{1})& \Longrightarrow & & t_{3}^{2}t_{6}t_{8}t_{13} & = 1 \\
g_{3} g_{1}^{2} & = (g_{3} g_{1}) g_{1}& \Longrightarrow & & t_{5}^{2}t_{9}t_{10}t_{12}t_{13} & = 1 \\
g_{2} g_{1}^{2} & = (g_{2} g_{1}) g_{1}& \Longrightarrow & & t_{3}^{2}t_{6}t_{7}t_{13} & = 1 \\
\end{aligned}
\]
Scanning through the conjugacy class representatives of $G$ and the generators of their centralizers, we obtain the following relations induced on the tails:
\[
\begin{aligned}
{[g_{4} g_{6} g_{7} , \, g_{1} g_{3} ]}_G & = 1 & \Longrightarrow & & t_{7}t_{12}t_{13} & = 1 \\
\end{aligned}
\]
Collecting the coefficients of these relations into a matrix yields
\[
T = \bordermatrix{
{} & t_{1} & t_{2} & t_{3} & t_{4} & t_{5} & t_{6} & t_{7} & t_{8} & t_{9} & t_{10} & t_{11} & t_{12} & t_{13} \cr
{} &  &  & 2 &  &  & 1 &  &  &  &  &  & 1 & 1 \cr
{} &  &  &  &  & 2 &  &  &  & 1 &  &  &  &  \cr
{} &  &  &  &  &  &  & 1 &  &  &  &  & 1 & 1 \cr
{} &  &  &  &  &  &  &  & 1 &  &  &  & 1 & 1 \cr
{} &  &  &  &  &  &  &  &  &  & 1 &  & 1 & 1 \cr
{} &  &  &  &  &  &  &  &  &  &  &  & 2 & 1 \cr
}.
\]
It follows readily that the nontrivial elementary divisors of the Smith normal form of $T$ are all equal to $1$. The torsion subgroup of the group generated by the tails is thus trivial, thereby showing $\B_0(G) = 1$.


\item \label{number:76} 
Let the group $G$ be the representative of this family given by the presentation
\[
\begin{aligned}
\langle g_{1}, \,g_{2}, \,g_{3}, \,g_{4}, \,g_{5}, \,g_{6}, \,g_{7} & \mid & g_{1}^{2} &= g_{6}, \\ 
 & & g_{2}^{2} &= 1, & [g_{2}, g_{1}]  &= g_{4}, \\ 
 & & g_{3}^{2} &= g_{4}, & [g_{3}, g_{1}]  &= g_{5}, \\ 
 & & g_{4}^{2} &= 1, & [g_{4}, g_{1}]  &= g_{7}, \\ 
 & & g_{5}^{2} &= 1, & [g_{5}, g_{3}]  &= g_{7}, \\ 
 & & g_{6}^{2} &= 1, & [g_{6}, g_{2}]  &= g_{7}, \\ 
 & & g_{7}^{2} &= 1\rangle. \\ 
\end{aligned}
\]
We add 12 tails to the presentation as to form a quotient of the universal central extension of the system: 
$g_{1}^{2} = g_{6} t_{1}$,
$g_{2}^{2} =  t_{2}$,
$[g_{2}, g_{1}] = g_{4} t_{3}$,
$g_{3}^{2} = g_{4} t_{4}$,
$[g_{3}, g_{1}] = g_{5} t_{5}$,
$g_{4}^{2} =  t_{6}$,
$[g_{4}, g_{1}] = g_{7} t_{7}$,
$g_{5}^{2} =  t_{8}$,
$[g_{5}, g_{3}] = g_{7} t_{9}$,
$g_{6}^{2} =  t_{10}$,
$[g_{6}, g_{2}] = g_{7} t_{11}$,
$g_{7}^{2} =  t_{12}$.
Carrying out consistency checks gives the following relations between the tails:
\[
\begin{aligned}
g_{6}^2 g_{2} & = g_{6} (g_{6} g_{2})& \Longrightarrow & & t_{11}^{2}t_{12} & = 1 \\
g_{5}^2 g_{3} & = g_{5} (g_{5} g_{3})& \Longrightarrow & & t_{9}^{2}t_{12} & = 1 \\
g_{4}^2 g_{1} & = g_{4} (g_{4} g_{1})& \Longrightarrow & & t_{7}^{2}t_{12} & = 1 \\
g_{3}^2 g_{1} & = g_{3} (g_{3} g_{1})& \Longrightarrow & & t_{5}^{2}t_{7}^{-1}t_{8}t_{9} & = 1 \\
g_{2}^2 g_{1} & = g_{2} (g_{2} g_{1})& \Longrightarrow & & t_{3}^{2}t_{6} & = 1 \\
g_{3} g_{1}^{2} & = (g_{3} g_{1}) g_{1}& \Longrightarrow & & t_{5}^{2}t_{8} & = 1 \\
g_{2} g_{1}^{2} & = (g_{2} g_{1}) g_{1}& \Longrightarrow & & t_{3}^{2}t_{6}t_{7}t_{11}t_{12} & = 1 \\
\end{aligned}
\]
Scanning through the conjugacy class representatives of $G$ and the generators of their centralizers, we see that no new relations are imposed.
Collecting the coefficients of these relations into a matrix yields
\[
T = \bordermatrix{
{} & t_{1} & t_{2} & t_{3} & t_{4} & t_{5} & t_{6} & t_{7} & t_{8} & t_{9} & t_{10} & t_{11} & t_{12} \cr
{} &  &  & 2 &  &  & 1 &  &  &  &  &  &  \cr
{} &  &  &  &  & 2 &  &  & 1 &  &  &  &  \cr
{} &  &  &  &  &  &  & 1 &  &  &  & 1 & 1 \cr
{} &  &  &  &  &  &  &  &  & 1 &  & 1 & 1 \cr
{} &  &  &  &  &  &  &  &  &  &  & 2 & 1 \cr
}.
\]
It follows readily that the nontrivial elementary divisors of the Smith normal form of $T$ are all equal to $1$. The torsion subgroup of the group generated by the tails is thus trivial, thereby showing $\B_0(G) = 1$.


\item \label{number:77} 
Let the group $G$ be the representative of this family given by the presentation
\[
\begin{aligned}
\langle g_{1}, \,g_{2}, \,g_{3}, \,g_{4}, \,g_{5}, \,g_{6}, \,g_{7} & \mid & g_{1}^{2} &= g_{4}, \\ 
 & & g_{2}^{2} &= 1, & [g_{2}, g_{1}]  &= g_{3}, \\ 
 & & g_{3}^{2} &= g_{6}, & [g_{3}, g_{1}]  &= g_{5}, & [g_{3}, g_{2}]  &= g_{6}, \\ 
 & & g_{4}^{2} &= 1, & [g_{4}, g_{2}]  &= g_{5}g_{6}, & [g_{4}, g_{3}]  &= g_{7}, \\ 
 & & g_{5}^{2} &= 1, & [g_{5}, g_{1}]  &= g_{7}, \\ 
 & & g_{6}^{2} &= 1, \\ 
 & & g_{7}^{2} &= 1\rangle. \\ 
\end{aligned}
\]
We add 13 tails to the presentation as to form a quotient of the universal central extension of the system: 
$g_{1}^{2} = g_{4} t_{1}$,
$g_{2}^{2} =  t_{2}$,
$[g_{2}, g_{1}] = g_{3} t_{3}$,
$g_{3}^{2} = g_{6} t_{4}$,
$[g_{3}, g_{1}] = g_{5} t_{5}$,
$[g_{3}, g_{2}] = g_{6} t_{6}$,
$g_{4}^{2} =  t_{7}$,
$[g_{4}, g_{2}] = g_{5}g_{6} t_{8}$,
$[g_{4}, g_{3}] = g_{7} t_{9}$,
$g_{5}^{2} =  t_{10}$,
$[g_{5}, g_{1}] = g_{7} t_{11}$,
$g_{6}^{2} =  t_{12}$,
$g_{7}^{2} =  t_{13}$.
Carrying out consistency checks gives the following relations between the tails:
\[
\begin{aligned}
g_{4}(g_{2} g_{1}) & = (g_{4} g_{2}) g_{1}  & \Longrightarrow & & t_{9}^{-1}t_{11} & = 1 \\
g_{5}^2 g_{1} & = g_{5} (g_{5} g_{1})& \Longrightarrow & & t_{11}^{2}t_{13} & = 1 \\
g_{4}^2 g_{2} & = g_{4} (g_{4} g_{2})& \Longrightarrow & & t_{8}^{2}t_{10}t_{12} & = 1 \\
g_{3}^2 g_{2} & = g_{3} (g_{3} g_{2})& \Longrightarrow & & t_{6}^{2}t_{12} & = 1 \\
g_{3}^2 g_{1} & = g_{3} (g_{3} g_{1})& \Longrightarrow & & t_{5}^{2}t_{10} & = 1 \\
g_{2}^2 g_{1} & = g_{2} (g_{2} g_{1})& \Longrightarrow & & t_{3}^{2}t_{4}t_{6}t_{12} & = 1 \\
g_{2} g_{1}^{2} & = (g_{2} g_{1}) g_{1}& \Longrightarrow & & t_{3}^{2}t_{4}t_{5}t_{8}t_{9}^{2}t_{10}t_{12}t_{13} & = 1 \\
\end{aligned}
\]
Scanning through the conjugacy class representatives of $G$ and the generators of their centralizers, we see that no new relations are imposed.
Collecting the coefficients of these relations into a matrix yields
\[
T = \bordermatrix{
{} & t_{1} & t_{2} & t_{3} & t_{4} & t_{5} & t_{6} & t_{7} & t_{8} & t_{9} & t_{10} & t_{11} & t_{12} & t_{13} \cr
{} &  &  & 2 & 1 &  & 1 &  &  &  &  &  & 1 &  \cr
{} &  &  &  &  & 1 & 1 &  & 1 &  & 1 &  & 1 &  \cr
{} &  &  &  &  &  & 2 &  &  &  &  &  & 1 &  \cr
{} &  &  &  &  &  &  &  & 2 &  & 1 &  & 1 &  \cr
{} &  &  &  &  &  &  &  &  & 1 &  & 1 &  & 1 \cr
{} &  &  &  &  &  &  &  &  &  &  & 2 &  & 1 \cr
}.
\]
It follows readily that the nontrivial elementary divisors of the Smith normal form of $T$ are all equal to $1$. The torsion subgroup of the group generated by the tails is thus trivial, thereby showing $\B_0(G) = 1$.


\item \label{number:78} 
Let the group $G$ be the representative of this family given by the presentation
\[
\begin{aligned}
\langle g_{1}, \,g_{2}, \,g_{3}, \,g_{4}, \,g_{5}, \,g_{6}, \,g_{7} & \mid & g_{1}^{2} &= g_{4}, \\ 
 & & g_{2}^{2} &= 1, & [g_{2}, g_{1}]  &= g_{3}, \\ 
 & & g_{3}^{2} &= g_{6}g_{7}, & [g_{3}, g_{1}]  &= g_{5}, & [g_{3}, g_{2}]  &= g_{6}, \\ 
 & & g_{4}^{2} &= 1, & [g_{4}, g_{2}]  &= g_{5}g_{6}g_{7}, \\ 
 & & g_{5}^{2} &= g_{7}, & [g_{5}, g_{1}]  &= g_{7}, & [g_{5}, g_{2}]  &= g_{7}, \\ 
 & & g_{6}^{2} &= g_{7}, & [g_{6}, g_{1}]  &= g_{7}, & [g_{6}, g_{2}]  &= g_{7}, \\ 
 & & g_{7}^{2} &= 1\rangle. \\ 
\end{aligned}
\]
We add 15 tails to the presentation as to form a quotient of the universal central extension of the system: 
$g_{1}^{2} = g_{4} t_{1}$,
$g_{2}^{2} =  t_{2}$,
$[g_{2}, g_{1}] = g_{3} t_{3}$,
$g_{3}^{2} = g_{6}g_{7} t_{4}$,
$[g_{3}, g_{1}] = g_{5} t_{5}$,
$[g_{3}, g_{2}] = g_{6} t_{6}$,
$g_{4}^{2} =  t_{7}$,
$[g_{4}, g_{2}] = g_{5}g_{6}g_{7} t_{8}$,
$g_{5}^{2} = g_{7} t_{9}$,
$[g_{5}, g_{1}] = g_{7} t_{10}$,
$[g_{5}, g_{2}] = g_{7} t_{11}$,
$g_{6}^{2} = g_{7} t_{12}$,
$[g_{6}, g_{1}] = g_{7} t_{13}$,
$[g_{6}, g_{2}] = g_{7} t_{14}$,
$g_{7}^{2} =  t_{15}$.
Carrying out consistency checks gives the following relations between the tails:
\[
\begin{aligned}
g_{4}(g_{2} g_{1}) & = (g_{4} g_{2}) g_{1}  & \Longrightarrow & & t_{10}t_{13}t_{15} & = 1 \\
g_{3}(g_{2} g_{1}) & = (g_{3} g_{2}) g_{1}  & \Longrightarrow & & t_{11}^{-1}t_{13} & = 1 \\
g_{6}^2 g_{2} & = g_{6} (g_{6} g_{2})& \Longrightarrow & & t_{14}^{2}t_{15} & = 1 \\
g_{6}^2 g_{1} & = g_{6} (g_{6} g_{1})& \Longrightarrow & & t_{13}^{2}t_{15} & = 1 \\
g_{4}^2 g_{2} & = g_{4} (g_{4} g_{2})& \Longrightarrow & & t_{8}^{2}t_{9}t_{12}t_{15}^{2} & = 1 \\
g_{3}^2 g_{2} & = g_{3} (g_{3} g_{2})& \Longrightarrow & & t_{6}^{2}t_{12}t_{14}^{-1} & = 1 \\
g_{3}^2 g_{1} & = g_{3} (g_{3} g_{1})& \Longrightarrow & & t_{5}^{2}t_{9}t_{13}^{-1} & = 1 \\
g_{2}^2 g_{1} & = g_{2} (g_{2} g_{1})& \Longrightarrow & & t_{3}^{2}t_{4}t_{6}t_{12}t_{15} & = 1 \\
g_{4} g_{2}^{2} & = (g_{4} g_{2}) g_{2}& \Longrightarrow & & t_{8}^{2}t_{9}t_{11}t_{12}t_{14}t_{15}^{3} & = 1 \\
g_{2} g_{1}^{2} & = (g_{2} g_{1}) g_{1}& \Longrightarrow & & t_{3}^{2}t_{4}t_{5}t_{8}t_{9}t_{12}t_{15}^{2} & = 1 \\
\end{aligned}
\]
Scanning through the conjugacy class representatives of $G$ and the generators of their centralizers, we see that no new relations are imposed.
Collecting the coefficients of these relations into a matrix yields
\[
T = \bordermatrix{
{} & t_{1} & t_{2} & t_{3} & t_{4} & t_{5} & t_{6} & t_{7} & t_{8} & t_{9} & t_{10} & t_{11} & t_{12} & t_{13} & t_{14} & t_{15} \cr
{} &  &  & 2 & 1 &  & 1 &  &  &  &  &  & 1 &  &  & 1 \cr
{} &  &  &  &  & 1 & 1 &  & 1 & 1 &  &  & 1 &  & 1 & 2 \cr
{} &  &  &  &  &  & 2 &  &  &  &  &  & 1 &  & 1 & 1 \cr
{} &  &  &  &  &  &  &  & 2 & 1 &  &  & 1 &  &  & 2 \cr
{} &  &  &  &  &  &  &  &  &  & 1 &  &  &  & 1 & 1 \cr
{} &  &  &  &  &  &  &  &  &  &  & 1 &  &  & 1 & 1 \cr
{} &  &  &  &  &  &  &  &  &  &  &  &  & 1 & 1 & 1 \cr
{} &  &  &  &  &  &  &  &  &  &  &  &  &  & 2 & 1 \cr
}.
\]
It follows readily that the nontrivial elementary divisors of the Smith normal form of $T$ are all equal to $1$. The torsion subgroup of the group generated by the tails is thus trivial, thereby showing $\B_0(G) = 1$.


\item \label{number:79} 
Let the group $G$ be the representative of this family given by the presentation
\[
\begin{aligned}
\langle g_{1}, \,g_{2}, \,g_{3}, \,g_{4}, \,g_{5}, \,g_{6}, \,g_{7} & \mid & g_{1}^{2} &= 1, \\ 
 & & g_{2}^{2} &= 1, & [g_{2}, g_{1}]  &= g_{4}, \\ 
 & & g_{3}^{2} &= 1, & [g_{3}, g_{1}]  &= g_{5}, \\ 
 & & g_{4}^{2} &= g_{6}g_{7}, & [g_{4}, g_{1}]  &= g_{6}, & [g_{4}, g_{2}]  &= g_{6}, \\ 
 & & g_{5}^{2} &= 1, \\ 
 & & g_{6}^{2} &= g_{7}, & [g_{6}, g_{1}]  &= g_{7}, & [g_{6}, g_{2}]  &= g_{7}, \\ 
 & & g_{7}^{2} &= 1\rangle. \\ 
\end{aligned}
\]
We add 13 tails to the presentation as to form a quotient of the universal central extension of the system: 
$g_{1}^{2} =  t_{1}$,
$g_{2}^{2} =  t_{2}$,
$[g_{2}, g_{1}] = g_{4} t_{3}$,
$g_{3}^{2} =  t_{4}$,
$[g_{3}, g_{1}] = g_{5} t_{5}$,
$g_{4}^{2} = g_{6}g_{7} t_{6}$,
$[g_{4}, g_{1}] = g_{6} t_{7}$,
$[g_{4}, g_{2}] = g_{6} t_{8}$,
$g_{5}^{2} =  t_{9}$,
$g_{6}^{2} = g_{7} t_{10}$,
$[g_{6}, g_{1}] = g_{7} t_{11}$,
$[g_{6}, g_{2}] = g_{7} t_{12}$,
$g_{7}^{2} =  t_{13}$.
Carrying out consistency checks gives the following relations between the tails:
\[
\begin{aligned}
g_{4}(g_{2} g_{1}) & = (g_{4} g_{2}) g_{1}  & \Longrightarrow & & t_{11}t_{12}^{-1} & = 1 \\
g_{6}^2 g_{2} & = g_{6} (g_{6} g_{2})& \Longrightarrow & & t_{12}^{2}t_{13} & = 1 \\
g_{4}^2 g_{2} & = g_{4} (g_{4} g_{2})& \Longrightarrow & & t_{8}^{2}t_{10}t_{12}^{-1} & = 1 \\
g_{4}^2 g_{1} & = g_{4} (g_{4} g_{1})& \Longrightarrow & & t_{7}^{2}t_{10}t_{11}^{-1} & = 1 \\
g_{3}^2 g_{1} & = g_{3} (g_{3} g_{1})& \Longrightarrow & & t_{5}^{2}t_{9} & = 1 \\
g_{2}^2 g_{1} & = g_{2} (g_{2} g_{1})& \Longrightarrow & & t_{3}^{2}t_{6}t_{8}t_{10}t_{13} & = 1 \\
g_{2} g_{1}^{2} & = (g_{2} g_{1}) g_{1}& \Longrightarrow & & t_{3}^{2}t_{6}t_{7}t_{10}t_{13} & = 1 \\
\end{aligned}
\]
Scanning through the conjugacy class representatives of $G$ and the generators of their centralizers, we see that no new relations are imposed.
Collecting the coefficients of these relations into a matrix yields
\[
T = \bordermatrix{
{} & t_{1} & t_{2} & t_{3} & t_{4} & t_{5} & t_{6} & t_{7} & t_{8} & t_{9} & t_{10} & t_{11} & t_{12} & t_{13} \cr
{} &  &  & 2 &  &  & 1 &  & 1 &  & 1 &  &  & 1 \cr
{} &  &  &  &  & 2 &  &  &  & 1 &  &  &  &  \cr
{} &  &  &  &  &  &  & 1 & 1 &  & 1 &  & 1 & 1 \cr
{} &  &  &  &  &  &  &  & 2 &  & 1 &  & 1 & 1 \cr
{} &  &  &  &  &  &  &  &  &  &  & 1 & 1 & 1 \cr
{} &  &  &  &  &  &  &  &  &  &  &  & 2 & 1 \cr
}.
\]
It follows readily that the nontrivial elementary divisors of the Smith normal form of $T$ are all equal to $1$. The torsion subgroup of the group generated by the tails is thus trivial, thereby showing $\B_0(G) = 1$.


\item \label{number:80} 
Let the group $G$ be the representative of this family given by the presentation
\[
\begin{aligned}
\langle g_{1}, \,g_{2}, \,g_{3}, \,g_{4}, \,g_{5}, \,g_{6}, \,g_{7} & \mid & g_{1}^{2} &= 1, \\ 
 & & g_{2}^{2} &= g_{4}g_{6}, & [g_{2}, g_{1}]  &= g_{4}, \\ 
 & & g_{3}^{2} &= 1, & [g_{3}, g_{1}]  &= g_{5}, & [g_{3}, g_{2}]  &= g_{7}, \\ 
 & & g_{4}^{2} &= g_{6}g_{7}, & [g_{4}, g_{1}]  &= g_{6}, \\ 
 & & g_{5}^{2} &= 1, \\ 
 & & g_{6}^{2} &= g_{7}, & [g_{6}, g_{1}]  &= g_{7}, \\ 
 & & g_{7}^{2} &= 1\rangle. \\ 
\end{aligned}
\]
We add 12 tails to the presentation as to form a quotient of the universal central extension of the system: 
$g_{1}^{2} =  t_{1}$,
$g_{2}^{2} = g_{4}g_{6} t_{2}$,
$[g_{2}, g_{1}] = g_{4} t_{3}$,
$g_{3}^{2} =  t_{4}$,
$[g_{3}, g_{1}] = g_{5} t_{5}$,
$[g_{3}, g_{2}] = g_{7} t_{6}$,
$g_{4}^{2} = g_{6}g_{7} t_{7}$,
$[g_{4}, g_{1}] = g_{6} t_{8}$,
$g_{5}^{2} =  t_{9}$,
$g_{6}^{2} = g_{7} t_{10}$,
$[g_{6}, g_{1}] = g_{7} t_{11}$,
$g_{7}^{2} =  t_{12}$.
Carrying out consistency checks gives the following relations between the tails:
\[
\begin{aligned}
g_{6}^2 g_{1} & = g_{6} (g_{6} g_{1})& \Longrightarrow & & t_{11}^{2}t_{12} & = 1 \\
g_{4}^2 g_{1} & = g_{4} (g_{4} g_{1})& \Longrightarrow & & t_{8}^{2}t_{10}t_{11}^{-1} & = 1 \\
g_{3}^2 g_{2} & = g_{3} (g_{3} g_{2})& \Longrightarrow & & t_{6}^{2}t_{12} & = 1 \\
g_{3}^2 g_{1} & = g_{3} (g_{3} g_{1})& \Longrightarrow & & t_{5}^{2}t_{9} & = 1 \\
g_{2}^2 g_{1} & = g_{2} (g_{2} g_{1})& \Longrightarrow & & t_{3}^{2}t_{7}t_{8}^{-1}t_{11}^{-1} & = 1 \\
\end{aligned}
\]
Scanning through the conjugacy class representatives of $G$ and the generators of their centralizers, we see that no new relations are imposed.
Collecting the coefficients of these relations into a matrix yields
\[
T = \bordermatrix{
{} & t_{1} & t_{2} & t_{3} & t_{4} & t_{5} & t_{6} & t_{7} & t_{8} & t_{9} & t_{10} & t_{11} & t_{12} \cr
{} &  &  & 2 &  &  &  & 1 & 1 &  & 1 &  & 1 \cr
{} &  &  &  &  & 2 &  &  &  & 1 &  &  &  \cr
{} &  &  &  &  &  & 2 &  &  &  &  &  & 1 \cr
{} &  &  &  &  &  &  &  & 2 &  & 1 & 1 & 1 \cr
{} &  &  &  &  &  &  &  &  &  &  & 2 & 1 \cr
}.
\]
A change of basis according to the transition matrix (specifying expansions of $t_i^{*}$ by $t_j$)
\[
\bordermatrix{
{} & t_{1}^{*} & t_{2}^{*} & t_{3}^{*} & t_{4}^{*} & t_{5}^{*} & t_{6}^{*} & t_{7}^{*} & t_{8}^{*} & t_{9}^{*} & t_{10}^{*} & t_{11}^{*} & t_{12}^{*} \cr
t_{1} &  &  &  &  &  & -7 & -1 &  &  & 1 &  & 1 \cr
t_{2} &  &  &  &  &  & -3 &  & 1 & -1 &  &  &  \cr
t_{3} & -26 & -36 &  & -24 & -20 &  &  &  &  & -1 &  & -1 \cr
t_{4} &  &  &  &  &  & 4 & -1 &  &  & -3 &  &  \cr
t_{5} & 40 & 54 &  & 36 & 30 & 7 &  &  &  &  &  &  \cr
t_{6} & -60 & -80 &  & -54 & -45 & -4 &  & -1 &  &  &  &  \cr
t_{7} & -13 & -18 &  & -12 & -10 &  & 1 &  &  &  &  &  \cr
t_{8} & 37 & 50 &  & 34 & 28 &  &  & 1 &  &  &  &  \cr
t_{9} & 20 & 27 &  & 18 & 15 &  &  &  & 1 &  &  &  \cr
t_{10} & 12 & 16 &  & 11 & 9 &  &  &  &  & 1 &  &  \cr
t_{11} & 89 & 122 & 2 & 81 & 68 &  &  &  &  &  & 1 &  \cr
t_{12} & 14 & 20 & 1 & 13 & 11 &  &  &  &  &  &  & 1 \cr
}
\]
shows that the nontrivial elementary divisors of the Smith normal form of $T$ are $1$, $1$, $1$, $1$, $2$.  The element corresponding to the divisor that is greater than $1$ is $t_{5}^{*}$. This already gives
\[
\B_0(G) \cong \langle t_{5}^{*}  \mid {t_{5}^{*}}^{2} \rangle.
\]

We now deal with explicitly identifying the nonuniversal commutator relation generating $\B_0(G)$.
First, factor out by the tails $t_{i}^{*}$ whose corresponding elementary divisors are either trivial or $1$. Transforming the situation back to the original tails $t_i$, this amounts to the nontrivial expansions given by
\[
\bordermatrix{
{} & t_{1} & t_{3} & t_{5} & t_{6} & t_{7} & t_{8} \cr
t_{5}^{*} & 1 & 1 & 1 & 1 & 1 & 1 \cr
}
\]
and all the other tails $t_i$ are trivial. We thus obtain a commutativity preserving central extension of the group $G$, given by the presentation
\[
\begin{aligned}
\langle g_{1}, \,g_{2}, \,g_{3}, \,g_{4}, \,g_{5}, \,g_{6}, \,g_{7}, \,t_{5}^{*} & \mid & g_{1}^{2} &= t_{5}^{*} , \\ 
 & & g_{2}^{2} &= g_{4}g_{6}, & [g_{2}, g_{1}]  &= g_{4}t_{5}^{*} , \\ 
 & & g_{3}^{2} &= 1, & [g_{3}, g_{1}]  &= g_{5}t_{5}^{*} , & [g_{3}, g_{2}]  &= g_{7}t_{5}^{*} , \\ 
 & & g_{4}^{2} &= g_{6}g_{7}t_{5}^{*} , & [g_{4}, g_{1}]  &= g_{6}t_{5}^{*} , \\ 
 & & g_{5}^{2} &= 1, \\ 
 & & g_{6}^{2} &= g_{7}, & [g_{6}, g_{1}]  &= g_{7}, \\ 
 & & g_{7}^{2} &= 1, \\ 
 & & {t_{5}^{*}}^{2} &= 1  \rangle,
\end{aligned}
\]
whence the nonuniversal commutator relation is identified as
\[
t_{5}^{*}  = [g_{3}, g_{2}] [g_{6}, g_{1}]^{-1}.  \quad 
\]


\item \label{number:81} 
Let the group $G$ be the representative of this family given by the presentation
\[
\begin{aligned}
\langle g_{1}, \,g_{2}, \,g_{3}, \,g_{4}, \,g_{5}, \,g_{6}, \,g_{7} & \mid & g_{1}^{2} &= 1, \\ 
 & & g_{2}^{2} &= g_{4}g_{6}, & [g_{2}, g_{1}]  &= g_{4}, \\ 
 & & g_{3}^{2} &= 1, & [g_{3}, g_{1}]  &= g_{5}, \\ 
 & & g_{4}^{2} &= g_{6}g_{7}, & [g_{4}, g_{1}]  &= g_{6}, \\ 
 & & g_{5}^{2} &= 1, \\ 
 & & g_{6}^{2} &= g_{7}, & [g_{6}, g_{1}]  &= g_{7}, \\ 
 & & g_{7}^{2} &= 1\rangle. \\ 
\end{aligned}
\]
We add 11 tails to the presentation as to form a quotient of the universal central extension of the system: 
$g_{1}^{2} =  t_{1}$,
$g_{2}^{2} = g_{4}g_{6} t_{2}$,
$[g_{2}, g_{1}] = g_{4} t_{3}$,
$g_{3}^{2} =  t_{4}$,
$[g_{3}, g_{1}] = g_{5} t_{5}$,
$g_{4}^{2} = g_{6}g_{7} t_{6}$,
$[g_{4}, g_{1}] = g_{6} t_{7}$,
$g_{5}^{2} =  t_{8}$,
$g_{6}^{2} = g_{7} t_{9}$,
$[g_{6}, g_{1}] = g_{7} t_{10}$,
$g_{7}^{2} =  t_{11}$.
Carrying out consistency checks gives the following relations between the tails:
\[
\begin{aligned}
g_{6}^2 g_{1} & = g_{6} (g_{6} g_{1})& \Longrightarrow & & t_{10}^{2}t_{11} & = 1 \\
g_{4}^2 g_{1} & = g_{4} (g_{4} g_{1})& \Longrightarrow & & t_{7}^{2}t_{9}t_{10}^{-1} & = 1 \\
g_{3}^2 g_{1} & = g_{3} (g_{3} g_{1})& \Longrightarrow & & t_{5}^{2}t_{8} & = 1 \\
g_{2}^2 g_{1} & = g_{2} (g_{2} g_{1})& \Longrightarrow & & t_{3}^{2}t_{6}t_{7}^{-1}t_{10}^{-1} & = 1 \\
\end{aligned}
\]
Scanning through the conjugacy class representatives of $G$ and the generators of their centralizers, we see that no new relations are imposed.
Collecting the coefficients of these relations into a matrix yields
\[
T = \bordermatrix{
{} & t_{1} & t_{2} & t_{3} & t_{4} & t_{5} & t_{6} & t_{7} & t_{8} & t_{9} & t_{10} & t_{11} \cr
{} &  &  & 2 &  &  & 1 & 1 &  & 1 &  & 1 \cr
{} &  &  &  &  & 2 &  &  & 1 &  &  &  \cr
{} &  &  &  &  &  &  & 2 &  & 1 & 1 & 1 \cr
{} &  &  &  &  &  &  &  &  &  & 2 & 1 \cr
}.
\]
It follows readily that the nontrivial elementary divisors of the Smith normal form of $T$ are all equal to $1$. The torsion subgroup of the group generated by the tails is thus trivial, thereby showing $\B_0(G) = 1$.


\item \label{number:82} 
Let the group $G$ be the representative of this family given by the presentation
\[
\begin{aligned}
\langle g_{1}, \,g_{2}, \,g_{3}, \,g_{4}, \,g_{5}, \,g_{6}, \,g_{7} & \mid & g_{1}^{2} &= g_{5}, \\ 
 & & g_{2}^{2} &= 1, & [g_{2}, g_{1}]  &= g_{4}, \\ 
 & & g_{3}^{2} &= g_{5}g_{6}, & [g_{3}, g_{1}]  &= g_{6}, \\ 
 & & g_{4}^{2} &= g_{6}, & [g_{4}, g_{1}]  &= g_{6}, & [g_{4}, g_{2}]  &= g_{6}g_{7}, & [g_{4}, g_{3}]  &= g_{7}, \\ 
 & & g_{5}^{2} &= 1, & [g_{5}, g_{2}]  &= g_{7}, \\ 
 & & g_{6}^{2} &= g_{7}, & [g_{6}, g_{1}]  &= g_{7}, & [g_{6}, g_{2}]  &= g_{7}, \\ 
 & & g_{7}^{2} &= 1\rangle. \\ 
\end{aligned}
\]
We add 15 tails to the presentation as to form a quotient of the universal central extension of the system: 
$g_{1}^{2} = g_{5} t_{1}$,
$g_{2}^{2} =  t_{2}$,
$[g_{2}, g_{1}] = g_{4} t_{3}$,
$g_{3}^{2} = g_{5}g_{6} t_{4}$,
$[g_{3}, g_{1}] = g_{6} t_{5}$,
$g_{4}^{2} = g_{6} t_{6}$,
$[g_{4}, g_{1}] = g_{6} t_{7}$,
$[g_{4}, g_{2}] = g_{6}g_{7} t_{8}$,
$[g_{4}, g_{3}] = g_{7} t_{9}$,
$g_{5}^{2} =  t_{10}$,
$[g_{5}, g_{2}] = g_{7} t_{11}$,
$g_{6}^{2} = g_{7} t_{12}$,
$[g_{6}, g_{1}] = g_{7} t_{13}$,
$[g_{6}, g_{2}] = g_{7} t_{14}$,
$g_{7}^{2} =  t_{15}$.
Carrying out consistency checks gives the following relations between the tails:
\[
\begin{aligned}
g_{4}(g_{2} g_{1}) & = (g_{4} g_{2}) g_{1}  & \Longrightarrow & & t_{13}t_{14}^{-1} & = 1 \\
g_{3}(g_{2} g_{1}) & = (g_{3} g_{2}) g_{1}  & \Longrightarrow & & t_{9}t_{14}^{-1} & = 1 \\
g_{6}^2 g_{2} & = g_{6} (g_{6} g_{2})& \Longrightarrow & & t_{14}^{2}t_{15} & = 1 \\
g_{5}^2 g_{2} & = g_{5} (g_{5} g_{2})& \Longrightarrow & & t_{11}^{2}t_{15} & = 1 \\
g_{4}^2 g_{2} & = g_{4} (g_{4} g_{2})& \Longrightarrow & & t_{8}^{2}t_{12}t_{14}^{-1}t_{15} & = 1 \\
g_{4}^2 g_{1} & = g_{4} (g_{4} g_{1})& \Longrightarrow & & t_{7}^{2}t_{12}t_{13}^{-1} & = 1 \\
g_{3}^2 g_{2} & = g_{3} (g_{3} g_{2})& \Longrightarrow & & t_{11}^{-1}t_{14}^{-1}t_{15}^{-1} & = 1 \\
g_{3}^2 g_{1} & = g_{3} (g_{3} g_{1})& \Longrightarrow & & t_{5}^{2}t_{12}t_{13}^{-1} & = 1 \\
g_{2}^2 g_{1} & = g_{2} (g_{2} g_{1})& \Longrightarrow & & t_{3}^{2}t_{6}t_{8}t_{12}t_{15} & = 1 \\
g_{2} g_{1}^{2} & = (g_{2} g_{1}) g_{1}& \Longrightarrow & & t_{3}^{2}t_{6}t_{7}t_{11}t_{12}t_{15} & = 1 \\
\end{aligned}
\]
Scanning through the conjugacy class representatives of $G$ and the generators of their centralizers, we obtain the following relations induced on the tails:
\[
\begin{aligned}
{[g_{3} g_{4} g_{6} g_{7} , \, g_{1} ]}_G & = 1 & \Longrightarrow & & t_{5}t_{7}t_{12}t_{13}t_{15} & = 1 \\
\end{aligned}
\]
Collecting the coefficients of these relations into a matrix yields
\[
T = \bordermatrix{
{} & t_{1} & t_{2} & t_{3} & t_{4} & t_{5} & t_{6} & t_{7} & t_{8} & t_{9} & t_{10} & t_{11} & t_{12} & t_{13} & t_{14} & t_{15} \cr
{} &  &  & 2 &  &  & 1 &  & 1 &  &  &  & 1 &  &  & 1 \cr
{} &  &  &  &  & 1 &  &  & 1 &  &  &  & 1 &  &  & 1 \cr
{} &  &  &  &  &  &  & 1 & 1 &  &  &  & 1 &  &  & 1 \cr
{} &  &  &  &  &  &  &  & 2 &  &  &  & 1 &  & 1 & 2 \cr
{} &  &  &  &  &  &  &  &  & 1 &  &  &  &  & 1 & 1 \cr
{} &  &  &  &  &  &  &  &  &  &  & 1 &  &  & 1 & 1 \cr
{} &  &  &  &  &  &  &  &  &  &  &  &  & 1 & 1 & 1 \cr
{} &  &  &  &  &  &  &  &  &  &  &  &  &  & 2 & 1 \cr
}.
\]
It follows readily that the nontrivial elementary divisors of the Smith normal form of $T$ are all equal to $1$. The torsion subgroup of the group generated by the tails is thus trivial, thereby showing $\B_0(G) = 1$.


\item \label{number:83} 
Let the group $G$ be the representative of this family given by the presentation
\[
\begin{aligned}
\langle g_{1}, \,g_{2}, \,g_{3}, \,g_{4}, \,g_{5}, \,g_{6}, \,g_{7} & \mid & g_{1}^{2} &= g_{5}, \\ 
 & & g_{2}^{2} &= 1, & [g_{2}, g_{1}]  &= g_{4}, \\ 
 & & g_{3}^{2} &= 1, & [g_{3}, g_{2}]  &= g_{7}, \\ 
 & & g_{4}^{2} &= 1, & [g_{4}, g_{1}]  &= g_{6}, \\ 
 & & g_{5}^{2} &= 1, & [g_{5}, g_{2}]  &= g_{6}, & [g_{5}, g_{4}]  &= g_{7}, \\ 
 & & g_{6}^{2} &= 1, & [g_{6}, g_{1}]  &= g_{7}, \\ 
 & & g_{7}^{2} &= 1\rangle. \\ 
\end{aligned}
\]
We add 13 tails to the presentation as to form a quotient of the universal central extension of the system: 
$g_{1}^{2} = g_{5} t_{1}$,
$g_{2}^{2} =  t_{2}$,
$[g_{2}, g_{1}] = g_{4} t_{3}$,
$g_{3}^{2} =  t_{4}$,
$[g_{3}, g_{2}] = g_{7} t_{5}$,
$g_{4}^{2} =  t_{6}$,
$[g_{4}, g_{1}] = g_{6} t_{7}$,
$g_{5}^{2} =  t_{8}$,
$[g_{5}, g_{2}] = g_{6} t_{9}$,
$[g_{5}, g_{4}] = g_{7} t_{10}$,
$g_{6}^{2} =  t_{11}$,
$[g_{6}, g_{1}] = g_{7} t_{12}$,
$g_{7}^{2} =  t_{13}$.
Carrying out consistency checks gives the following relations between the tails:
\[
\begin{aligned}
g_{5}(g_{2} g_{1}) & = (g_{5} g_{2}) g_{1}  & \Longrightarrow & & t_{10}^{-1}t_{12} & = 1 \\
g_{6}^2 g_{1} & = g_{6} (g_{6} g_{1})& \Longrightarrow & & t_{12}^{2}t_{13} & = 1 \\
g_{5}^2 g_{2} & = g_{5} (g_{5} g_{2})& \Longrightarrow & & t_{9}^{2}t_{11} & = 1 \\
g_{4}^2 g_{1} & = g_{4} (g_{4} g_{1})& \Longrightarrow & & t_{7}^{2}t_{11} & = 1 \\
g_{3}^2 g_{2} & = g_{3} (g_{3} g_{2})& \Longrightarrow & & t_{5}^{2}t_{13} & = 1 \\
g_{2}^2 g_{1} & = g_{2} (g_{2} g_{1})& \Longrightarrow & & t_{3}^{2}t_{6} & = 1 \\
g_{2} g_{1}^{2} & = (g_{2} g_{1}) g_{1}& \Longrightarrow & & t_{3}^{2}t_{6}t_{7}t_{9}t_{10}^{2}t_{11}t_{13} & = 1 \\
\end{aligned}
\]
Scanning through the conjugacy class representatives of $G$ and the generators of their centralizers, we obtain the following relations induced on the tails:
\[
\begin{aligned}
{[g_{3} g_{6} g_{7} , \, g_{1} g_{2} g_{3} ]}_G & = 1 & \Longrightarrow & & t_{5}t_{12}t_{13} & = 1 \\
\end{aligned}
\]
Collecting the coefficients of these relations into a matrix yields
\[
T = \bordermatrix{
{} & t_{1} & t_{2} & t_{3} & t_{4} & t_{5} & t_{6} & t_{7} & t_{8} & t_{9} & t_{10} & t_{11} & t_{12} & t_{13} \cr
{} &  &  & 2 &  &  & 1 &  &  &  &  &  &  &  \cr
{} &  &  &  &  & 1 &  &  &  &  &  &  & 1 & 1 \cr
{} &  &  &  &  &  &  & 1 &  & 1 &  & 1 &  &  \cr
{} &  &  &  &  &  &  &  &  & 2 &  & 1 &  &  \cr
{} &  &  &  &  &  &  &  &  &  & 1 &  & 1 & 1 \cr
{} &  &  &  &  &  &  &  &  &  &  &  & 2 & 1 \cr
}.
\]
It follows readily that the nontrivial elementary divisors of the Smith normal form of $T$ are all equal to $1$. The torsion subgroup of the group generated by the tails is thus trivial, thereby showing $\B_0(G) = 1$.


\item \label{number:84} 
Let the group $G$ be the representative of this family given by the presentation
\[
\begin{aligned}
\langle g_{1}, \,g_{2}, \,g_{3}, \,g_{4}, \,g_{5}, \,g_{6}, \,g_{7} & \mid & g_{1}^{2} &= g_{5}, \\ 
 & & g_{2}^{2} &= 1, & [g_{2}, g_{1}]  &= g_{4}, \\ 
 & & g_{3}^{2} &= 1, & [g_{3}, g_{2}]  &= g_{7}, \\ 
 & & g_{4}^{2} &= g_{7}, & [g_{4}, g_{1}]  &= g_{6}, & [g_{4}, g_{2}]  &= g_{7}, \\ 
 & & g_{5}^{2} &= 1, & [g_{5}, g_{2}]  &= g_{6}g_{7}, & [g_{5}, g_{4}]  &= g_{7}, \\ 
 & & g_{6}^{2} &= 1, & [g_{6}, g_{1}]  &= g_{7}, \\ 
 & & g_{7}^{2} &= 1\rangle. \\ 
\end{aligned}
\]
We add 14 tails to the presentation as to form a quotient of the universal central extension of the system: 
$g_{1}^{2} = g_{5} t_{1}$,
$g_{2}^{2} =  t_{2}$,
$[g_{2}, g_{1}] = g_{4} t_{3}$,
$g_{3}^{2} =  t_{4}$,
$[g_{3}, g_{2}] = g_{7} t_{5}$,
$g_{4}^{2} = g_{7} t_{6}$,
$[g_{4}, g_{1}] = g_{6} t_{7}$,
$[g_{4}, g_{2}] = g_{7} t_{8}$,
$g_{5}^{2} =  t_{9}$,
$[g_{5}, g_{2}] = g_{6}g_{7} t_{10}$,
$[g_{5}, g_{4}] = g_{7} t_{11}$,
$g_{6}^{2} =  t_{12}$,
$[g_{6}, g_{1}] = g_{7} t_{13}$,
$g_{7}^{2} =  t_{14}$.
Carrying out consistency checks gives the following relations between the tails:
\[
\begin{aligned}
g_{5}(g_{2} g_{1}) & = (g_{5} g_{2}) g_{1}  & \Longrightarrow & & t_{11}^{-1}t_{13} & = 1 \\
g_{6}^2 g_{1} & = g_{6} (g_{6} g_{1})& \Longrightarrow & & t_{13}^{2}t_{14} & = 1 \\
g_{5}^2 g_{2} & = g_{5} (g_{5} g_{2})& \Longrightarrow & & t_{10}^{2}t_{12}t_{14} & = 1 \\
g_{4}^2 g_{2} & = g_{4} (g_{4} g_{2})& \Longrightarrow & & t_{8}^{2}t_{14} & = 1 \\
g_{4}^2 g_{1} & = g_{4} (g_{4} g_{1})& \Longrightarrow & & t_{7}^{2}t_{12} & = 1 \\
g_{3}^2 g_{2} & = g_{3} (g_{3} g_{2})& \Longrightarrow & & t_{5}^{2}t_{14} & = 1 \\
g_{2}^2 g_{1} & = g_{2} (g_{2} g_{1})& \Longrightarrow & & t_{3}^{2}t_{6}t_{8}t_{14} & = 1 \\
g_{2} g_{1}^{2} & = (g_{2} g_{1}) g_{1}& \Longrightarrow & & t_{3}^{2}t_{6}t_{7}t_{10}t_{11}^{2}t_{12}t_{14}^{2} & = 1 \\
\end{aligned}
\]
Scanning through the conjugacy class representatives of $G$ and the generators of their centralizers, we obtain the following relations induced on the tails:
\[
\begin{aligned}
{[g_{4} g_{6} g_{7} , \, g_{2} g_{4} g_{5} g_{7} ]}_G & = 1 & \Longrightarrow & & t_{8}t_{11}^{-1} & = 1 \\
{[g_{3} g_{6} g_{7} , \, g_{1} g_{2} g_{3} ]}_G & = 1 & \Longrightarrow & & t_{5}t_{13}t_{14} & = 1 \\
\end{aligned}
\]
Collecting the coefficients of these relations into a matrix yields
\[
T = \bordermatrix{
{} & t_{1} & t_{2} & t_{3} & t_{4} & t_{5} & t_{6} & t_{7} & t_{8} & t_{9} & t_{10} & t_{11} & t_{12} & t_{13} & t_{14} \cr
{} &  &  & 2 &  &  & 1 &  &  &  &  &  &  & 1 & 1 \cr
{} &  &  &  &  & 1 &  &  &  &  &  &  &  & 1 & 1 \cr
{} &  &  &  &  &  &  & 1 &  &  & 1 &  & 1 & 1 & 1 \cr
{} &  &  &  &  &  &  &  & 1 &  &  &  &  & 1 & 1 \cr
{} &  &  &  &  &  &  &  &  &  & 2 &  & 1 &  & 1 \cr
{} &  &  &  &  &  &  &  &  &  &  & 1 &  & 1 & 1 \cr
{} &  &  &  &  &  &  &  &  &  &  &  &  & 2 & 1 \cr
}.
\]
It follows readily that the nontrivial elementary divisors of the Smith normal form of $T$ are all equal to $1$. The torsion subgroup of the group generated by the tails is thus trivial, thereby showing $\B_0(G) = 1$.


\item \label{number:85} 
Let the group $G$ be the representative of this family given by the presentation
\[
\begin{aligned}
\langle g_{1}, \,g_{2}, \,g_{3}, \,g_{4}, \,g_{5}, \,g_{6}, \,g_{7} & \mid & g_{1}^{2} &= 1, \\ 
 & & g_{2}^{2} &= 1, & [g_{2}, g_{1}]  &= g_{5}, \\ 
 & & g_{3}^{2} &= 1, \\ 
 & & g_{4}^{2} &= 1, & [g_{4}, g_{3}]  &= g_{7}, \\ 
 & & g_{5}^{2} &= g_{6}g_{7}, & [g_{5}, g_{1}]  &= g_{6}, & [g_{5}, g_{2}]  &= g_{6}, \\ 
 & & g_{6}^{2} &= g_{7}, & [g_{6}, g_{1}]  &= g_{7}, & [g_{6}, g_{2}]  &= g_{7}, \\ 
 & & g_{7}^{2} &= 1\rangle. \\ 
\end{aligned}
\]
We add 13 tails to the presentation as to form a quotient of the universal central extension of the system: 
$g_{1}^{2} =  t_{1}$,
$g_{2}^{2} =  t_{2}$,
$[g_{2}, g_{1}] = g_{5} t_{3}$,
$g_{3}^{2} =  t_{4}$,
$g_{4}^{2} =  t_{5}$,
$[g_{4}, g_{3}] = g_{7} t_{6}$,
$g_{5}^{2} = g_{6}g_{7} t_{7}$,
$[g_{5}, g_{1}] = g_{6} t_{8}$,
$[g_{5}, g_{2}] = g_{6} t_{9}$,
$g_{6}^{2} = g_{7} t_{10}$,
$[g_{6}, g_{1}] = g_{7} t_{11}$,
$[g_{6}, g_{2}] = g_{7} t_{12}$,
$g_{7}^{2} =  t_{13}$.
Carrying out consistency checks gives the following relations between the tails:
\[
\begin{aligned}
g_{5}(g_{2} g_{1}) & = (g_{5} g_{2}) g_{1}  & \Longrightarrow & & t_{11}t_{12}^{-1} & = 1 \\
g_{6}^2 g_{2} & = g_{6} (g_{6} g_{2})& \Longrightarrow & & t_{12}^{2}t_{13} & = 1 \\
g_{5}^2 g_{2} & = g_{5} (g_{5} g_{2})& \Longrightarrow & & t_{9}^{2}t_{10}t_{12}^{-1} & = 1 \\
g_{5}^2 g_{1} & = g_{5} (g_{5} g_{1})& \Longrightarrow & & t_{8}^{2}t_{10}t_{11}^{-1} & = 1 \\
g_{4}^2 g_{3} & = g_{4} (g_{4} g_{3})& \Longrightarrow & & t_{6}^{2}t_{13} & = 1 \\
g_{2}^2 g_{1} & = g_{2} (g_{2} g_{1})& \Longrightarrow & & t_{3}^{2}t_{7}t_{9}t_{10}t_{13} & = 1 \\
g_{2} g_{1}^{2} & = (g_{2} g_{1}) g_{1}& \Longrightarrow & & t_{3}^{2}t_{7}t_{8}t_{10}t_{13} & = 1 \\
\end{aligned}
\]
Scanning through the conjugacy class representatives of $G$ and the generators of their centralizers, we obtain the following relations induced on the tails:
\[
\begin{aligned}
{[g_{4} g_{6} g_{7} , \, g_{1} g_{3} ]}_G & = 1 & \Longrightarrow & & t_{6}t_{11}t_{13} & = 1 \\
\end{aligned}
\]
Collecting the coefficients of these relations into a matrix yields
\[
T = \bordermatrix{
{} & t_{1} & t_{2} & t_{3} & t_{4} & t_{5} & t_{6} & t_{7} & t_{8} & t_{9} & t_{10} & t_{11} & t_{12} & t_{13} \cr
{} &  &  & 2 &  &  &  & 1 &  & 1 & 1 &  &  & 1 \cr
{} &  &  &  &  &  & 1 &  &  &  &  &  & 1 & 1 \cr
{} &  &  &  &  &  &  &  & 1 & 1 & 1 &  & 1 & 1 \cr
{} &  &  &  &  &  &  &  &  & 2 & 1 &  & 1 & 1 \cr
{} &  &  &  &  &  &  &  &  &  &  & 1 & 1 & 1 \cr
{} &  &  &  &  &  &  &  &  &  &  &  & 2 & 1 \cr
}.
\]
It follows readily that the nontrivial elementary divisors of the Smith normal form of $T$ are all equal to $1$. The torsion subgroup of the group generated by the tails is thus trivial, thereby showing $\B_0(G) = 1$.


\item \label{number:86} 
Let the group $G$ be the representative of this family given by the presentation
\[
\begin{aligned}
\langle g_{1}, \,g_{2}, \,g_{3}, \,g_{4}, \,g_{5}, \,g_{6}, \,g_{7} & \mid & g_{1}^{2} &= 1, \\ 
 & & g_{2}^{2} &= 1, & [g_{2}, g_{1}]  &= g_{4}, \\ 
 & & g_{3}^{2} &= g_{4}, & [g_{3}, g_{1}]  &= g_{5}, & [g_{3}, g_{2}]  &= g_{6}, \\ 
 & & g_{4}^{2} &= g_{7}, & [g_{4}, g_{1}]  &= g_{7}, & [g_{4}, g_{2}]  &= g_{7}, \\ 
 & & g_{5}^{2} &= g_{7}, & [g_{5}, g_{1}]  &= g_{7}, \\ 
 & & g_{6}^{2} &= 1, & [g_{6}, g_{3}]  &= g_{7}, \\ 
 & & g_{7}^{2} &= 1\rangle. \\ 
\end{aligned}
\]
We add 14 tails to the presentation as to form a quotient of the universal central extension of the system: 
$g_{1}^{2} =  t_{1}$,
$g_{2}^{2} =  t_{2}$,
$[g_{2}, g_{1}] = g_{4} t_{3}$,
$g_{3}^{2} = g_{4} t_{4}$,
$[g_{3}, g_{1}] = g_{5} t_{5}$,
$[g_{3}, g_{2}] = g_{6} t_{6}$,
$g_{4}^{2} = g_{7} t_{7}$,
$[g_{4}, g_{1}] = g_{7} t_{8}$,
$[g_{4}, g_{2}] = g_{7} t_{9}$,
$g_{5}^{2} = g_{7} t_{10}$,
$[g_{5}, g_{1}] = g_{7} t_{11}$,
$g_{6}^{2} =  t_{12}$,
$[g_{6}, g_{3}] = g_{7} t_{13}$,
$g_{7}^{2} =  t_{14}$.
Carrying out consistency checks gives the following relations between the tails:
\[
\begin{aligned}
g_{6}^2 g_{3} & = g_{6} (g_{6} g_{3})& \Longrightarrow & & t_{13}^{2}t_{14} & = 1 \\
g_{5}^2 g_{1} & = g_{5} (g_{5} g_{1})& \Longrightarrow & & t_{11}^{2}t_{14} & = 1 \\
g_{4}^2 g_{2} & = g_{4} (g_{4} g_{2})& \Longrightarrow & & t_{9}^{2}t_{14} & = 1 \\
g_{4}^2 g_{1} & = g_{4} (g_{4} g_{1})& \Longrightarrow & & t_{8}^{2}t_{14} & = 1 \\
g_{3}^2 g_{2} & = g_{3} (g_{3} g_{2})& \Longrightarrow & & t_{6}^{2}t_{9}^{-1}t_{12}t_{13} & = 1 \\
g_{3}^2 g_{1} & = g_{3} (g_{3} g_{1})& \Longrightarrow & & t_{5}^{2}t_{8}^{-1}t_{10} & = 1 \\
g_{2}^2 g_{1} & = g_{2} (g_{2} g_{1})& \Longrightarrow & & t_{3}^{2}t_{7}t_{9}t_{14} & = 1 \\
g_{3} g_{2}^{2} & = (g_{3} g_{2}) g_{2}& \Longrightarrow & & t_{6}^{2}t_{12} & = 1 \\
g_{3} g_{1}^{2} & = (g_{3} g_{1}) g_{1}& \Longrightarrow & & t_{5}^{2}t_{10}t_{11}t_{14} & = 1 \\
g_{2} g_{1}^{2} & = (g_{2} g_{1}) g_{1}& \Longrightarrow & & t_{3}^{2}t_{7}t_{8}t_{14} & = 1 \\
\end{aligned}
\]
Scanning through the conjugacy class representatives of $G$ and the generators of their centralizers, we see that no new relations are imposed.
Collecting the coefficients of these relations into a matrix yields
\[
T = \bordermatrix{
{} & t_{1} & t_{2} & t_{3} & t_{4} & t_{5} & t_{6} & t_{7} & t_{8} & t_{9} & t_{10} & t_{11} & t_{12} & t_{13} & t_{14} \cr
{} &  &  & 2 &  &  &  & 1 &  &  &  &  &  & 1 & 1 \cr
{} &  &  &  &  & 2 &  &  &  &  & 1 &  &  & 1 & 1 \cr
{} &  &  &  &  &  & 2 &  &  &  &  &  & 1 &  &  \cr
{} &  &  &  &  &  &  &  & 1 &  &  &  &  & 1 & 1 \cr
{} &  &  &  &  &  &  &  &  & 1 &  &  &  & 1 & 1 \cr
{} &  &  &  &  &  &  &  &  &  &  & 1 &  & 1 & 1 \cr
{} &  &  &  &  &  &  &  &  &  &  &  &  & 2 & 1 \cr
}.
\]
It follows readily that the nontrivial elementary divisors of the Smith normal form of $T$ are all equal to $1$. The torsion subgroup of the group generated by the tails is thus trivial, thereby showing $\B_0(G) = 1$.


\item \label{number:87} 
Let the group $G$ be the representative of this family given by the presentation
\[
\begin{aligned}
\langle g_{1}, \,g_{2}, \,g_{3}, \,g_{4}, \,g_{5}, \,g_{6}, \,g_{7} & \mid & g_{1}^{2} &= g_{4}g_{5}g_{6}, \\ 
 & & g_{2}^{2} &= g_{4}g_{5}, & [g_{2}, g_{1}]  &= g_{4}, \\ 
 & & g_{3}^{2} &= g_{4}, & [g_{3}, g_{1}]  &= g_{5}, & [g_{3}, g_{2}]  &= g_{6}, \\ 
 & & g_{4}^{2} &= 1, & [g_{4}, g_{1}]  &= g_{7}, \\ 
 & & g_{5}^{2} &= 1, & [g_{5}, g_{1}]  &= g_{7}, & [g_{5}, g_{3}]  &= g_{7}, \\ 
 & & g_{6}^{2} &= 1, & [g_{6}, g_{2}]  &= g_{7}, \\ 
 & & g_{7}^{2} &= 1\rangle. \\ 
\end{aligned}
\]
We add 14 tails to the presentation as to form a quotient of the universal central extension of the system: 
$g_{1}^{2} = g_{4}g_{5}g_{6} t_{1}$,
$g_{2}^{2} = g_{4}g_{5} t_{2}$,
$[g_{2}, g_{1}] = g_{4} t_{3}$,
$g_{3}^{2} = g_{4} t_{4}$,
$[g_{3}, g_{1}] = g_{5} t_{5}$,
$[g_{3}, g_{2}] = g_{6} t_{6}$,
$g_{4}^{2} =  t_{7}$,
$[g_{4}, g_{1}] = g_{7} t_{8}$,
$g_{5}^{2} =  t_{9}$,
$[g_{5}, g_{1}] = g_{7} t_{10}$,
$[g_{5}, g_{3}] = g_{7} t_{11}$,
$g_{6}^{2} =  t_{12}$,
$[g_{6}, g_{2}] = g_{7} t_{13}$,
$g_{7}^{2} =  t_{14}$.
Carrying out consistency checks gives the following relations between the tails:
\[
\begin{aligned}
g_{6}^2 g_{2} & = g_{6} (g_{6} g_{2})& \Longrightarrow & & t_{13}^{2}t_{14} & = 1 \\
g_{5}^2 g_{3} & = g_{5} (g_{5} g_{3})& \Longrightarrow & & t_{11}^{2}t_{14} & = 1 \\
g_{5}^2 g_{1} & = g_{5} (g_{5} g_{1})& \Longrightarrow & & t_{10}^{2}t_{14} & = 1 \\
g_{4}^2 g_{1} & = g_{4} (g_{4} g_{1})& \Longrightarrow & & t_{8}^{2}t_{14} & = 1 \\
g_{3}^2 g_{2} & = g_{3} (g_{3} g_{2})& \Longrightarrow & & t_{6}^{2}t_{12} & = 1 \\
g_{3}^2 g_{1} & = g_{3} (g_{3} g_{1})& \Longrightarrow & & t_{5}^{2}t_{8}^{-1}t_{9}t_{11} & = 1 \\
g_{2}^2 g_{1} & = g_{2} (g_{2} g_{1})& \Longrightarrow & & t_{3}^{2}t_{7}t_{8}^{-1}t_{10}^{-1}t_{14}^{-1} & = 1 \\
g_{3} g_{2}^{2} & = (g_{3} g_{2}) g_{2}& \Longrightarrow & & t_{6}^{2}t_{11}t_{12}t_{13}t_{14} & = 1 \\
g_{3} g_{1}^{2} & = (g_{3} g_{1}) g_{1}& \Longrightarrow & & t_{5}^{2}t_{9}t_{10}t_{11}t_{14} & = 1 \\
g_{2} g_{1}^{2} & = (g_{2} g_{1}) g_{1}& \Longrightarrow & & t_{3}^{2}t_{7}t_{8}t_{13}t_{14} & = 1 \\
\end{aligned}
\]
Scanning through the conjugacy class representatives of $G$ and the generators of their centralizers, we see that no new relations are imposed.
Collecting the coefficients of these relations into a matrix yields
\[
T = \bordermatrix{
{} & t_{1} & t_{2} & t_{3} & t_{4} & t_{5} & t_{6} & t_{7} & t_{8} & t_{9} & t_{10} & t_{11} & t_{12} & t_{13} & t_{14} \cr
{} &  &  & 2 &  &  &  & 1 &  &  &  &  &  &  &  \cr
{} &  &  &  &  & 2 &  &  &  & 1 &  &  &  &  &  \cr
{} &  &  &  &  &  & 2 &  &  &  &  &  & 1 &  &  \cr
{} &  &  &  &  &  &  &  & 1 &  &  &  &  & 1 & 1 \cr
{} &  &  &  &  &  &  &  &  &  & 1 &  &  & 1 & 1 \cr
{} &  &  &  &  &  &  &  &  &  &  & 1 &  & 1 & 1 \cr
{} &  &  &  &  &  &  &  &  &  &  &  &  & 2 & 1 \cr
}.
\]
It follows readily that the nontrivial elementary divisors of the Smith normal form of $T$ are all equal to $1$. The torsion subgroup of the group generated by the tails is thus trivial, thereby showing $\B_0(G) = 1$.


\item \label{number:88} 
Let the group $G$ be the representative of this family given by the presentation
\[
\begin{aligned}
\langle g_{1}, \,g_{2}, \,g_{3}, \,g_{4}, \,g_{5}, \,g_{6}, \,g_{7} & \mid & g_{1}^{2} &= g_{4}g_{5}, \\ 
 & & g_{2}^{2} &= g_{4}, & [g_{2}, g_{1}]  &= g_{4}, \\ 
 & & g_{3}^{2} &= 1, & [g_{3}, g_{1}]  &= g_{5}, & [g_{3}, g_{2}]  &= g_{6}, \\ 
 & & g_{4}^{2} &= g_{7}, & [g_{4}, g_{1}]  &= g_{7}, & [g_{4}, g_{3}]  &= g_{7}, \\ 
 & & g_{5}^{2} &= 1, & [g_{5}, g_{1}]  &= g_{7}, \\ 
 & & g_{6}^{2} &= 1, & [g_{6}, g_{1}]  &= g_{7}, & [g_{6}, g_{2}]  &= g_{7}, \\ 
 & & g_{7}^{2} &= 1\rangle. \\ 
\end{aligned}
\]
We add 15 tails to the presentation as to form a quotient of the universal central extension of the system: 
$g_{1}^{2} = g_{4}g_{5} t_{1}$,
$g_{2}^{2} = g_{4} t_{2}$,
$[g_{2}, g_{1}] = g_{4} t_{3}$,
$g_{3}^{2} =  t_{4}$,
$[g_{3}, g_{1}] = g_{5} t_{5}$,
$[g_{3}, g_{2}] = g_{6} t_{6}$,
$g_{4}^{2} = g_{7} t_{7}$,
$[g_{4}, g_{1}] = g_{7} t_{8}$,
$[g_{4}, g_{3}] = g_{7} t_{9}$,
$g_{5}^{2} =  t_{10}$,
$[g_{5}, g_{1}] = g_{7} t_{11}$,
$g_{6}^{2} =  t_{12}$,
$[g_{6}, g_{1}] = g_{7} t_{13}$,
$[g_{6}, g_{2}] = g_{7} t_{14}$,
$g_{7}^{2} =  t_{15}$.
Carrying out consistency checks gives the following relations between the tails:
\[
\begin{aligned}
g_{3}(g_{2} g_{1}) & = (g_{3} g_{2}) g_{1}  & \Longrightarrow & & t_{9}t_{13}t_{15} & = 1 \\
g_{6}^2 g_{2} & = g_{6} (g_{6} g_{2})& \Longrightarrow & & t_{14}^{2}t_{15} & = 1 \\
g_{6}^2 g_{1} & = g_{6} (g_{6} g_{1})& \Longrightarrow & & t_{13}^{2}t_{15} & = 1 \\
g_{5}^2 g_{1} & = g_{5} (g_{5} g_{1})& \Longrightarrow & & t_{11}^{2}t_{15} & = 1 \\
g_{4}^2 g_{1} & = g_{4} (g_{4} g_{1})& \Longrightarrow & & t_{8}^{2}t_{15} & = 1 \\
g_{3}^2 g_{2} & = g_{3} (g_{3} g_{2})& \Longrightarrow & & t_{6}^{2}t_{12} & = 1 \\
g_{3}^2 g_{1} & = g_{3} (g_{3} g_{1})& \Longrightarrow & & t_{5}^{2}t_{10} & = 1 \\
g_{2}^2 g_{1} & = g_{2} (g_{2} g_{1})& \Longrightarrow & & t_{3}^{2}t_{7}t_{8}^{-1} & = 1 \\
g_{3} g_{2}^{2} & = (g_{3} g_{2}) g_{2}& \Longrightarrow & & t_{6}^{2}t_{9}t_{12}t_{14}t_{15} & = 1 \\
g_{3} g_{1}^{2} & = (g_{3} g_{1}) g_{1}& \Longrightarrow & & t_{5}^{2}t_{9}t_{10}t_{11}t_{15} & = 1 \\
g_{1}^{2} g_{1} & = g_{1} g_{1}^{2}& \Longrightarrow & & t_{8}^{-1}t_{11}^{-1}t_{15}^{-1} & = 1 \\
\end{aligned}
\]
Scanning through the conjugacy class representatives of $G$ and the generators of their centralizers, we see that no new relations are imposed.
Collecting the coefficients of these relations into a matrix yields
\[
T = \bordermatrix{
{} & t_{1} & t_{2} & t_{3} & t_{4} & t_{5} & t_{6} & t_{7} & t_{8} & t_{9} & t_{10} & t_{11} & t_{12} & t_{13} & t_{14} & t_{15} \cr
{} &  &  & 2 &  &  &  & 1 &  &  &  &  &  &  & 1 & 1 \cr
{} &  &  &  &  & 2 &  &  &  &  & 1 &  &  &  &  &  \cr
{} &  &  &  &  &  & 2 &  &  &  &  &  & 1 &  &  &  \cr
{} &  &  &  &  &  &  &  & 1 &  &  &  &  &  & 1 & 1 \cr
{} &  &  &  &  &  &  &  &  & 1 &  &  &  &  & 1 & 1 \cr
{} &  &  &  &  &  &  &  &  &  &  & 1 &  &  & 1 & 1 \cr
{} &  &  &  &  &  &  &  &  &  &  &  &  & 1 & 1 & 1 \cr
{} &  &  &  &  &  &  &  &  &  &  &  &  &  & 2 & 1 \cr
}.
\]
It follows readily that the nontrivial elementary divisors of the Smith normal form of $T$ are all equal to $1$. The torsion subgroup of the group generated by the tails is thus trivial, thereby showing $\B_0(G) = 1$.


\item \label{number:89} 
Let the group $G$ be the representative of this family given by the presentation
\[
\begin{aligned}
\langle g_{1}, \,g_{2}, \,g_{3}, \,g_{4}, \,g_{5}, \,g_{6}, \,g_{7} & \mid & g_{1}^{2} &= 1, \\ 
 & & g_{2}^{2} &= 1, & [g_{2}, g_{1}]  &= g_{4}, \\ 
 & & g_{3}^{2} &= 1, & [g_{3}, g_{1}]  &= g_{5}, & [g_{3}, g_{2}]  &= g_{6}, \\ 
 & & g_{4}^{2} &= g_{7}, & [g_{4}, g_{1}]  &= g_{7}, & [g_{4}, g_{2}]  &= g_{7}, & [g_{4}, g_{3}]  &= g_{7}, \\ 
 & & g_{5}^{2} &= g_{7}, & [g_{5}, g_{1}]  &= g_{7}, & [g_{5}, g_{3}]  &= g_{7}, \\ 
 & & g_{6}^{2} &= 1, & [g_{6}, g_{1}]  &= g_{7}, \\ 
 & & g_{7}^{2} &= 1\rangle. \\ 
\end{aligned}
\]
We add 16 tails to the presentation as to form a quotient of the universal central extension of the system: 
$g_{1}^{2} =  t_{1}$,
$g_{2}^{2} =  t_{2}$,
$[g_{2}, g_{1}] = g_{4} t_{3}$,
$g_{3}^{2} =  t_{4}$,
$[g_{3}, g_{1}] = g_{5} t_{5}$,
$[g_{3}, g_{2}] = g_{6} t_{6}$,
$g_{4}^{2} = g_{7} t_{7}$,
$[g_{4}, g_{1}] = g_{7} t_{8}$,
$[g_{4}, g_{2}] = g_{7} t_{9}$,
$[g_{4}, g_{3}] = g_{7} t_{10}$,
$g_{5}^{2} = g_{7} t_{11}$,
$[g_{5}, g_{1}] = g_{7} t_{12}$,
$[g_{5}, g_{3}] = g_{7} t_{13}$,
$g_{6}^{2} =  t_{14}$,
$[g_{6}, g_{1}] = g_{7} t_{15}$,
$g_{7}^{2} =  t_{16}$.
Carrying out consistency checks gives the following relations between the tails:
\[
\begin{aligned}
g_{3}(g_{2} g_{1}) & = (g_{3} g_{2}) g_{1}  & \Longrightarrow & & t_{10}t_{15}t_{16} & = 1 \\
g_{6}^2 g_{1} & = g_{6} (g_{6} g_{1})& \Longrightarrow & & t_{15}^{2}t_{16} & = 1 \\
g_{5}^2 g_{3} & = g_{5} (g_{5} g_{3})& \Longrightarrow & & t_{13}^{2}t_{16} & = 1 \\
g_{5}^2 g_{1} & = g_{5} (g_{5} g_{1})& \Longrightarrow & & t_{12}^{2}t_{16} & = 1 \\
g_{4}^2 g_{2} & = g_{4} (g_{4} g_{2})& \Longrightarrow & & t_{9}^{2}t_{16} & = 1 \\
g_{4}^2 g_{1} & = g_{4} (g_{4} g_{1})& \Longrightarrow & & t_{8}^{2}t_{16} & = 1 \\
g_{3}^2 g_{2} & = g_{3} (g_{3} g_{2})& \Longrightarrow & & t_{6}^{2}t_{14} & = 1 \\
g_{3}^2 g_{1} & = g_{3} (g_{3} g_{1})& \Longrightarrow & & t_{5}^{2}t_{11}t_{13}t_{16} & = 1 \\
g_{2}^2 g_{1} & = g_{2} (g_{2} g_{1})& \Longrightarrow & & t_{3}^{2}t_{7}t_{9}t_{16} & = 1 \\
g_{3} g_{1}^{2} & = (g_{3} g_{1}) g_{1}& \Longrightarrow & & t_{5}^{2}t_{11}t_{12}t_{16} & = 1 \\
g_{2} g_{1}^{2} & = (g_{2} g_{1}) g_{1}& \Longrightarrow & & t_{3}^{2}t_{7}t_{8}t_{16} & = 1 \\
\end{aligned}
\]
Scanning through the conjugacy class representatives of $G$ and the generators of their centralizers, we obtain the following relations induced on the tails:
\[
\begin{aligned}
{[g_{5} g_{6} g_{7} , \, g_{1} ]}_G & = 1 & \Longrightarrow & & t_{12}t_{15}t_{16} & = 1 \\
{[g_{4} g_{7} , \, g_{1} g_{3} ]}_G & = 1 & \Longrightarrow & & t_{8}t_{10}t_{16} & = 1 \\
\end{aligned}
\]
Collecting the coefficients of these relations into a matrix yields
\[
T = \bordermatrix{
{} & t_{1} & t_{2} & t_{3} & t_{4} & t_{5} & t_{6} & t_{7} & t_{8} & t_{9} & t_{10} & t_{11} & t_{12} & t_{13} & t_{14} & t_{15} & t_{16} \cr
{} &  &  & 2 &  &  &  & 1 &  &  &  &  &  &  &  & 1 & 1 \cr
{} &  &  &  &  & 2 &  &  &  &  &  & 1 &  &  &  & 1 & 1 \cr
{} &  &  &  &  &  & 2 &  &  &  &  &  &  &  & 1 &  &  \cr
{} &  &  &  &  &  &  &  & 1 &  &  &  &  &  &  & 1 & 1 \cr
{} &  &  &  &  &  &  &  &  & 1 &  &  &  &  &  & 1 & 1 \cr
{} &  &  &  &  &  &  &  &  &  & 1 &  &  &  &  & 1 & 1 \cr
{} &  &  &  &  &  &  &  &  &  &  &  & 1 &  &  & 1 & 1 \cr
{} &  &  &  &  &  &  &  &  &  &  &  &  & 1 &  & 1 & 1 \cr
{} &  &  &  &  &  &  &  &  &  &  &  &  &  &  & 2 & 1 \cr
}.
\]
It follows readily that the nontrivial elementary divisors of the Smith normal form of $T$ are all equal to $1$. The torsion subgroup of the group generated by the tails is thus trivial, thereby showing $\B_0(G) = 1$.


\item \label{number:90} 
Let the group $G$ be the representative of this family given by the presentation
\[
\begin{aligned}
\langle g_{1}, \,g_{2}, \,g_{3}, \,g_{4}, \,g_{5}, \,g_{6}, \,g_{7} & \mid & g_{1}^{2} &= g_{5}, \\ 
 & & g_{2}^{2} &= 1, & [g_{2}, g_{1}]  &= g_{4}, \\ 
 & & g_{3}^{2} &= g_{4}, & [g_{3}, g_{1}]  &= g_{5}, & [g_{3}, g_{2}]  &= g_{6}, \\ 
 & & g_{4}^{2} &= g_{7}, & [g_{4}, g_{2}]  &= g_{7}, \\ 
 & & g_{5}^{2} &= g_{7}, & [g_{5}, g_{2}]  &= g_{7}, & [g_{5}, g_{3}]  &= g_{7}, \\ 
 & & g_{6}^{2} &= 1, & [g_{6}, g_{1}]  &= g_{7}, & [g_{6}, g_{3}]  &= g_{7}, \\ 
 & & g_{7}^{2} &= 1\rangle. \\ 
\end{aligned}
\]
We add 15 tails to the presentation as to form a quotient of the universal central extension of the system: 
$g_{1}^{2} = g_{5} t_{1}$,
$g_{2}^{2} =  t_{2}$,
$[g_{2}, g_{1}] = g_{4} t_{3}$,
$g_{3}^{2} = g_{4} t_{4}$,
$[g_{3}, g_{1}] = g_{5} t_{5}$,
$[g_{3}, g_{2}] = g_{6} t_{6}$,
$g_{4}^{2} = g_{7} t_{7}$,
$[g_{4}, g_{2}] = g_{7} t_{8}$,
$g_{5}^{2} = g_{7} t_{9}$,
$[g_{5}, g_{2}] = g_{7} t_{10}$,
$[g_{5}, g_{3}] = g_{7} t_{11}$,
$g_{6}^{2} =  t_{12}$,
$[g_{6}, g_{1}] = g_{7} t_{13}$,
$[g_{6}, g_{3}] = g_{7} t_{14}$,
$g_{7}^{2} =  t_{15}$.
Carrying out consistency checks gives the following relations between the tails:
\[
\begin{aligned}
g_{3}(g_{2} g_{1}) & = (g_{3} g_{2}) g_{1}  & \Longrightarrow & & t_{10}^{-1}t_{13} & = 1 \\
g_{6}^2 g_{3} & = g_{6} (g_{6} g_{3})& \Longrightarrow & & t_{14}^{2}t_{15} & = 1 \\
g_{6}^2 g_{1} & = g_{6} (g_{6} g_{1})& \Longrightarrow & & t_{13}^{2}t_{15} & = 1 \\
g_{5}^2 g_{3} & = g_{5} (g_{5} g_{3})& \Longrightarrow & & t_{11}^{2}t_{15} & = 1 \\
g_{4}^2 g_{2} & = g_{4} (g_{4} g_{2})& \Longrightarrow & & t_{8}^{2}t_{15} & = 1 \\
g_{3}^2 g_{2} & = g_{3} (g_{3} g_{2})& \Longrightarrow & & t_{6}^{2}t_{8}^{-1}t_{12}t_{14} & = 1 \\
g_{3}^2 g_{1} & = g_{3} (g_{3} g_{1})& \Longrightarrow & & t_{5}^{2}t_{9}t_{11}t_{15} & = 1 \\
g_{2}^2 g_{1} & = g_{2} (g_{2} g_{1})& \Longrightarrow & & t_{3}^{2}t_{7}t_{8}t_{15} & = 1 \\
g_{3} g_{2}^{2} & = (g_{3} g_{2}) g_{2}& \Longrightarrow & & t_{6}^{2}t_{12} & = 1 \\
g_{2} g_{1}^{2} & = (g_{2} g_{1}) g_{1}& \Longrightarrow & & t_{3}^{2}t_{7}t_{10}t_{15} & = 1 \\
\end{aligned}
\]
Scanning through the conjugacy class representatives of $G$ and the generators of their centralizers, we obtain the following relations induced on the tails:
\[
\begin{aligned}
{[g_{5} g_{7} , \, g_{2} g_{3} g_{4} ]}_G & = 1 & \Longrightarrow & & t_{10}t_{11}t_{15} & = 1 \\
\end{aligned}
\]
Collecting the coefficients of these relations into a matrix yields
\[
T = \bordermatrix{
{} & t_{1} & t_{2} & t_{3} & t_{4} & t_{5} & t_{6} & t_{7} & t_{8} & t_{9} & t_{10} & t_{11} & t_{12} & t_{13} & t_{14} & t_{15} \cr
{} &  &  & 2 &  &  &  & 1 &  &  &  &  &  &  & 1 & 1 \cr
{} &  &  &  &  & 2 &  &  &  & 1 &  &  &  &  & 1 & 1 \cr
{} &  &  &  &  &  & 2 &  &  &  &  &  & 1 &  &  &  \cr
{} &  &  &  &  &  &  &  & 1 &  &  &  &  &  & 1 & 1 \cr
{} &  &  &  &  &  &  &  &  &  & 1 &  &  &  & 1 & 1 \cr
{} &  &  &  &  &  &  &  &  &  &  & 1 &  &  & 1 & 1 \cr
{} &  &  &  &  &  &  &  &  &  &  &  &  & 1 & 1 & 1 \cr
{} &  &  &  &  &  &  &  &  &  &  &  &  &  & 2 & 1 \cr
}.
\]
It follows readily that the nontrivial elementary divisors of the Smith normal form of $T$ are all equal to $1$. The torsion subgroup of the group generated by the tails is thus trivial, thereby showing $\B_0(G) = 1$.


\item \label{number:91} 
Let the group $G$ be the representative of this family given by the presentation
\[
\begin{aligned}
\langle g_{1}, \,g_{2}, \,g_{3}, \,g_{4}, \,g_{5}, \,g_{6}, \,g_{7} & \mid & g_{1}^{2} &= g_{4}, \\ 
 & & g_{2}^{2} &= g_{4}, & [g_{2}, g_{1}]  &= g_{4}, \\ 
 & & g_{3}^{2} &= 1, & [g_{3}, g_{1}]  &= g_{5}, & [g_{3}, g_{2}]  &= g_{6}, \\ 
 & & g_{4}^{2} &= 1, & [g_{4}, g_{3}]  &= g_{7}, \\ 
 & & g_{5}^{2} &= 1, & [g_{5}, g_{1}]  &= g_{7}, \\ 
 & & g_{6}^{2} &= 1, & [g_{6}, g_{1}]  &= g_{7}, & [g_{6}, g_{2}]  &= g_{7}, \\ 
 & & g_{7}^{2} &= 1\rangle. \\ 
\end{aligned}
\]
We add 14 tails to the presentation as to form a quotient of the universal central extension of the system: 
$g_{1}^{2} = g_{4} t_{1}$,
$g_{2}^{2} = g_{4} t_{2}$,
$[g_{2}, g_{1}] = g_{4} t_{3}$,
$g_{3}^{2} =  t_{4}$,
$[g_{3}, g_{1}] = g_{5} t_{5}$,
$[g_{3}, g_{2}] = g_{6} t_{6}$,
$g_{4}^{2} =  t_{7}$,
$[g_{4}, g_{3}] = g_{7} t_{8}$,
$g_{5}^{2} =  t_{9}$,
$[g_{5}, g_{1}] = g_{7} t_{10}$,
$g_{6}^{2} =  t_{11}$,
$[g_{6}, g_{1}] = g_{7} t_{12}$,
$[g_{6}, g_{2}] = g_{7} t_{13}$,
$g_{7}^{2} =  t_{14}$.
Carrying out consistency checks gives the following relations between the tails:
\[
\begin{aligned}
g_{3}(g_{2} g_{1}) & = (g_{3} g_{2}) g_{1}  & \Longrightarrow & & t_{8}t_{12}t_{14} & = 1 \\
g_{6}^2 g_{2} & = g_{6} (g_{6} g_{2})& \Longrightarrow & & t_{13}^{2}t_{14} & = 1 \\
g_{6}^2 g_{1} & = g_{6} (g_{6} g_{1})& \Longrightarrow & & t_{12}^{2}t_{14} & = 1 \\
g_{5}^2 g_{1} & = g_{5} (g_{5} g_{1})& \Longrightarrow & & t_{10}^{2}t_{14} & = 1 \\
g_{3}^2 g_{2} & = g_{3} (g_{3} g_{2})& \Longrightarrow & & t_{6}^{2}t_{11} & = 1 \\
g_{3}^2 g_{1} & = g_{3} (g_{3} g_{1})& \Longrightarrow & & t_{5}^{2}t_{9} & = 1 \\
g_{2}^2 g_{1} & = g_{2} (g_{2} g_{1})& \Longrightarrow & & t_{3}^{2}t_{7} & = 1 \\
g_{3} g_{2}^{2} & = (g_{3} g_{2}) g_{2}& \Longrightarrow & & t_{6}^{2}t_{8}t_{11}t_{13}t_{14} & = 1 \\
g_{3} g_{1}^{2} & = (g_{3} g_{1}) g_{1}& \Longrightarrow & & t_{5}^{2}t_{8}t_{9}t_{10}t_{14} & = 1 \\
\end{aligned}
\]
Scanning through the conjugacy class representatives of $G$ and the generators of their centralizers, we see that no new relations are imposed.
Collecting the coefficients of these relations into a matrix yields
\[
T = \bordermatrix{
{} & t_{1} & t_{2} & t_{3} & t_{4} & t_{5} & t_{6} & t_{7} & t_{8} & t_{9} & t_{10} & t_{11} & t_{12} & t_{13} & t_{14} \cr
{} &  &  & 2 &  &  &  & 1 &  &  &  &  &  &  &  \cr
{} &  &  &  &  & 2 &  &  &  & 1 &  &  &  &  &  \cr
{} &  &  &  &  &  & 2 &  &  &  &  & 1 &  &  &  \cr
{} &  &  &  &  &  &  &  & 1 &  &  &  &  & 1 & 1 \cr
{} &  &  &  &  &  &  &  &  &  & 1 &  &  & 1 & 1 \cr
{} &  &  &  &  &  &  &  &  &  &  &  & 1 & 1 & 1 \cr
{} &  &  &  &  &  &  &  &  &  &  &  &  & 2 & 1 \cr
}.
\]
It follows readily that the nontrivial elementary divisors of the Smith normal form of $T$ are all equal to $1$. The torsion subgroup of the group generated by the tails is thus trivial, thereby showing $\B_0(G) = 1$.


\item \label{number:92} 
Let the group $G$ be the representative of this family given by the presentation
\[
\begin{aligned}
\langle g_{1}, \,g_{2}, \,g_{3}, \,g_{4}, \,g_{5}, \,g_{6}, \,g_{7} & \mid & g_{1}^{2} &= 1, \\ 
 & & g_{2}^{2} &= 1, & [g_{2}, g_{1}]  &= g_{4}, \\ 
 & & g_{3}^{2} &= 1, & [g_{3}, g_{1}]  &= g_{5}, \\ 
 & & g_{4}^{2} &= g_{7}, & [g_{4}, g_{1}]  &= g_{7}, & [g_{4}, g_{2}]  &= g_{7}, & [g_{4}, g_{3}]  &= g_{6}, \\ 
 & & g_{5}^{2} &= g_{7}, & [g_{5}, g_{1}]  &= g_{7}, & [g_{5}, g_{2}]  &= g_{6}g_{7}, & [g_{5}, g_{3}]  &= g_{7}, & [g_{5}, g_{4}]  &= g_{7}, \\ 
 & & g_{6}^{2} &= 1, & [g_{6}, g_{1}]  &= g_{7}, \\ 
 & & g_{7}^{2} &= 1\rangle. \\ 
\end{aligned}
\]
We add 17 tails to the presentation as to form a quotient of the universal central extension of the system: 
$g_{1}^{2} =  t_{1}$,
$g_{2}^{2} =  t_{2}$,
$[g_{2}, g_{1}] = g_{4} t_{3}$,
$g_{3}^{2} =  t_{4}$,
$[g_{3}, g_{1}] = g_{5} t_{5}$,
$g_{4}^{2} = g_{7} t_{6}$,
$[g_{4}, g_{1}] = g_{7} t_{7}$,
$[g_{4}, g_{2}] = g_{7} t_{8}$,
$[g_{4}, g_{3}] = g_{6} t_{9}$,
$g_{5}^{2} = g_{7} t_{10}$,
$[g_{5}, g_{1}] = g_{7} t_{11}$,
$[g_{5}, g_{2}] = g_{6}g_{7} t_{12}$,
$[g_{5}, g_{3}] = g_{7} t_{13}$,
$[g_{5}, g_{4}] = g_{7} t_{14}$,
$g_{6}^{2} =  t_{15}$,
$[g_{6}, g_{1}] = g_{7} t_{16}$,
$g_{7}^{2} =  t_{17}$.
Carrying out consistency checks gives the following relations between the tails:
\[
\begin{aligned}
g_{5}(g_{2} g_{1}) & = (g_{5} g_{2}) g_{1}  & \Longrightarrow & & t_{14}^{-1}t_{16} & = 1 \\
g_{4}(g_{3} g_{1}) & = (g_{4} g_{3}) g_{1}  & \Longrightarrow & & t_{14}t_{16}t_{17} & = 1 \\
g_{3}(g_{2} g_{1}) & = (g_{3} g_{2}) g_{1}  & \Longrightarrow & & t_{9}t_{12}^{-1}t_{14}^{-1}t_{17}^{-1} & = 1 \\
g_{5}^2 g_{3} & = g_{5} (g_{5} g_{3})& \Longrightarrow & & t_{13}^{2}t_{17} & = 1 \\
g_{5}^2 g_{2} & = g_{5} (g_{5} g_{2})& \Longrightarrow & & t_{12}^{2}t_{15}t_{17} & = 1 \\
g_{5}^2 g_{1} & = g_{5} (g_{5} g_{1})& \Longrightarrow & & t_{11}^{2}t_{17} & = 1 \\
g_{4}^2 g_{2} & = g_{4} (g_{4} g_{2})& \Longrightarrow & & t_{8}^{2}t_{17} & = 1 \\
g_{4}^2 g_{1} & = g_{4} (g_{4} g_{1})& \Longrightarrow & & t_{7}^{2}t_{17} & = 1 \\
g_{3}^2 g_{1} & = g_{3} (g_{3} g_{1})& \Longrightarrow & & t_{5}^{2}t_{10}t_{13}t_{17} & = 1 \\
g_{2}^2 g_{1} & = g_{2} (g_{2} g_{1})& \Longrightarrow & & t_{3}^{2}t_{6}t_{8}t_{17} & = 1 \\
g_{3} g_{1}^{2} & = (g_{3} g_{1}) g_{1}& \Longrightarrow & & t_{5}^{2}t_{10}t_{11}t_{17} & = 1 \\
g_{2} g_{1}^{2} & = (g_{2} g_{1}) g_{1}& \Longrightarrow & & t_{3}^{2}t_{6}t_{7}t_{17} & = 1 \\
\end{aligned}
\]
Scanning through the conjugacy class representatives of $G$ and the generators of their centralizers, we obtain the following relations induced on the tails:
\[
\begin{aligned}
{[g_{5} g_{6} g_{7} , \, g_{1} ]}_G & = 1 & \Longrightarrow & & t_{11}t_{16}t_{17} & = 1 \\
{[g_{4} g_{6} g_{7} , \, g_{1} ]}_G & = 1 & \Longrightarrow & & t_{7}t_{16}t_{17} & = 1 \\
\end{aligned}
\]
Collecting the coefficients of these relations into a matrix yields
\[
T = \bordermatrix{
{} & t_{1} & t_{2} & t_{3} & t_{4} & t_{5} & t_{6} & t_{7} & t_{8} & t_{9} & t_{10} & t_{11} & t_{12} & t_{13} & t_{14} & t_{15} & t_{16} & t_{17} \cr
{} &  &  & 2 &  &  & 1 &  &  &  &  &  &  &  &  &  & 1 & 1 \cr
{} &  &  &  &  & 2 &  &  &  &  & 1 &  &  &  &  &  & 1 & 1 \cr
{} &  &  &  &  &  &  & 1 &  &  &  &  &  &  &  &  & 1 & 1 \cr
{} &  &  &  &  &  &  &  & 1 &  &  &  &  &  &  &  & 1 & 1 \cr
{} &  &  &  &  &  &  &  &  & 1 &  &  & 1 &  &  & 1 & 1 & 1 \cr
{} &  &  &  &  &  &  &  &  &  &  & 1 &  &  &  &  & 1 & 1 \cr
{} &  &  &  &  &  &  &  &  &  &  &  & 2 &  &  & 1 &  & 1 \cr
{} &  &  &  &  &  &  &  &  &  &  &  &  & 1 &  &  & 1 & 1 \cr
{} &  &  &  &  &  &  &  &  &  &  &  &  &  & 1 &  & 1 & 1 \cr
{} &  &  &  &  &  &  &  &  &  &  &  &  &  &  &  & 2 & 1 \cr
}.
\]
It follows readily that the nontrivial elementary divisors of the Smith normal form of $T$ are all equal to $1$. The torsion subgroup of the group generated by the tails is thus trivial, thereby showing $\B_0(G) = 1$.


\item \label{number:93} 
Let the group $G$ be the representative of this family given by the presentation
\[
\begin{aligned}
\langle g_{1}, \,g_{2}, \,g_{3}, \,g_{4}, \,g_{5}, \,g_{6}, \,g_{7} & \mid & g_{1}^{2} &= 1, \\ 
 & & g_{2}^{2} &= 1, & [g_{2}, g_{1}]  &= g_{4}, \\ 
 & & g_{3}^{2} &= 1, & [g_{3}, g_{1}]  &= g_{5}, & [g_{3}, g_{2}]  &= g_{7}, \\ 
 & & g_{4}^{2} &= g_{7}, & [g_{4}, g_{1}]  &= g_{7}, & [g_{4}, g_{2}]  &= g_{7}, & [g_{4}, g_{3}]  &= g_{6}, \\ 
 & & g_{5}^{2} &= g_{7}, & [g_{5}, g_{1}]  &= g_{7}, & [g_{5}, g_{2}]  &= g_{6}g_{7}, & [g_{5}, g_{3}]  &= g_{7}, & [g_{5}, g_{4}]  &= g_{7}, \\ 
 & & g_{6}^{2} &= 1, & [g_{6}, g_{1}]  &= g_{7}, \\ 
 & & g_{7}^{2} &= 1\rangle. \\ 
\end{aligned}
\]
We add 18 tails to the presentation as to form a quotient of the universal central extension of the system: 
$g_{1}^{2} =  t_{1}$,
$g_{2}^{2} =  t_{2}$,
$[g_{2}, g_{1}] = g_{4} t_{3}$,
$g_{3}^{2} =  t_{4}$,
$[g_{3}, g_{1}] = g_{5} t_{5}$,
$[g_{3}, g_{2}] = g_{7} t_{6}$,
$g_{4}^{2} = g_{7} t_{7}$,
$[g_{4}, g_{1}] = g_{7} t_{8}$,
$[g_{4}, g_{2}] = g_{7} t_{9}$,
$[g_{4}, g_{3}] = g_{6} t_{10}$,
$g_{5}^{2} = g_{7} t_{11}$,
$[g_{5}, g_{1}] = g_{7} t_{12}$,
$[g_{5}, g_{2}] = g_{6}g_{7} t_{13}$,
$[g_{5}, g_{3}] = g_{7} t_{14}$,
$[g_{5}, g_{4}] = g_{7} t_{15}$,
$g_{6}^{2} =  t_{16}$,
$[g_{6}, g_{1}] = g_{7} t_{17}$,
$g_{7}^{2} =  t_{18}$.
Carrying out consistency checks gives the following relations between the tails:
\[
\begin{aligned}
g_{5}(g_{2} g_{1}) & = (g_{5} g_{2}) g_{1}  & \Longrightarrow & & t_{15}^{-1}t_{17} & = 1 \\
g_{4}(g_{3} g_{1}) & = (g_{4} g_{3}) g_{1}  & \Longrightarrow & & t_{15}t_{17}t_{18} & = 1 \\
g_{3}(g_{2} g_{1}) & = (g_{3} g_{2}) g_{1}  & \Longrightarrow & & t_{10}t_{13}^{-1}t_{15}^{-1}t_{18}^{-1} & = 1 \\
g_{5}^2 g_{3} & = g_{5} (g_{5} g_{3})& \Longrightarrow & & t_{14}^{2}t_{18} & = 1 \\
g_{5}^2 g_{2} & = g_{5} (g_{5} g_{2})& \Longrightarrow & & t_{13}^{2}t_{16}t_{18} & = 1 \\
g_{5}^2 g_{1} & = g_{5} (g_{5} g_{1})& \Longrightarrow & & t_{12}^{2}t_{18} & = 1 \\
g_{4}^2 g_{2} & = g_{4} (g_{4} g_{2})& \Longrightarrow & & t_{9}^{2}t_{18} & = 1 \\
g_{4}^2 g_{1} & = g_{4} (g_{4} g_{1})& \Longrightarrow & & t_{8}^{2}t_{18} & = 1 \\
g_{3}^2 g_{2} & = g_{3} (g_{3} g_{2})& \Longrightarrow & & t_{6}^{2}t_{18} & = 1 \\
g_{3}^2 g_{1} & = g_{3} (g_{3} g_{1})& \Longrightarrow & & t_{5}^{2}t_{11}t_{14}t_{18} & = 1 \\
g_{2}^2 g_{1} & = g_{2} (g_{2} g_{1})& \Longrightarrow & & t_{3}^{2}t_{7}t_{9}t_{18} & = 1 \\
g_{3} g_{1}^{2} & = (g_{3} g_{1}) g_{1}& \Longrightarrow & & t_{5}^{2}t_{11}t_{12}t_{18} & = 1 \\
g_{2} g_{1}^{2} & = (g_{2} g_{1}) g_{1}& \Longrightarrow & & t_{3}^{2}t_{7}t_{8}t_{18} & = 1 \\
\end{aligned}
\]
Scanning through the conjugacy class representatives of $G$ and the generators of their centralizers, we obtain the following relations induced on the tails:
\[
\begin{aligned}
{[g_{5} g_{6} g_{7} , \, g_{1} ]}_G & = 1 & \Longrightarrow & & t_{12}t_{17}t_{18} & = 1 \\
{[g_{4} g_{6} g_{7} , \, g_{1} ]}_G & = 1 & \Longrightarrow & & t_{8}t_{17}t_{18} & = 1 \\
{[g_{3} g_{5} g_{6} g_{7} , \, g_{2} g_{4} g_{5} g_{7} ]}_G & = 1 & \Longrightarrow & & t_{6}t_{10}^{-1}t_{13}t_{14}^{-1}t_{15}t_{18} & = 1 \\
\end{aligned}
\]
Collecting the coefficients of these relations into a matrix yields
\[
T = \bordermatrix{
{} & t_{1} & t_{2} & t_{3} & t_{4} & t_{5} & t_{6} & t_{7} & t_{8} & t_{9} & t_{10} & t_{11} & t_{12} & t_{13} & t_{14} & t_{15} & t_{16} & t_{17} & t_{18} \cr
{} &  &  & 2 &  &  &  & 1 &  &  &  &  &  &  &  &  &  & 1 & 1 \cr
{} &  &  &  &  & 2 &  &  &  &  &  & 1 &  &  &  &  &  & 1 & 1 \cr
{} &  &  &  &  &  & 1 &  &  &  &  &  &  &  &  &  &  & 1 & 1 \cr
{} &  &  &  &  &  &  &  & 1 &  &  &  &  &  &  &  &  & 1 & 1 \cr
{} &  &  &  &  &  &  &  &  & 1 &  &  &  &  &  &  &  & 1 & 1 \cr
{} &  &  &  &  &  &  &  &  &  & 1 &  &  & 1 &  &  & 1 & 1 & 1 \cr
{} &  &  &  &  &  &  &  &  &  &  &  & 1 &  &  &  &  & 1 & 1 \cr
{} &  &  &  &  &  &  &  &  &  &  &  &  & 2 &  &  & 1 &  & 1 \cr
{} &  &  &  &  &  &  &  &  &  &  &  &  &  & 1 &  &  & 1 & 1 \cr
{} &  &  &  &  &  &  &  &  &  &  &  &  &  &  & 1 &  & 1 & 1 \cr
{} &  &  &  &  &  &  &  &  &  &  &  &  &  &  &  &  & 2 & 1 \cr
}.
\]
It follows readily that the nontrivial elementary divisors of the Smith normal form of $T$ are all equal to $1$. The torsion subgroup of the group generated by the tails is thus trivial, thereby showing $\B_0(G) = 1$.


\item \label{number:94} 
Let the group $G$ be the representative of this family given by the presentation
\[
\begin{aligned}
\langle g_{1}, \,g_{2}, \,g_{3}, \,g_{4}, \,g_{5}, \,g_{6}, \,g_{7} & \mid & g_{1}^{2} &= 1, \\ 
 & & g_{2}^{2} &= 1, & [g_{2}, g_{1}]  &= g_{4}, \\ 
 & & g_{3}^{2} &= 1, & [g_{3}, g_{1}]  &= g_{5}, & [g_{3}, g_{2}]  &= g_{7}, \\ 
 & & g_{4}^{2} &= 1, & [g_{4}, g_{3}]  &= g_{6}, \\ 
 & & g_{5}^{2} &= 1, & [g_{5}, g_{2}]  &= g_{6}g_{7}, & [g_{5}, g_{4}]  &= g_{7}, \\ 
 & & g_{6}^{2} &= 1, & [g_{6}, g_{1}]  &= g_{7}, \\ 
 & & g_{7}^{2} &= 1\rangle. \\ 
\end{aligned}
\]
We add 14 tails to the presentation as to form a quotient of the universal central extension of the system: 
$g_{1}^{2} =  t_{1}$,
$g_{2}^{2} =  t_{2}$,
$[g_{2}, g_{1}] = g_{4} t_{3}$,
$g_{3}^{2} =  t_{4}$,
$[g_{3}, g_{1}] = g_{5} t_{5}$,
$[g_{3}, g_{2}] = g_{7} t_{6}$,
$g_{4}^{2} =  t_{7}$,
$[g_{4}, g_{3}] = g_{6} t_{8}$,
$g_{5}^{2} =  t_{9}$,
$[g_{5}, g_{2}] = g_{6}g_{7} t_{10}$,
$[g_{5}, g_{4}] = g_{7} t_{11}$,
$g_{6}^{2} =  t_{12}$,
$[g_{6}, g_{1}] = g_{7} t_{13}$,
$g_{7}^{2} =  t_{14}$.
Carrying out consistency checks gives the following relations between the tails:
\[
\begin{aligned}
g_{5}(g_{2} g_{1}) & = (g_{5} g_{2}) g_{1}  & \Longrightarrow & & t_{11}^{-1}t_{13} & = 1 \\
g_{4}(g_{3} g_{1}) & = (g_{4} g_{3}) g_{1}  & \Longrightarrow & & t_{11}t_{13}t_{14} & = 1 \\
g_{3}(g_{2} g_{1}) & = (g_{3} g_{2}) g_{1}  & \Longrightarrow & & t_{8}t_{10}^{-1}t_{11}^{-1}t_{14}^{-1} & = 1 \\
g_{5}^2 g_{2} & = g_{5} (g_{5} g_{2})& \Longrightarrow & & t_{10}^{2}t_{12}t_{14} & = 1 \\
g_{3}^2 g_{2} & = g_{3} (g_{3} g_{2})& \Longrightarrow & & t_{6}^{2}t_{14} & = 1 \\
g_{3}^2 g_{1} & = g_{3} (g_{3} g_{1})& \Longrightarrow & & t_{5}^{2}t_{9} & = 1 \\
g_{2}^2 g_{1} & = g_{2} (g_{2} g_{1})& \Longrightarrow & & t_{3}^{2}t_{7} & = 1 \\
\end{aligned}
\]
Scanning through the conjugacy class representatives of $G$ and the generators of their centralizers, we obtain the following relations induced on the tails:
\[
\begin{aligned}
{[g_{3} g_{4} g_{5} g_{6} g_{7} , \, g_{2} g_{4} g_{5} ]}_G & = 1 & \Longrightarrow & & t_{6}t_{8}^{-1}t_{10}t_{14} & = 1 \\
\end{aligned}
\]
Collecting the coefficients of these relations into a matrix yields
\[
T = \bordermatrix{
{} & t_{1} & t_{2} & t_{3} & t_{4} & t_{5} & t_{6} & t_{7} & t_{8} & t_{9} & t_{10} & t_{11} & t_{12} & t_{13} & t_{14} \cr
{} &  &  & 2 &  &  &  & 1 &  &  &  &  &  &  &  \cr
{} &  &  &  &  & 2 &  &  &  & 1 &  &  &  &  &  \cr
{} &  &  &  &  &  & 1 &  &  &  &  &  &  & 1 & 1 \cr
{} &  &  &  &  &  &  &  & 1 &  & 1 &  & 1 & 1 & 1 \cr
{} &  &  &  &  &  &  &  &  &  & 2 &  & 1 &  & 1 \cr
{} &  &  &  &  &  &  &  &  &  &  & 1 &  & 1 & 1 \cr
{} &  &  &  &  &  &  &  &  &  &  &  &  & 2 & 1 \cr
}.
\]
It follows readily that the nontrivial elementary divisors of the Smith normal form of $T$ are all equal to $1$. The torsion subgroup of the group generated by the tails is thus trivial, thereby showing $\B_0(G) = 1$.


\item \label{number:95} 
Let the group $G$ be the representative of this family given by the presentation
\[
\begin{aligned}
\langle g_{1}, \,g_{2}, \,g_{3}, \,g_{4}, \,g_{5}, \,g_{6}, \,g_{7} & \mid & g_{1}^{2} &= 1, \\ 
 & & g_{2}^{2} &= 1, & [g_{2}, g_{1}]  &= g_{4}, \\ 
 & & g_{3}^{2} &= 1, & [g_{3}, g_{1}]  &= g_{5}, \\ 
 & & g_{4}^{2} &= 1, & [g_{4}, g_{3}]  &= g_{6}, \\ 
 & & g_{5}^{2} &= 1, & [g_{5}, g_{2}]  &= g_{6}g_{7}, & [g_{5}, g_{4}]  &= g_{7}, \\ 
 & & g_{6}^{2} &= 1, & [g_{6}, g_{1}]  &= g_{7}, \\ 
 & & g_{7}^{2} &= 1\rangle. \\ 
\end{aligned}
\]
We add 13 tails to the presentation as to form a quotient of the universal central extension of the system: 
$g_{1}^{2} =  t_{1}$,
$g_{2}^{2} =  t_{2}$,
$[g_{2}, g_{1}] = g_{4} t_{3}$,
$g_{3}^{2} =  t_{4}$,
$[g_{3}, g_{1}] = g_{5} t_{5}$,
$g_{4}^{2} =  t_{6}$,
$[g_{4}, g_{3}] = g_{6} t_{7}$,
$g_{5}^{2} =  t_{8}$,
$[g_{5}, g_{2}] = g_{6}g_{7} t_{9}$,
$[g_{5}, g_{4}] = g_{7} t_{10}$,
$g_{6}^{2} =  t_{11}$,
$[g_{6}, g_{1}] = g_{7} t_{12}$,
$g_{7}^{2} =  t_{13}$.
Carrying out consistency checks gives the following relations between the tails:
\[
\begin{aligned}
g_{5}(g_{2} g_{1}) & = (g_{5} g_{2}) g_{1}  & \Longrightarrow & & t_{10}^{-1}t_{12} & = 1 \\
g_{4}(g_{3} g_{1}) & = (g_{4} g_{3}) g_{1}  & \Longrightarrow & & t_{10}t_{12}t_{13} & = 1 \\
g_{3}(g_{2} g_{1}) & = (g_{3} g_{2}) g_{1}  & \Longrightarrow & & t_{7}t_{9}^{-1}t_{10}^{-1}t_{13}^{-1} & = 1 \\
g_{5}^2 g_{2} & = g_{5} (g_{5} g_{2})& \Longrightarrow & & t_{9}^{2}t_{11}t_{13} & = 1 \\
g_{3}^2 g_{1} & = g_{3} (g_{3} g_{1})& \Longrightarrow & & t_{5}^{2}t_{8} & = 1 \\
g_{2}^2 g_{1} & = g_{2} (g_{2} g_{1})& \Longrightarrow & & t_{3}^{2}t_{6} & = 1 \\
\end{aligned}
\]
Scanning through the conjugacy class representatives of $G$ and the generators of their centralizers, we see that no new relations are imposed.
Collecting the coefficients of these relations into a matrix yields
\[
T = \bordermatrix{
{} & t_{1} & t_{2} & t_{3} & t_{4} & t_{5} & t_{6} & t_{7} & t_{8} & t_{9} & t_{10} & t_{11} & t_{12} & t_{13} \cr
{} &  &  & 2 &  &  & 1 &  &  &  &  &  &  &  \cr
{} &  &  &  &  & 2 &  &  & 1 &  &  &  &  &  \cr
{} &  &  &  &  &  &  & 1 &  & 1 &  & 1 & 1 & 1 \cr
{} &  &  &  &  &  &  &  &  & 2 &  & 1 &  & 1 \cr
{} &  &  &  &  &  &  &  &  &  & 1 &  & 1 & 1 \cr
{} &  &  &  &  &  &  &  &  &  &  &  & 2 & 1 \cr
}.
\]
It follows readily that the nontrivial elementary divisors of the Smith normal form of $T$ are all equal to $1$. The torsion subgroup of the group generated by the tails is thus trivial, thereby showing $\B_0(G) = 1$.


\item \label{number:96} 
Let the group $G$ be the representative of this family given by the presentation
\[
\begin{aligned}
\langle g_{1}, \,g_{2}, \,g_{3}, \,g_{4}, \,g_{5}, \,g_{6}, \,g_{7} & \mid & g_{1}^{2} &= g_{4}, \\ 
 & & g_{2}^{2} &= 1, & [g_{2}, g_{1}]  &= g_{3}, \\ 
 & & g_{3}^{2} &= g_{6}g_{7}, & [g_{3}, g_{1}]  &= g_{5}, & [g_{3}, g_{2}]  &= g_{6}, \\ 
 & & g_{4}^{2} &= 1, & [g_{4}, g_{2}]  &= g_{5}g_{6}g_{7}, & [g_{4}, g_{3}]  &= g_{7}, \\ 
 & & g_{5}^{2} &= g_{7}, & [g_{5}, g_{2}]  &= g_{7}, \\ 
 & & g_{6}^{2} &= g_{7}, & [g_{6}, g_{1}]  &= g_{7}, & [g_{6}, g_{2}]  &= g_{7}, \\ 
 & & g_{7}^{2} &= 1\rangle. \\ 
\end{aligned}
\]
We add 15 tails to the presentation as to form a quotient of the universal central extension of the system: 
$g_{1}^{2} = g_{4} t_{1}$,
$g_{2}^{2} =  t_{2}$,
$[g_{2}, g_{1}] = g_{3} t_{3}$,
$g_{3}^{2} = g_{6}g_{7} t_{4}$,
$[g_{3}, g_{1}] = g_{5} t_{5}$,
$[g_{3}, g_{2}] = g_{6} t_{6}$,
$g_{4}^{2} =  t_{7}$,
$[g_{4}, g_{2}] = g_{5}g_{6}g_{7} t_{8}$,
$[g_{4}, g_{3}] = g_{7} t_{9}$,
$g_{5}^{2} = g_{7} t_{10}$,
$[g_{5}, g_{2}] = g_{7} t_{11}$,
$g_{6}^{2} = g_{7} t_{12}$,
$[g_{6}, g_{1}] = g_{7} t_{13}$,
$[g_{6}, g_{2}] = g_{7} t_{14}$,
$g_{7}^{2} =  t_{15}$.
Carrying out consistency checks gives the following relations between the tails:
\[
\begin{aligned}
g_{4}(g_{2} g_{1}) & = (g_{4} g_{2}) g_{1}  & \Longrightarrow & & t_{9}^{-1}t_{13} & = 1 \\
g_{3}(g_{2} g_{1}) & = (g_{3} g_{2}) g_{1}  & \Longrightarrow & & t_{11}^{-1}t_{13} & = 1 \\
g_{6}^2 g_{2} & = g_{6} (g_{6} g_{2})& \Longrightarrow & & t_{14}^{2}t_{15} & = 1 \\
g_{6}^2 g_{1} & = g_{6} (g_{6} g_{1})& \Longrightarrow & & t_{13}^{2}t_{15} & = 1 \\
g_{4}^2 g_{2} & = g_{4} (g_{4} g_{2})& \Longrightarrow & & t_{8}^{2}t_{10}t_{12}t_{15}^{2} & = 1 \\
g_{3}^2 g_{2} & = g_{3} (g_{3} g_{2})& \Longrightarrow & & t_{6}^{2}t_{12}t_{14}^{-1} & = 1 \\
g_{3}^2 g_{1} & = g_{3} (g_{3} g_{1})& \Longrightarrow & & t_{5}^{2}t_{10}t_{13}^{-1} & = 1 \\
g_{2}^2 g_{1} & = g_{2} (g_{2} g_{1})& \Longrightarrow & & t_{3}^{2}t_{4}t_{6}t_{12}t_{15} & = 1 \\
g_{4} g_{2}^{2} & = (g_{4} g_{2}) g_{2}& \Longrightarrow & & t_{8}^{2}t_{10}t_{11}t_{12}t_{14}t_{15}^{3} & = 1 \\
g_{2} g_{1}^{2} & = (g_{2} g_{1}) g_{1}& \Longrightarrow & & t_{3}^{2}t_{4}t_{5}t_{8}t_{9}^{2}t_{10}t_{12}t_{15}^{3} & = 1 \\
\end{aligned}
\]
Scanning through the conjugacy class representatives of $G$ and the generators of their centralizers, we see that no new relations are imposed.
Collecting the coefficients of these relations into a matrix yields
\[
T = \bordermatrix{
{} & t_{1} & t_{2} & t_{3} & t_{4} & t_{5} & t_{6} & t_{7} & t_{8} & t_{9} & t_{10} & t_{11} & t_{12} & t_{13} & t_{14} & t_{15} \cr
{} &  &  & 2 & 1 &  & 1 &  &  &  &  &  & 1 &  &  & 1 \cr
{} &  &  &  &  & 1 & 1 &  & 1 &  & 1 &  & 1 &  & 1 & 2 \cr
{} &  &  &  &  &  & 2 &  &  &  &  &  & 1 &  & 1 & 1 \cr
{} &  &  &  &  &  &  &  & 2 &  & 1 &  & 1 &  &  & 2 \cr
{} &  &  &  &  &  &  &  &  & 1 &  &  &  &  & 1 & 1 \cr
{} &  &  &  &  &  &  &  &  &  &  & 1 &  &  & 1 & 1 \cr
{} &  &  &  &  &  &  &  &  &  &  &  &  & 1 & 1 & 1 \cr
{} &  &  &  &  &  &  &  &  &  &  &  &  &  & 2 & 1 \cr
}.
\]
It follows readily that the nontrivial elementary divisors of the Smith normal form of $T$ are all equal to $1$. The torsion subgroup of the group generated by the tails is thus trivial, thereby showing $\B_0(G) = 1$.


\item \label{number:97} 
Let the group $G$ be the representative of this family given by the presentation
\[
\begin{aligned}
\langle g_{1}, \,g_{2}, \,g_{3}, \,g_{4}, \,g_{5}, \,g_{6}, \,g_{7} & \mid & g_{1}^{2} &= g_{4}, \\ 
 & & g_{2}^{2} &= g_{6}, & [g_{2}, g_{1}]  &= g_{3}, \\ 
 & & g_{3}^{2} &= g_{6}g_{7}, & [g_{3}, g_{1}]  &= g_{5}, & [g_{3}, g_{2}]  &= g_{6}, \\ 
 & & g_{4}^{2} &= g_{5}g_{6}, & [g_{4}, g_{2}]  &= g_{5}g_{6}, \\ 
 & & g_{5}^{2} &= g_{7}, & [g_{5}, g_{1}]  &= g_{7}, & [g_{5}, g_{2}]  &= g_{7}, \\ 
 & & g_{6}^{2} &= 1, & [g_{6}, g_{1}]  &= g_{7}, \\ 
 & & g_{7}^{2} &= 1\rangle. \\ 
\end{aligned}
\]
We add 14 tails to the presentation as to form a quotient of the universal central extension of the system: 
$g_{1}^{2} = g_{4} t_{1}$,
$g_{2}^{2} = g_{6} t_{2}$,
$[g_{2}, g_{1}] = g_{3} t_{3}$,
$g_{3}^{2} = g_{6}g_{7} t_{4}$,
$[g_{3}, g_{1}] = g_{5} t_{5}$,
$[g_{3}, g_{2}] = g_{6} t_{6}$,
$g_{4}^{2} = g_{5}g_{6} t_{7}$,
$[g_{4}, g_{2}] = g_{5}g_{6} t_{8}$,
$g_{5}^{2} = g_{7} t_{9}$,
$[g_{5}, g_{1}] = g_{7} t_{10}$,
$[g_{5}, g_{2}] = g_{7} t_{11}$,
$g_{6}^{2} =  t_{12}$,
$[g_{6}, g_{1}] = g_{7} t_{13}$,
$g_{7}^{2} =  t_{14}$.
Carrying out consistency checks gives the following relations between the tails:
\[
\begin{aligned}
g_{4}(g_{2} g_{1}) & = (g_{4} g_{2}) g_{1}  & \Longrightarrow & & t_{10}t_{13}t_{14} & = 1 \\
g_{3}(g_{2} g_{1}) & = (g_{3} g_{2}) g_{1}  & \Longrightarrow & & t_{11}^{-1}t_{13} & = 1 \\
g_{6}^2 g_{1} & = g_{6} (g_{6} g_{1})& \Longrightarrow & & t_{13}^{2}t_{14} & = 1 \\
g_{4}^2 g_{2} & = g_{4} (g_{4} g_{2})& \Longrightarrow & & t_{8}^{2}t_{9}t_{11}^{-1}t_{12} & = 1 \\
g_{3}^2 g_{2} & = g_{3} (g_{3} g_{2})& \Longrightarrow & & t_{6}^{2}t_{12} & = 1 \\
g_{3}^2 g_{1} & = g_{3} (g_{3} g_{1})& \Longrightarrow & & t_{5}^{2}t_{9}t_{13}^{-1} & = 1 \\
g_{2}^2 g_{1} & = g_{2} (g_{2} g_{1})& \Longrightarrow & & t_{3}^{2}t_{4}t_{6}t_{12}t_{13}^{-1} & = 1 \\
g_{2} g_{1}^{2} & = (g_{2} g_{1}) g_{1}& \Longrightarrow & & t_{3}^{2}t_{4}t_{5}t_{8}t_{9}t_{12}t_{14} & = 1 \\
\end{aligned}
\]
Scanning through the conjugacy class representatives of $G$ and the generators of their centralizers, we see that no new relations are imposed.
Collecting the coefficients of these relations into a matrix yields
\[
T = \bordermatrix{
{} & t_{1} & t_{2} & t_{3} & t_{4} & t_{5} & t_{6} & t_{7} & t_{8} & t_{9} & t_{10} & t_{11} & t_{12} & t_{13} & t_{14} \cr
{} &  &  & 2 & 1 &  & 1 &  &  &  &  &  & 1 & 1 & 1 \cr
{} &  &  &  &  & 1 & 1 &  & 1 & 1 &  &  & 1 & 1 & 1 \cr
{} &  &  &  &  &  & 2 &  &  &  &  &  & 1 &  &  \cr
{} &  &  &  &  &  &  &  & 2 & 1 &  &  & 1 & 1 & 1 \cr
{} &  &  &  &  &  &  &  &  &  & 1 &  &  & 1 & 1 \cr
{} &  &  &  &  &  &  &  &  &  &  & 1 &  & 1 & 1 \cr
{} &  &  &  &  &  &  &  &  &  &  &  &  & 2 & 1 \cr
}.
\]
It follows readily that the nontrivial elementary divisors of the Smith normal form of $T$ are all equal to $1$. The torsion subgroup of the group generated by the tails is thus trivial, thereby showing $\B_0(G) = 1$.


\item \label{number:98} 
Let the group $G$ be the representative of this family given by the presentation
\[
\begin{aligned}
\langle g_{1}, \,g_{2}, \,g_{3}, \,g_{4}, \,g_{5}, \,g_{6}, \,g_{7} & \mid & g_{1}^{2} &= g_{4}, \\ 
 & & g_{2}^{2} &= 1, & [g_{2}, g_{1}]  &= g_{3}, \\ 
 & & g_{3}^{2} &= g_{6}, & [g_{3}, g_{1}]  &= g_{5}, & [g_{3}, g_{2}]  &= g_{6}, \\ 
 & & g_{4}^{2} &= g_{5}g_{6}, & [g_{4}, g_{2}]  &= g_{5}g_{6}g_{7}, \\ 
 & & g_{5}^{2} &= g_{7}, & [g_{5}, g_{1}]  &= g_{7}, & [g_{5}, g_{2}]  &= g_{7}, \\ 
 & & g_{6}^{2} &= 1, & [g_{6}, g_{1}]  &= g_{7}, \\ 
 & & g_{7}^{2} &= 1\rangle. \\ 
\end{aligned}
\]
We add 14 tails to the presentation as to form a quotient of the universal central extension of the system: 
$g_{1}^{2} = g_{4} t_{1}$,
$g_{2}^{2} =  t_{2}$,
$[g_{2}, g_{1}] = g_{3} t_{3}$,
$g_{3}^{2} = g_{6} t_{4}$,
$[g_{3}, g_{1}] = g_{5} t_{5}$,
$[g_{3}, g_{2}] = g_{6} t_{6}$,
$g_{4}^{2} = g_{5}g_{6} t_{7}$,
$[g_{4}, g_{2}] = g_{5}g_{6}g_{7} t_{8}$,
$g_{5}^{2} = g_{7} t_{9}$,
$[g_{5}, g_{1}] = g_{7} t_{10}$,
$[g_{5}, g_{2}] = g_{7} t_{11}$,
$g_{6}^{2} =  t_{12}$,
$[g_{6}, g_{1}] = g_{7} t_{13}$,
$g_{7}^{2} =  t_{14}$.
Carrying out consistency checks gives the following relations between the tails:
\[
\begin{aligned}
g_{4}(g_{2} g_{1}) & = (g_{4} g_{2}) g_{1}  & \Longrightarrow & & t_{10}t_{13}t_{14} & = 1 \\
g_{3}(g_{2} g_{1}) & = (g_{3} g_{2}) g_{1}  & \Longrightarrow & & t_{11}^{-1}t_{13} & = 1 \\
g_{6}^2 g_{1} & = g_{6} (g_{6} g_{1})& \Longrightarrow & & t_{13}^{2}t_{14} & = 1 \\
g_{4}^2 g_{2} & = g_{4} (g_{4} g_{2})& \Longrightarrow & & t_{8}^{2}t_{9}t_{11}^{-1}t_{12}t_{14} & = 1 \\
g_{3}^2 g_{2} & = g_{3} (g_{3} g_{2})& \Longrightarrow & & t_{6}^{2}t_{12} & = 1 \\
g_{3}^2 g_{1} & = g_{3} (g_{3} g_{1})& \Longrightarrow & & t_{5}^{2}t_{9}t_{13}^{-1} & = 1 \\
g_{2}^2 g_{1} & = g_{2} (g_{2} g_{1})& \Longrightarrow & & t_{3}^{2}t_{4}t_{6}t_{12} & = 1 \\
g_{2} g_{1}^{2} & = (g_{2} g_{1}) g_{1}& \Longrightarrow & & t_{3}^{2}t_{4}t_{5}t_{8}t_{9}t_{12}t_{14} & = 1 \\
\end{aligned}
\]
Scanning through the conjugacy class representatives of $G$ and the generators of their centralizers, we see that no new relations are imposed.
Collecting the coefficients of these relations into a matrix yields
\[
T = \bordermatrix{
{} & t_{1} & t_{2} & t_{3} & t_{4} & t_{5} & t_{6} & t_{7} & t_{8} & t_{9} & t_{10} & t_{11} & t_{12} & t_{13} & t_{14} \cr
{} &  &  & 2 & 1 &  & 1 &  &  &  &  &  & 1 &  &  \cr
{} &  &  &  &  & 1 & 1 &  & 1 & 1 &  &  & 1 &  & 1 \cr
{} &  &  &  &  &  & 2 &  &  &  &  &  & 1 &  &  \cr
{} &  &  &  &  &  &  &  & 2 & 1 &  &  & 1 & 1 & 2 \cr
{} &  &  &  &  &  &  &  &  &  & 1 &  &  & 1 & 1 \cr
{} &  &  &  &  &  &  &  &  &  &  & 1 &  & 1 & 1 \cr
{} &  &  &  &  &  &  &  &  &  &  &  &  & 2 & 1 \cr
}.
\]
It follows readily that the nontrivial elementary divisors of the Smith normal form of $T$ are all equal to $1$. The torsion subgroup of the group generated by the tails is thus trivial, thereby showing $\B_0(G) = 1$.


\item \label{number:99} 
Let the group $G$ be the representative of this family given by the presentation
\[
\begin{aligned}
\langle g_{1}, \,g_{2}, \,g_{3}, \,g_{4}, \,g_{5}, \,g_{6}, \,g_{7} & \mid & g_{1}^{2} &= g_{4}, \\ 
 & & g_{2}^{2} &= 1, & [g_{2}, g_{1}]  &= g_{3}, \\ 
 & & g_{3}^{2} &= g_{6}, & [g_{3}, g_{1}]  &= g_{5}, & [g_{3}, g_{2}]  &= g_{6}, \\ 
 & & g_{4}^{2} &= g_{5}, & [g_{4}, g_{2}]  &= g_{5}g_{6}g_{7}, & [g_{4}, g_{3}]  &= g_{7}, \\ 
 & & g_{5}^{2} &= g_{7}, & [g_{5}, g_{2}]  &= g_{7}, \\ 
 & & g_{6}^{2} &= 1, & [g_{6}, g_{1}]  &= g_{7}, \\ 
 & & g_{7}^{2} &= 1\rangle. \\ 
\end{aligned}
\]
We add 14 tails to the presentation as to form a quotient of the universal central extension of the system: 
$g_{1}^{2} = g_{4} t_{1}$,
$g_{2}^{2} =  t_{2}$,
$[g_{2}, g_{1}] = g_{3} t_{3}$,
$g_{3}^{2} = g_{6} t_{4}$,
$[g_{3}, g_{1}] = g_{5} t_{5}$,
$[g_{3}, g_{2}] = g_{6} t_{6}$,
$g_{4}^{2} = g_{5} t_{7}$,
$[g_{4}, g_{2}] = g_{5}g_{6}g_{7} t_{8}$,
$[g_{4}, g_{3}] = g_{7} t_{9}$,
$g_{5}^{2} = g_{7} t_{10}$,
$[g_{5}, g_{2}] = g_{7} t_{11}$,
$g_{6}^{2} =  t_{12}$,
$[g_{6}, g_{1}] = g_{7} t_{13}$,
$g_{7}^{2} =  t_{14}$.
Carrying out consistency checks gives the following relations between the tails:
\[
\begin{aligned}
g_{4}(g_{2} g_{1}) & = (g_{4} g_{2}) g_{1}  & \Longrightarrow & & t_{9}^{-1}t_{13} & = 1 \\
g_{3}(g_{2} g_{1}) & = (g_{3} g_{2}) g_{1}  & \Longrightarrow & & t_{11}^{-1}t_{13} & = 1 \\
g_{6}^2 g_{1} & = g_{6} (g_{6} g_{1})& \Longrightarrow & & t_{13}^{2}t_{14} & = 1 \\
g_{4}^2 g_{2} & = g_{4} (g_{4} g_{2})& \Longrightarrow & & t_{8}^{2}t_{10}t_{11}^{-1}t_{12}t_{14} & = 1 \\
g_{3}^2 g_{2} & = g_{3} (g_{3} g_{2})& \Longrightarrow & & t_{6}^{2}t_{12} & = 1 \\
g_{3}^2 g_{1} & = g_{3} (g_{3} g_{1})& \Longrightarrow & & t_{5}^{2}t_{10}t_{13}^{-1} & = 1 \\
g_{2}^2 g_{1} & = g_{2} (g_{2} g_{1})& \Longrightarrow & & t_{3}^{2}t_{4}t_{6}t_{12} & = 1 \\
g_{2} g_{1}^{2} & = (g_{2} g_{1}) g_{1}& \Longrightarrow & & t_{3}^{2}t_{4}t_{5}t_{8}t_{9}^{2}t_{10}t_{12}t_{14}^{2} & = 1 \\
\end{aligned}
\]
Scanning through the conjugacy class representatives of $G$ and the generators of their centralizers, we see that no new relations are imposed.
Collecting the coefficients of these relations into a matrix yields
\[
T = \bordermatrix{
{} & t_{1} & t_{2} & t_{3} & t_{4} & t_{5} & t_{6} & t_{7} & t_{8} & t_{9} & t_{10} & t_{11} & t_{12} & t_{13} & t_{14} \cr
{} &  &  & 2 & 1 &  & 1 &  &  &  &  &  & 1 &  &  \cr
{} &  &  &  &  & 1 & 1 &  & 1 &  & 1 &  & 1 &  & 1 \cr
{} &  &  &  &  &  & 2 &  &  &  &  &  & 1 &  &  \cr
{} &  &  &  &  &  &  &  & 2 &  & 1 &  & 1 & 1 & 2 \cr
{} &  &  &  &  &  &  &  &  & 1 &  &  &  & 1 & 1 \cr
{} &  &  &  &  &  &  &  &  &  &  & 1 &  & 1 & 1 \cr
{} &  &  &  &  &  &  &  &  &  &  &  &  & 2 & 1 \cr
}.
\]
It follows readily that the nontrivial elementary divisors of the Smith normal form of $T$ are all equal to $1$. The torsion subgroup of the group generated by the tails is thus trivial, thereby showing $\B_0(G) = 1$.


\item \label{number:100} 
Let the group $G$ be the representative of this family given by the presentation
\[
\begin{aligned}
\langle g_{1}, \,g_{2}, \,g_{3}, \,g_{4}, \,g_{5}, \,g_{6}, \,g_{7} & \mid & g_{1}^{2} &= 1, \\ 
 & & g_{2}^{2} &= 1, & [g_{2}, g_{1}]  &= g_{4}, \\ 
 & & g_{3}^{2} &= 1, & [g_{3}, g_{1}]  &= g_{5}, \\ 
 & & g_{4}^{2} &= g_{6}g_{7}, & [g_{4}, g_{1}]  &= g_{6}, & [g_{4}, g_{2}]  &= g_{6}, \\ 
 & & g_{5}^{2} &= g_{7}, & [g_{5}, g_{1}]  &= g_{7}, & [g_{5}, g_{3}]  &= g_{7}, \\ 
 & & g_{6}^{2} &= g_{7}, & [g_{6}, g_{1}]  &= g_{7}, & [g_{6}, g_{2}]  &= g_{7}, \\ 
 & & g_{7}^{2} &= 1\rangle. \\ 
\end{aligned}
\]
We add 15 tails to the presentation as to form a quotient of the universal central extension of the system: 
$g_{1}^{2} =  t_{1}$,
$g_{2}^{2} =  t_{2}$,
$[g_{2}, g_{1}] = g_{4} t_{3}$,
$g_{3}^{2} =  t_{4}$,
$[g_{3}, g_{1}] = g_{5} t_{5}$,
$g_{4}^{2} = g_{6}g_{7} t_{6}$,
$[g_{4}, g_{1}] = g_{6} t_{7}$,
$[g_{4}, g_{2}] = g_{6} t_{8}$,
$g_{5}^{2} = g_{7} t_{9}$,
$[g_{5}, g_{1}] = g_{7} t_{10}$,
$[g_{5}, g_{3}] = g_{7} t_{11}$,
$g_{6}^{2} = g_{7} t_{12}$,
$[g_{6}, g_{1}] = g_{7} t_{13}$,
$[g_{6}, g_{2}] = g_{7} t_{14}$,
$g_{7}^{2} =  t_{15}$.
Carrying out consistency checks gives the following relations between the tails:
\[
\begin{aligned}
g_{4}(g_{2} g_{1}) & = (g_{4} g_{2}) g_{1}  & \Longrightarrow & & t_{13}t_{14}^{-1} & = 1 \\
g_{6}^2 g_{2} & = g_{6} (g_{6} g_{2})& \Longrightarrow & & t_{14}^{2}t_{15} & = 1 \\
g_{5}^2 g_{3} & = g_{5} (g_{5} g_{3})& \Longrightarrow & & t_{11}^{2}t_{15} & = 1 \\
g_{5}^2 g_{1} & = g_{5} (g_{5} g_{1})& \Longrightarrow & & t_{10}^{2}t_{15} & = 1 \\
g_{4}^2 g_{2} & = g_{4} (g_{4} g_{2})& \Longrightarrow & & t_{8}^{2}t_{12}t_{14}^{-1} & = 1 \\
g_{4}^2 g_{1} & = g_{4} (g_{4} g_{1})& \Longrightarrow & & t_{7}^{2}t_{12}t_{13}^{-1} & = 1 \\
g_{3}^2 g_{1} & = g_{3} (g_{3} g_{1})& \Longrightarrow & & t_{5}^{2}t_{9}t_{11}t_{15} & = 1 \\
g_{2}^2 g_{1} & = g_{2} (g_{2} g_{1})& \Longrightarrow & & t_{3}^{2}t_{6}t_{8}t_{12}t_{15} & = 1 \\
g_{3} g_{1}^{2} & = (g_{3} g_{1}) g_{1}& \Longrightarrow & & t_{5}^{2}t_{9}t_{10}t_{15} & = 1 \\
g_{2} g_{1}^{2} & = (g_{2} g_{1}) g_{1}& \Longrightarrow & & t_{3}^{2}t_{6}t_{7}t_{12}t_{15} & = 1 \\
\end{aligned}
\]
Scanning through the conjugacy class representatives of $G$ and the generators of their centralizers, we obtain the following relations induced on the tails:
\[
\begin{aligned}
{[g_{5} g_{6} g_{7} , \, g_{1} ]}_G & = 1 & \Longrightarrow & & t_{10}t_{13}t_{15} & = 1 \\
\end{aligned}
\]
Collecting the coefficients of these relations into a matrix yields
\[
T = \bordermatrix{
{} & t_{1} & t_{2} & t_{3} & t_{4} & t_{5} & t_{6} & t_{7} & t_{8} & t_{9} & t_{10} & t_{11} & t_{12} & t_{13} & t_{14} & t_{15} \cr
{} &  &  & 2 &  &  & 1 &  & 1 &  &  &  & 1 &  &  & 1 \cr
{} &  &  &  &  & 2 &  &  &  & 1 &  &  &  &  & 1 & 1 \cr
{} &  &  &  &  &  &  & 1 & 1 &  &  &  & 1 &  & 1 & 1 \cr
{} &  &  &  &  &  &  &  & 2 &  &  &  & 1 &  & 1 & 1 \cr
{} &  &  &  &  &  &  &  &  &  & 1 &  &  &  & 1 & 1 \cr
{} &  &  &  &  &  &  &  &  &  &  & 1 &  &  & 1 & 1 \cr
{} &  &  &  &  &  &  &  &  &  &  &  &  & 1 & 1 & 1 \cr
{} &  &  &  &  &  &  &  &  &  &  &  &  &  & 2 & 1 \cr
}.
\]
It follows readily that the nontrivial elementary divisors of the Smith normal form of $T$ are all equal to $1$. The torsion subgroup of the group generated by the tails is thus trivial, thereby showing $\B_0(G) = 1$.


\item \label{number:101} 
Let the group $G$ be the representative of this family given by the presentation
\[
\begin{aligned}
\langle g_{1}, \,g_{2}, \,g_{3}, \,g_{4}, \,g_{5}, \,g_{6}, \,g_{7} & \mid & g_{1}^{2} &= 1, \\ 
 & & g_{2}^{2} &= 1, & [g_{2}, g_{1}]  &= g_{4}, \\ 
 & & g_{3}^{2} &= 1, & [g_{3}, g_{1}]  &= g_{5}, \\ 
 & & g_{4}^{2} &= g_{6}g_{7}, & [g_{4}, g_{1}]  &= g_{6}, & [g_{4}, g_{2}]  &= g_{6}, & [g_{4}, g_{3}]  &= g_{7}, \\ 
 & & g_{5}^{2} &= 1, & [g_{5}, g_{2}]  &= g_{7}, \\ 
 & & g_{6}^{2} &= g_{7}, & [g_{6}, g_{1}]  &= g_{7}, & [g_{6}, g_{2}]  &= g_{7}, \\ 
 & & g_{7}^{2} &= 1\rangle. \\ 
\end{aligned}
\]
We add 15 tails to the presentation as to form a quotient of the universal central extension of the system: 
$g_{1}^{2} =  t_{1}$,
$g_{2}^{2} =  t_{2}$,
$[g_{2}, g_{1}] = g_{4} t_{3}$,
$g_{3}^{2} =  t_{4}$,
$[g_{3}, g_{1}] = g_{5} t_{5}$,
$g_{4}^{2} = g_{6}g_{7} t_{6}$,
$[g_{4}, g_{1}] = g_{6} t_{7}$,
$[g_{4}, g_{2}] = g_{6} t_{8}$,
$[g_{4}, g_{3}] = g_{7} t_{9}$,
$g_{5}^{2} =  t_{10}$,
$[g_{5}, g_{2}] = g_{7} t_{11}$,
$g_{6}^{2} = g_{7} t_{12}$,
$[g_{6}, g_{1}] = g_{7} t_{13}$,
$[g_{6}, g_{2}] = g_{7} t_{14}$,
$g_{7}^{2} =  t_{15}$.
Carrying out consistency checks gives the following relations between the tails:
\[
\begin{aligned}
g_{4}(g_{2} g_{1}) & = (g_{4} g_{2}) g_{1}  & \Longrightarrow & & t_{13}t_{14}^{-1} & = 1 \\
g_{3}(g_{2} g_{1}) & = (g_{3} g_{2}) g_{1}  & \Longrightarrow & & t_{9}t_{11}^{-1} & = 1 \\
g_{6}^2 g_{2} & = g_{6} (g_{6} g_{2})& \Longrightarrow & & t_{14}^{2}t_{15} & = 1 \\
g_{5}^2 g_{2} & = g_{5} (g_{5} g_{2})& \Longrightarrow & & t_{11}^{2}t_{15} & = 1 \\
g_{4}^2 g_{2} & = g_{4} (g_{4} g_{2})& \Longrightarrow & & t_{8}^{2}t_{12}t_{14}^{-1} & = 1 \\
g_{4}^2 g_{1} & = g_{4} (g_{4} g_{1})& \Longrightarrow & & t_{7}^{2}t_{12}t_{13}^{-1} & = 1 \\
g_{3}^2 g_{1} & = g_{3} (g_{3} g_{1})& \Longrightarrow & & t_{5}^{2}t_{10} & = 1 \\
g_{2}^2 g_{1} & = g_{2} (g_{2} g_{1})& \Longrightarrow & & t_{3}^{2}t_{6}t_{8}t_{12}t_{15} & = 1 \\
g_{2} g_{1}^{2} & = (g_{2} g_{1}) g_{1}& \Longrightarrow & & t_{3}^{2}t_{6}t_{7}t_{12}t_{15} & = 1 \\
\end{aligned}
\]
Scanning through the conjugacy class representatives of $G$ and the generators of their centralizers, we obtain the following relations induced on the tails:
\[
\begin{aligned}
{[g_{5} g_{6} g_{7} , \, g_{2} g_{4} g_{6} ]}_G & = 1 & \Longrightarrow & & t_{11}t_{14}t_{15} & = 1 \\
\end{aligned}
\]
Collecting the coefficients of these relations into a matrix yields
\[
T = \bordermatrix{
{} & t_{1} & t_{2} & t_{3} & t_{4} & t_{5} & t_{6} & t_{7} & t_{8} & t_{9} & t_{10} & t_{11} & t_{12} & t_{13} & t_{14} & t_{15} \cr
{} &  &  & 2 &  &  & 1 &  & 1 &  &  &  & 1 &  &  & 1 \cr
{} &  &  &  &  & 2 &  &  &  &  & 1 &  &  &  &  &  \cr
{} &  &  &  &  &  &  & 1 & 1 &  &  &  & 1 &  & 1 & 1 \cr
{} &  &  &  &  &  &  &  & 2 &  &  &  & 1 &  & 1 & 1 \cr
{} &  &  &  &  &  &  &  &  & 1 &  &  &  &  & 1 & 1 \cr
{} &  &  &  &  &  &  &  &  &  &  & 1 &  &  & 1 & 1 \cr
{} &  &  &  &  &  &  &  &  &  &  &  &  & 1 & 1 & 1 \cr
{} &  &  &  &  &  &  &  &  &  &  &  &  &  & 2 & 1 \cr
}.
\]
It follows readily that the nontrivial elementary divisors of the Smith normal form of $T$ are all equal to $1$. The torsion subgroup of the group generated by the tails is thus trivial, thereby showing $\B_0(G) = 1$.


\item \label{number:102} 
Let the group $G$ be the representative of this family given by the presentation
\[
\begin{aligned}
\langle g_{1}, \,g_{2}, \,g_{3}, \,g_{4}, \,g_{5}, \,g_{6}, \,g_{7} & \mid & g_{1}^{2} &= 1, \\ 
 & & g_{2}^{2} &= 1, & [g_{2}, g_{1}]  &= g_{4}, \\ 
 & & g_{3}^{2} &= g_{4}g_{6}, & [g_{3}, g_{1}]  &= g_{5}, \\ 
 & & g_{4}^{2} &= 1, & [g_{4}, g_{3}]  &= g_{7}, \\ 
 & & g_{5}^{2} &= g_{6}g_{7}, & [g_{5}, g_{1}]  &= g_{6}, & [g_{5}, g_{2}]  &= g_{7}, & [g_{5}, g_{3}]  &= g_{6}g_{7}, \\ 
 & & g_{6}^{2} &= g_{7}, & [g_{6}, g_{1}]  &= g_{7}, & [g_{6}, g_{3}]  &= g_{7}, \\ 
 & & g_{7}^{2} &= 1\rangle. \\ 
\end{aligned}
\]
We add 15 tails to the presentation as to form a quotient of the universal central extension of the system: 
$g_{1}^{2} =  t_{1}$,
$g_{2}^{2} =  t_{2}$,
$[g_{2}, g_{1}] = g_{4} t_{3}$,
$g_{3}^{2} = g_{4}g_{6} t_{4}$,
$[g_{3}, g_{1}] = g_{5} t_{5}$,
$g_{4}^{2} =  t_{6}$,
$[g_{4}, g_{3}] = g_{7} t_{7}$,
$g_{5}^{2} = g_{6}g_{7} t_{8}$,
$[g_{5}, g_{1}] = g_{6} t_{9}$,
$[g_{5}, g_{2}] = g_{7} t_{10}$,
$[g_{5}, g_{3}] = g_{6}g_{7} t_{11}$,
$g_{6}^{2} = g_{7} t_{12}$,
$[g_{6}, g_{1}] = g_{7} t_{13}$,
$[g_{6}, g_{3}] = g_{7} t_{14}$,
$g_{7}^{2} =  t_{15}$.
Carrying out consistency checks gives the following relations between the tails:
\[
\begin{aligned}
g_{5}(g_{3} g_{1}) & = (g_{5} g_{3}) g_{1}  & \Longrightarrow & & t_{13}t_{14}^{-1} & = 1 \\
g_{3}(g_{2} g_{1}) & = (g_{3} g_{2}) g_{1}  & \Longrightarrow & & t_{7}t_{10}^{-1} & = 1 \\
g_{6}^2 g_{3} & = g_{6} (g_{6} g_{3})& \Longrightarrow & & t_{14}^{2}t_{15} & = 1 \\
g_{5}^2 g_{3} & = g_{5} (g_{5} g_{3})& \Longrightarrow & & t_{11}^{2}t_{12}t_{14}^{-1}t_{15} & = 1 \\
g_{5}^2 g_{2} & = g_{5} (g_{5} g_{2})& \Longrightarrow & & t_{10}^{2}t_{15} & = 1 \\
g_{5}^2 g_{1} & = g_{5} (g_{5} g_{1})& \Longrightarrow & & t_{9}^{2}t_{12}t_{13}^{-1} & = 1 \\
g_{3}^2 g_{1} & = g_{3} (g_{3} g_{1})& \Longrightarrow & & t_{5}^{2}t_{8}t_{11}t_{12}t_{13}^{-1}t_{15} & = 1 \\
g_{2}^2 g_{1} & = g_{2} (g_{2} g_{1})& \Longrightarrow & & t_{3}^{2}t_{6} & = 1 \\
g_{3} g_{1}^{2} & = (g_{3} g_{1}) g_{1}& \Longrightarrow & & t_{5}^{2}t_{8}t_{9}t_{12}t_{15} & = 1 \\
g_{3}^{2} g_{3} & = g_{3} g_{3}^{2}& \Longrightarrow & & t_{7}^{-1}t_{14}^{-1}t_{15}^{-1} & = 1 \\
\end{aligned}
\]
Scanning through the conjugacy class representatives of $G$ and the generators of their centralizers, we see that no new relations are imposed.
Collecting the coefficients of these relations into a matrix yields
\[
T = \bordermatrix{
{} & t_{1} & t_{2} & t_{3} & t_{4} & t_{5} & t_{6} & t_{7} & t_{8} & t_{9} & t_{10} & t_{11} & t_{12} & t_{13} & t_{14} & t_{15} \cr
{} &  &  & 2 &  &  & 1 &  &  &  &  &  &  &  &  &  \cr
{} &  &  &  &  & 2 &  &  & 1 &  &  & 1 & 1 &  & 1 & 2 \cr
{} &  &  &  &  &  &  & 1 &  &  &  &  &  &  & 1 & 1 \cr
{} &  &  &  &  &  &  &  &  & 1 &  & 1 & 1 &  &  & 1 \cr
{} &  &  &  &  &  &  &  &  &  & 1 &  &  &  & 1 & 1 \cr
{} &  &  &  &  &  &  &  &  &  &  & 2 & 1 &  & 1 & 2 \cr
{} &  &  &  &  &  &  &  &  &  &  &  &  & 1 & 1 & 1 \cr
{} &  &  &  &  &  &  &  &  &  &  &  &  &  & 2 & 1 \cr
}.
\]
It follows readily that the nontrivial elementary divisors of the Smith normal form of $T$ are all equal to $1$. The torsion subgroup of the group generated by the tails is thus trivial, thereby showing $\B_0(G) = 1$.


\item \label{number:103} 
Let the group $G$ be the representative of this family given by the presentation
\[
\begin{aligned}
\langle g_{1}, \,g_{2}, \,g_{3}, \,g_{4}, \,g_{5}, \,g_{6}, \,g_{7} & \mid & g_{1}^{2} &= 1, \\ 
 & & g_{2}^{2} &= g_{4}, & [g_{2}, g_{1}]  &= g_{4}, \\ 
 & & g_{3}^{2} &= 1, & [g_{3}, g_{1}]  &= g_{5}, \\ 
 & & g_{4}^{2} &= g_{7}, & [g_{4}, g_{1}]  &= g_{7}, \\ 
 & & g_{5}^{2} &= g_{6}g_{7}, & [g_{5}, g_{1}]  &= g_{6}, & [g_{5}, g_{3}]  &= g_{6}, \\ 
 & & g_{6}^{2} &= g_{7}, & [g_{6}, g_{1}]  &= g_{7}, & [g_{6}, g_{3}]  &= g_{7}, \\ 
 & & g_{7}^{2} &= 1\rangle. \\ 
\end{aligned}
\]
We add 14 tails to the presentation as to form a quotient of the universal central extension of the system: 
$g_{1}^{2} =  t_{1}$,
$g_{2}^{2} = g_{4} t_{2}$,
$[g_{2}, g_{1}] = g_{4} t_{3}$,
$g_{3}^{2} =  t_{4}$,
$[g_{3}, g_{1}] = g_{5} t_{5}$,
$g_{4}^{2} = g_{7} t_{6}$,
$[g_{4}, g_{1}] = g_{7} t_{7}$,
$g_{5}^{2} = g_{6}g_{7} t_{8}$,
$[g_{5}, g_{1}] = g_{6} t_{9}$,
$[g_{5}, g_{3}] = g_{6} t_{10}$,
$g_{6}^{2} = g_{7} t_{11}$,
$[g_{6}, g_{1}] = g_{7} t_{12}$,
$[g_{6}, g_{3}] = g_{7} t_{13}$,
$g_{7}^{2} =  t_{14}$.
Carrying out consistency checks gives the following relations between the tails:
\[
\begin{aligned}
g_{5}(g_{3} g_{1}) & = (g_{5} g_{3}) g_{1}  & \Longrightarrow & & t_{12}t_{13}^{-1} & = 1 \\
g_{6}^2 g_{3} & = g_{6} (g_{6} g_{3})& \Longrightarrow & & t_{13}^{2}t_{14} & = 1 \\
g_{5}^2 g_{3} & = g_{5} (g_{5} g_{3})& \Longrightarrow & & t_{10}^{2}t_{11}t_{13}^{-1} & = 1 \\
g_{5}^2 g_{1} & = g_{5} (g_{5} g_{1})& \Longrightarrow & & t_{9}^{2}t_{11}t_{12}^{-1} & = 1 \\
g_{4}^2 g_{1} & = g_{4} (g_{4} g_{1})& \Longrightarrow & & t_{7}^{2}t_{14} & = 1 \\
g_{3}^2 g_{1} & = g_{3} (g_{3} g_{1})& \Longrightarrow & & t_{5}^{2}t_{8}t_{10}t_{11}t_{14} & = 1 \\
g_{2}^2 g_{1} & = g_{2} (g_{2} g_{1})& \Longrightarrow & & t_{3}^{2}t_{6}t_{7}^{-1} & = 1 \\
g_{3} g_{1}^{2} & = (g_{3} g_{1}) g_{1}& \Longrightarrow & & t_{5}^{2}t_{8}t_{9}t_{11}t_{14} & = 1 \\
\end{aligned}
\]
Scanning through the conjugacy class representatives of $G$ and the generators of their centralizers, we obtain the following relations induced on the tails:
\[
\begin{aligned}
{[g_{4} g_{6} g_{7} , \, g_{1} ]}_G & = 1 & \Longrightarrow & & t_{7}t_{12}t_{14} & = 1 \\
\end{aligned}
\]
Collecting the coefficients of these relations into a matrix yields
\[
T = \bordermatrix{
{} & t_{1} & t_{2} & t_{3} & t_{4} & t_{5} & t_{6} & t_{7} & t_{8} & t_{9} & t_{10} & t_{11} & t_{12} & t_{13} & t_{14} \cr
{} &  &  & 2 &  &  & 1 &  &  &  &  &  &  & 1 & 1 \cr
{} &  &  &  &  & 2 &  &  & 1 &  & 1 & 1 &  &  & 1 \cr
{} &  &  &  &  &  &  & 1 &  &  &  &  &  & 1 & 1 \cr
{} &  &  &  &  &  &  &  &  & 1 & 1 & 1 &  & 1 & 1 \cr
{} &  &  &  &  &  &  &  &  &  & 2 & 1 &  & 1 & 1 \cr
{} &  &  &  &  &  &  &  &  &  &  &  & 1 & 1 & 1 \cr
{} &  &  &  &  &  &  &  &  &  &  &  &  & 2 & 1 \cr
}.
\]
It follows readily that the nontrivial elementary divisors of the Smith normal form of $T$ are all equal to $1$. The torsion subgroup of the group generated by the tails is thus trivial, thereby showing $\B_0(G) = 1$.


\item \label{number:104} 
Let the group $G$ be the representative of this family given by the presentation
\[
\begin{aligned}
\langle g_{1}, \,g_{2}, \,g_{3}, \,g_{4}, \,g_{5}, \,g_{6}, \,g_{7} & \mid & g_{1}^{2} &= g_{4}g_{6}, \\ 
 & & g_{2}^{2} &= g_{4}, & [g_{2}, g_{1}]  &= g_{4}, \\ 
 & & g_{3}^{2} &= 1, & [g_{3}, g_{1}]  &= g_{5}, \\ 
 & & g_{4}^{2} &= g_{7}, & [g_{4}, g_{1}]  &= g_{7}, \\ 
 & & g_{5}^{2} &= g_{6}, & [g_{5}, g_{1}]  &= g_{6}, & [g_{5}, g_{3}]  &= g_{6}g_{7}, \\ 
 & & g_{6}^{2} &= g_{7}, & [g_{6}, g_{1}]  &= g_{7}, & [g_{6}, g_{3}]  &= g_{7}, \\ 
 & & g_{7}^{2} &= 1\rangle. \\ 
\end{aligned}
\]
We add 14 tails to the presentation as to form a quotient of the universal central extension of the system: 
$g_{1}^{2} = g_{4}g_{6} t_{1}$,
$g_{2}^{2} = g_{4} t_{2}$,
$[g_{2}, g_{1}] = g_{4} t_{3}$,
$g_{3}^{2} =  t_{4}$,
$[g_{3}, g_{1}] = g_{5} t_{5}$,
$g_{4}^{2} = g_{7} t_{6}$,
$[g_{4}, g_{1}] = g_{7} t_{7}$,
$g_{5}^{2} = g_{6} t_{8}$,
$[g_{5}, g_{1}] = g_{6} t_{9}$,
$[g_{5}, g_{3}] = g_{6}g_{7} t_{10}$,
$g_{6}^{2} = g_{7} t_{11}$,
$[g_{6}, g_{1}] = g_{7} t_{12}$,
$[g_{6}, g_{3}] = g_{7} t_{13}$,
$g_{7}^{2} =  t_{14}$.
Carrying out consistency checks gives the following relations between the tails:
\[
\begin{aligned}
g_{5}(g_{3} g_{1}) & = (g_{5} g_{3}) g_{1}  & \Longrightarrow & & t_{12}t_{13}^{-1} & = 1 \\
g_{6}^2 g_{3} & = g_{6} (g_{6} g_{3})& \Longrightarrow & & t_{13}^{2}t_{14} & = 1 \\
g_{5}^2 g_{3} & = g_{5} (g_{5} g_{3})& \Longrightarrow & & t_{10}^{2}t_{11}t_{13}^{-1}t_{14} & = 1 \\
g_{5}^2 g_{1} & = g_{5} (g_{5} g_{1})& \Longrightarrow & & t_{9}^{2}t_{11}t_{12}^{-1} & = 1 \\
g_{4}^2 g_{1} & = g_{4} (g_{4} g_{1})& \Longrightarrow & & t_{7}^{2}t_{14} & = 1 \\
g_{3}^2 g_{1} & = g_{3} (g_{3} g_{1})& \Longrightarrow & & t_{5}^{2}t_{8}t_{10}t_{11}t_{14} & = 1 \\
g_{2}^2 g_{1} & = g_{2} (g_{2} g_{1})& \Longrightarrow & & t_{3}^{2}t_{6}t_{7}^{-1} & = 1 \\
g_{3} g_{1}^{2} & = (g_{3} g_{1}) g_{1}& \Longrightarrow & & t_{5}^{2}t_{8}t_{9}t_{11}t_{13}t_{14} & = 1 \\
g_{1}^{2} g_{1} & = g_{1} g_{1}^{2}& \Longrightarrow & & t_{7}^{-1}t_{12}^{-1}t_{14}^{-1} & = 1 \\
\end{aligned}
\]
Scanning through the conjugacy class representatives of $G$ and the generators of their centralizers, we see that no new relations are imposed.
Collecting the coefficients of these relations into a matrix yields
\[
T = \bordermatrix{
{} & t_{1} & t_{2} & t_{3} & t_{4} & t_{5} & t_{6} & t_{7} & t_{8} & t_{9} & t_{10} & t_{11} & t_{12} & t_{13} & t_{14} \cr
{} &  &  & 2 &  &  & 1 &  &  &  &  &  &  & 1 & 1 \cr
{} &  &  &  &  & 2 &  &  & 1 &  & 1 & 1 &  &  & 1 \cr
{} &  &  &  &  &  &  & 1 &  &  &  &  &  & 1 & 1 \cr
{} &  &  &  &  &  &  &  &  & 1 & 1 & 1 &  &  & 1 \cr
{} &  &  &  &  &  &  &  &  &  & 2 & 1 &  & 1 & 2 \cr
{} &  &  &  &  &  &  &  &  &  &  &  & 1 & 1 & 1 \cr
{} &  &  &  &  &  &  &  &  &  &  &  &  & 2 & 1 \cr
}.
\]
It follows readily that the nontrivial elementary divisors of the Smith normal form of $T$ are all equal to $1$. The torsion subgroup of the group generated by the tails is thus trivial, thereby showing $\B_0(G) = 1$.


\item \label{number:105} 
Let the group $G$ be the representative of this family given by the presentation
\[
\begin{aligned}
\langle g_{1}, \,g_{2}, \,g_{3}, \,g_{4}, \,g_{5}, \,g_{6}, \,g_{7} & \mid & g_{1}^{2} &= 1, \\ 
 & & g_{2}^{2} &= g_{4}g_{6}, & [g_{2}, g_{1}]  &= g_{4}, \\ 
 & & g_{3}^{2} &= 1, & [g_{3}, g_{1}]  &= g_{5}, \\ 
 & & g_{4}^{2} &= g_{6}g_{7}, & [g_{4}, g_{1}]  &= g_{6}, \\ 
 & & g_{5}^{2} &= g_{7}, & [g_{5}, g_{1}]  &= g_{7}, & [g_{5}, g_{3}]  &= g_{7}, \\ 
 & & g_{6}^{2} &= g_{7}, & [g_{6}, g_{1}]  &= g_{7}, \\ 
 & & g_{7}^{2} &= 1\rangle. \\ 
\end{aligned}
\]
We add 13 tails to the presentation as to form a quotient of the universal central extension of the system: 
$g_{1}^{2} =  t_{1}$,
$g_{2}^{2} = g_{4}g_{6} t_{2}$,
$[g_{2}, g_{1}] = g_{4} t_{3}$,
$g_{3}^{2} =  t_{4}$,
$[g_{3}, g_{1}] = g_{5} t_{5}$,
$g_{4}^{2} = g_{6}g_{7} t_{6}$,
$[g_{4}, g_{1}] = g_{6} t_{7}$,
$g_{5}^{2} = g_{7} t_{8}$,
$[g_{5}, g_{1}] = g_{7} t_{9}$,
$[g_{5}, g_{3}] = g_{7} t_{10}$,
$g_{6}^{2} = g_{7} t_{11}$,
$[g_{6}, g_{1}] = g_{7} t_{12}$,
$g_{7}^{2} =  t_{13}$.
Carrying out consistency checks gives the following relations between the tails:
\[
\begin{aligned}
g_{6}^2 g_{1} & = g_{6} (g_{6} g_{1})& \Longrightarrow & & t_{12}^{2}t_{13} & = 1 \\
g_{5}^2 g_{3} & = g_{5} (g_{5} g_{3})& \Longrightarrow & & t_{10}^{2}t_{13} & = 1 \\
g_{5}^2 g_{1} & = g_{5} (g_{5} g_{1})& \Longrightarrow & & t_{9}^{2}t_{13} & = 1 \\
g_{4}^2 g_{1} & = g_{4} (g_{4} g_{1})& \Longrightarrow & & t_{7}^{2}t_{11}t_{12}^{-1} & = 1 \\
g_{3}^2 g_{1} & = g_{3} (g_{3} g_{1})& \Longrightarrow & & t_{5}^{2}t_{8}t_{10}t_{13} & = 1 \\
g_{2}^2 g_{1} & = g_{2} (g_{2} g_{1})& \Longrightarrow & & t_{3}^{2}t_{6}t_{7}^{-1}t_{12}^{-1} & = 1 \\
g_{3} g_{1}^{2} & = (g_{3} g_{1}) g_{1}& \Longrightarrow & & t_{5}^{2}t_{8}t_{9}t_{13} & = 1 \\
\end{aligned}
\]
Scanning through the conjugacy class representatives of $G$ and the generators of their centralizers, we obtain the following relations induced on the tails:
\[
\begin{aligned}
{[g_{5} g_{6} g_{7} , \, g_{1} ]}_G & = 1 & \Longrightarrow & & t_{9}t_{12}t_{13} & = 1 \\
\end{aligned}
\]
Collecting the coefficients of these relations into a matrix yields
\[
T = \bordermatrix{
{} & t_{1} & t_{2} & t_{3} & t_{4} & t_{5} & t_{6} & t_{7} & t_{8} & t_{9} & t_{10} & t_{11} & t_{12} & t_{13} \cr
{} &  &  & 2 &  &  & 1 & 1 &  &  &  & 1 &  & 1 \cr
{} &  &  &  &  & 2 &  &  & 1 &  &  &  & 1 & 1 \cr
{} &  &  &  &  &  &  & 2 &  &  &  & 1 & 1 & 1 \cr
{} &  &  &  &  &  &  &  &  & 1 &  &  & 1 & 1 \cr
{} &  &  &  &  &  &  &  &  &  & 1 &  & 1 & 1 \cr
{} &  &  &  &  &  &  &  &  &  &  &  & 2 & 1 \cr
}.
\]
It follows readily that the nontrivial elementary divisors of the Smith normal form of $T$ are all equal to $1$. The torsion subgroup of the group generated by the tails is thus trivial, thereby showing $\B_0(G) = 1$.


\item \label{number:106} 
Let the group $G$ be the representative of this family given by the presentation
\[
\begin{aligned}
\langle g_{1}, \,g_{2}, \,g_{3}, \,g_{4}, \,g_{5}, \,g_{6}, \,g_{7} & \mid & g_{1}^{2} &= g_{4}, \\ 
 & & g_{2}^{2} &= g_{6}, & [g_{2}, g_{1}]  &= g_{3}, \\ 
 & & g_{3}^{2} &= g_{6}g_{7}, & [g_{3}, g_{1}]  &= g_{5}, & [g_{3}, g_{2}]  &= g_{6}, \\ 
 & & g_{4}^{2} &= 1, & [g_{4}, g_{2}]  &= g_{5}g_{6}, & [g_{4}, g_{3}]  &= g_{6}g_{7}, \\ 
 & & g_{5}^{2} &= g_{7}, & [g_{5}, g_{1}]  &= g_{6}, & [g_{5}, g_{2}]  &= g_{7}, & [g_{5}, g_{4}]  &= g_{7}, \\ 
 & & g_{6}^{2} &= 1, & [g_{6}, g_{1}]  &= g_{7}, \\ 
 & & g_{7}^{2} &= 1\rangle. \\ 
\end{aligned}
\]
We add 16 tails to the presentation as to form a quotient of the universal central extension of the system: 
$g_{1}^{2} = g_{4} t_{1}$,
$g_{2}^{2} = g_{6} t_{2}$,
$[g_{2}, g_{1}] = g_{3} t_{3}$,
$g_{3}^{2} = g_{6}g_{7} t_{4}$,
$[g_{3}, g_{1}] = g_{5} t_{5}$,
$[g_{3}, g_{2}] = g_{6} t_{6}$,
$g_{4}^{2} =  t_{7}$,
$[g_{4}, g_{2}] = g_{5}g_{6} t_{8}$,
$[g_{4}, g_{3}] = g_{6}g_{7} t_{9}$,
$g_{5}^{2} = g_{7} t_{10}$,
$[g_{5}, g_{1}] = g_{6} t_{11}$,
$[g_{5}, g_{2}] = g_{7} t_{12}$,
$[g_{5}, g_{4}] = g_{7} t_{13}$,
$g_{6}^{2} =  t_{14}$,
$[g_{6}, g_{1}] = g_{7} t_{15}$,
$g_{7}^{2} =  t_{16}$.
Carrying out consistency checks gives the following relations between the tails:
\[
\begin{aligned}
g_{4}(g_{3} g_{1}) & = (g_{4} g_{3}) g_{1}  & \Longrightarrow & & t_{13}t_{15}t_{16} & = 1 \\
g_{4}(g_{2} g_{1}) & = (g_{4} g_{2}) g_{1}  & \Longrightarrow & & t_{9}^{-1}t_{11}t_{15} & = 1 \\
g_{3}(g_{2} g_{1}) & = (g_{3} g_{2}) g_{1}  & \Longrightarrow & & t_{12}^{-1}t_{15} & = 1 \\
g_{6}^2 g_{1} & = g_{6} (g_{6} g_{1})& \Longrightarrow & & t_{15}^{2}t_{16} & = 1 \\
g_{5}^2 g_{1} & = g_{5} (g_{5} g_{1})& \Longrightarrow & & t_{11}^{2}t_{14} & = 1 \\
g_{4}^2 g_{2} & = g_{4} (g_{4} g_{2})& \Longrightarrow & & t_{8}^{2}t_{10}t_{13}t_{14}t_{16} & = 1 \\
g_{3}^2 g_{2} & = g_{3} (g_{3} g_{2})& \Longrightarrow & & t_{6}^{2}t_{14} & = 1 \\
g_{3}^2 g_{1} & = g_{3} (g_{3} g_{1})& \Longrightarrow & & t_{5}^{2}t_{10}t_{15}^{-1} & = 1 \\
g_{2}^2 g_{1} & = g_{2} (g_{2} g_{1})& \Longrightarrow & & t_{3}^{2}t_{4}t_{6}t_{14}t_{15}^{-1} & = 1 \\
g_{2} g_{1}^{2} & = (g_{2} g_{1}) g_{1}& \Longrightarrow & & t_{3}^{2}t_{4}t_{5}t_{8}t_{9}^{2}t_{10}t_{14}^{2}t_{16}^{2} & = 1 \\
\end{aligned}
\]
Scanning through the conjugacy class representatives of $G$ and the generators of their centralizers, we see that no new relations are imposed.
Collecting the coefficients of these relations into a matrix yields
\[
T = \bordermatrix{
{} & t_{1} & t_{2} & t_{3} & t_{4} & t_{5} & t_{6} & t_{7} & t_{8} & t_{9} & t_{10} & t_{11} & t_{12} & t_{13} & t_{14} & t_{15} & t_{16} \cr
{} &  &  & 2 & 1 &  & 1 &  &  &  &  &  &  &  & 1 & 1 & 1 \cr
{} &  &  &  &  & 1 & 1 &  & 1 &  & 1 &  &  &  & 1 & 1 & 1 \cr
{} &  &  &  &  &  & 2 &  &  &  &  &  &  &  & 1 &  &  \cr
{} &  &  &  &  &  &  &  & 2 &  & 1 &  &  &  & 1 & 1 & 1 \cr
{} &  &  &  &  &  &  &  &  & 1 &  & 1 &  &  & 1 & 1 & 1 \cr
{} &  &  &  &  &  &  &  &  &  &  & 2 &  &  & 1 &  &  \cr
{} &  &  &  &  &  &  &  &  &  &  &  & 1 &  &  & 1 & 1 \cr
{} &  &  &  &  &  &  &  &  &  &  &  &  & 1 &  & 1 & 1 \cr
{} &  &  &  &  &  &  &  &  &  &  &  &  &  &  & 2 & 1 \cr
}.
\]
A change of basis according to the transition matrix (specifying expansions of $t_i^{*}$ by $t_j$)
\[
\bordermatrix{
{} & t_{1}^{*} & t_{2}^{*} & t_{3}^{*} & t_{4}^{*} & t_{5}^{*} & t_{6}^{*} & t_{7}^{*} & t_{8}^{*} & t_{9}^{*} & t_{10}^{*} & t_{11}^{*} & t_{12}^{*} & t_{13}^{*} & t_{14}^{*} & t_{15}^{*} & t_{16}^{*} \cr
t_{1} &  &  &  &  &  &  &  &  &  & -1 & -9 & 1 & 1 & 1 &  &  \cr
t_{2} &  &  &  &  &  &  &  &  &  &  & -2 & 1 & 1 &  &  & -1 \cr
t_{3} & 4 & 2 & 4 & 8 & 4 & 8 &  &  & 6 & 2 & 9 & -1 & -1 & -1 &  &  \cr
t_{4} & 2 & 1 & 2 & 4 & 2 & 4 &  &  & 3 &  & 2 &  &  &  &  &  \cr
t_{5} & -3 & -2 & -4 & -8 & -4 & -8 &  &  & -6 & -1 &  &  &  &  &  &  \cr
t_{6} & 3 & 3 & 4 & 8 & 4 & 8 &  &  & 6 & 1 & 2 & -1 & -1 &  &  & 1 \cr
t_{7} &  &  &  &  &  &  &  &  &  &  &  & -1 &  &  &  & 1 \cr
t_{8} & -9 & -4 & -8 & -18 & -12 & -20 &  &  & -14 &  &  &  &  &  &  & -1 \cr
t_{9} & 3 & 2 & 4 & 8 & 5 & 8 &  &  & 6 & 1 &  &  &  &  &  &  \cr
t_{10} & -6 & -3 & -6 & -13 & -8 & -14 &  &  & -10 & -1 & -3 &  &  &  &  &  \cr
t_{11} & 1 &  &  & 2 & 1 & 2 &  &  & 1 &  & 1 &  &  &  &  &  \cr
t_{12} & 9 & 4 & 8 & 18 & 12 & 20 & 1 &  & 14 &  &  & 1 &  &  &  &  \cr
t_{13} & 2 &  & 2 & 3 & 3 & 5 & -1 &  & 3 &  &  &  & 1 &  &  &  \cr
t_{14} &  & 1 & 1 & 2 &  & 1 &  &  & 1 &  &  &  &  & 1 &  &  \cr
t_{15} & 8 & 4 & 6 & 16 & 10 & 17 &  & 2 & 12 &  &  &  &  &  & 1 &  \cr
t_{16} & 9 & 4 & 8 & 18 & 12 & 20 &  & 1 & 14 &  &  &  &  &  &  & 1 \cr
}
\]
shows that the nontrivial elementary divisors of the Smith normal form of $T$ are $1$, $1$, $1$, $1$, $1$, $1$, $1$, $1$, $2$.  The element corresponding to the divisor that is greater than $1$ is $t_{9}^{*}$. This already gives
\[
\B_0(G) \cong \langle t_{9}^{*}  \mid {t_{9}^{*}}^{2} \rangle.
\]

We now deal with explicitly identifying the nonuniversal commutator relation generating $\B_0(G)$.
First, factor out by the tails $t_{i}^{*}$ whose corresponding elementary divisors are either trivial or $1$. Transforming the situation back to the original tails $t_i$, this amounts to the nontrivial expansions given by
\[
\bordermatrix{
{} & t_{2} & t_{4} & t_{5} & t_{6} \cr
t_{9}^{*} & 1 & 1 & 1 & 1 \cr
}
\]
and all the other tails $t_i$ are trivial. We thus obtain a commutativity preserving central extension of the group $G$, given by the presentation
\[
\begin{aligned}
\langle g_{1}, \,g_{2}, \,g_{3}, \,g_{4}, \,g_{5}, \,g_{6}, \,g_{7}, \,t_{9}^{*} & \mid & g_{1}^{2} &= g_{4}, \\ 
 & & g_{2}^{2} &= g_{6}t_{9}^{*} , & [g_{2}, g_{1}]  &= g_{3}, \\ 
 & & g_{3}^{2} &= g_{6}g_{7}t_{9}^{*} , & [g_{3}, g_{1}]  &= g_{5}t_{9}^{*} , & [g_{3}, g_{2}]  &= g_{6}t_{9}^{*} , \\ 
 & & g_{4}^{2} &= 1, & [g_{4}, g_{2}]  &= g_{5}g_{6}, & [g_{4}, g_{3}]  &= g_{6}g_{7}, \\ 
 & & g_{5}^{2} &= g_{7}, & [g_{5}, g_{1}]  &= g_{6}, & [g_{5}, g_{2}]  &= g_{7}, & [g_{5}, g_{4}]  &= g_{7}, \\ 
 & & g_{6}^{2} &= 1, & [g_{6}, g_{1}]  &= g_{7}, \\ 
 & & g_{7}^{2} &= 1, \\ 
 & & {t_{9}^{*}}^{2} &= 1  \rangle,
\end{aligned}
\]
whence the nonuniversal commutator relation is identified as
\[
t_{9}^{*}  = [g_{3}, g_{2}] [g_{5}, g_{1}]^{-1}.  \quad 
\]


\item \label{number:107} 
Let the group $G$ be the representative of this family given by the presentation
\[
\begin{aligned}
\langle g_{1}, \,g_{2}, \,g_{3}, \,g_{4}, \,g_{5}, \,g_{6}, \,g_{7} & \mid & g_{1}^{2} &= g_{4}, \\ 
 & & g_{2}^{2} &= 1, & [g_{2}, g_{1}]  &= g_{3}, \\ 
 & & g_{3}^{2} &= g_{6}g_{7}, & [g_{3}, g_{1}]  &= g_{5}, & [g_{3}, g_{2}]  &= g_{6}g_{7}, \\ 
 & & g_{4}^{2} &= 1, & [g_{4}, g_{2}]  &= g_{5}g_{6}, & [g_{4}, g_{3}]  &= g_{6}g_{7}, \\ 
 & & g_{5}^{2} &= g_{7}, & [g_{5}, g_{1}]  &= g_{6}, & [g_{5}, g_{2}]  &= g_{7}, & [g_{5}, g_{4}]  &= g_{7}, \\ 
 & & g_{6}^{2} &= 1, & [g_{6}, g_{1}]  &= g_{7}, \\ 
 & & g_{7}^{2} &= 1\rangle. \\ 
\end{aligned}
\]
We add 16 tails to the presentation as to form a quotient of the universal central extension of the system: 
$g_{1}^{2} = g_{4} t_{1}$,
$g_{2}^{2} =  t_{2}$,
$[g_{2}, g_{1}] = g_{3} t_{3}$,
$g_{3}^{2} = g_{6}g_{7} t_{4}$,
$[g_{3}, g_{1}] = g_{5} t_{5}$,
$[g_{3}, g_{2}] = g_{6}g_{7} t_{6}$,
$g_{4}^{2} =  t_{7}$,
$[g_{4}, g_{2}] = g_{5}g_{6} t_{8}$,
$[g_{4}, g_{3}] = g_{6}g_{7} t_{9}$,
$g_{5}^{2} = g_{7} t_{10}$,
$[g_{5}, g_{1}] = g_{6} t_{11}$,
$[g_{5}, g_{2}] = g_{7} t_{12}$,
$[g_{5}, g_{4}] = g_{7} t_{13}$,
$g_{6}^{2} =  t_{14}$,
$[g_{6}, g_{1}] = g_{7} t_{15}$,
$g_{7}^{2} =  t_{16}$.
Carrying out consistency checks gives the following relations between the tails:
\[
\begin{aligned}
g_{4}(g_{3} g_{1}) & = (g_{4} g_{3}) g_{1}  & \Longrightarrow & & t_{13}t_{15}t_{16} & = 1 \\
g_{4}(g_{2} g_{1}) & = (g_{4} g_{2}) g_{1}  & \Longrightarrow & & t_{9}^{-1}t_{11}t_{15} & = 1 \\
g_{3}(g_{2} g_{1}) & = (g_{3} g_{2}) g_{1}  & \Longrightarrow & & t_{12}^{-1}t_{15} & = 1 \\
g_{6}^2 g_{1} & = g_{6} (g_{6} g_{1})& \Longrightarrow & & t_{15}^{2}t_{16} & = 1 \\
g_{5}^2 g_{1} & = g_{5} (g_{5} g_{1})& \Longrightarrow & & t_{11}^{2}t_{14} & = 1 \\
g_{4}^2 g_{2} & = g_{4} (g_{4} g_{2})& \Longrightarrow & & t_{8}^{2}t_{10}t_{13}t_{14}t_{16} & = 1 \\
g_{3}^2 g_{2} & = g_{3} (g_{3} g_{2})& \Longrightarrow & & t_{6}^{2}t_{14}t_{16} & = 1 \\
g_{3}^2 g_{1} & = g_{3} (g_{3} g_{1})& \Longrightarrow & & t_{5}^{2}t_{10}t_{15}^{-1} & = 1 \\
g_{2}^2 g_{1} & = g_{2} (g_{2} g_{1})& \Longrightarrow & & t_{3}^{2}t_{4}t_{6}t_{14}t_{16} & = 1 \\
g_{2} g_{1}^{2} & = (g_{2} g_{1}) g_{1}& \Longrightarrow & & t_{3}^{2}t_{4}t_{5}t_{8}t_{9}^{2}t_{10}t_{14}^{2}t_{16}^{2} & = 1 \\
\end{aligned}
\]
Scanning through the conjugacy class representatives of $G$ and the generators of their centralizers, we obtain the following relations induced on the tails:
\[
\begin{aligned}
{[g_{3} g_{5} g_{6} , \, g_{2} g_{4} g_{5} g_{7} ]}_G & = 1 & \Longrightarrow & & t_{6}t_{9}^{-1}t_{12}t_{13}t_{16} & = 1 \\
\end{aligned}
\]
Collecting the coefficients of these relations into a matrix yields
\[
T = \bordermatrix{
{} & t_{1} & t_{2} & t_{3} & t_{4} & t_{5} & t_{6} & t_{7} & t_{8} & t_{9} & t_{10} & t_{11} & t_{12} & t_{13} & t_{14} & t_{15} & t_{16} \cr
{} &  &  & 2 & 1 &  &  &  &  &  &  & 1 &  &  & 1 & 1 & 1 \cr
{} &  &  &  &  & 1 &  &  & 1 &  & 1 & 1 &  &  & 1 & 1 & 1 \cr
{} &  &  &  &  &  & 1 &  &  &  &  & 1 &  &  & 1 & 1 & 1 \cr
{} &  &  &  &  &  &  &  & 2 &  & 1 &  &  &  & 1 & 1 & 1 \cr
{} &  &  &  &  &  &  &  &  & 1 &  & 1 &  &  & 1 & 1 & 1 \cr
{} &  &  &  &  &  &  &  &  &  &  & 2 &  &  & 1 &  &  \cr
{} &  &  &  &  &  &  &  &  &  &  &  & 1 &  &  & 1 & 1 \cr
{} &  &  &  &  &  &  &  &  &  &  &  &  & 1 &  & 1 & 1 \cr
{} &  &  &  &  &  &  &  &  &  &  &  &  &  &  & 2 & 1 \cr
}.
\]
It follows readily that the nontrivial elementary divisors of the Smith normal form of $T$ are all equal to $1$. The torsion subgroup of the group generated by the tails is thus trivial, thereby showing $\B_0(G) = 1$.


\item \label{number:108} 
Let the group $G$ be the representative of this family given by the presentation
\[
\begin{aligned}
\langle g_{1}, \,g_{2}, \,g_{3}, \,g_{4}, \,g_{5}, \,g_{6}, \,g_{7} & \mid & g_{1}^{2} &= g_{4}, \\ 
 & & g_{2}^{2} &= 1, & [g_{2}, g_{1}]  &= g_{3}, \\ 
 & & g_{3}^{2} &= g_{5}g_{6}g_{7}, & [g_{3}, g_{1}]  &= g_{5}, & [g_{3}, g_{2}]  &= g_{5}g_{7}, \\ 
 & & g_{4}^{2} &= 1, & [g_{4}, g_{2}]  &= g_{7}, \\ 
 & & g_{5}^{2} &= g_{6}g_{7}, & [g_{5}, g_{1}]  &= g_{6}, & [g_{5}, g_{2}]  &= g_{6}, \\ 
 & & g_{6}^{2} &= g_{7}, & [g_{6}, g_{1}]  &= g_{7}, & [g_{6}, g_{2}]  &= g_{7}, \\ 
 & & g_{7}^{2} &= 1\rangle. \\ 
\end{aligned}
\]
We add 15 tails to the presentation as to form a quotient of the universal central extension of the system: 
$g_{1}^{2} = g_{4} t_{1}$,
$g_{2}^{2} =  t_{2}$,
$[g_{2}, g_{1}] = g_{3} t_{3}$,
$g_{3}^{2} = g_{5}g_{6}g_{7} t_{4}$,
$[g_{3}, g_{1}] = g_{5} t_{5}$,
$[g_{3}, g_{2}] = g_{5}g_{7} t_{6}$,
$g_{4}^{2} =  t_{7}$,
$[g_{4}, g_{2}] = g_{7} t_{8}$,
$g_{5}^{2} = g_{6}g_{7} t_{9}$,
$[g_{5}, g_{1}] = g_{6} t_{10}$,
$[g_{5}, g_{2}] = g_{6} t_{11}$,
$g_{6}^{2} = g_{7} t_{12}$,
$[g_{6}, g_{1}] = g_{7} t_{13}$,
$[g_{6}, g_{2}] = g_{7} t_{14}$,
$g_{7}^{2} =  t_{15}$.
Carrying out consistency checks gives the following relations between the tails:
\[
\begin{aligned}
g_{5}(g_{2} g_{1}) & = (g_{5} g_{2}) g_{1}  & \Longrightarrow & & t_{13}t_{14}^{-1} & = 1 \\
g_{3}(g_{2} g_{1}) & = (g_{3} g_{2}) g_{1}  & \Longrightarrow & & t_{10}t_{11}^{-1} & = 1 \\
g_{6}^2 g_{2} & = g_{6} (g_{6} g_{2})& \Longrightarrow & & t_{14}^{2}t_{15} & = 1 \\
g_{5}^2 g_{2} & = g_{5} (g_{5} g_{2})& \Longrightarrow & & t_{11}^{2}t_{12}t_{14}^{-1} & = 1 \\
g_{4}^2 g_{2} & = g_{4} (g_{4} g_{2})& \Longrightarrow & & t_{8}^{2}t_{15} & = 1 \\
g_{3}^2 g_{2} & = g_{3} (g_{3} g_{2})& \Longrightarrow & & t_{6}^{2}t_{9}t_{11}^{-1}t_{14}^{-1}t_{15} & = 1 \\
g_{3}^2 g_{1} & = g_{3} (g_{3} g_{1})& \Longrightarrow & & t_{5}^{2}t_{9}t_{10}^{-1}t_{13}^{-1} & = 1 \\
g_{2}^2 g_{1} & = g_{2} (g_{2} g_{1})& \Longrightarrow & & t_{3}^{2}t_{4}t_{6}t_{9}t_{12}t_{15}^{2} & = 1 \\
g_{2} g_{1}^{2} & = (g_{2} g_{1}) g_{1}& \Longrightarrow & & t_{3}^{2}t_{4}t_{5}t_{8}t_{9}t_{12}t_{15}^{2} & = 1 \\
\end{aligned}
\]
Scanning through the conjugacy class representatives of $G$ and the generators of their centralizers, we obtain the following relations induced on the tails:
\[
\begin{aligned}
{[g_{4} g_{6} g_{7} , \, g_{2} g_{4} ]}_G & = 1 & \Longrightarrow & & t_{8}t_{14}t_{15} & = 1 \\
\end{aligned}
\]
Collecting the coefficients of these relations into a matrix yields
\[
T = \bordermatrix{
{} & t_{1} & t_{2} & t_{3} & t_{4} & t_{5} & t_{6} & t_{7} & t_{8} & t_{9} & t_{10} & t_{11} & t_{12} & t_{13} & t_{14} & t_{15} \cr
{} &  &  & 2 & 1 &  & 1 &  &  & 1 &  &  & 1 &  &  & 2 \cr
{} &  &  &  &  & 1 & 1 &  &  & 1 &  & 1 & 1 &  & 1 & 2 \cr
{} &  &  &  &  &  & 2 &  &  & 1 &  & 1 & 1 &  &  & 2 \cr
{} &  &  &  &  &  &  &  & 1 &  &  &  &  &  & 1 & 1 \cr
{} &  &  &  &  &  &  &  &  &  & 1 & 1 & 1 &  & 1 & 1 \cr
{} &  &  &  &  &  &  &  &  &  &  & 2 & 1 &  & 1 & 1 \cr
{} &  &  &  &  &  &  &  &  &  &  &  &  & 1 & 1 & 1 \cr
{} &  &  &  &  &  &  &  &  &  &  &  &  &  & 2 & 1 \cr
}.
\]
It follows readily that the nontrivial elementary divisors of the Smith normal form of $T$ are all equal to $1$. The torsion subgroup of the group generated by the tails is thus trivial, thereby showing $\B_0(G) = 1$.


\item \label{number:109} 
Let the group $G$ be the representative of this family given by the presentation
\[
\begin{aligned}
\langle g_{1}, \,g_{2}, \,g_{3}, \,g_{4}, \,g_{5}, \,g_{6}, \,g_{7} & \mid & g_{1}^{2} &= 1, \\ 
 & & g_{2}^{2} &= 1, & [g_{2}, g_{1}]  &= g_{4}, \\ 
 & & g_{3}^{2} &= 1, & [g_{3}, g_{1}]  &= g_{7}, \\ 
 & & g_{4}^{2} &= g_{5}g_{6}, & [g_{4}, g_{1}]  &= g_{5}, & [g_{4}, g_{2}]  &= g_{5}, \\ 
 & & g_{5}^{2} &= g_{6}g_{7}, & [g_{5}, g_{1}]  &= g_{6}, & [g_{5}, g_{2}]  &= g_{6}, \\ 
 & & g_{6}^{2} &= g_{7}, & [g_{6}, g_{1}]  &= g_{7}, & [g_{6}, g_{2}]  &= g_{7}, \\ 
 & & g_{7}^{2} &= 1\rangle. \\ 
\end{aligned}
\]
We add 15 tails to the presentation as to form a quotient of the universal central extension of the system: 
$g_{1}^{2} =  t_{1}$,
$g_{2}^{2} =  t_{2}$,
$[g_{2}, g_{1}] = g_{4} t_{3}$,
$g_{3}^{2} =  t_{4}$,
$[g_{3}, g_{1}] = g_{7} t_{5}$,
$g_{4}^{2} = g_{5}g_{6} t_{6}$,
$[g_{4}, g_{1}] = g_{5} t_{7}$,
$[g_{4}, g_{2}] = g_{5} t_{8}$,
$g_{5}^{2} = g_{6}g_{7} t_{9}$,
$[g_{5}, g_{1}] = g_{6} t_{10}$,
$[g_{5}, g_{2}] = g_{6} t_{11}$,
$g_{6}^{2} = g_{7} t_{12}$,
$[g_{6}, g_{1}] = g_{7} t_{13}$,
$[g_{6}, g_{2}] = g_{7} t_{14}$,
$g_{7}^{2} =  t_{15}$.
Carrying out consistency checks gives the following relations between the tails:
\[
\begin{aligned}
g_{5}(g_{2} g_{1}) & = (g_{5} g_{2}) g_{1}  & \Longrightarrow & & t_{13}t_{14}^{-1} & = 1 \\
g_{4}(g_{2} g_{1}) & = (g_{4} g_{2}) g_{1}  & \Longrightarrow & & t_{10}t_{11}^{-1} & = 1 \\
g_{6}^2 g_{2} & = g_{6} (g_{6} g_{2})& \Longrightarrow & & t_{14}^{2}t_{15} & = 1 \\
g_{5}^2 g_{2} & = g_{5} (g_{5} g_{2})& \Longrightarrow & & t_{11}^{2}t_{12}t_{14}^{-1} & = 1 \\
g_{4}^2 g_{2} & = g_{4} (g_{4} g_{2})& \Longrightarrow & & t_{8}^{2}t_{9}t_{11}^{-1}t_{14}^{-1} & = 1 \\
g_{4}^2 g_{1} & = g_{4} (g_{4} g_{1})& \Longrightarrow & & t_{7}^{2}t_{9}t_{10}^{-1}t_{13}^{-1} & = 1 \\
g_{3}^2 g_{1} & = g_{3} (g_{3} g_{1})& \Longrightarrow & & t_{5}^{2}t_{15} & = 1 \\
g_{2}^2 g_{1} & = g_{2} (g_{2} g_{1})& \Longrightarrow & & t_{3}^{2}t_{6}t_{8}t_{9}t_{12}t_{15} & = 1 \\
g_{2} g_{1}^{2} & = (g_{2} g_{1}) g_{1}& \Longrightarrow & & t_{3}^{2}t_{6}t_{7}t_{9}t_{12}t_{15} & = 1 \\
\end{aligned}
\]
Scanning through the conjugacy class representatives of $G$ and the generators of their centralizers, we obtain the following relations induced on the tails:
\[
\begin{aligned}
{[g_{3} g_{6} g_{7} , \, g_{1} ]}_G & = 1 & \Longrightarrow & & t_{5}t_{13}t_{15} & = 1 \\
\end{aligned}
\]
Collecting the coefficients of these relations into a matrix yields
\[
T = \bordermatrix{
{} & t_{1} & t_{2} & t_{3} & t_{4} & t_{5} & t_{6} & t_{7} & t_{8} & t_{9} & t_{10} & t_{11} & t_{12} & t_{13} & t_{14} & t_{15} \cr
{} &  &  & 2 &  &  & 1 &  & 1 & 1 &  &  & 1 &  &  & 1 \cr
{} &  &  &  &  & 1 &  &  &  &  &  &  &  &  & 1 & 1 \cr
{} &  &  &  &  &  &  & 1 & 1 & 1 &  & 1 & 1 &  &  & 1 \cr
{} &  &  &  &  &  &  &  & 2 & 1 &  & 1 & 1 &  &  & 1 \cr
{} &  &  &  &  &  &  &  &  &  & 1 & 1 & 1 &  & 1 & 1 \cr
{} &  &  &  &  &  &  &  &  &  &  & 2 & 1 &  & 1 & 1 \cr
{} &  &  &  &  &  &  &  &  &  &  &  &  & 1 & 1 & 1 \cr
{} &  &  &  &  &  &  &  &  &  &  &  &  &  & 2 & 1 \cr
}.
\]
It follows readily that the nontrivial elementary divisors of the Smith normal form of $T$ are all equal to $1$. The torsion subgroup of the group generated by the tails is thus trivial, thereby showing $\B_0(G) = 1$.


\item \label{number:110} 
Let the group $G$ be the representative of this family given by the presentation
\[
\begin{aligned}
\langle g_{1}, \,g_{2}, \,g_{3}, \,g_{4}, \,g_{5}, \,g_{6}, \,g_{7} & \mid & g_{1}^{2} &= g_{4}, \\ 
 & & g_{2}^{2} &= g_{3}g_{5}g_{6}, & [g_{2}, g_{1}]  &= g_{3}, \\ 
 & & g_{3}^{2} &= g_{5}g_{6}g_{7}, & [g_{3}, g_{1}]  &= g_{5}, \\ 
 & & g_{4}^{2} &= 1, & [g_{4}, g_{2}]  &= g_{7}, \\ 
 & & g_{5}^{2} &= g_{6}g_{7}, & [g_{5}, g_{1}]  &= g_{6}, \\ 
 & & g_{6}^{2} &= g_{7}, & [g_{6}, g_{1}]  &= g_{7}, \\ 
 & & g_{7}^{2} &= 1\rangle. \\ 
\end{aligned}
\]
We add 12 tails to the presentation as to form a quotient of the universal central extension of the system: 
$g_{1}^{2} = g_{4} t_{1}$,
$g_{2}^{2} = g_{3}g_{5}g_{6} t_{2}$,
$[g_{2}, g_{1}] = g_{3} t_{3}$,
$g_{3}^{2} = g_{5}g_{6}g_{7} t_{4}$,
$[g_{3}, g_{1}] = g_{5} t_{5}$,
$g_{4}^{2} =  t_{6}$,
$[g_{4}, g_{2}] = g_{7} t_{7}$,
$g_{5}^{2} = g_{6}g_{7} t_{8}$,
$[g_{5}, g_{1}] = g_{6} t_{9}$,
$g_{6}^{2} = g_{7} t_{10}$,
$[g_{6}, g_{1}] = g_{7} t_{11}$,
$g_{7}^{2} =  t_{12}$.
Carrying out consistency checks gives the following relations between the tails:
\[
\begin{aligned}
g_{6}^2 g_{1} & = g_{6} (g_{6} g_{1})& \Longrightarrow & & t_{11}^{2}t_{12} & = 1 \\
g_{5}^2 g_{1} & = g_{5} (g_{5} g_{1})& \Longrightarrow & & t_{9}^{2}t_{10}t_{11}^{-1} & = 1 \\
g_{4}^2 g_{2} & = g_{4} (g_{4} g_{2})& \Longrightarrow & & t_{7}^{2}t_{12} & = 1 \\
g_{3}^2 g_{1} & = g_{3} (g_{3} g_{1})& \Longrightarrow & & t_{5}^{2}t_{8}t_{9}^{-1}t_{11}^{-1} & = 1 \\
g_{2}^2 g_{1} & = g_{2} (g_{2} g_{1})& \Longrightarrow & & t_{3}^{2}t_{4}t_{5}^{-1}t_{9}^{-1}t_{11}^{-1} & = 1 \\
g_{2} g_{1}^{2} & = (g_{2} g_{1}) g_{1}& \Longrightarrow & & t_{3}^{2}t_{4}t_{5}t_{7}t_{8}t_{10}t_{12}^{2} & = 1 \\
\end{aligned}
\]
Scanning through the conjugacy class representatives of $G$ and the generators of their centralizers, we see that no new relations are imposed.
Collecting the coefficients of these relations into a matrix yields
\[
T = \bordermatrix{
{} & t_{1} & t_{2} & t_{3} & t_{4} & t_{5} & t_{6} & t_{7} & t_{8} & t_{9} & t_{10} & t_{11} & t_{12} \cr
{} &  &  & 2 & 1 & 1 &  &  & 1 &  & 1 & 1 & 2 \cr
{} &  &  &  &  & 2 &  &  & 1 & 1 & 1 &  & 1 \cr
{} &  &  &  &  &  &  & 1 &  &  &  & 1 & 1 \cr
{} &  &  &  &  &  &  &  &  & 2 & 1 & 1 & 1 \cr
{} &  &  &  &  &  &  &  &  &  &  & 2 & 1 \cr
}.
\]
It follows readily that the nontrivial elementary divisors of the Smith normal form of $T$ are all equal to $1$. The torsion subgroup of the group generated by the tails is thus trivial, thereby showing $\B_0(G) = 1$.


\item \label{number:111} 
Let the group $G$ be the representative of this family given by the presentation
\[
\begin{aligned}
\langle g_{1}, \,g_{2}, \,g_{3}, \,g_{4}, \,g_{5}, \,g_{6}, \,g_{7} & \mid & g_{1}^{2} &= g_{4}, \\ 
 & & g_{2}^{2} &= 1, & [g_{2}, g_{1}]  &= g_{3}, \\ 
 & & g_{3}^{2} &= 1, & [g_{3}, g_{1}]  &= g_{5}, \\ 
 & & g_{4}^{2} &= 1, & [g_{4}, g_{2}]  &= g_{5}g_{7}, & [g_{4}, g_{3}]  &= g_{6}g_{7}, \\ 
 & & g_{5}^{2} &= g_{7}, & [g_{5}, g_{1}]  &= g_{6}, & [g_{5}, g_{2}]  &= g_{7}, & [g_{5}, g_{3}]  &= g_{7}, & [g_{5}, g_{4}]  &= g_{7}, \\ 
 & & g_{6}^{2} &= 1, & [g_{6}, g_{1}]  &= g_{7}, & [g_{6}, g_{2}]  &= g_{7}, \\ 
 & & g_{7}^{2} &= 1\rangle. \\ 
\end{aligned}
\]
We add 17 tails to the presentation as to form a quotient of the universal central extension of the system: 
$g_{1}^{2} = g_{4} t_{1}$,
$g_{2}^{2} =  t_{2}$,
$[g_{2}, g_{1}] = g_{3} t_{3}$,
$g_{3}^{2} =  t_{4}$,
$[g_{3}, g_{1}] = g_{5} t_{5}$,
$g_{4}^{2} =  t_{6}$,
$[g_{4}, g_{2}] = g_{5}g_{7} t_{7}$,
$[g_{4}, g_{3}] = g_{6}g_{7} t_{8}$,
$g_{5}^{2} = g_{7} t_{9}$,
$[g_{5}, g_{1}] = g_{6} t_{10}$,
$[g_{5}, g_{2}] = g_{7} t_{11}$,
$[g_{5}, g_{3}] = g_{7} t_{12}$,
$[g_{5}, g_{4}] = g_{7} t_{13}$,
$g_{6}^{2} =  t_{14}$,
$[g_{6}, g_{1}] = g_{7} t_{15}$,
$[g_{6}, g_{2}] = g_{7} t_{16}$,
$g_{7}^{2} =  t_{17}$.
Carrying out consistency checks gives the following relations between the tails:
\[
\begin{aligned}
g_{5}(g_{2} g_{1}) & = (g_{5} g_{2}) g_{1}  & \Longrightarrow & & t_{12}^{-1}t_{16}^{-1}t_{17}^{-1} & = 1 \\
g_{4}(g_{3} g_{2}) & = (g_{4} g_{3}) g_{2}  & \Longrightarrow & & t_{12}^{-1}t_{16} & = 1 \\
g_{4}(g_{3} g_{1}) & = (g_{4} g_{3}) g_{1}  & \Longrightarrow & & t_{13}t_{15}t_{17} & = 1 \\
g_{4}(g_{2} g_{1}) & = (g_{4} g_{2}) g_{1}  & \Longrightarrow & & t_{8}^{-1}t_{10}t_{12}^{-1}t_{17}^{-1} & = 1 \\
g_{3}(g_{2} g_{1}) & = (g_{3} g_{2}) g_{1}  & \Longrightarrow & & t_{11}^{-1}t_{12}^{-1}t_{17}^{-1} & = 1 \\
g_{6}^2 g_{1} & = g_{6} (g_{6} g_{1})& \Longrightarrow & & t_{15}^{2}t_{17} & = 1 \\
g_{5}^2 g_{1} & = g_{5} (g_{5} g_{1})& \Longrightarrow & & t_{10}^{2}t_{14} & = 1 \\
g_{4}^2 g_{2} & = g_{4} (g_{4} g_{2})& \Longrightarrow & & t_{7}^{2}t_{9}t_{13}t_{17}^{2} & = 1 \\
g_{3}^2 g_{1} & = g_{3} (g_{3} g_{1})& \Longrightarrow & & t_{5}^{2}t_{9}t_{12}t_{17} & = 1 \\
g_{2}^2 g_{1} & = g_{2} (g_{2} g_{1})& \Longrightarrow & & t_{3}^{2}t_{4} & = 1 \\
g_{4} g_{2}^{2} & = (g_{4} g_{2}) g_{2}& \Longrightarrow & & t_{7}^{2}t_{9}t_{11}t_{17}^{2} & = 1 \\
g_{2} g_{1}^{2} & = (g_{2} g_{1}) g_{1}& \Longrightarrow & & t_{3}^{2}t_{4}t_{5}t_{7}t_{8}^{2}t_{9}t_{12}^{2}t_{14}t_{17}^{3} & = 1 \\
\end{aligned}
\]
Scanning through the conjugacy class representatives of $G$ and the generators of their centralizers, we see that no new relations are imposed.
Collecting the coefficients of these relations into a matrix yields
\[
T = \bordermatrix{
{} & t_{1} & t_{2} & t_{3} & t_{4} & t_{5} & t_{6} & t_{7} & t_{8} & t_{9} & t_{10} & t_{11} & t_{12} & t_{13} & t_{14} & t_{15} & t_{16} & t_{17} \cr
{} &  &  & 2 & 1 &  &  &  &  &  &  &  &  &  &  &  &  &  \cr
{} &  &  &  &  & 1 &  & 1 &  & 1 &  &  &  &  &  &  &  & 1 \cr
{} &  &  &  &  &  &  & 2 &  & 1 &  &  &  &  &  &  & 1 & 2 \cr
{} &  &  &  &  &  &  &  & 1 &  & 1 &  &  &  & 1 &  & 1 & 1 \cr
{} &  &  &  &  &  &  &  &  &  & 2 &  &  &  & 1 &  &  &  \cr
{} &  &  &  &  &  &  &  &  &  &  & 1 &  &  &  &  & 1 & 1 \cr
{} &  &  &  &  &  &  &  &  &  &  &  & 1 &  &  &  & 1 & 1 \cr
{} &  &  &  &  &  &  &  &  &  &  &  &  & 1 &  &  & 1 & 1 \cr
{} &  &  &  &  &  &  &  &  &  &  &  &  &  &  & 1 & 1 & 1 \cr
{} &  &  &  &  &  &  &  &  &  &  &  &  &  &  &  & 2 & 1 \cr
}.
\]
It follows readily that the nontrivial elementary divisors of the Smith normal form of $T$ are all equal to $1$. The torsion subgroup of the group generated by the tails is thus trivial, thereby showing $\B_0(G) = 1$.


\item \label{number:112} 
Let the group $G$ be the representative of this family given by the presentation
\[
\begin{aligned}
\langle g_{1}, \,g_{2}, \,g_{3}, \,g_{4}, \,g_{5}, \,g_{6}, \,g_{7} & \mid & g_{1}^{2} &= g_{4}, \\ 
 & & g_{2}^{2} &= 1, & [g_{2}, g_{1}]  &= g_{3}, \\ 
 & & g_{3}^{2} &= g_{7}, & [g_{3}, g_{1}]  &= g_{5}, & [g_{3}, g_{2}]  &= g_{7}, \\ 
 & & g_{4}^{2} &= 1, & [g_{4}, g_{2}]  &= g_{5}, & [g_{4}, g_{3}]  &= g_{6}g_{7}, \\ 
 & & g_{5}^{2} &= g_{7}, & [g_{5}, g_{1}]  &= g_{6}, & [g_{5}, g_{2}]  &= g_{7}, & [g_{5}, g_{3}]  &= g_{7}, & [g_{5}, g_{4}]  &= g_{7}, \\ 
 & & g_{6}^{2} &= 1, & [g_{6}, g_{1}]  &= g_{7}, & [g_{6}, g_{2}]  &= g_{7}, \\ 
 & & g_{7}^{2} &= 1\rangle. \\ 
\end{aligned}
\]
We add 18 tails to the presentation as to form a quotient of the universal central extension of the system: 
$g_{1}^{2} = g_{4} t_{1}$,
$g_{2}^{2} =  t_{2}$,
$[g_{2}, g_{1}] = g_{3} t_{3}$,
$g_{3}^{2} = g_{7} t_{4}$,
$[g_{3}, g_{1}] = g_{5} t_{5}$,
$[g_{3}, g_{2}] = g_{7} t_{6}$,
$g_{4}^{2} =  t_{7}$,
$[g_{4}, g_{2}] = g_{5} t_{8}$,
$[g_{4}, g_{3}] = g_{6}g_{7} t_{9}$,
$g_{5}^{2} = g_{7} t_{10}$,
$[g_{5}, g_{1}] = g_{6} t_{11}$,
$[g_{5}, g_{2}] = g_{7} t_{12}$,
$[g_{5}, g_{3}] = g_{7} t_{13}$,
$[g_{5}, g_{4}] = g_{7} t_{14}$,
$g_{6}^{2} =  t_{15}$,
$[g_{6}, g_{1}] = g_{7} t_{16}$,
$[g_{6}, g_{2}] = g_{7} t_{17}$,
$g_{7}^{2} =  t_{18}$.
Carrying out consistency checks gives the following relations between the tails:
\[
\begin{aligned}
g_{5}(g_{2} g_{1}) & = (g_{5} g_{2}) g_{1}  & \Longrightarrow & & t_{13}^{-1}t_{17}^{-1}t_{18}^{-1} & = 1 \\
g_{4}(g_{3} g_{2}) & = (g_{4} g_{3}) g_{2}  & \Longrightarrow & & t_{13}^{-1}t_{17} & = 1 \\
g_{4}(g_{3} g_{1}) & = (g_{4} g_{3}) g_{1}  & \Longrightarrow & & t_{14}t_{16}t_{18} & = 1 \\
g_{4}(g_{2} g_{1}) & = (g_{4} g_{2}) g_{1}  & \Longrightarrow & & t_{9}^{-1}t_{11}t_{13}^{-1}t_{18}^{-1} & = 1 \\
g_{3}(g_{2} g_{1}) & = (g_{3} g_{2}) g_{1}  & \Longrightarrow & & t_{12}^{-1}t_{13}^{-1}t_{18}^{-1} & = 1 \\
g_{6}^2 g_{1} & = g_{6} (g_{6} g_{1})& \Longrightarrow & & t_{16}^{2}t_{18} & = 1 \\
g_{5}^2 g_{1} & = g_{5} (g_{5} g_{1})& \Longrightarrow & & t_{11}^{2}t_{15} & = 1 \\
g_{4}^2 g_{2} & = g_{4} (g_{4} g_{2})& \Longrightarrow & & t_{8}^{2}t_{10}t_{14}t_{18} & = 1 \\
g_{3}^2 g_{2} & = g_{3} (g_{3} g_{2})& \Longrightarrow & & t_{6}^{2}t_{18} & = 1 \\
g_{3}^2 g_{1} & = g_{3} (g_{3} g_{1})& \Longrightarrow & & t_{5}^{2}t_{10}t_{13}t_{18} & = 1 \\
g_{2}^2 g_{1} & = g_{2} (g_{2} g_{1})& \Longrightarrow & & t_{3}^{2}t_{4}t_{6}t_{18} & = 1 \\
g_{4} g_{2}^{2} & = (g_{4} g_{2}) g_{2}& \Longrightarrow & & t_{8}^{2}t_{10}t_{12}t_{18} & = 1 \\
g_{2} g_{1}^{2} & = (g_{2} g_{1}) g_{1}& \Longrightarrow & & t_{3}^{2}t_{4}t_{5}t_{8}t_{9}^{2}t_{10}t_{13}^{2}t_{15}t_{18}^{3} & = 1 \\
\end{aligned}
\]
Scanning through the conjugacy class representatives of $G$ and the generators of their centralizers, we obtain the following relations induced on the tails:
\[
\begin{aligned}
{[g_{3} g_{5} g_{6} g_{7} , \, g_{2} g_{5} g_{7} ]}_G & = 1 & \Longrightarrow & & t_{6}t_{12}t_{13}^{-1}t_{17}t_{18} & = 1 \\
\end{aligned}
\]
Collecting the coefficients of these relations into a matrix yields
\[
T = \bordermatrix{
{} & t_{1} & t_{2} & t_{3} & t_{4} & t_{5} & t_{6} & t_{7} & t_{8} & t_{9} & t_{10} & t_{11} & t_{12} & t_{13} & t_{14} & t_{15} & t_{16} & t_{17} & t_{18} \cr
{} &  &  & 2 & 1 &  &  &  &  &  &  &  &  &  &  &  &  & 1 & 1 \cr
{} &  &  &  &  & 1 &  &  & 1 &  & 1 &  &  &  &  &  &  & 1 & 1 \cr
{} &  &  &  &  &  & 1 &  &  &  &  &  &  &  &  &  &  & 1 & 1 \cr
{} &  &  &  &  &  &  &  & 2 &  & 1 &  &  &  &  &  &  & 1 & 1 \cr
{} &  &  &  &  &  &  &  &  & 1 &  & 1 &  &  &  & 1 &  & 1 & 1 \cr
{} &  &  &  &  &  &  &  &  &  &  & 2 &  &  &  & 1 &  &  &  \cr
{} &  &  &  &  &  &  &  &  &  &  &  & 1 &  &  &  &  & 1 & 1 \cr
{} &  &  &  &  &  &  &  &  &  &  &  &  & 1 &  &  &  & 1 & 1 \cr
{} &  &  &  &  &  &  &  &  &  &  &  &  &  & 1 &  &  & 1 & 1 \cr
{} &  &  &  &  &  &  &  &  &  &  &  &  &  &  &  & 1 & 1 & 1 \cr
{} &  &  &  &  &  &  &  &  &  &  &  &  &  &  &  &  & 2 & 1 \cr
}.
\]
It follows readily that the nontrivial elementary divisors of the Smith normal form of $T$ are all equal to $1$. The torsion subgroup of the group generated by the tails is thus trivial, thereby showing $\B_0(G) = 1$.


\item \label{number:113} 
Let the group $G$ be the representative of this family given by the presentation
\[
\begin{aligned}
\langle g_{1}, \,g_{2}, \,g_{3}, \,g_{4}, \,g_{5}, \,g_{6}, \,g_{7} & \mid & g_{1}^{2} &= 1, \\ 
 & & g_{2}^{2} &= 1, & [g_{2}, g_{1}]  &= g_{3}, \\ 
 & & g_{3}^{2} &= g_{4}g_{5}, & [g_{3}, g_{1}]  &= g_{4}, & [g_{3}, g_{2}]  &= g_{4}, \\ 
 & & g_{4}^{2} &= g_{5}g_{6}, & [g_{4}, g_{1}]  &= g_{5}, & [g_{4}, g_{2}]  &= g_{5}, \\ 
 & & g_{5}^{2} &= g_{6}g_{7}, & [g_{5}, g_{1}]  &= g_{6}, & [g_{5}, g_{2}]  &= g_{6}, \\ 
 & & g_{6}^{2} &= g_{7}, & [g_{6}, g_{1}]  &= g_{7}, & [g_{6}, g_{2}]  &= g_{7}, \\ 
 & & g_{7}^{2} &= 1\rangle. \\ 
\end{aligned}
\]
We add 16 tails to the presentation as to form a quotient of the universal central extension of the system: 
$g_{1}^{2} =  t_{1}$,
$g_{2}^{2} =  t_{2}$,
$[g_{2}, g_{1}] = g_{3} t_{3}$,
$g_{3}^{2} = g_{4}g_{5} t_{4}$,
$[g_{3}, g_{1}] = g_{4} t_{5}$,
$[g_{3}, g_{2}] = g_{4} t_{6}$,
$g_{4}^{2} = g_{5}g_{6} t_{7}$,
$[g_{4}, g_{1}] = g_{5} t_{8}$,
$[g_{4}, g_{2}] = g_{5} t_{9}$,
$g_{5}^{2} = g_{6}g_{7} t_{10}$,
$[g_{5}, g_{1}] = g_{6} t_{11}$,
$[g_{5}, g_{2}] = g_{6} t_{12}$,
$g_{6}^{2} = g_{7} t_{13}$,
$[g_{6}, g_{1}] = g_{7} t_{14}$,
$[g_{6}, g_{2}] = g_{7} t_{15}$,
$g_{7}^{2} =  t_{16}$.
Carrying out consistency checks gives the following relations between the tails:
\[
\begin{aligned}
g_{5}(g_{2} g_{1}) & = (g_{5} g_{2}) g_{1}  & \Longrightarrow & & t_{14}t_{15}^{-1} & = 1 \\
g_{4}(g_{2} g_{1}) & = (g_{4} g_{2}) g_{1}  & \Longrightarrow & & t_{11}t_{12}^{-1} & = 1 \\
g_{3}(g_{2} g_{1}) & = (g_{3} g_{2}) g_{1}  & \Longrightarrow & & t_{8}t_{9}^{-1} & = 1 \\
g_{6}^2 g_{2} & = g_{6} (g_{6} g_{2})& \Longrightarrow & & t_{15}^{2}t_{16} & = 1 \\
g_{5}^2 g_{2} & = g_{5} (g_{5} g_{2})& \Longrightarrow & & t_{12}^{2}t_{13}t_{15}^{-1} & = 1 \\
g_{4}^2 g_{2} & = g_{4} (g_{4} g_{2})& \Longrightarrow & & t_{9}^{2}t_{10}t_{12}^{-1}t_{15}^{-1} & = 1 \\
g_{3}^2 g_{2} & = g_{3} (g_{3} g_{2})& \Longrightarrow & & t_{6}^{2}t_{7}t_{9}^{-1}t_{12}^{-1} & = 1 \\
g_{3}^2 g_{1} & = g_{3} (g_{3} g_{1})& \Longrightarrow & & t_{5}^{2}t_{7}t_{8}^{-1}t_{11}^{-1} & = 1 \\
g_{2}^2 g_{1} & = g_{2} (g_{2} g_{1})& \Longrightarrow & & t_{3}^{2}t_{4}t_{6}t_{7}t_{10}t_{13}t_{16} & = 1 \\
g_{2} g_{1}^{2} & = (g_{2} g_{1}) g_{1}& \Longrightarrow & & t_{3}^{2}t_{4}t_{5}t_{7}t_{10}t_{13}t_{16} & = 1 \\
\end{aligned}
\]
Scanning through the conjugacy class representatives of $G$ and the generators of their centralizers, we see that no new relations are imposed.
Collecting the coefficients of these relations into a matrix yields
\[
T = \bordermatrix{
{} & t_{1} & t_{2} & t_{3} & t_{4} & t_{5} & t_{6} & t_{7} & t_{8} & t_{9} & t_{10} & t_{11} & t_{12} & t_{13} & t_{14} & t_{15} & t_{16} \cr
{} &  &  & 2 & 1 &  & 1 & 1 &  &  & 1 &  &  & 1 &  &  & 1 \cr
{} &  &  &  &  & 1 & 1 & 1 &  & 1 & 1 &  &  & 1 &  &  & 1 \cr
{} &  &  &  &  &  & 2 & 1 &  & 1 & 1 &  &  & 1 &  &  & 1 \cr
{} &  &  &  &  &  &  &  & 1 & 1 & 1 &  & 1 & 1 &  &  & 1 \cr
{} &  &  &  &  &  &  &  &  & 2 & 1 &  & 1 & 1 &  &  & 1 \cr
{} &  &  &  &  &  &  &  &  &  &  & 1 & 1 & 1 &  & 1 & 1 \cr
{} &  &  &  &  &  &  &  &  &  &  &  & 2 & 1 &  & 1 & 1 \cr
{} &  &  &  &  &  &  &  &  &  &  &  &  &  & 1 & 1 & 1 \cr
{} &  &  &  &  &  &  &  &  &  &  &  &  &  &  & 2 & 1 \cr
}.
\]
It follows readily that the nontrivial elementary divisors of the Smith normal form of $T$ are all equal to $1$. The torsion subgroup of the group generated by the tails is thus trivial, thereby showing $\B_0(G) = 1$.


\item \label{number:114} 
Let the group $G$ be the representative of this family given by the presentation
\[
\begin{aligned}
\langle g_{1}, \,g_{2}, \,g_{3}, \,g_{4}, \,g_{5}, \,g_{6}, \,g_{7} & \mid & g_{1}^{2} &= g_{4}, \\ 
 & & g_{2}^{2} &= 1, & [g_{2}, g_{1}]  &= g_{3}, \\ 
 & & g_{3}^{2} &= g_{6}, & [g_{3}, g_{1}]  &= g_{5}, & [g_{3}, g_{2}]  &= g_{6}, \\ 
 & & g_{4}^{2} &= 1, & [g_{4}, g_{2}]  &= g_{5}g_{6}g_{7}, & [g_{4}, g_{3}]  &= g_{6}g_{7}, \\ 
 & & g_{5}^{2} &= g_{7}, & [g_{5}, g_{1}]  &= g_{6}, & [g_{5}, g_{2}]  &= g_{7}, & [g_{5}, g_{4}]  &= g_{7}, \\ 
 & & g_{6}^{2} &= 1, & [g_{6}, g_{1}]  &= g_{7}, \\ 
 & & g_{7}^{2} &= 1\rangle. \\ 
\end{aligned}
\]
We add 16 tails to the presentation as to form a quotient of the universal central extension of the system: 
$g_{1}^{2} = g_{4} t_{1}$,
$g_{2}^{2} =  t_{2}$,
$[g_{2}, g_{1}] = g_{3} t_{3}$,
$g_{3}^{2} = g_{6} t_{4}$,
$[g_{3}, g_{1}] = g_{5} t_{5}$,
$[g_{3}, g_{2}] = g_{6} t_{6}$,
$g_{4}^{2} =  t_{7}$,
$[g_{4}, g_{2}] = g_{5}g_{6}g_{7} t_{8}$,
$[g_{4}, g_{3}] = g_{6}g_{7} t_{9}$,
$g_{5}^{2} = g_{7} t_{10}$,
$[g_{5}, g_{1}] = g_{6} t_{11}$,
$[g_{5}, g_{2}] = g_{7} t_{12}$,
$[g_{5}, g_{4}] = g_{7} t_{13}$,
$g_{6}^{2} =  t_{14}$,
$[g_{6}, g_{1}] = g_{7} t_{15}$,
$g_{7}^{2} =  t_{16}$.
Carrying out consistency checks gives the following relations between the tails:
\[
\begin{aligned}
g_{4}(g_{3} g_{1}) & = (g_{4} g_{3}) g_{1}  & \Longrightarrow & & t_{13}t_{15}t_{16} & = 1 \\
g_{4}(g_{2} g_{1}) & = (g_{4} g_{2}) g_{1}  & \Longrightarrow & & t_{9}^{-1}t_{11}t_{15} & = 1 \\
g_{3}(g_{2} g_{1}) & = (g_{3} g_{2}) g_{1}  & \Longrightarrow & & t_{12}^{-1}t_{15} & = 1 \\
g_{6}^2 g_{1} & = g_{6} (g_{6} g_{1})& \Longrightarrow & & t_{15}^{2}t_{16} & = 1 \\
g_{5}^2 g_{1} & = g_{5} (g_{5} g_{1})& \Longrightarrow & & t_{11}^{2}t_{14} & = 1 \\
g_{4}^2 g_{2} & = g_{4} (g_{4} g_{2})& \Longrightarrow & & t_{8}^{2}t_{10}t_{13}t_{14}t_{16}^{2} & = 1 \\
g_{3}^2 g_{2} & = g_{3} (g_{3} g_{2})& \Longrightarrow & & t_{6}^{2}t_{14} & = 1 \\
g_{3}^2 g_{1} & = g_{3} (g_{3} g_{1})& \Longrightarrow & & t_{5}^{2}t_{10}t_{15}^{-1} & = 1 \\
g_{2}^2 g_{1} & = g_{2} (g_{2} g_{1})& \Longrightarrow & & t_{3}^{2}t_{4}t_{6}t_{14} & = 1 \\
g_{2} g_{1}^{2} & = (g_{2} g_{1}) g_{1}& \Longrightarrow & & t_{3}^{2}t_{4}t_{5}t_{8}t_{9}^{2}t_{10}t_{14}^{2}t_{16}^{2} & = 1 \\
\end{aligned}
\]
Scanning through the conjugacy class representatives of $G$ and the generators of their centralizers, we see that no new relations are imposed.
Collecting the coefficients of these relations into a matrix yields
\[
T = \bordermatrix{
{} & t_{1} & t_{2} & t_{3} & t_{4} & t_{5} & t_{6} & t_{7} & t_{8} & t_{9} & t_{10} & t_{11} & t_{12} & t_{13} & t_{14} & t_{15} & t_{16} \cr
{} &  &  & 2 & 1 &  & 1 &  &  &  &  &  &  &  & 1 &  &  \cr
{} &  &  &  &  & 1 & 1 &  & 1 &  & 1 &  &  &  & 1 &  & 1 \cr
{} &  &  &  &  &  & 2 &  &  &  &  &  &  &  & 1 &  &  \cr
{} &  &  &  &  &  &  &  & 2 &  & 1 &  &  &  & 1 & 1 & 2 \cr
{} &  &  &  &  &  &  &  &  & 1 &  & 1 &  &  & 1 & 1 & 1 \cr
{} &  &  &  &  &  &  &  &  &  &  & 2 &  &  & 1 &  &  \cr
{} &  &  &  &  &  &  &  &  &  &  &  & 1 &  &  & 1 & 1 \cr
{} &  &  &  &  &  &  &  &  &  &  &  &  & 1 &  & 1 & 1 \cr
{} &  &  &  &  &  &  &  &  &  &  &  &  &  &  & 2 & 1 \cr
}.
\]
A change of basis according to the transition matrix (specifying expansions of $t_i^{*}$ by $t_j$)
\[
\bordermatrix{
{} & t_{1}^{*} & t_{2}^{*} & t_{3}^{*} & t_{4}^{*} & t_{5}^{*} & t_{6}^{*} & t_{7}^{*} & t_{8}^{*} & t_{9}^{*} & t_{10}^{*} & t_{11}^{*} & t_{12}^{*} & t_{13}^{*} & t_{14}^{*} & t_{15}^{*} & t_{16}^{*} \cr
t_{1} &  &  &  &  &  &  &  &  &  & -1 & -9 & 1 & 1 & 1 &  &  \cr
t_{2} &  &  &  &  &  &  &  &  &  &  & -2 & 1 & 1 &  &  & -1 \cr
t_{3} & 4 & 2 & 4 & 8 & 4 & 8 &  &  & 6 & 2 & 9 & -1 & -1 & -1 &  &  \cr
t_{4} & 2 & 1 & 2 & 4 & 2 & 4 &  &  & 3 &  & 2 &  &  &  &  &  \cr
t_{5} & -3 & -2 & -4 & -8 & -4 & -8 &  &  & -6 & -1 &  &  &  &  &  &  \cr
t_{6} & 3 & 3 & 4 & 8 & 4 & 8 &  &  & 6 & 1 & 2 & -1 & -1 &  &  & 1 \cr
t_{7} &  &  &  &  &  &  &  &  &  &  &  & -1 &  &  &  & 1 \cr
t_{8} & -9 & -4 & -8 & -18 & -12 & -20 &  &  & -14 &  &  &  &  &  &  & -1 \cr
t_{9} & 3 & 2 & 4 & 8 & 5 & 8 &  &  & 6 & 1 &  &  &  &  &  &  \cr
t_{10} & -6 & -3 & -6 & -13 & -8 & -14 &  &  & -10 & -1 & -3 &  &  &  &  &  \cr
t_{11} & 1 &  &  & 2 & 1 & 2 &  &  & 1 &  & 1 &  &  &  &  &  \cr
t_{12} & 9 & 4 & 8 & 18 & 12 & 20 & 1 &  & 14 &  &  & 1 &  &  &  &  \cr
t_{13} & 2 &  & 2 & 3 & 3 & 5 & -1 &  & 3 &  &  &  & 1 &  &  &  \cr
t_{14} &  & 1 & 1 & 2 &  & 1 &  &  & 1 &  &  &  &  & 1 &  &  \cr
t_{15} & 19 & 9 & 16 & 38 & 24 & 41 &  & 2 & 29 &  &  &  &  &  & 1 &  \cr
t_{16} & 9 & 4 & 8 & 18 & 12 & 20 &  & 1 & 14 &  &  &  &  &  &  & 1 \cr
}
\]
shows that the nontrivial elementary divisors of the Smith normal form of $T$ are $1$, $1$, $1$, $1$, $1$, $1$, $1$, $1$, $2$.  The element corresponding to the divisor that is greater than $1$ is $t_{9}^{*}$. This already gives
\[
\B_0(G) \cong \langle t_{9}^{*}  \mid {t_{9}^{*}}^{2} \rangle.
\]

We now deal with explicitly identifying the nonuniversal commutator relation generating $\B_0(G)$.
First, factor out by the tails $t_{i}^{*}$ whose corresponding elementary divisors are either trivial or $1$. Transforming the situation back to the original tails $t_i$, this amounts to the nontrivial expansions given by
\[
\bordermatrix{
{} & t_{2} & t_{4} & t_{5} & t_{6} \cr
t_{9}^{*} & 1 & 1 & 1 & 1 \cr
}
\]
and all the other tails $t_i$ are trivial. We thus obtain a commutativity preserving central extension of the group $G$, given by the presentation
\[
\begin{aligned}
\langle g_{1}, \,g_{2}, \,g_{3}, \,g_{4}, \,g_{5}, \,g_{6}, \,g_{7}, \,t_{9}^{*} & \mid & g_{1}^{2} &= g_{4}, \\ 
 & & g_{2}^{2} &= t_{9}^{*} , & [g_{2}, g_{1}]  &= g_{3}, \\ 
 & & g_{3}^{2} &= g_{6}t_{9}^{*} , & [g_{3}, g_{1}]  &= g_{5}t_{9}^{*} , & [g_{3}, g_{2}]  &= g_{6}t_{9}^{*} , \\ 
 & & g_{4}^{2} &= 1, & [g_{4}, g_{2}]  &= g_{5}g_{6}g_{7}, & [g_{4}, g_{3}]  &= g_{6}g_{7}, \\ 
 & & g_{5}^{2} &= g_{7}, & [g_{5}, g_{1}]  &= g_{6}, & [g_{5}, g_{2}]  &= g_{7}, & [g_{5}, g_{4}]  &= g_{7}, \\ 
 & & g_{6}^{2} &= 1, & [g_{6}, g_{1}]  &= g_{7}, \\ 
 & & g_{7}^{2} &= 1, \\ 
 & & {t_{9}^{*}}^{2} &= 1  \rangle,
\end{aligned}
\]
whence the nonuniversal commutator relation is identified as
\[
t_{9}^{*}  = [g_{3}, g_{2}] [g_{5}, g_{1}]^{-1}.  \quad 
\]


\item \label{number:115} 
Let the group $G$ be the representative of this family given by the presentation
\[
\begin{aligned}
\langle g_{1}, \,g_{2}, \,g_{3}, \,g_{4}, \,g_{5}, \,g_{6}, \,g_{7} & \mid & g_{1}^{2} &= g_{4}, \\ 
 & & g_{2}^{2} &= g_{6}, & [g_{2}, g_{1}]  &= g_{3}, \\ 
 & & g_{3}^{2} &= g_{6}, & [g_{3}, g_{1}]  &= g_{5}, & [g_{3}, g_{2}]  &= g_{6}g_{7}, \\ 
 & & g_{4}^{2} &= 1, & [g_{4}, g_{2}]  &= g_{5}g_{6}g_{7}, & [g_{4}, g_{3}]  &= g_{6}g_{7}, \\ 
 & & g_{5}^{2} &= g_{7}, & [g_{5}, g_{1}]  &= g_{6}, & [g_{5}, g_{2}]  &= g_{7}, & [g_{5}, g_{4}]  &= g_{7}, \\ 
 & & g_{6}^{2} &= 1, & [g_{6}, g_{1}]  &= g_{7}, \\ 
 & & g_{7}^{2} &= 1\rangle. \\ 
\end{aligned}
\]
We add 16 tails to the presentation as to form a quotient of the universal central extension of the system: 
$g_{1}^{2} = g_{4} t_{1}$,
$g_{2}^{2} = g_{6} t_{2}$,
$[g_{2}, g_{1}] = g_{3} t_{3}$,
$g_{3}^{2} = g_{6} t_{4}$,
$[g_{3}, g_{1}] = g_{5} t_{5}$,
$[g_{3}, g_{2}] = g_{6}g_{7} t_{6}$,
$g_{4}^{2} =  t_{7}$,
$[g_{4}, g_{2}] = g_{5}g_{6}g_{7} t_{8}$,
$[g_{4}, g_{3}] = g_{6}g_{7} t_{9}$,
$g_{5}^{2} = g_{7} t_{10}$,
$[g_{5}, g_{1}] = g_{6} t_{11}$,
$[g_{5}, g_{2}] = g_{7} t_{12}$,
$[g_{5}, g_{4}] = g_{7} t_{13}$,
$g_{6}^{2} =  t_{14}$,
$[g_{6}, g_{1}] = g_{7} t_{15}$,
$g_{7}^{2} =  t_{16}$.
Carrying out consistency checks gives the following relations between the tails:
\[
\begin{aligned}
g_{4}(g_{3} g_{1}) & = (g_{4} g_{3}) g_{1}  & \Longrightarrow & & t_{13}t_{15}t_{16} & = 1 \\
g_{4}(g_{2} g_{1}) & = (g_{4} g_{2}) g_{1}  & \Longrightarrow & & t_{9}^{-1}t_{11}t_{15} & = 1 \\
g_{3}(g_{2} g_{1}) & = (g_{3} g_{2}) g_{1}  & \Longrightarrow & & t_{12}^{-1}t_{15} & = 1 \\
g_{6}^2 g_{1} & = g_{6} (g_{6} g_{1})& \Longrightarrow & & t_{15}^{2}t_{16} & = 1 \\
g_{5}^2 g_{1} & = g_{5} (g_{5} g_{1})& \Longrightarrow & & t_{11}^{2}t_{14} & = 1 \\
g_{4}^2 g_{2} & = g_{4} (g_{4} g_{2})& \Longrightarrow & & t_{8}^{2}t_{10}t_{13}t_{14}t_{16}^{2} & = 1 \\
g_{3}^2 g_{2} & = g_{3} (g_{3} g_{2})& \Longrightarrow & & t_{6}^{2}t_{14}t_{16} & = 1 \\
g_{3}^2 g_{1} & = g_{3} (g_{3} g_{1})& \Longrightarrow & & t_{5}^{2}t_{10}t_{15}^{-1} & = 1 \\
g_{2}^2 g_{1} & = g_{2} (g_{2} g_{1})& \Longrightarrow & & t_{3}^{2}t_{4}t_{6}t_{14}t_{15}^{-1} & = 1 \\
g_{2} g_{1}^{2} & = (g_{2} g_{1}) g_{1}& \Longrightarrow & & t_{3}^{2}t_{4}t_{5}t_{8}t_{9}^{2}t_{10}t_{14}^{2}t_{16}^{2} & = 1 \\
\end{aligned}
\]
Scanning through the conjugacy class representatives of $G$ and the generators of their centralizers, we obtain the following relations induced on the tails:
\[
\begin{aligned}
{[g_{3} g_{5} g_{6} , \, g_{2} g_{4} g_{5} g_{7} ]}_G & = 1 & \Longrightarrow & & t_{6}t_{9}^{-1}t_{12}t_{13}t_{16} & = 1 \\
\end{aligned}
\]
Collecting the coefficients of these relations into a matrix yields
\[
T = \bordermatrix{
{} & t_{1} & t_{2} & t_{3} & t_{4} & t_{5} & t_{6} & t_{7} & t_{8} & t_{9} & t_{10} & t_{11} & t_{12} & t_{13} & t_{14} & t_{15} & t_{16} \cr
{} &  &  & 2 & 1 &  &  &  &  &  &  & 1 &  &  & 1 &  &  \cr
{} &  &  &  &  & 1 &  &  & 1 &  & 1 & 1 &  &  & 1 &  & 1 \cr
{} &  &  &  &  &  & 1 &  &  &  &  & 1 &  &  & 1 & 1 & 1 \cr
{} &  &  &  &  &  &  &  & 2 &  & 1 &  &  &  & 1 & 1 & 2 \cr
{} &  &  &  &  &  &  &  &  & 1 &  & 1 &  &  & 1 & 1 & 1 \cr
{} &  &  &  &  &  &  &  &  &  &  & 2 &  &  & 1 &  &  \cr
{} &  &  &  &  &  &  &  &  &  &  &  & 1 &  &  & 1 & 1 \cr
{} &  &  &  &  &  &  &  &  &  &  &  &  & 1 &  & 1 & 1 \cr
{} &  &  &  &  &  &  &  &  &  &  &  &  &  &  & 2 & 1 \cr
}.
\]
It follows readily that the nontrivial elementary divisors of the Smith normal form of $T$ are all equal to $1$. The torsion subgroup of the group generated by the tails is thus trivial, thereby showing $\B_0(G) = 1$.

\end{enumerate}

\end{document}